\DeclareMathOperator{\Hom}{Hom}
\DeclareMathOperator{\End}{End}
\DeclareMathOperator{\Ext}{Ext}
\DeclareMathOperator{\im}{Im}
\DeclareMathOperator{\Ker}{Ker}
\DeclareMathOperator{\sgn}{sgn}
\DeclareMathOperator{\tp}{top}
\DeclareMathOperator{\rad}{rad}
\DeclareMathOperator{\op}{op}
\DeclareMathOperator{\mod*}{mod}
\DeclareMathOperator{\Coker}{Coker}
\DeclareMathOperator{\dimvect}{\mathbf{dim}}
\DeclareMathOperator{\id}{id}
\DeclareMathOperator{\proj}{proj}
\DeclareMathOperator{\add}{add}
\DeclareMathOperator{\Ob}{Ob}
\newcommand{\dynA}{{A}}
\newcommand{\dynD}{{D}}
\newcommand{\dynE}{{E}}
\newcommand{\dynF}{{F}}
\newcommand{\dynG}{{G}}
\newcommand{\coxA}{\dynA}
\newcommand{\coxD}{\dynD}
\newcommand{\coxE}{\dynE}
\newcommand{\coxF}{\dynF}
\newcommand{\coxI}{{I}}
\newcommand{\coxH}{{H}}
\newcommand{\gend}{\Delta}	
\newcommand{\ccv}{\mathbf{c}}
\newcommand{\cv}{c}
\newcommand{\ggv}{\mathbf{g}}
\newcommand{\gv}{g}
\newcommand{\rootsys}{\Phi}
\newcommand{\posroots}{\rootsys_+}
\newcommand{\der}{\mathcal{D}}
\newcommand{\bder}{\der^b}
\newcommand{\catH}{\mathcal{H}}
\newcommand{\clus}{\mathcal{C}}
\newcommand{\gen}{\mathcal{G}}
\newcommand{\syz}{\Omega}
\newcommand{\calM}{\mathcal{M}}
\newcommand{\sus}{\Sigma}
\newcommand{\dgart}{\tau_{\der_\gend}}
\newcommand{\dfart}{\tau_{\der_F}}
\newcommand{\cfart}{\tau_{\clus_F}}
\newcommand{\vw}{\kappa} 
\newcommand{\aw}{\xi} 
\newcommand{\ivx}{\phi}
\newcommand{\dimproj}{\delta}
\newcommand{\derdim}{\wh\dimproj}
\newcommand{\tree}{\mathbb{T}}
\newcommand{\gratio}{\varphi}
\newcommand{\croot}{\psi}
\newcommand{\chebsr}{\chi_+}
\newcommand{\chebr}{\chi}
\newcommand{\imunit}{\mathrm{i}}
\newcommand{\e}{\mathrm{e}}
\newcommand{\field}{K}
\newcommand{\real}{\mathbb{R}}
\newcommand{\integer}{\mathbb{Z}}
\newcommand{\nnint}{\mathbb{Z}_{\geq 0}}
\newcommand{\goldint}{\mathbb{Z}[\gratio]}
\newcommand{\goldsr}{\nnint[\gratio]}
\newcommand{\srhom}{\sigma_+}
\newcommand{\inv}{^{-1}}
\newcommand{\wh}{\widehat}
\newcommand{\ps}[1]{^{(#1)}}
\def\wt{\widetilde}
\def\h{\widehat}
\newcommand{\tikznode}[2]{
	\ifmmode
		\tikz[remember picture,baseline=(#1.base),inner sep=0pt] \node (#1) {$#2$};
	\else
		\tikz[remember picture,baseline=(#1.base),inner sep=0pt] \node (#1) {#2};
	\fi
}
\theoremstyle{plain}
\newtheorem{thm}{Theorem}[section]
\newtheorem*{thm*}{Theorem}
\newtheorem{lem}[thm]{Lemma}
\newtheorem{cor}[thm]{Corollary}
\newtheorem{prop}[thm]{Proposition}
\newtheorem*{prop*}{Proposition}
\theoremstyle{definition}
\newtheorem{defn}[thm]{Definition}
\newtheorem{exam}[thm]{Example}
\newtheorem*{exam*}{Example}
\theoremstyle{remark}
\newtheorem{rem}[thm]{Remark}
\numberwithin{equation}{section}
\numberwithin{figure}{section}
\begin{document}
	\title[Categorifications of Non-Integer Quivers: Types $H_4$, $H_3$ and $I_2(2n+1)$]{Categorifications of Non-Integer Quivers:\\ Types $H_4$, $H_3$ and $I_2(2n+1)$}
	\author{Drew Damien Duffield and Pavel Tumarkin}
       \address{Department of Mathematical Sciences, Durham University, Mathematical Sciences \& Computer Science Building, Upper Mountjoy Campus, Stockton Road, Durham, DH1 3LE, UK}
        \email{drew.d.duffield@durham.ac.uk, pavel.tumarkin@durham.ac.uk}
        \thanks{Research was supported by the Leverhulme Trust research grant RPG-2019-153.}
	\maketitle
	\begin{abstract}
		We define the notion of a weighted unfolding of quivers with real weights, and use this to provide a categorification of mutations of quivers of finite types $H_4$, $H_3$ and $I_2(2n+1)$. In particular, the (un)folding induces a semiring action on the categories associated to the unfolded quivers of types $E_8$, $D_6$ and $A_{2n}$ respectively. We then define the tropical seed pattern on the folded quivers, which includes $\cv$- and $\gv$-vectors, and show its compatibility with the unfolding.    
        \end{abstract}
        
              
	\setcounter{tocdepth}{1}
	\tableofcontents
	
	\section{Introduction and main results}
Cluster algebras were introduced by Fomin and Zelevinsky~\cite{FominZelevinskyI} in an effort to describe dual canonical bases in the universal enveloping algebras of Borel subalgebras of simple complex Lie algebras. 
A cluster algebra of geometric type is completely defined by an integer skew-symmetrizable $n\times n$ {\em exchange matrix} (or, equivalently, by the corresponding {\em cluster quiver}), which undergoes involutive transformations called {\em mutations}. A cluster algebra possesses a distinguished set of generators, \emph{cluster variables}, organized in groups of the same cardinality called \emph{clusters}. Pairs composed of a cluster and the corresponding exchange matrix are called {\em seeds} which form a combinatorial structure called a {\em seed pattern}. 

A distinguished class of cluster algebras consists of algebras of {\em finite type}, i.e. cluster algebras having only finitely many distinct cluster variables. These algebras were classified by Fomin and Zelevinsky in~\cite{FominZelevinskyII} by establishing a connection with Cartan-Killing classification of simple Lie algebras. More precisely, it was shown in~\cite{FominZelevinskyII} that cluster algebras of finite type are in one-to-one correspondence with finite root systems, and there is a bijection between the set of cluster variables and the set of roots consisting of all positive roots and negative simple roots.

A categorification of cluster algebras associated to quivers of finite type was constructed in a series of papers by Buan, Marsh, Reineke, Reiten, and Todorov~\cite{BMRRT,BMRTilting,BMRMutation} and by Caldero, Chapoton and Schiffler~\cite{CCS2,CCS}. The categorification involves the construction of the \emph{cluster category} and the development of cluster-tilting theory.
Cluster-tilting theory has since inspired exciting new directions in the representation theory of finite-dimensional algebras, prompting further developments in topics concerning exceptional sequences~\cite{BMExceptional,SpeyerThomas}, $\tau$-tilting theory~\cite{TauTilting}, and the wall and chamber structure of finite-dimensional algebras~\cite{BST} (to name but a few). The theory is particularly well-understood in the finite type setting as a result of the formal connection made between cluster algebras of finite type and cluster categories via the Caldero-Chapoton map~\cite{CCMap}, which induces a bijection between the cluster variables of a cluster algebra of finite type and the indecomposable objects of the corresponding cluster category (cf.~\cite{CalderoKeller}).

Cluster algebras of finite type correspond to finite root systems and thus to finite crystallographic Coxeter groups. In this paper, we consider cluster structures defined from finite non-crystallographic Coxeter groups and their associated root systems (in the sense of Deodhar~\cite{Deodhar}).
The main goal is to extend the cluster combinatorics and categorification to the exchange matrices and quivers of types $\coxI_2(n)$, $\coxH_3$ and $\coxH_4$. 

Mutations of matrices with real entries were considered e.g. in~\cite{Lampe,FeliksonTumarkin19}. In contrast to the usual integer framework, the seed pattern defined as in the integer setting lacks some basic good properties. We show that the tropical degeneration of the seed pattern can nevertheless be consistently defined, and mutations of quivers of types  $\coxI_2(2n+1)$, $\coxH_3$ and $\coxH_4$ can be categorified in line with the integer counterparts. There are additional technical considerations and slight differences in theory for the $\coxI_2(2n)$ cases which arise from the existence of short and long roots. As such, categorification of quivers of types $\coxI_2(2n)$ are addressed in a separate paper (\cite{DT2}).

One of our main tools is a {\em weighted (un)folding} of a quiver (see Section~\ref{section-unf} for the details), the application of which follows the projection of root systems of types $\coxE_8$, $\coxD_6$ and $\coxA_4$ developed in~\cite{Lusztig,WaveFrontsReflGroups,MoodyPatera}. In particular, the (un)foldings give rise to projections of dimension vectors of objects in module categories of integer quivers to the roots associated to folded quivers (Section~\ref{section-proj}). We obtain the following theorem.

\begin{thm}[Theorem~\ref{thm:Folding},Proposition~\ref{prop:DerivedProj}]
Let $F\colon Q^{\gend} \rightarrow Q^{\gend'}$ be a weighted folding of quivers, where $Q^{\gend}$ is a quiver of type $\gend \in \{\coxA_{2n},\coxD_6,\coxE_8\}$ and $Q^{\gend'}$ is a quiver of type $\gend'\in\{\coxI_2(2n+1),\coxH_3,\coxH_4\}$. Then $F$ determines a weighting on the rows of the Auslander-Reiten quiver of $\mod* KQ^{\gend}$ such that the following hold:
	\begin{enumerate}[label=(\alph*)]
		\item The dimension vectors of the modules in the rows with weight 1 project onto the positive roots of $\gend'$.
		\item Let $\mathcal{R}_1$ and $\mathcal{R}_w$ be rows (with respective weights 1 and $w$) that correspond to the same vertex of the folded quiver. Then the projected dimension vector $v_w$ of a module $M_w \in \mathcal{R}_w$ is such that $v_w = w v_1$, where $v_1$ is the projected dimension vector of the module $M_1 \in \mathcal{R}_1$ in the same column as $M_w$.
	\end{enumerate}
	Moreover, these results naturally extend to the bounded derived category $\bder(\mod*KQ)$. 
  \end{thm}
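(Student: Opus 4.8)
The plan is to read both statements off Gabriel's theorem together with the Coxeter-theoretic description of the Auslander--Reiten quiver $\AR$ of $\mod* KQ^{\gend}$, transported through the root-system projection of Section~\ref{section-proj}. Recall that $M\mapsto\dimvect M$ is a bijection from the indecomposables of $\mod* KQ^{\gend}$ onto the positive roots of $\gend$, that $\AR$ has exactly one $\tau$-orbit through each indecomposable projective, and that these $\tau$-orbits are the rows; so the rows are indexed by the vertices of $Q^{\gend}$, and the weighting supplied by $F$ (Section~\ref{section-unf}) is the one pulling the vertex-weights of $Q^{\gend'}$ back along the fibres of $F$. I would realise the projection as a single linear map $\pi$ on $K_0(\mod* KQ^{\gend})\otimes\real$ taking values in the real root space of $\gend'$, written in simple-root coordinates so that $\pi(\dimvect M)_j=\sum_{i\in F^{-1}(j)}\omega_i(\dimvect M)_i$ with strictly positive vertex-weights $\omega_i$.

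The single identity driving everything is the intertwining of Coxeter transformations. Writing $\Phi_{\gend}$ for the Coxeter transformation on the Grothendieck group, so that $\dimvect\tau M=\Phi_{\gend}\dimvect M$ on non-projectives, the crux is
\begin{equation*}
\pi\,\Phi_{\gend}=\Phi_{\gend'}\,\pi,
\end{equation*}
where $\Phi_{\gend'}$ is the Coxeter transformation of the folded type. This is exactly where the coincidences $h_{\coxE_8}=h_{\coxH_4}=30$, $h_{\coxD_6}=h_{\coxH_3}=10$ and $h_{\coxA_{2n}}=h_{\coxI_2(2n+1)}=2n+1$ enter: $\Phi_{\gend}$ preserves the eigenplane decomposition of the real root space, $\pi$ is projection onto the distinguished plane indexed by the exponents of $\gend'$, and on that plane $\Phi_{\gend}$ acts precisely as $\Phi_{\gend'}$. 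Granting this, one step along any row corresponds under $\pi$ to one application of $\Phi_{\gend'}^{\pm1}$.

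For part (a), since $\dimvect M$ is a positive root of $\gend$ its projection has nonnegative simple-root coordinates (the $\omega_i$ are positive), so once the projection is known to be a root of $\gend'$ it is automatically a \emph{positive} root; that it lands on a root at all is the weight-$1$ case of the projection recalled in Section~\ref{section-proj}. As $M$ runs over a weight-$1$ row its projection runs over a $\Phi_{\gend'}$-orbit segment of positive roots, and running over all weight-$1$ rows I would match these against the positive roots of $\gend'$ by the count $\tfrac12 n_{\gend'}h_{\gend'}$, injectivity coming from distinct columns giving distinct points of one orbit and distinct rows giving distinct orbits. For part (b) the intertwining reduces the claim to a single column: if $v_w=w\,v_1$ for one same-column pair, applying $\Phi_{\gend'}^{\pm1}$ (which commutes with the scalar $w$) propagates it to every common column; the base case is built into the weighting, as the simple roots indexed by one fibre $F^{-1}(j)$ were defined to project to parallel vectors differing by the ratio $w$.

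The extension to $\bder(\mod* KQ)$ is then formal: the indecomposables are the shifts $M[i]$, the Auslander--Reiten quiver is the infinite translation quiver $\integer\gend$ on which $\tau$ is an everywhere-defined autoequivalence, and $K_0(\bder)=K_0(\mod* KQ)$ with $[M[i]]=(-1)^i[M]$, so the intertwining persists verbatim, the rows become full $\tau$-orbits, and applying the module-level computation to each shift gives the result, the signs $(-1)^i$ merely replacing the positive roots of $\gend'$ by all roots. The main obstacle I anticipate is proving the intertwining $\pi\Phi_{\gend}=\Phi_{\gend'}\pi$ together with the matching base cases uniformly across the three families --- that is, verifying that the combinatorial weighting coming from $F$ really is projection onto the exponent-$\gend'$ eigenplane and that the weight-$1$ rows are singled out correctly; the remaining ingredients are Gabriel's theorem, linearity, and a finite per-type count.
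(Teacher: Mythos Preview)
Your approach is essentially correct and is genuinely different from the paper's. The paper does not invoke the Coxeter-transformation intertwining $\pi\Phi_{\gend}=\Phi_{\gend'}\pi$ or the eigenplane description at all; instead it proves (b) by a bare-hands induction on powers of $\tau$. For the $\coxH$-type foldings (Lemma~\ref{lem:FoldingRadSeries}) it compares the radical series of $\tau^j I(i)$ and $\tau^j I(\phi_i)$ term by term, pushing the ratio $\gratio$ through a minimal projective presentation and then through the Nakayama functor to pass from $\tau^j$ to $\tau^{j+1}$. For the $\coxI$-type foldings (Lemmas~\ref{lem:InjProjIMultiples}--\ref{lem:IRootsOfUnity}) it computes $\dimproj_F(I(k))$, $\dimproj_F(P(k))$ and $\dimproj_F(\tau^m I(0))$ explicitly using Chebyshev identities, and runs the same Nakayama induction. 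Part (a) is then read off from (b) together with Gabriel's theorem and the Lusztig/Moody--Patera projection of root systems; the derived extension (Proposition~\ref{prop:DerivedProj}) is handled exactly as you suggest.

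Your route buys uniformity: one identity replaces two separate family-specific inductions, and the equality of Coxeter numbers explains \emph{why} the row-by-row scaling persists. The paper's route buys concreteness: it never needs to identify $\pi$ with an eigenspace projection or to verify the intertwining abstractly, and it makes the structure of individual modules (radical layers) visible. One point you should tighten is the base case for (b): you take it at the simples, but simples $S(i)$ and $S(j)$ with $F(i)=F(j)$ lie in the same \emph{column} of the AR quiver only for specific orientations (the paper's chosen ones); in general the clean base case sits at the injectives or projectives, which is where the paper starts, and verifying $\dimproj_F(I(\phi_i))=\gratio\,\dimproj_F(I(i))$ already requires the small arrow-by-arrow check the paper performs.
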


  This, in turn, gives rise to a semiring action on the module, derived and cluster categories of the integer quiver, where the semiring (which we denote by $R_+$) is defined from the weights of arrows of the folded quiver (Section~\ref{section-action}). The main result here can be formulated as follows.

  \begin{thm}[Theorems~\ref{thm:Action},~\ref{thm:Generators}]
     Let $F,\gend$ and $\gend'$ be as above. Then $\mod*KQ^\gend$ has an action of the semiring $R_+$. The collection of indecomposables of $\mod*KQ^\gend$ whose projected dimension vector is a root of $\gend'$ forms a minimal set of $R_+$-generators for $\mod*KQ^\gend$. These results naturally extend to the bounded derived category $\bder(\mod*KQ)$.
      \end{thm}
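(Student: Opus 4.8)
The plan is to read both statements off the row-weighting of the Auslander--Reiten quiver $\AR$ of $\mod*KQ^\gend$ furnished by Theorem~\ref{thm:Folding}. First I would make the semiring explicit. For $\gend'\in\{\coxH_3,\coxH_4\}$ the only arrow weight is the golden ratio $\gratio$ (with $\gratio^2=\gratio+1$), so $R_+=\goldsr$; for $\gend'=\coxI_2(2n+1)$ the weight is the Chebyshev value $2\cos(\pi/(2n+1))$ and $R_+$ is the subsemiring of $\real_{\geq 0}$ it generates, with multiplication controlled by the product-to-sum (Chebyshev) identities. In every case the finite set $W=\{1=u_0,u_1,\dots,u_{n-1}\}$ of weights occurring on the rows of $\AR$ lies in $R_+$ and is closed, via those identities, under multiplication by the generating weight $u_1$.

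Next I would define the action on objects. The isomorphism classes of $\mod*KQ^\gend$ form a commutative monoid under $\oplus$, already an $\nnint$-semimodule via $n\cdot M=M^{\oplus n}$, so it suffices to define the operation $\Theta:=u_1\cdot(\blank)$ and to check that it satisfies the relations defining $R_+$. Using part (b) of Theorem~\ref{thm:Folding}, for an indecomposable $M$ in a row of weight $u_k$ attached to a folded vertex I set $\Theta(M):=\bigoplus M_{u'}$, summed over the rows in the same column whose weights $u'$ appear in the expansion of $u_1u_k$; concretely $\Theta(M_1)=M_{u_1}$ and $\Theta(M_{u_k})=M_{u_{k+1}}\oplus M_{u_{k-1}}$, which specialises to $\gratio\cdot M_1=M_\gratio$ and $\gratio\cdot M_\gratio=M_\gratio\oplus M_1$ in the $\coxH$ cases. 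Extending $\Theta$ over $\oplus$ makes it an endomorphism of the object monoid, and the key point is that it satisfies the same relations over $\nnint$ as the number $u_1$: this I would verify by comparing $\Theta^{\,i}(M)$ with the right-hand sides prescribed by the Chebyshev recurrence, on both indecomposables and their direct sums, so that $u_1\mapsto\Theta$ extends to a semiring homomorphism from $R_+$ to the semiring of $\oplus$-additive operations on $\mod*KQ^\gend$. Krull--Schmidt cancellation then upgrades the recurrence to the identities $u_k\cdot M_1=M_{u_k}$ for every weight $u_k\in W$.

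For the derived statement I would run the identical construction on the AR quiver of $\bder(\mod*KQ)$, whose rows, columns and weights are the shift-periodic continuation of those on $\AR$ by Proposition~\ref{prop:DerivedProj}; since $\Theta$ is compatible with the autoequivalence defining the cluster category, the action descends there as well. Generation and minimality are then immediate from the dichotomy of Theorem~\ref{thm:Folding}: by part (a) the weight-$1$ indecomposables are exactly those whose projected dimension vector is a root of $\gend'$, and by the identities $u_k\cdot M_1=M_{u_k}$ every other indecomposable equals $u_k\cdot M_1$ for the weight-$1$ indecomposable $M_1$ in its column; as every object is a direct sum of indecomposables, the weight-$1$ indecomposables $R_+$-generate. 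Minimality follows because $\Theta$ preserves columns, so the sub-semimodule generated by any proper subset $S$ of the weight-$1$ indecomposables meets only the columns of $S$ and cannot contain a weight-$1$ indecomposable outside $S$.

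The hard part will be the well-definedness of the action: showing that the purely combinatorial row-shift $\Theta$ really satisfies the algebraic relations of $R_+$ over $\nnint$ (not merely on projected dimension vectors), so that the semiring acts genuinely rather than formally. In type $\coxI_2(2n+1)$ this requires care at the top weight $u_{n-1}$, where the naive recurrence would leave $W$; here one must use the reflection identity $u_n=u_{n-1}$ (equivalently $\sin((n+1)\pi/(2n+1))=\sin(n\pi/(2n+1))$) to fold the overshoot back into $W$, and check that the resulting truncated recurrence still matches multiplication in $R_+$. Confirming that this coherence survives the shift-periodic extension to $\bder(\mod*KQ)$, and that $\Theta$ can be promoted from an operation on objects to an endofunctor, is the remaining technical point.
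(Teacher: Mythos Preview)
Your plan correctly identifies the combinatorial shadow of the action---that on isomorphism classes, multiplication by the generating weight shuffles indecomposables within a column according to the Chebyshev recurrence---and this is exactly equation~$(\ast)$ in Section~\ref{section-action}. But the paper's notion of ``action of $R_+$'' (Definition~\ref{def:RPCoeffCat}) is not an operation on the monoid of iso-classes: it is a family of \emph{endofunctors} $\wh{r}$ satisfying axioms (M1)--(M6), including exactness and a faithful $K$-linear action on all $\Ext$-spaces. The paper obtains these by writing down explicit formulas at the level of quiver representations: $(\croot_k M)_i$ is a prescribed direct sum of the vector spaces $M_j$, the structure maps $(\croot_k M)_a$ are explicit block matrices in the $M_{a'}$ (with a scalar parameter chosen outside a finite bad set so that $(\ast)$ holds), and $(\croot_k f)_i$ acts diagonally on morphisms. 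You define only what $\Theta$ does to iso-classes and defer ``promoting to an endofunctor'' to a closing remark; but that promotion \emph{is} the theorem---there is no general mechanism turning an additive operation on objects into an exact functor, and without the explicit representation-level construction you cannot verify (M5)--(M6).

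Separately, your identification of $R_+$ as the subsemiring of $\real_{\geq 0}$ generated by $2\cos(\pi/(2n+1))$ fails when $2n+1$ is not prime: the paper warns (in the Remark following Definition~\ref{def:ChebSemiring}) that e.g.\ for $n=4$ one has $U_3(\cos\tfrac{\pi}{9})=U_0(\cos\tfrac{\pi}{9})+U_1(\cos\tfrac{\pi}{9})$ in $\real$, so the real weights satisfy extra relations that would force isomorphisms between non-isomorphic objects. The paper works instead with the formal semiring $\chebsr\ps{2n+1}$, in which $\croot_0,\dots,\croot_{n-1}$ are $\nnint$-independent by construction; your recurrence-checking must take place there, not in $\real$. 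Finally, minimality in the paper (Definition~\ref{def:MinGenerators}) is a partial-order comparison against every other basic generating set, established via the geometry of $\dimproj_F$ (Proposition~\ref{prop:GenGeometry}); your column-preservation argument shows only irredundancy of $\Gamma$, which is related but not the statement being proved.
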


   The semiring action also applies to the cluster category of $Q^\gend$, and this allows us to extend the results of~\cite{BMRRT} to categorify mutations of folded quivers by considering distinguished objects with respect to the action, which we call $R_+$-{\em tilting objects}  (Section~\ref{section-mutations}). We prove the following.

   \begin{thm}[Theorems~\ref{thm:TiltingEquiv},~\ref{thm:Length},~\ref{thm:Complements},~\ref{thm:TiltedRPAction},~\ref{thm:TiltedFolding}]
     Let $F\colon Q^\gend \rightarrow Q^{\gend'}$ be a folding of quivers with $\gend' \in \{\coxH_3,\coxH_4, \coxI_2(2n+1)\}$. Then the following holds for the category cluster category $\clus_F$ of $Q^\gend$:
     \begin{enumerate}[label=(\alph*)]
       \item
       Every basic $R_+$-tilting object $T \in \clus_F$ (injectively) corresponds to a basic tilting object $\wh{T} \in \clus_F$.
     \item
       If $T$ is a basic $R_+$-tilting object, then $T$ has $|Q_0^{\gend'}|$ indecomposable direct summands.
     \item
       If $T$ is an almost complete $R_+$-tilting object, then it has exactly two complements.
     \item
       If $T$ is a basic $R_+$-tilting object, and $A_T$ is the cluster-tilted algebra corresponding to $\wh{T}$, then there exists an $R_+$-action on $\mod*A_T$.
     \item
       Let $T$ be an almost complete $R_+$-tilting object with complements $X_1$ and $X_2$, let $T_1=T\oplus X_1$ and $T_2=T\oplus X_2$, and let $A_1$, $A_2$ be cluster-tilted algebras corresponding to $\wh T_1$ and $\wh T_2$. Then the (non-integer) folded quivers associated to $A_1$ and $A_2$ differ by a single mutation.
\end{enumerate}
     \end{thm}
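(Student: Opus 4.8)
The plan is to establish the five parts by transporting the ordinary cluster-tilting theory of~\cite{BMRRT,BMRTilting,BMRMutation} through the semiring action constructed earlier. Throughout I regard $\clus_F$ as the cluster category of the integer quiver $Q^\gend$ together with its $R_+$-action, using crucially that this action is defined row-wise on the Auslander--Reiten quiver, so that it commutes with the shift and with $\tau$. For part (a), I would send a basic $R_+$-tilting object $T$ to the direct sum $\wh{T}$ of all indecomposables lying in the $R_+$-orbits of its summands, i.e. the closure of $T$ under the action. Since $R_+$-rigidity of $T$ amounts to the mutual $\Ext^1$-vanishing among all objects in the orbits of its summands, $\wh{T}$ is $\Ext^1$-rigid in the ordinary sense and hence a genuine basic tilting object of $\clus_F$; the assignment $T\mapsto\wh{T}$ is injective because $T$ is recovered from $\wh{T}$ by selecting one summand per orbit. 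Part (b) is then immediate: $\wh{T}$ has $|Q_0^{\gend}|$ indecomposable summands, $F$ partitions these into the orbits indexing the vertices of $Q^{\gend'}$, and $T$ consists of exactly one $R_+$-summand per orbit, so that $T$ has $|Q_0^{\gend'}|$ indecomposable $R_+$-summands.

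For part (c), an almost complete basic $R_+$-tilting object $T$ corresponds to an ordinary rigid object whose summands fill all orbits of vertices of $Q^{\gend'}$ except one, say $O$; thus the unfolded object is missing precisely the $|O|$ indecomposables in that orbit. The key structural input, a defining property of an admissible weighted folding, is that the vertices of $O$ are pairwise non-adjacent in $Q^\gend$. Consequently the $|O|$ ordinary exchange triangles producing complements at the vertices of $O$ are mutually compatible, and the $R_+$-action synchronises the two choices available at each vertex into exactly two orbit-wide choices. I would then check that these assemble into exactly two $R_+$-complements $X_1$ and $X_2$, and that no further complements arise, by arguing that any $R_+$-complement must respect the orbit structure and therefore coincide with one of these two simultaneous mutations.

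For part (d), the cluster-tilted algebra $A_T=\End_{\clus_F}(\wh{T})^{\op}$ satisfies $\mod* A_T\simeq\clus_F/\add(\tau\wh{T})$ by~\cite{BMRTilting}. Since $\tau\wh{T}$ is again $R_+$-stable (as $\tau$ commutes with the action), the $R_+$-action descends to this additive quotient, and hence $\mod* A_T$ acquires an $R_+$-action.

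Finally, part (e) is the crux and contains the main obstacle. Given the two complements of part (c), put $T_i=T\oplus X_i$; each $\wh{T_i}$ is an ordinary tilting object, and by~\cite{BMRMutation} the quivers of the associated cluster-tilted algebras differ by mutations at the vertices of the orbit $O$. Because these vertices are pairwise non-adjacent, the individual mutations commute and amount to a single simultaneous orbit mutation of the unfolded quiver. The remaining---and most delicate---step is to show that applying $F$ to this orbit mutation reproduces exactly the single non-integer mutation of the folded quiver prescribed by the tropical seed pattern. Using the compatibility of folding with mutation established in the earlier sections, I expect this to reduce to a finite verification over $\gend'\in\{\coxH_3,\coxH_4,\coxI_2(2n+1)\}$ that the real weights produced by the simultaneous orbit mutation agree with the tropical mutation rule on the folded quiver. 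The principal difficulty is precisely this matching of weights: one must ensure that the bookkeeping of several unfolded integer mutations collapses correctly onto a single weighted mutation, which is exactly where the non-adjacency of orbits and the row-wise definition of the $R_+$-action are indispensable.
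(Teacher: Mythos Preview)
Your outline captures the overall architecture correctly, and parts (b), (d), and (e) are essentially as in the paper. But there are two genuine gaps.

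\textbf{Part (a): maximality is not free.} You write that $R_+$-rigidity of $T$ gives ordinary rigidity of $\wh T$ ``and hence a genuine basic tilting object''. Rigidity does not imply maximality. What you must show is: if $\wh T\oplus Y$ is rigid for some indecomposable $Y\notin\add\wh T$, then $T\oplus X$ is $R_+$-rigid, where $X$ is the orbit generator with $Y\in\mathcal I_X$. This requires knowing that rigidity of $\wh T$ against a \emph{single} member $Y$ of an orbit forces rigidity against the \emph{entire} orbit $\wh X$. That is not automatic from the $R_+$-action commuting with $\tau$; it is the content of the paper's Lemma~\ref{lem:ClusHomCols}, which in turn rests on a propagation argument (Lemma~\ref{lem:ClusExact}) exploiting the multiplication rule $\croot_1\croot_j=\croot_{j-1}+\croot_{j+1}$ at the level of exact sequences. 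Without this lemma the equivalence in (a) is only half proved.

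\textbf{Part (c): the ``synchronisation'' of complements is the hard part.} Your argument removes a whole orbit $O$ from $\wh T$, leaving an object that is not almost complete in the classical sense (it is missing $|O|$ summands), and then asserts that the individual two-choice exchanges at the vertices of $O$ assemble into exactly two global choices. Non-adjacency of the vertices of $O$ ensures that the unfolded mutations commute, but it does \emph{not} by itself tell you that the $2^{|O|}$ a~priori combinations collapse to two, nor that the two complements land in a single $R_+$-orbit. The paper does not argue this way. Instead it removes one summand $Y\in\mathcal I_{X_1}$ from $\wh T\oplus\wh X_1$, takes its classical second complement, and uses Lemma~\ref{lem:ClusHomCols} again to show that this complement lies in $\mathcal I_{X_2}$ for a fixed $X_2\in\Gamma$ (giving ``at least two''). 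For ``at most two'' it runs a delicate argument with minimal $\add\wh T$-approximation triangles (Proposition~\ref{prop:approximations}): assuming a third complement $X_3$ leads, via these triangles, to a rigid object with too many summands. Your sketch contains neither of these ingredients, and I do not see how to complete it without them; the step ``I would then check that these assemble into exactly two $R_+$-complements'' is precisely where the work lies.
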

     We also note that for the two complements in (c) one obtains triangles between indecomposables that are $R_+$-generated by the complements (see Corollary~\ref{thm:RPApproximations}).

     Finally, we define the tropical seed patterns of folded quivers. We define $\cv$-vectors and $C$-matrices as in the integer case, and we consider two definitions of $\gv$-vectors (or $G$-matrices). The first definition goes along the results of~\cite{NZ} by defining $G$-matrix as the inverse of the transposed $C$-matrix. The second way to define $\gv$-vectors is to apply the projection with respect to the folding to $\gv$-vectors (corresponding to vertices with weight $1$) of the unfolded quiver. We then prove the following (see Section~\ref{section-cg} for details).

     \begin{thm}[Theorem~\ref{thm:Cube}, Lemma~\ref{lem:Roots}, Corollary~\ref{thm:SignCoherent}]
\label{thm}
       Let $F\colon Q^\gend \rightarrow Q^{\gend'}$ be a folding of quivers with $\gend' \in \{\coxH_3,\coxH_4, \coxI_2(2n+1)\}$. Then $\cv$-vectors of  $Q^{\gend'}$ are roots of $\gend'$ (and thus are sign-coherent), and the two definitions of $\gv$-vectors of $Q^{\gend'}$ coincide. Further, $C$-matrices of  $Q^{\gend'}$ can also be obtained by projection of $C$-matrices of  $Q^{\gend}$.
      \end{thm}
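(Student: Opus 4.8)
The plan is to deduce all three assertions from a single compatibility statement (Theorem~\ref{thm:Cube}): that tropical $C$-matrix mutation on the folded quiver $Q^{\gend'}$ is the image, under the folding projection $\pi$ of Theorem~\ref{thm:Folding}, of tropical $C$-matrix mutation on $Q^\gend$. To set this up, I would first recall that for the integer quiver $Q^\gend$ of finite type the $\cv$-vectors are honest roots of $\gend$: by the cluster-tilting theory of \cite{BMRRT} each $\cv$-vector is, up to an overall sign, the dimension vector of an indecomposable $KQ^\gend$-module, hence a root of $\gend$.

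The heart of the argument is the compatibility itself. A mutation of $Q^{\gend'}$ at a vertex $v$ lifts to the simultaneous mutation of $Q^\gend$ at the whole fibre $F^{-1}(v)$; since the vertices in a fibre are pairwise non-adjacent, these mutations commute, so the associated transformation of the integer $C$-matrix of $Q^\gend$ is unambiguous. I would then compare the two mutation rules termwise and check that $\pi$ carries the unfolded rule to the folded one, the only real input being that $\pi$ sends roots of $\gend$ to roots of $\gend'$ and respects the sign pattern recorded in Section~\ref{section-unf}. An induction on the length of a mutation sequence from the initial seed upgrades this to the statement that every $C$-matrix of $Q^{\gend'}$ equals $\pi$ applied to the corresponding $C$-matrix of $Q^\gend$, which is precisely the last claim of the theorem.

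From this, Lemma~\ref{lem:Roots} and Corollary~\ref{thm:SignCoherent} are immediate. Each $\cv$-vector of $Q^{\gend'}$ is the $\pi$-image of a $\cv$-vector of $Q^\gend$; the latter is a root of $\gend$, and $\pi$ takes roots of $\gend$ to roots of $\gend'$ by Theorem~\ref{thm:Folding}(a), so every $\cv$-vector of $Q^{\gend'}$ is a root of $\gend'$. Sign-coherence is then automatic, since any root of a finite root system is either a non-negative or a non-positive integer combination of the simple roots.

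It remains to reconcile the two definitions of $\gv$-vectors, and this is where I expect the main obstacle. On the unfolded side the tropical duality of Nakanishi--Zelevinsky \cite{NZ} gives genuinely $G_t = (C_t^{-1})^{\mathsf T}$ at every seed $t$. The first (NZ-style) definition of the folded $G$-matrix is $\bigl((\pi(C_t))^{-1}\bigr)^{\mathsf T}$, while the second definition is $\pi^\vee(G_t)$, where $\pi^\vee$ is the projection used for $\gv$-vectors, i.e.\ the weight-side, contragredient counterpart of $\pi$. Coincidence of the two thus amounts to the intertwining identity $\pi^\vee\bigl((C_t^{-1})^{\mathsf T}\bigr) = \bigl((\pi(C_t))^{-1}\bigr)^{\mathsf T}$. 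This is delicate precisely because projection and matrix inversion do not commute in general; the point is that $\pi$ and $\pi^\vee$ are contragredient with respect to the symmetrizable form attached to the folding, so that the identity collapses to the tropical duality on the unfolded side read off along the fibres of $F$. I would finish by verifying this intertwining explicitly for each of the three foldings $\coxE_8\to\coxH_4$, $\coxD_6\to\coxH_3$ and $\coxA_{2n}\to\coxI_2(2n+1)$ using the weights of Section~\ref{section-unf}.
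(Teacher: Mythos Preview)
Your overall architecture---prove the $C$-matrix compatibility first, then deduce everything else---matches the paper. But two steps in your outline are genuinely gapped.

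\medskip

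\textbf{The root claim.} You write that ``$\pi$ takes roots of $\gend$ to roots of $\gend'$ by Theorem~\ref{thm:Folding}(a)''. It does not. The projection $d_F$ sends a root of $\gend$ to a root of $\gend'$ \emph{or} to a $\chebsr\ps{2n+1}$-multiple thereof (e.g.\ to $\varphi\alpha$ in the $\coxH$ cases); Theorem~\ref{thm:Folding}(a) only covers indecomposables in weight-$1$ rows. So knowing that the unfolded $\cv$-vector $\ccv_{j,\wh t}$ is a root of $\gend$ does not by itself force $d_F(\ccv_{j,\wh t})$ to be a root of $\gend'$, even when $j$ has weight $1$. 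The paper fills this gap via Proposition~\ref{prop:CBlockMutation}: each block of $C_{\wh t}$ is $\rho(r)$ for some $r\in\chebr\ps{2n+1}$, where $\rho$ is the regular representation. From this block structure, Corollary~\ref{thm:SignCoherent} argues by contradiction: if a weight-$1$ column projected to a non-root $\croot_k\alpha$ with $k>0$, the other columns of the same block would be forced to be $\pm$dimension vectors of decomposable objects, contradicting the fact that $\cv$-vectors of $Q^\gend$ are roots. Your termwise comparison of mutation rules does not produce this block-structure statement, and without it the root claim does not follow.

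\medskip

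\textbf{The $G$-matrix intertwining.} Your proposed resolution---introduce a contragredient projection $\pi^\vee$ and appeal to duality---is off target. In the paper's cube all four oblique maps are the \emph{same} $\mathbf{d}_F$; there is no separate weight-side projection, because the folded exchange matrix is already skew-symmetric. The reason $\mathbf{d}_F$ commutes with $({-}^T)^{-1}$ is purely ring-theoretic (Proposition~\ref{prop:TBSquares}): since every block of $C_{\wh t}$ is $\rho(r_{[i][j]})$ and the blocks commute, the inverse of $C_{\wh t}^T$ can be computed block-wise as a polynomial in these $\rho(r_{[i][j]})$; applying $\mathbf{d}_F$ is then just applying the ring homomorphism $\sigma\ps{2n+1}$ entrywise, which commutes with that polynomial expression. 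A contragredience argument would not give this; the regular-representation block structure is the essential mechanism you are missing in both halves of the proof.
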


      We note that Theorem~\ref{thm} provides a categorical interpretation of $\gv$-vectors and $G$-matrices of $Q^{\gend'}$, which we describe in Section~\ref{section-cat-g}. Theorem~\ref{thm} can also be used to provide an explicit construction of non-crystallographic generalized associahedra, see~\cite{FTY}.  

\medskip

      The paper is organised as follows. In Section~\ref{section-basics} we recall the basics about mutations of quivers and exchange matrices. In Section~\ref{section-unf}, we describe the projections of root systems we will use throughout the paper, remind the classical definition of unfolding, and define a generalisation called weighted unfolding. Section~\ref{cheb} is devoted to the basic properties of Chebyshev polynomials of second kind. In Section~\ref{section-proj}, we construct the projections of module and bounded derived categories induced by the (un)foldings of quivers. Section~\ref{section-action} is devoted to the description of the semiring action on the module and bounded derived categories. In Section~\ref{section-mutations}, we extend the semiring action to the cluster categories of the unfolded quivers, thus providing a categorification of mutations of the folded quivers. Finally, Section~\ref{section-cg} is devoted to the construction of the tropical seed pattern, and to the compatibility of the projections and mutations. 

      \subsection*{Acknowledgements}
 We would like to thank Anna Felikson and Pierre-Guy Plamondon for helpful discussions.  A substantial part of the paper was written at the Isaac Newton Institute for Mathematical Sciences, Cambridge; we are grateful to the organisers of the programme “Cluster algebras and representation theory”, and to the Institute for support and hospitality during the programme; this work was supported by EPSRC grant no EP/R014604/1.    

	\section{Exchange matrices, quivers and their mutations}
\label{section-basics}
In the theory of cluster algebras and mutations of quivers, one classically has an integer skew-symmetric (or more generally, skew-symmetrizable) matrix, called the \emph{exchange matrix}. The exchange matrix can equivalently be viewed as an integer-weighted (valued) quiver. One can then perform \emph{mutations} on the exchange matrix (or quiver) to obtain a new exchange matrix (resp. quiver). It is easy to see that exactly the same mutation rule can be applied to any skew-symmetrizable matrix with entries from any totally ordered ring. Throughout this section, we will assume that $R$ is a totally ordered ring. We also do not consider \emph{`frozen vertices'} in our quiver, and thus all exchange matrices in this paper are assumed to be square ones. 

Throughout the paper, $Q$ is a quiver with vertex set $Q_0$, arrow set $Q_1$.


\begin{defn}
 By an \emph{exchange matrix over $R$} we mean a  skew-symmetric matrix with entries in $R$. We will abuse notation by using the notion {\em exchange matrix} instead if there is no ambiguity what the ring $R$ is. 
\end{defn}

\begin{defn}
	Let $Q_0$ be a set of indices and let $B=(b_{ij})_{i,j\in Q_0}$ be an exchange matrix over $R$. A \emph{mutation of $B$ at an index $k \in Q_0$} is an exchange matrix $\mu_k(B)=(b'_{ij})_{i,j \in Q_0}$ whose entries are given by the \emph{mutation formula}
	\begin{equation*}
		b'_{ij}=
		\begin{cases}
			-b_{ij}													&	\text{if } i=k \text{ or } j=k \\
			b_{ij}+\frac{|b_{ik}|b_{kj} + b_{ik}|b_{kj}|}{2}	& \text{otherwise}.
		\end{cases}
	\end{equation*}
\end{defn}

\begin{rem}
	The mutation formula above is precisely the same formula used in the classical setting of mutations of integer exchange matrices~\cite{FominZelevinskyI}.
\end{rem}

\begin{defn}
	An \emph{$R$-quiver} is a quiver $Q$ that has at most one arrow between distinct vertices, and each such arrow has a strictly positive $R$-weight. Specifically, $Q$ is a tuple $(Q_0,Q_1,\aw)$ such that the following holds:
	\begin{enumerate}[label=(R\arabic*)]
		\item $(Q_0,Q_1)$ is a quiver without loops and 2-cycles.
		\item For any $a\colon i \rightarrow j$ and $a'\colon i' \rightarrow j'$ in $Q_1$ such that $a \neq a'$, we have $i \neq i'$ or $j \neq j'$.
		\item $\aw\colon Q_1 \rightarrow R_{>0}$ is a function mapping each arrow to its strictly positive weight.
	\end{enumerate}
\end{defn}

Exchange matrices over $R$ are in bijective correspondence with $R$-quivers. Given an exchange matrix $B=(b_{ij})_{i,j\in Q_0}$, one obtains an $R$-quiver $Q^B$ in the following way. The set of vertices $Q_0^B$ of $Q^B$ is precisely the index set $Q_0$ of $B$. There exists an arrow $a\colon i \rightarrow j$ in $Q_1^B$ if and only if $b_{ij} > 0_R$. Moreover, each arrow $a\colon i \rightarrow j$ in $Q_1^B$ has weight $\aw(a)=b_{ij}$. 

\begin{defn}
	Let $Q^B$ be an $R$-quiver and let $B$ be its corresponding exchange matrix. A mutation of $Q^B$ at the vertex $k \in Q_0^B$ is the $R$-quiver $Q^{\mu_k(B)}$ corresponding to $\mu_k(B)$.
\end{defn}

Since mutation is an involution, one can define {\em mutation classes} as sets of quivers that can be obtained from each other by iterated mutations. A quiver is called {\em mutation-finite} if its mutation class is finite.    

\section{Folding and unfolding}
\label{section-unf}
In this section, we define weighted foldings and weighted unfoldings of skew-symmetrizable matrices, and then apply it to quivers constructed from root systems of finite non-crystallographic Coxeter groups. 

\subsection{Unfolding a skew-symmetrizable matrix} \label{unf}
Here we define a notion of a \emph{weighted unfolding} of a skew-symmetrizable matrix, and show how it relates to the notion of unfolding due to A. Zelevinsky (see \cite{FST2} and \cite{FST3} for details).

\subsubsection{Rescaling}
\label{rescaling}
We will first need the definition of a \emph{rescaling} of a skew-symmetrizable matrix introduced by N.~Reading in~\cite{Reading}.

\begin{defn}
  \label{rescaling-def}
  Let $B$ and $\wt B$ be skew-symmetrizable matrices. Then $\wt B$ is a \emph{rescaling} of $B$ if there exists a diagonal matrix $P$ with positive entries $p_i$ such that $\wt B=P^{-1}BP$. In particular, the $ij-$entry of $\wt B$ is $p_j/p_i$ times the $ij$-entry of $B$.
\end{defn}

\begin{exam} \label{rescaling-ex}
	 The matrix $\wt B$ below is a rescaling of the matrix $B$. Here the rescaling matrix $P$ has diagonal entries $(1,1,1/\sqrt{2},1/\sqrt{2})$.
	\begin{equation*}
		\wt B=P^{-1}BP=\begin{pmatrix}
			0&-1&0&0\\
			1&0&-1&0\\
			0&2&0&-1\\
			0&0&1&0
		\end{pmatrix}
		\qquad\qquad
		B=\begin{pmatrix}
			0&-1&0&0\\
			1&0&-\sqrt{2}&0\\
			0&\sqrt{2}&0&-1\\
			0&0&1&0
		\end{pmatrix}
	\end{equation*}
\end{exam}

\begin{rem}
Every skew-symmetrizable matrix has a skew-symmetric rescaling: if $B$ is skew-symmetrizable and $D$ is a diagonal matrix such that $BD$ is skew-symmetric, then it is easy to check that the matrix $\sqrt{D^{-1}}B\sqrt{D}$ is skew-symmetric.
  \end{rem}

It was shown in~\cite{Reading} that rescaling commutes with mutations. Therefore, we can associate any skew-symmetrizable exchange matrix with its skew-symmetric rescaling, and thus with the corresponding quiver (with real weights).

\subsubsection{Weighted unfoldings}
We will now give the definition of a weighted unfolding. Throughout this subsection, we let $S$ be an $m \times m$ skew-symmetric matrix and $B$ be an indecomposable $n \times n$ skew-symmetrizable matrix with real entries.

\begin{defn} \label{defn:Unfolding}
	Suppose that there exist disjoint index sets $E_1,\ldots,E_n$ of the rows/columns of $S$ such that $m=\sum_{i=1}^n |E_i|$. We say that the pair of matrices $(B,S)$ is an \emph{origami pair} if
	\begin{enumerate}[label=(\arabic*)]
		\item the sum of entries in each column of each $E_i \times E_j$ block of $S$ equals $b_{ij}$;
		\item if $b_{ij} \geq 0$ then the $E_i \times E_j$ block of $S$ has all entries non-negative.
	\end{enumerate}
	
	Given an origami pair $(B,S)$, we define a \emph{composite mutation} $\h\mu_i = \prod_{\hat\imath \in E_i} \mu_{\hat\imath}$ on $S$. This mutation is well-defined, since all the mutations $\mu_{\hat\imath}$, $\hat\imath\in E_i$, for given $i$ commute (as it follows from (1) and (2) that every $E_i\times E_i$ block is a zero submatrix).
	
	Suppose that there exists some rescaling $P\inv B P$ of $B$, and a diagonal $m \times m$ matrix $W = (w_i)$ with positive entries (called \emph{weights}) such that the pair $(P\inv B P, WSW\inv)$ is origami. We say that $S$ is a \emph{weighted unfolding} of $B$ \emph{with weights $w_i$} if for any sequence of iterated mutations $\mu_{k_1}\ldots\mu_{k_l}$ of $B$, the pair
	\begin{equation*}
		(P^{-1}\mu_{k_1}\dots\mu_{k_l}(B)P, W\h\mu_{k_1}\dots\h\mu_{k_l}(S)W^{-1})
	\end{equation*}
	is also origami.
\end{defn}

\begin{rem}
	Note that the elements in any given index set $E_i$ need not be indices of consecutive rows/columns. See Examples~\ref{I5} and \ref{G2}.
\end{rem}

From Definition~\ref{defn:Unfolding}, one can recover the classical notion of unfolding an indecomposable $n \times n$ integer skew-symmetrizable matrix $B$ (due to A. Zelevinsky) in the following way. First, let $BD$ be a skew-symmetric matrix, where $D=(d_{i})$ is a diagonal integer matrix with positive diagonal entries. 
Then an $m \times m$ skew-symmetric integer matrix $S$ is an (integer) unfolding of $B$ if $S$ is a weighted unfolding of $B$ with all weights equal to 1, and if the index sets are such that $|E_i|=d_i$.

\begin{exam} \label{unf-ex}
	The matrix $S$ below is an integer unfolding of the matrix $B$. Here $d_1=d_2=1$, $d_3=d_4=2$, $E_1=\{1\}$, $E_2=\{2\}$, $E_3=\{3,4\}$, $E_2=\{5,6\}$.
	\begin{equation*}
		B=\begin{pmatrix}
			0&-1&0&0\\
			1&0&-1&0\\
			0&2&0&-1\\
			0&0&1&0
		\end{pmatrix}
		\qquad\qquad
		S=\begin{pmatrix}
			0&-1&0&0&0&0\\
			1&0&-1&-1&0&0\\
			0&1&0&0&-1&0\\
			0&1&0&0&0&-1\\
			0&0&1&0&0&0\\
			0&0&0&1&0&0
		\end{pmatrix}
	\end{equation*}
	The matrix $B$ defines a cluster algebra of type $\coxF_4$, and $S$ is of type $\coxE_6$. The matrix $S$ is also an unfolding of the rescaled matrix $B$ from Example~\ref{rescaling-ex}.
\end{exam}

\begin{exam} \label{I5}
	Denote $\gratio=2\cos(\pi/5)$, and consider the following two matrices
	\begin{equation*}
		B=\begin{pmatrix}
			0&-\gratio\\
			\gratio&0
		\end{pmatrix}
		\qquad\qquad
		S=\begin{pmatrix}
			0&-1&0&0\\
			1&0&1&0\\
			0&-1&0&-1\\
			0&0&1&0
		\end{pmatrix}
	\end{equation*}
	It is easy to check that $S$ is a weighted unfolding of $B$ with weights $(1,\gratio,\gratio,1)$, where $E_1=\{1,3\}$, $E_2=\{2,4\}$ and $P=\mathrm{Id}$. We can observe that the matrix $S$ is of type $\coxA_4$, and it is natural to say that the matrix $B$ is of type $H_2=\coxI_2(5)$.
\end{exam}

\begin{exam} \label{G2}
	Consider the following two matrices
	\begin{equation*}
		B=\begin{pmatrix}
			0&-\sqrt{3}\\
			\sqrt{3}&0
		\end{pmatrix}
		\qquad\qquad
		S=\begin{pmatrix}
			0&-1&0&0&0\\
			1&0&1&0&0\\
			0&-1&0&-1&0\\
			0&0&1&0&1\\
			0&0&0&-1&0
		\end{pmatrix}
	\end{equation*}
	Again, it is easy to see that $S$ is a weighted unfolding of $B$ with weights $(1,\sqrt{3},2,\sqrt{3},1)$, where $E_1=\{2,4\}$, $E_2=\{1,3,5\}$ and $P=\mathrm{Id}$. Here, $S$ is of type $\coxA_5$, and $B$ is of type $\coxI_2(6)$ (which is a rescaling of the Dynkin type $\dynG_2$).
\end{exam}

Henceforth, we will abuse notation by omitting the word ``weighted'' throughout the paper and specify the weights instead.

Examples~\ref{I5} and~\ref{G2} fit into a series of unfoldings: for every dihedral group $\coxI_2(k+1)$ there is an unfolding of the corresponding matrix to $\coxA_{k}$ with weights
\begin{equation*}
	w_i=U_{i-1}\left(\cos\frac{\pi}{k+1}\right)
\end{equation*}
and $P=\mathrm{Id}$, where $U_i$ is the $i$-th Chebyshev polynomial of the second kind (see Section~\ref{cheb}).

Example~\ref{I5} also gives rise to unfoldings of matrices corresponding to groups $\coxH_3$ and $\coxH_4$ to $\coxD_6$ and $\coxE_8$ respectively, see Fig.~\ref{H} for the weights (the unfolding blocks are composed of two vertices each, given by vertices in the same column).

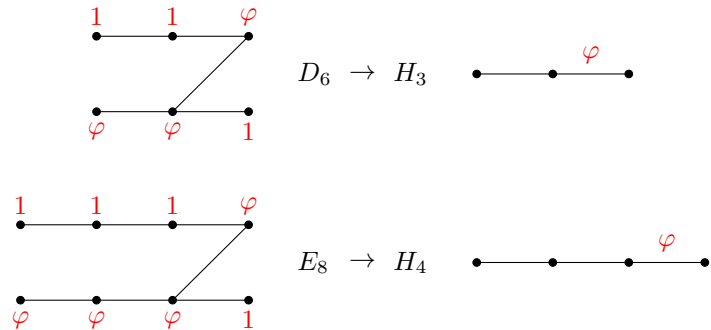
\begin{figure}[t]
	\begin{tikzpicture}
	\draw [fill=black](3,2.5) ellipse (0.05 and 0.05);
	\draw [fill=black](4,2.5) ellipse (0.05 and 0.05);
	\draw [fill=black](5,2.5) ellipse (0.05 and 0.05);
	\draw [fill=black](3,1.5) ellipse (0.05 and 0.05);
	\draw [fill=black](4,1.5) ellipse (0.05 and 0.05);
	\draw [fill=black](5,1.5) ellipse (0.05 and 0.05);
	\draw (3,2.5) -- (5,2.5) -- (4,1.5) -- (5,1.5);
	\draw (3,1.5) -- (4,1.5);
	\draw [red,anchor=south] (3,2.5) node {\footnotesize$1$};
	\draw [red,anchor=south] (4,2.5) node {\footnotesize$1$};
	\draw [red,anchor=south] (5,2.5) node {\footnotesize$\varphi$};
	\draw [red,anchor=north] (3,1.5) node {\footnotesize$\varphi$};
	\draw [red,anchor=north] (4,1.5) node {\footnotesize$\varphi$};
	\draw [red,anchor=north] (5,1.5) node {\footnotesize$1$};
	
	\draw [anchor=west] (5.5,2) node {\footnotesize$D_6$};
	\draw (6.5,2) node {\footnotesize$\rightarrow$};
	\draw [anchor=east] (7.5,2) node {\footnotesize$H_3$};
	
	\draw [fill=black](8,2) ellipse (0.05 and 0.05);
	\draw [fill=black](9,2) ellipse (0.05 and 0.05);
	\draw [fill=black](10,2) ellipse (0.05 and 0.05);
	\draw (8,2) -- (10,2);
	\draw[red,anchor=south] (9.5,2) node {\footnotesize$\gratio$};
	
	\draw [fill=black](2,0) ellipse (0.05 and 0.05);
	\draw [fill=black](3,0) ellipse (0.05 and 0.05);
	\draw [fill=black](4,0) ellipse (0.05 and 0.05);
	\draw [fill=black](5,0) ellipse (0.05 and 0.05);
	\draw [fill=black](2,-1) ellipse (0.05 and 0.05);
	\draw [fill=black](3,-1) ellipse (0.05 and 0.05);
	\draw [fill=black](4,-1) ellipse (0.05 and 0.05);
	\draw [fill=black](5,-1) ellipse (0.05 and 0.05);
	\draw (2,0) -- (5,0) -- (4,-1) -- (5,-1);
	\draw (2,-1) -- (4,-1);
	\draw [red,anchor=south] (2,0) node {\footnotesize$1$};
	\draw [red,anchor=south] (3,0) node {\footnotesize$1$};
	\draw [red,anchor=south] (4,0) node {\footnotesize$1$};
	\draw [red,anchor=south] (5,0) node {\footnotesize$\varphi$};
	\draw [red,anchor=north] (2,-1) node {\footnotesize$\varphi$};
	\draw [red,anchor=north] (3,-1) node {\footnotesize$\varphi$};
	\draw [red,anchor=north] (4,-1) node {\footnotesize$\varphi$};
	\draw [red,anchor=north] (5,-1) node {\footnotesize$1$};
	
	\draw [anchor=west] (5.5,-0.5) node {\footnotesize$E_8$};
	\draw (6.5,-0.5) node {\footnotesize$\rightarrow$};
	\draw [anchor=east] (7.5,-0.5) node {\footnotesize$H_4$};
	
	\draw [fill=black](8,-0.5) ellipse (0.05 and 0.05);
	\draw [fill=black](9,-0.5) ellipse (0.05 and 0.05);
	\draw [fill=black](10,-0.5) ellipse (0.05 and 0.05);
	\draw [fill=black](11,-0.5) ellipse (0.05 and 0.05);
	\draw (8,-0.5) -- (11,-0.5);
	\draw[red, anchor = south] (10.5,-0.5) node {\footnotesize$\gratio$};
\end{tikzpicture}
	\caption{Foldings of quivers $Q^{\coxD_6} \rightarrow Q^{\coxH_3}$ and $Q^{\coxE_8} \rightarrow Q^{\coxH_4}$. Weights are labelled next to the appropriate vertices or arrows. Each collection of edges that appear in the same column in the domain of the folding are either all pointing left or all pointing right, and these arrows are mapped to an arrow in the codomain with the same orientation.} \label{H}
\end{figure}

\subsection{Weighted foldings of quivers}
The unfoldings described above are consistent with foldings/projections of root systems of Coxeter groups. The construction goes back to Lusztig~\cite{Lusztig} who noticed the embedding of the root system of the Coxeter group of type $\coxH_4$ into the root system of the Coxeter group of type $\coxE_8$ in the context of admissible maps of Coxeter groups. An embedding of the root system of the dihedral group $I_2(n)$ into the root systems of the Coxeter group of type $\coxA_{n-1}$ was observed in~\cite{WaveFrontsReflGroups,Muhlherr}. These constructions have later been further developed and extensively used in various contexts (see~\cite{WaveFrontsReflGroups,MoodyPatera,Dyer,Lanini}). 

In this section, we formalise the notion of folding in the context of weighted quivers, of which the projection above features as an example. To formally define foldings onto $R$-quivers, we need the notion of a vertex-weighted quiver.

\begin{defn} \label{defn:RVWQ}
  An \emph{$R$-vertex-weighted quiver} $Q$ is a tuple $(Q_0,Q_1,\vw)$ such that $(Q_0,Q_1)$ is a quiver and $\vw\colon Q_0 \rightarrow R$ is a function mapping each vertex to its weight.
\end{defn}

The above definition allows us to define the dual notion of weighted unfolding of exchange matrices --- the folding of weighted quivers.

\begin{defn} \label{defn:RQfolding}
	Let $Q^B$ be an $R$-quiver and $Q^S$ be an $R$-vertex-weighted quiver. We call a morphism of quivers $F\colon Q^S \rightarrow Q^B$ a \emph{weighted folding of quivers} if the following holds.
	\begin{enumerate}[label=(\roman*)]
		\item $Q^B$ is the $R$-quiver of an $n \times n$ exchange matrix $B$ over $R$,
		\item $Q^S$ is the quiver of an $m \times m$ integer exchange matrix $S$,
		\item $S$ is a weighted unfolding of $B$ with weight matrix $W=(w_i)$ such that each $w_i \in R$, and the blocks of $S$ are given by index sets $E_1,\ldots, E_n$,
		\item $\vw(i)=w_i$ for each $i \in Q_0^S$,
		\item $F$ is a surjective morphism such that for any $j \in Q_0^B$, we have $F(i) = j$ for all $i \in E_j$.
	\end{enumerate}
	We call $Q^B$ a \emph{(weighted) folding} of $Q^S$ if there exists a folding of weighted quivers $F\colon Q^S \rightarrow Q^B$.
\end{defn}

Henceforth, for each $i \in Q^S_0$, we will write $[i] \in Q^B_0$ as the vertex such that $F(i) = [i]$, and thus, $[i]=[j]$ for any $i,j \in Q^S_0$ such that $F(i)=F(j)$. The exchange $\integer$-matrix $S$ then has the structure of a block matrix $(S_{[i][j]})_{[i],[j] \in Q^B_0}$. Given a simply-laced Dynkin diagram of type $\gend$, we mean by $Q^\gend$ a quiver whose underlying graph is of shape $\gend$. Similarly, given a Coxeter diagram $\gend'$, we mean by $Q^{\gend'}$ an $R$-quiver whose underlying graph is of shape $\gend'$ and whose arrows are weighted by $\aw(a)=2\cos\theta_{\alpha}$ for each $a \in Q^{\gend'}_1$, where $\theta_{\alpha}$ is the dihedral angle of the edge $\alpha \in \gend'$ that corresponds to the arrow $a \in Q^{\gend'}_1$.

\begin{exam}
	Figure~\ref{H} shows the foldings onto $\coxH$-type quivers. In terms of projections of root systems, the simple roots of the root system $\coxE_8$ with prescribed weights $1$ are mapped to simple roots of the root system $\coxH_4$, and the simple roots from the same unfolding blocks with weights $\gratio$ are mapped to $\gratio$-multiples of the corresponding  simple roots of $\coxH_4$. Extending this map by linearity results in a bijection between the roots of $\coxE_8$ and roots of $\coxH_4$ together with their $\gratio$-multiples. The projection of $\coxD_6$ onto $\coxH_3$ can be obtained by considering the natural embedding of the root system of type $\coxD_6$ into $\coxE_8$.
	
	The construction can also be extended to the projections of $\coxA_k$ onto $\coxI_{2}(k+1)$ by mapping each simple root from one of the unfolding blocks to $w_i\e^{\imunit k\pi/(k+1)}$, and each simple root from the other blocks to $w_i$. If $k$ is even, then extending this by linearity results in a bijective map to the union $\bigcup_{i=0}^{(k-2)/2}w _i \rootsys$, where $\rootsys$ is the root system of $\coxI_{2}(k+1)$ (cf. Lemma~\ref{lem:IRootsOfUnity}).

        If $k$ is odd, then the projection is a bit more subtle. Extending the map above by linearity, we obtain $\bigcup\limits_{i+j\,\text{even}}w _{i}\e^{\imunit \pi j/(k+1)}$. If we consider the roots in the root system of type $I_2(k+1)$ to be of two distinct lengths $1$ (short roots) and $2\cos(\pi/(k+1))$ (long roots), then the result of the projection can be rewritten as the union of $\bigcup\limits_{0\le i\le(k-1)/4}w_{2i} \rootsys_s$ and $\bigcup\limits_{0\le i\le(k-3)/4}(w_{2i+1}/w_1) \rootsys_l$, where  $\rootsys_s$ and $\rootsys_l$ denote the set of short and long roots respectively.
      \end{exam}

      \begin{rem} \label{rem:IFolding}
	For (un)foldings of type $\coxI_2(k+1)$, we will focus mainly on the case where $k$ is even, and we will investigate the case where $k$ is odd in a forthcoming paper. Thus for this paper, we will work with foldings $F\colon Q^{\coxA_{2n}} \rightarrow Q^{\coxI_{2}(2n+1)}$, where $n\geq 2$ and $Q^{\coxA_{2n}}$ is the bipartite quiver
	\begin{equation*}
		\xymatrix@1{0 \ar[r] & 1 & \ar[l] 2 \ar[r] & 3 & \ar[l] \cdots \ar[r] & 2n-1}
	\end{equation*}
	(or its opposite), with $\vw(i) = U_i(\cos\frac{\pi}{2n+1})$ for $i\in Q^{\coxA_{2n}}_0$. On the other hand, $Q^{\coxI_2(2n+1)}$ is the $\integer\left[2\cos\frac{\pi}{2n+1}\right]$-quiver 
	\begin{equation*}
		\xymatrix@1{[0] \ar[rr]^-{2 \cos \frac{\pi}{2n+1}} && [1]}
	\end{equation*}
	(or its opposite). In particular, $F(i)=F(j)$ if and only if $i$ and $j$ are either both even or both odd.
\end{rem}

\section{Chebyshev polynomials of the second kind}
\label{cheb}
As highlighted from the last section, Chebyshev polynomials of the second kind are central to the paper. Thus, we will briefly review them here and define some technical tools based on their properties.

\subsection{Definitions and basic properties}

The $k$-th Chebyshev polynomial of the second kind is the polynomial $U_k(x)$ that satisfies the relation
\begin{equation*}
	U_{k}(\cos \alpha) \sin \alpha = \sin (k+1) \alpha.
\end{equation*}
It is a well-known fact that the polynomial $U_{k}(x)$ is a degree $k$ polynomial with integer coefficients. In the case where $x = \cos \frac{\pi}{n+1}$, the Chebyshev polynomials of the second kind satisfy nice symmetry properties, which are also well-known.

\begin{lem} \label{lem:ChebProps}
	Define a sequence $(\theta_k)_{k \in \nnint}$ by $\theta_k=U_k(\cos \frac{\pi}{n+1})$. Then
	\begin{enumerate}[label=(\alph*)]
		\item $\theta_1 = 2 \cos \frac{\pi}{n+1}$,
		\item $\theta_n=0$,
		\item $\theta_k=\theta_{n-1-k}$,
		\item $\theta_{n+k}=-\theta_{n-k}$ for $k\leq n$,
		\item $\theta_k \theta_l = \sum_{j=0}^l \theta_{k-l+2j}$ for $k\geq l$.
		\item $\theta_k > 1$ for $0 < k < n-1$.
	\end{enumerate}
\end{lem}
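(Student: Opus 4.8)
The plan is to prove everything from the defining relation $U_k(\cos\alpha)\sin\alpha = \sin(k+1)\alpha$ specialized at $\alpha = \pi/(n+1)$, so that $\theta_k\sin\frac{\pi}{n+1} = \sin\frac{(k+1)\pi}{n+1}$. Since $\sin\frac{\pi}{n+1}\neq 0$, this gives the closed form
\[
\theta_k = \frac{\sin\frac{(k+1)\pi}{n+1}}{\sin\frac{\pi}{n+1}},
\]
and nearly all parts reduce to elementary trigonometric identities for the sine function. First I would establish (a) directly: $\theta_1 = \sin\frac{2\pi}{n+1}/\sin\frac{\pi}{n+1} = 2\cos\frac{\pi}{n+1}$ by the double-angle formula. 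Part (b) is immediate since $\theta_n = \sin\pi/\sin\frac{\pi}{n+1} = 0$.

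For (c) and (d) I would use the reflection symmetries of sine. For (c), $\theta_{n-1-k} = \sin\frac{(n-k)\pi}{n+1}/\sin\frac{\pi}{n+1}$, and since $\sin\frac{(n-k)\pi}{n+1} = \sin\left(\pi - \frac{(k+1)\pi}{n+1}\right) = \sin\frac{(k+1)\pi}{n+1}$, equality with $\theta_k$ follows. For (d), with $k\le n$, I would compute $\theta_{n+k} = \sin\frac{(n+k+1)\pi}{n+1}/\sin\frac{\pi}{n+1}$ and use $\sin\left(\pi + \frac{k\pi}{n+1}\right) = -\sin\frac{k\pi}{n+1}$ together with the identification $\theta_{n-k} = \sin\frac{(n-k+1)\pi}{n+1}/\sin\frac{\pi}{n+1}$ and the reflection $\sin\frac{(n-k+1)\pi}{n+1} = \sin\frac{k\pi}{n+1}$, matching the two expressions up to sign. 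Part (e) is the product-to-sum identity: writing each $\theta$ as a ratio of sines and applying $2\sin A\sin B = \cos(A-B)-\cos(A+B)$ telescopes the right-hand sum $\sum_{j=0}^\ell \theta_{k-\ell+2j}$ into $\theta_k\theta_\ell$; alternatively one can prove it by induction on $\ell$ using the three-term recurrence $\theta_{k+1} = \theta_1\theta_k - \theta_{k-1}$ that $U_k$ satisfies, which may be cleaner to write.

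The only part needing genuine (though still elementary) care is (f), the strict inequality $\theta_k > 1$ for $0 < k < n-1$. Here the closed form gives $\theta_k = \sin\frac{(k+1)\pi}{n+1}/\sin\frac{\pi}{n+1}$, and I would argue that for $1\le k\le n-2$ the angle $\frac{(k+1)\pi}{n+1}$ lies in $\left(\frac{\pi}{n+1}, \frac{(n-1)\pi}{n+1}\right)$, on which $\sin$ exceeds $\sin\frac{\pi}{n+1}$ strictly: the endpoints $\frac{\pi}{n+1}$ and $\frac{n\pi}{n+1}=\pi-\frac{\pi}{n+1}$ both give sine equal to $\sin\frac{\pi}{n+1}$, and by concavity (or monotonicity on each half) $\sin$ is strictly larger in the open interior. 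One must check that the relevant angles $\frac{(k+1)\pi}{n+1}$ stay strictly inside $(\frac{\pi}{n+1},\pi-\frac{\pi}{n+1})$, i.e. that $2\le k+1\le n-1$, which is exactly the hypothesis $0<k<n-1$; this is where the precise range matters and is the main point to get right. I expect (f) to be the only real obstacle, since (a)--(e) are formal consequences of the sine closed form, whereas (f) requires locating the argument inside the correct monotonicity/concavity window.
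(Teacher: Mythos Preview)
Your proposal is correct and follows essentially the same approach as the paper: everything is derived from the closed form $\theta_k = \sin\frac{(k+1)\pi}{n+1}/\sin\frac{\pi}{n+1}$, and parts (a)--(d), (f) are handled by the same elementary sine identities you describe (the paper simply says these ``immediately follow from trigonometric identities'' without writing them out). The one stylistic difference is in (e): the paper sums the complex geometric series $\sum_{j=0}^{l} e^{i(k-l+2j+1)\pi/(n+1)}/\sin\frac{\pi}{n+1}$ and takes imaginary parts, whereas you propose the equivalent real-variable telescoping via $2\sin A\sin B = \cos(A-B)-\cos(A+B)$; these are the same computation in different clothing, and either is perfectly acceptable.
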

\begin{proof}
	All statements except for (e) immediately follow from trigonometric identities.
%
%
	The statement (e) follows from evaluating the imaginary part of the geometric series 
	\begin{align*}
		\sum_{j=0}^{l}\frac{\e^{\frac{\imunit(k-l+2j + 1)\pi}{n+1}}}{\sin\frac{\pi}{n+1}} &= \frac{\left(1-\e^{\frac{\imunit(2 l+1)\pi}{n+1}}\right) \e^{\frac{\imunit(k-l+1)\pi}{n+1}}}{\left(1-\e^{\frac{2\imunit \pi}{n+1}}\right) \sin\frac{\pi}{n+1}}
	=\frac{\left(\e^{\frac{\imunit(l+1)\pi}{n+1}} - \e^{\frac{-\imunit(l+1)\pi}{n+1}}\right) \e^{\frac{\imunit(k+1)\pi}{n+1}}}{\left(\e^{\frac{\imunit \pi}{n+1}} - \e^{\frac{-\imunit \pi}{n+1}}\right) \sin\frac{\pi}{n+1}} \\
	&= \frac{\sin\frac{(l+1)\pi}{n+1}}{\sin\frac{\pi}{n+1}} \frac{\e^{\frac{\imunit(k+1)\pi}{n+1}}}{\sin\frac{\pi}{n+1}}.
	\end{align*}
	
\end{proof}

Motivated by the properties above, we will define a class of rings and semirings that will be used extensively throughout the paper.

\begin{defn} \label{def:ChebSemiring}
	Let $n \geq 2$. Then we define the following family of commutative rings and semirings by
	\begin{align*}
          \chebr\ps{2n+1} &= \integer[\croot_1,\ldots,\croot_{n-1}] \\
          \chebsr\ps{2n+1} &= \nnint[\croot_1,\ldots,\croot_{n-1}] \\
	\wh\chebr\ps{2n+1} &= \integer\left[2 \cos \frac{\pi}{2n+1}\right]
	\end{align*}
	such that
	\begin{equation*}
		\croot_k \croot_l = \croot_l \croot_k = \sum_{j=0}^l \croot_{k-l+2j} \quad (k\geq l),
	\end{equation*}
	where the symbols $\croot_k$ with $k \not\in\{1,\ldots, n-1\}$ resulting from the above product are interpreted by the following axioms
	\begin{enumerate}[label=(\alph*)]
		\item $\croot_0=1$,
		\item $\croot_{k} = \croot_{2n-1-k}$ for $k < 2n$.
	\end{enumerate}
\end{defn}

One should notice that property (c) of Lemma~\ref{lem:ChebProps} corresponds to axiom (b) of Definition~\ref{def:ChebSemiring}, and that the multiplication rule in Definition~\ref{def:ChebSemiring} is just property (e) of Lemma~\ref{lem:ChebProps}.

\begin{rem} \label{rem:ChebBasis}
	As a result of the multiplication rule in Definition~\ref{def:ChebSemiring} and Lemma~\ref{lem:ChebProps}(e), we have a ring epimorphism
	\begin{align*}
		f\colon \integer[x] / (U_{n-1}(\tfrac{x}{2}) - U_{n}(\tfrac{x}{2}))  &\rightarrow \chebr\ps{2n+1}, \\
		\sum_{j=0}^{2n-1}a_j U_j(\tfrac{x}{2}) \mapsto \sum_{j=0}^{2n-1}a_j\croot_j.
	\end{align*}
	Note that we have $f(x)=\croot_1$ and one obtains axiom (b) of Definition~\ref{def:ChebSemiring} under $f$ via the relations
	\begin{align*}
		U_{n-1}(\tfrac{x}{2}) - U_{n}(\tfrac{x}{2}) &= 0 \\
		U_{1}(\tfrac{x}{2})(U_{n-1}(\tfrac{x}{2}) - U_{n}(\tfrac{x}{2})) &= U_{n-2}(\tfrac{x}{2}) + U_{n}(\tfrac{x}{2}) - U_{n-1}(\tfrac{x}{2}) - U_{n+1}(\tfrac{x}{2})\\ &= U_{n-2}(\tfrac{x}{2}) - U_{n+1}(\tfrac{x}{2}) = 0 \\
		&\vdots \\
		U_{n-1}(\tfrac{x}{2})(U_{n-1}(\tfrac{x}{2}) - U_{n}(\tfrac{x}{2})) &= U_{0}(\tfrac{x}{2}) - U_{2n-1}(\tfrac{x}{2}) =0
	\end{align*}
	in $\integer[x] / (U_{n-1}(\tfrac{x}{2}) - U_{n}(\tfrac{x}{2}))$. In particular, $\Ker f=0$ and thus we actually have an isomorphism
	\begin{equation*}
		\chebr\ps{2n+1} \cong \integer[x] / (U_{n-1}(\tfrac{x}{2}) - U_{n}(\tfrac{x}{2}))
	\end{equation*}
	A consequence of this isomorphism is that $\{\croot_0,\ldots,\croot_{n-1}\}$ is an integral basis for $\chebr\ps{2n+1}$, and that $\chebsr\ps{2n+1}$ is isomorphic to the subsemiring of the above quotient ring whose elements are non-negative sums of the polynomials $U_{i}(\frac{x}{2})$ with $0 \leq i \leq n-1$.
\end{rem}

\begin{rem} The reader may be wondering why we have decided not to work with, for example, the semiring
\begin{equation} \label{eq:AltChebsrDef} \tag{$\ast$}
	\nnint\left[U_1\left(\cos\frac{\pi}{2n+1}\right),\ldots,U_{n-1}\left(\cos\frac{\pi}{2n+1}\right)\right].
\end{equation}
The reason is due to technicalities later in the paper. We will later define an action of $\chebsr\ps{2n+1}$ on a category $\mod*KQ^{\coxA_{2n}}$, and for this action to be well-defined, we want to ensure that the generators $\croot_k$ are linearly independent for $k \leq 2n-1$. The problem occurs where for some values of $n$ we have
\begin{equation*}
	U_k\left(\cos\frac{\pi}{2n+1}\right) = \sum_{j < k} a_j U_j\left(\cos\frac{\pi}{2n+1}\right) \qquad a_j \in \nnint
\end{equation*}
This happens whenever the degree of the minimal polynomial of $\cos(\frac{\pi}{2n+1})$ is strictly less than $n$. For example, this happens in the case of $n=4$ where we want to have $\{\croot_0, \ldots, \croot_3\}$ as a basis for $\chebsr\ps{9}$, but we also have $U_3(\cos\frac{\pi}{9})=U_0(\cos\frac{\pi}{9})+U_1(\cos\frac{\pi}{9})$. On the other hand, if the minimal polynomial of $\cos(\frac{\pi}{2n+1})$ is of degree $n$, then we have $\chebr\ps{2n+1} \cong \wh{\chebr}_{2n+1}$ and we can write $\chebsr\ps{2n+1}$ as in (\ref{eq:AltChebsrDef}). For example, one can safely define $\chebsr\ps{5}=\nnint[\gratio]$, where $\gratio$ is the golden ratio.
\end{rem}

\begin{rem} \label{rem:ChebsrOrder}
	There exist homomorphisms of rings
	\begin{align*}
		\sigma\ps{2n+1}\colon \chebr\ps{2n+1} &\rightarrow \wh\chebr\ps{2n+1} \\ 
		\wh\sigma\ps{2n+1}\colon \wh\chebr\ps{2n+1} &\rightarrow \real
	\end{align*}	
	defined by $\sigma\ps{2n+1}(\croot_k) = U_k\left(\cos\frac{\pi}{2n+1}\right)$ and where $\wh\sigma\ps{2n+1}$ is the canonical embedding. In particular, $\sigma\ps{2n+1}$ is an epimorphism. We also have a restriction of $\sigma\ps{2n+1}$ to a homomorphism of semirings
	\begin{equation*}
		\srhom\ps{2n+1}\colon \chebsr\ps{2n+1} \rightarrow \wh\chebr\ps{2n+1}.
	\end{equation*}
	The ring $\real$ is totally ordered, which endows $\chebsr\ps{2n+1}$ with a partial ordering given by
	\begin{equation*}
		r \leq s \Leftrightarrow r = s \text{ or } \wh\sigma\ps{2n+1}\srhom\ps{2n+1}(r) <  \wh\sigma\ps{2n+1}\srhom\ps{2n+1}(s).
	\end{equation*}
\end{rem}

\subsection{Representations of $\chebr\ps{2n+1}$ and weighted (un)foldings}
Suppose $S$ is an exchange matrix over $\integer$ that is obtained by unfolding an exchange matrix $B$ over $\wh\chebr\ps{2n+1}$ of type $\coxH_4$, $\coxH_3$ or $\coxI_2(2n+1)$. In this case, the regular representation of the ring $\chebr\ps{2n+1}$ plays an important role with respect to the structure of $S$. We will see that the blocks of $S$ are the matrices that arise from the $\integer$-linear action of $\croot_1$ on the ring $\chebr\ps{2n+1}$. First let us investigate the regular representation of $\chebr\ps{2n+1}$.

\begin{defn} \label{def:RegRep}
	Consider the basis of $\chebr\ps{2n+1}$ given in Remark~\ref{rem:ChebBasis}. We denote by
	\begin{equation*}
		\rho\colon \chebr\ps{2n+1} \rightarrow \integer^{n \times n}
	\end{equation*}
	the regular representation of $\chebr\ps{2n+1}$ with respect to this basis. That is, $\rho$ is the ring homomorphism such that $\rho(\croot_i)v(r) = v(\croot_i r)$ for any $r \in \chebr\ps{2n+1}$, where $v(r) = (a_0,\ldots,a_{n-1}) \in \integer^n$ is the vector representing $r= \sum_{i=0}^{n-1} a_i \croot_i$. 
\end{defn}

\begin{lem} \label{lem:ChebRegRep}
	For any $i \in \{0,\ldots,n-1\}$, we have the following.
	\begin{enumerate}[label=(\alph*)]
		\item The entries of $\rho(\croot_i)$ are either $0$ or $1$. 
		\item $\rho(\croot_i)$ is symmetric.
	\end{enumerate}
\end{lem}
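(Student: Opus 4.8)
The plan is to work directly with the structure constants of $\chebr\ps{2n+1}$ in the basis $\{\croot_0,\ldots,\croot_{n-1}\}$ of Remark~\ref{rem:ChebBasis}. By Definition~\ref{def:RegRep}, the $(a,b)$-entry of $\rho(\croot_i)$ is the coefficient $c_{ib}^a$ of $\croot_a$ in the expansion of $\croot_i\croot_b$, so both assertions become statements about these coefficients: (a) says $c_{ib}^a\in\{0,1\}$, and (b) says $c_{ib}^a=c_{ia}^b$. First I would compute the product explicitly. Writing $m=\min(i,b)$ and $M=\max(i,b)$ and applying the multiplication rule of Definition~\ref{def:ChebSemiring}, one obtains $\croot_i\croot_b=\sum_{j=0}^{m}\croot_{M-m+2j}$, so the ``raw'' indices appearing are exactly $|i-b|,|i-b|+2,\ldots,i+b$: an arithmetic progression of step $2$, all of the same parity as $i+b$, with largest term $i+b\le 2n-2$.

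Each raw index $k$ must then be reduced into $\{0,\ldots,n-1\}$: indices with $k<n$ are already basis indices, while those with $n\le k\le 2n-2$ are rewritten by the single reflection $\croot_k=\croot_{2n-1-k}$ of axiom (c), which lands in $\{1,\ldots,n-1\}$ (axioms (a) and (b) are never triggered since $k\le 2n-2$). For part (a) I would then rule out collisions: if two distinct raw indices $k_1<k_2$ reduced to the same basis index, then, since the reflection is injective and fixes nothing, the only possibility is $k_1<n\le k_2$ with $k_1=2n-1-k_2$, forcing $k_1+k_2=2n-1$; but $k_1,k_2$ have the same parity, so $k_1+k_2$ is even, a contradiction. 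Hence every basis element occurs at most once, and $c_{ib}^a\in\{0,1\}$.

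For the symmetry (b) I would record a closed criterion for $c_{ib}^a=1$ coming from the reduction above: $\croot_a$ occurs \emph{directly} (as an unreduced raw index) precisely when $a\equiv i+b\pmod 2$ and $|i-b|\le a\le i+b$, and it occurs via \emph{reflection} precisely when $a\equiv i+b+1\pmod 2$ and $|i-b|\le 2n-1-a\le i+b$. The direct condition asserts that $\{a,i,b\}$ satisfies the triangle inequality together with the parity constraint $a+b+i\equiv 0\pmod 2$, both of which are \emph{totally symmetric} in $a,i,b$ and in particular invariant under the swap $a\leftrightarrow b$; so the direct contributions to $c_{ib}^a$ and $c_{ia}^b$ agree immediately. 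The remaining task is to show the reflected condition is likewise symmetric under $a\leftrightarrow b$.

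Rewriting the reflected inequalities as the triangle inequality on the triple $\{2n-1-a,\,i,\,b\}$ and comparing with the corresponding triple $\{2n-1-b,\,i,\,a\}$ attached to $c_{ia}^b$, the two extremal inequalities coincide in both cases ($2n-1\le a+b+i$ and $a+b-i\le 2n-1$), so the only potential asymmetry sits in the ``middle'' inequalities $a+i-b\le 2n-1$ versus $b+i-a\le 2n-1$. The key observation, which I expect to be the crux, is that these hold automatically for in-range indices: since $a,i\le n-1$ and $b\ge 0$ one has $a+i-b\le 2n-2<2n-1$, and symmetrically for the other. Thus the reflected criterion depends only on symmetric data, giving $c_{ib}^a=c_{ia}^b$ and hence the symmetry of $\rho(\croot_i)$. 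The main obstacle is exactly this bookkeeping of the reflected contributions; once the range bound trivialises the asymmetric inequalities, both parts fall out.
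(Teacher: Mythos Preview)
Your argument is correct, and in several respects cleaner than the paper's. The paper does not work with the structure constants abstractly; instead it writes down the matrices $\rho(\croot_i)$ explicitly---identity for $i=0$, a lower-right triangle of ones for $i=n-1$, and for $0<i<n-1$ a concrete ``hatched'' picture with triangular blocks of $0$s and $1$s and diagonal bands---and then simply observes that these pictures are symmetric with $\{0,1\}$ entries. In contrast you extract the structure constants $c_{ib}^a$ directly from the Clebsch--Gordan-style product rule, use the parity of the raw indices $|i-b|,|i-b|+2,\ldots,i+b$ to rule out collisions after the single reflection $k\mapsto 2n-1-k$ (proving (a)), and then recognise the direct condition as the fully symmetric triangle inequality on $\{a,i,b\}$ while the range bound $a,b,i\le n-1$ kills the only potentially asymmetric inequalities in the reflected case (proving (b)). Your route makes the \emph{reason} for symmetry transparent and generalises without redrawing pictures; the paper's route has the compensating advantage of exhibiting the exact zero-one pattern of each $\rho(\croot_i)$, which is occasionally useful elsewhere but is not needed for the lemma as stated.
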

\begin{proof}
	From the product formula, it is easy to see that the matrix of $\rho(\croot_i)$ is the following. The matrix $\rho(\croot_0)$ is the identity. The matrix $\rho(\croot_{n-1})$ is such that $(\rho(\croot_{n-1}))_{jk}= 1$ if and only if $j \geq n-k-1$ (note that we use a zero-based index here, so the bottom right triangle of the matrix consists of ones). If $0 < i < n-1$, then $\rho(\croot_i)$ is of the form
	\begin{equation*}
		\begin{pmatrix}
			\tikznode{tlt1}{0} &  & \tikznode{tlt2}{0} & \tikznode{h1t}{1} & \tikznode{trt1}{0} &  &  & \tikznode{trt2}{0} \\
			 &  &  & \tikznode{h2t}{0} &  &  & &  \\
			\tikznode{tlt3}{0} &  & & \tikznode{h3t}{1}  &  & & &  \\
			\tikznode{h1l}{1} & \tikznode{h2l}{0} &\tikznode{h3l}{1}& & & & & \tikznode{trt3}{0}\\
			\tikznode{blt1}{0} & & &  &  & \tikznode{h3r}{1}& \tikznode{h2r}{0} & \tikznode{h1r}{1} \\
			 &  &  &  & \tikznode{h3b}{1} &  &  & \tikznode{brt1}{1} \\
			 &  & &  & \tikznode{h2b}{0} &  & &  \\
			\tikznode{blt2}{0} & & & \tikznode{blt3}{0} & \tikznode{h1b}{1} & \tikznode{brt2}{1} & & \tikznode{brt3}{1}
	\end{pmatrix}
	\begin{tikzpicture}[remember picture, overlay,shorten >=1pt,shorten <=1pt]
 		 \draw (tlt1) -- (tlt2) -- (tlt3) -- (tlt1);
		  \draw (trt1) -- (trt2) -- (trt3) -- (trt1);
		  \draw (blt1) -- (blt2) -- (blt3) -- (blt1);
		  \draw (brt1) -- (brt2) -- (brt3) -- (brt1);
		  \draw (h1t) -- (h1l) -- (h1b) -- (h1r) -- (h1t);
		  \draw (h2t) -- (h2l) -- (h2b) -- (h2r) -- (h2t);
		  \draw (h3t) -- (h3l) -- (h3b) -- (h3r) -- (h3t);
		  \draw ($0.5*(h3r)+0.5*(h3l)$) node {$\cdots$};
	\end{tikzpicture}
	\end{equation*}
	where the top triangles and bottom left triangle consists of zeros, and the bottom right triangle consists of ones. The rectangular hatches in the middle consist either entirely of ones or entirely of zeros, where the corners of the first hatch are at the the $(i,0)$ $(0,i)$, $(n-1-i, n-1)$ and $(n-1, n-1-i)$-th entries. In particular, the $(i,j)$-th entry is $1$ if $j$ is even and $0$ if $j$ is odd for any $j$ such that $(i,j)$ is within the area of the first hatch. All of these matrices are clearly symmetric with entries that are precisely either $0$ or $1$.
\end{proof}

Now let us consider a weighted folding $F\colon Q^S \rightarrow Q^B$ that corresponds to a weighted unfolding $S$ of an exchange matrix $B$ over $\wh\chebr\ps{2n+1}$. In particular, we consider $F$ to be either one of the $\coxH$-type foldings of Figure~\ref{H} or $\coxI$-type foldings of Remark~\ref{rem:IFolding}. Recall that the exchange $\integer$-matrix $S$ has the structure of a block matrix $(S_{[i][j]})_{[i],[j] \in Q^B_0}$, where for any $i \in Q^S_0$, we have $F(i)=[i]$. Under this notation, we have the following result.

\begin{lem} \label{lem:RegRepExchange}
	Let $B=(b_{[i][j]})_{[i],[j] \in Q^B_0}$ be an exchange matrix over $\wh\chebr\ps{2n+1}$ of type $\coxH$ or $\coxI_2(2n+1)$. Suppose $S=(s_{ij})_{i,j \in Q^S_0}$ is an integer block matrix $(S_{[i][j]})_{[i],[j] \in Q^B_0}$ such that $S_{[i][j]}=\rho(b'_{[i][j]})$, where $b'_{[i][j]} \in \chebr\ps{2n+1}$ is such that $\sigma\ps{2n+1}(b'_{[i][j]}) = b_{[i][j]}$ and $\rho$ is the regular representation of $\chebr\ps{2n+1}$.
	\begin{enumerate}[label=(\alph*)]
		\item $S$ is block skew-symmetric: $S_{[i][j]}=-S_{[j][i]}$.
		\item $S$ is skew-symmetric: $s_{ij}=-s_{ji}$.
		\item $S$ satisfies (1) and (2) of Definition~\ref{defn:Unfolding}.
	\end{enumerate}
\end{lem}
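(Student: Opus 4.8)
The plan is to exploit that $\rho$ is a ring homomorphism and that each block of $S$ is the image under $\rho$ of a single element of $\chebr\ps{2n+1}$. Throughout I identify the vertices in the block $E_{[i]}$ with the basis $\{\croot_0,\dots,\croot_{n-1}\}$ of Remark~\ref{rem:ChebBasis}, so that a vertex of $Q^S$ is a pair $([i],a)$ and $s_{([i],a),([j],b)}=(\rho(b'_{[i][j]}))_{ab}$. For the quivers of types $\coxH_3,\coxH_4$ and $\coxI_2(2n+1)$ every entry of $B$ lies in $\{0,\pm 1,\pm 2\cos\frac{\pi}{2n+1}\}$, i.e. in $\{0,\pm\sigma\ps{2n+1}(\croot_0),\pm\sigma\ps{2n+1}(\croot_1)\}$, so the first step is to fix the canonical lifts $b'_{[i][j]}\in\{0,\pm\croot_0,\pm\croot_1\}$. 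Since $B$ is skew-symmetric, this makes the lifted matrix $B'=(b'_{[i][j]})$ skew-symmetric over $\chebr\ps{2n+1}$ (so $b'_{[i][j]}=-b'_{[j][i]}$) and gives $b'_{[i][j]}\in\chebsr\ps{2n+1}$ whenever $b_{[i][j]}\geq 0$.

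Part (a) is then immediate: because $\rho$ is additive, $S_{[i][j]}=\rho(b'_{[i][j]})=\rho(-b'_{[j][i]})=-\rho(b'_{[j][i]})=-S_{[j][i]}$. For part (b) I would combine (a) with the symmetry statement of Lemma~\ref{lem:ChebRegRep}(b): since every $\rho(\croot_k)$ is symmetric, so is each block $S_{[i][j]}=\rho(b'_{[i][j]})$, being a linear combination of symmetric matrices. Hence for vertices $([i],a),([j],b)$ with $[i]\neq[j]$ one gets $s_{([j],b),([i],a)}=(S_{[j][i]})_{ba}=-(S_{[i][j]})_{ba}=-(S_{[i][j]})_{ab}=-s_{([i],a),([j],b)}$, using block skew-symmetry in the middle and block symmetry at the end. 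The diagonal blocks vanish ($b_{[i][i]}=0$ forces $S_{[i][i]}=\rho(0)=0$), so entries inside a single block are also skew (and $s_{ii}=0$); this gives $s_{ij}=-s_{ji}$ in all cases.

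The real work is part (c), which concerns the defining conditions (1) and (2) of Definition~\ref{defn:Unfolding} for the pair $(WSW^{-1},\wt B)$ with $\wt B=B$ (here $P=\mathrm{Id}$). The key idea is to identify the correct weight covector. I would take the block weights to be $w_a=\sigma\ps{2n+1}(\croot_a)=U_a(\cos\frac{\pi}{2n+1})$, which are positive for $0\le a\le n-1$ by Lemma~\ref{lem:ChebProps}, so that $W$ restricted to each block is $\mathrm{diag}(w_0,\dots,w_{n-1})$; these are exactly the folding weights of Figure~\ref{H} and Remark~\ref{rem:IFolding} up to the within-block relabelling. The crucial observation is that the covector $\mathbf{w}=(w_0,\dots,w_{n-1})$ is a common left eigenvector of the regular representation: $\mathbf{w}^{T}\rho(r)=\sigma\ps{2n+1}(r)\,\mathbf{w}^{T}$ for all $r\in\chebr\ps{2n+1}$. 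This holds because $\mathbf{w}^{T}v(r)=\sum_a w_a r_a=\sigma\ps{2n+1}(r)$ for the coordinate vector $v(r)$ of any $r$, and $\rho(s)v(r)=v(sr)$, whence $\mathbf{w}^{T}\rho(s)v(r)=\sigma\ps{2n+1}(sr)=\sigma\ps{2n+1}(s)\,\mathbf{w}^{T}v(r)$ for all $r$. Granting this, the sum of the $c$-th column of the block $W_{[i]}\rho(b'_{[i][j]})W_{[j]}^{-1}$ of $WSW^{-1}$ equals $w_c^{-1}\bigl[\mathbf{w}^{T}\rho(b'_{[i][j]})\bigr]_c=w_c^{-1}\sigma\ps{2n+1}(b'_{[i][j]})\,w_c=\sigma\ps{2n+1}(b'_{[i][j]})=b_{[i][j]}$, which is condition (1). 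For condition (2), if $b_{[i][j]}\geq 0$ then $b'_{[i][j]}\in\chebsr\ps{2n+1}$ is a non-negative combination of the $\croot_k$, and by Lemma~\ref{lem:ChebRegRep}(a) each $\rho(\croot_k)$ has entries in $\{0,1\}$; hence $\rho(b'_{[i][j]})$ has non-negative entries, and multiplying by the positive diagonal matrices $W_{[i]}$ and $W_{[j]}^{-1}$ preserves this.

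The main obstacle is genuinely part (c): the content is not the column-sum computation itself but recognising that the prescribed folding weights are precisely the $\sigma\ps{2n+1}$-images of the basis, i.e. the left eigenvector of the regular representation, which is what converts the combinatorial column-sum requirement of Definition~\ref{defn:Unfolding} into the purely algebraic identity $\sigma\ps{2n+1}(b'_{[i][j]})=b_{[i][j]}$. Once this eigenvector relation is established, parts (a), (b) and both halves of (c) reduce to the elementary properties of $\rho$ recorded in Lemma~\ref{lem:ChebRegRep}.
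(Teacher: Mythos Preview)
Your proof is correct and follows essentially the same route as the paper's: fix lifts $b'_{[i][j]}\in\{0,\pm 1,\pm\croot_1\}$, deduce (a) from additivity of $\rho$ and skew-symmetry of $B'$, deduce (b) from (a) together with the symmetry of each $\rho(\croot_k)$ (Lemma~\ref{lem:ChebRegRep}(b)), and for (c) use the weights $w_a=\sigma\ps{2n+1}(\croot_a)$ to verify the column-sum condition, with Lemma~\ref{lem:ChebRegRep}(a) giving non-negativity. The only cosmetic difference is that you package the column-sum computation as the left-eigenvector identity $\mathbf{w}^{T}\rho(r)=\sigma\ps{2n+1}(r)\,\mathbf{w}^{T}$, whereas the paper computes directly that the $k$-th column of $\wh{W}\rho(b'_{[i][j]})$ has entry-sum $\sigma\ps{2n+1}(b'_{[i][j]}\croot_k)$ and then divides by $\sigma\ps{2n+1}(\croot_k)$; these are the same calculation.
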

\begin{proof}
	First note that by definition we have $b_{[i][j]} \in \{0,\pm 1,\pm 2\cos\frac{\pi}{2n+1}\}$. The corresponding values of $\chebr\ps{2n+1}$ we need to consider are therefore the values $b'_{[i][j]} \in \{0,\pm 1,\pm \croot_1\}$. Since $B$ is skew-symmetric and $\rho$ is a representation of $\chebr\ps{2n+1}$, $S$ is a skew-symmetric block matrix (that is, $S_{[i][j]}=-S_{[j][i]}$). This proves (a). In particular, $S$ is skew-symmetric since $\rho(b'_{[i][j]})$ is symmetric, which proves (b). Moreover, (2) of Definition~\ref{defn:Unfolding} is satisfied by Lemma~\ref{lem:ChebRegRep}(a).
	
	It remains to show that (1) holds. Let $\wh{W}=(\wh{W}_i)_{0\leq i \leq n-1}$ be the diagonal matrix such that $\wh{W}_i = \sigma\ps{2n+1}(\croot_i) = U_i(2\cos\frac{\pi}{2n+1})$. Consider the weight matrix $W$ with block diagonal structure $(W_{[i]})_{i \in Q^B_0}$ such that $W_{[i]}=W_{[j]}=\wh{W}$ for any $[i],[j] \in Q^B_0$. The matrix $WSW\inv$ then has a block structure indexed by $Q^B_0$, with $(WSW\inv)_{[i][j]} = \wh{W}S_{[i][j]}\wh{W}\inv$. Since $\rho$ is the regular representation of $\chebr\ps{2n+1}$, the sum of entries in the $k$-th column of the matrix $\wh{W}S_{[i][j]}$ is the value $\sigma\ps{2n+1}(b'_{[i][j]} \croot_k)$. Multiplying on the right by the matrix $\wh{W}\inv$ has the effect of multiplying the $k$-th column of $\wh{W}S_{[i][j]}$ by $(\sigma\ps{2n+1}(\croot_k))\inv$. Thus, the sum of each column of $\wh{W}S_{[i][j]}\wh{W}\inv$ is $\sigma\ps{2n+1}(b'_{[i][j]})=b_{[i][j]}$. Hence, (1) holds.
\end{proof}

\begin{prop} \label{prop:RegRepExchange}
	If $F\colon Q^S \rightarrow Q^B$ is a folding as in Figure~\ref{H} or Remark~\ref{rem:IFolding}, then there exists an ordering $\leq$ of $Q^S_0$ such that $S_{[i][j]}=\rho(b'_{[i][j]})$, where $b'_{[i][j]} \in \chebr\ps{2n+1}$ is such that $\sigma\ps{2n+1}(b'_{[i][j]}) = b_{[i][j]}$.
\end{prop}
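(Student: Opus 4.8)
The plan is to reduce the statement to a comparison of two explicit quivers and then exhibit the required ordering. Since $B$ is of type $\coxH$ or $\coxI_2(2n+1)$, each entry satisfies $b_{[i][j]} \in \{0,\pm 1,\pm 2\cos\frac{\pi}{2n+1}\}$, so the only elements of $\chebr\ps{2n+1}$ that occur are $b'_{[i][j]} \in \{0,\pm 1,\pm\croot_1\}$. Hence the only blocks I need to understand are $\rho(0)=0$, $\rho(\pm\croot_0)=\pm\Id$, and $\rho(\pm\croot_1)$, and by (the proof of) Lemma~\ref{lem:ChebRegRep} the matrix $\rho(\croot_1)$ is the symmetric $0/1$ matrix carrying $1$'s exactly on the two off-diagonals together with a single $1$ in the bottom-right $(n-1,n-1)$ corner.

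Next I would assemble the candidate matrix $\wt S$ defined by $\wt S_{[i][j]}=\rho(b'_{[i][j]})$. By Lemma~\ref{lem:RegRepExchange} this $\wt S$ is skew-symmetric and satisfies assertions (1) and (2) of Definition~\ref{defn:Unfolding}, so it is a genuine unfolding of $B$. The heart of the argument is then to identify the underlying (unoriented) quiver of $\wt S$. Using the explicit shape of $\rho(\croot_1)$, a short degree count shows that in the $\coxI_2(2n+1)$ case the block matrix $\left(\begin{smallmatrix} 0 & \rho(\croot_1)\\ -\rho(\croot_1)&0\end{smallmatrix}\right)$ has support a connected graph on $2n$ vertices (connectivity being visible from the explicit form) with exactly two vertices of degree $1$ and all others of degree $2$ --- hence the path $\coxA_{2n}$. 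In the $\coxH_3$ (resp.\ $\coxH_4$) case the same computation produces a single trivalent vertex with legs of lengths $1,1,3$ (resp.\ $1,2,4$), which are precisely the Dynkin diagrams $\coxD_6$ (resp.\ $\coxE_8$). Thus $\wt S$ and the unfolding $S$ of Figure~\ref{H} / Remark~\ref{rem:IFolding} share the same underlying Dynkin type, and, since the signs of the $b'_{[i][j]}$ agree with the orientations prescribed by $B$ via $\sigma\ps{2n+1}$, the same orientation.

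It remains to produce the ordering itself. An ordering of $Q^S_0$ realising the prescription is exactly a relabelling of the vertices inside each fibre $E_{[i]}$, i.e.\ an isomorphism of quivers $Q^{\wt S}\to Q^S$ commuting with $F$. To build it I would fix an arbitrary ordering on one fibre and propagate it across the $\Id$-type blocks: a block that must equal $\rho(\croot_0)=\Id$ is a permutation matrix in $S$, so once a fibre is ordered the ordering of each neighbouring fibre joined to it by a weight-$1$ edge is forced to coincide. The single remaining weight-$\croot_1$ block is then matched to $\rho(\croot_1)$ by choosing the ordering of its source fibre accordingly, which is possible because the two bipartite graphs involved are isomorphic paths with matching row- and column-endpoints.

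The main obstacle I anticipate is this last step: ensuring that one global ordering serves every block simultaneously, rather than block-by-block. The orderings of distinct fibres are linked through the identity blocks, so one might fear a consistency conflict. This is resolved by the fact that the folded quiver $Q^B$ is a tree (indeed a path in each of the three cases), so the propagation of orderings along its edges never closes a cycle and hence cannot create an obstruction; the orientation data give no further trouble, having already been forced to agree with that of $S$.
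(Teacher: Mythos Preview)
Your approach is correct in outline but considerably more elaborate than the paper's. The paper simply \emph{writes down} the ordering: within each fibre $E_{[i]}$, order the vertices of $Q^S_0$ by their weight $\vw$. In the $\coxH$-type case ($n=2$) each fibre contains one vertex of weight $1$ and one of weight $\gratio$, so this places the weight-$1$ vertex in position $0$ and the weight-$\gratio$ vertex in position $1$; one then reads off from Figure~\ref{H} that each nonzero block is either $\left(\begin{smallmatrix}1&0\\0&1\end{smallmatrix}\right)$ or $\left(\begin{smallmatrix}0&1\\1&1\end{smallmatrix}\right)=\rho(\croot_1)$, as required. In the $\coxI$-type case, ordering by weight places vertex $i$ of $Q^{\coxA_{2n}}$ in position $k$ of its fibre where $\vw(i)=U_k(\cos\frac{\pi}{2n+1})$; the arrows $i\leftrightarrow i\pm 1$ of the bipartite quiver then encode exactly the relation $\croot_1\croot_k=\croot_{k-1}+\croot_{k+1}$ (with the boundary $\croot_n=\croot_{n-1}$), so the single nonzero block is $\rho(\croot_1)$. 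No intermediate construction of $\wt S$ or Dynkin-type identification is needed.

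Your route --- building $\wt S$ abstractly, recognising its Dynkin type by a degree count, and then producing a fibre-preserving isomorphism $Q^{\wt S}\to Q^S$ --- does work, but the ``fix an arbitrary ordering on one fibre and propagate'' step is not quite right as stated. In the $\coxH$ cases, an arbitrary initial ordering propagates via the $\Id$-blocks to an ordering on the fibre incident to the $\croot_1$-edge, and if this puts the weight-$\gratio$ vertex in position $0$ then the rows of that block come out swapped; no column permutation then realises $\rho(\croot_1)$, since $\rho(\croot_1)$ has its row of sum $2$ in position $1$. Only one of the two initial orderings works. In the $\coxI$ case there are no $\Id$-blocks at all, so both fibres must be ordered simultaneously from the single $\croot_1$-block, which your propagation recipe does not cover. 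Both issues are easily repaired (e.g.\ pin down the isomorphism using the unique trivalent vertex in $\coxD_6$/$\coxE_8$ or the path endpoints in $\coxA_{2n}$), and when you do so the resulting ordering is precisely the weight ordering the paper writes down in one line.
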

\begin{proof}
	Let $\preceq$ be any total ordering of $Q^B_0$. Define $i \leq j$ if and only if $\kappa(i) \leq \kappa(j)$ and $[i] \preceq [j]$. For foldings as in Figure~\ref{H}, we have $b_{[i][j]} = 0$, $b_{[i][j]} = \pm 1$ or $b_{[i][j]} =\pm \gratio$ for each entry of $B$. It is easy to see that the corresponding block of $S$ under this ordering is
	\begin{equation*}
		S_{[i][j]} = \pm\begin{pmatrix}0 & 0 \\ 0 & 0 \end{pmatrix}, \qquad
		S_{[i][j]} = \pm\begin{pmatrix}1 & 0 \\ 0 & 1 \end{pmatrix}
		\qquad \text{or} \qquad
		S_{[i][j]} = \pm\begin{pmatrix}0 & 1 \\ 1 & 1 \end{pmatrix}
	\end{equation*}
	respectively. We also have $\chebr\ps{5} \cong \wh\chebr\ps{5}$ in this setting, and it is easy to see that we precisely have $S_{[i][j]} =\rho(b_{[i][j]})$. For foldings as in Remark~\ref{rem:IFolding}, one notes that for any vertex $i \in Q^S_0$, we have arrows (of some orientation) between pairs of vertices $(i,i-1)$ and $(i,i+1)$ in $Q^S$, unless $i\in \{0,2n-1\}$, in which case we have only one arrow. In particular, this mimics the relation $\croot_1 \croot_i = \croot_{i-1}+\croot_{i+1}$ for $i \not\in\{0,2n-1\}$. Further noting that $\sigma\ps{2n+1}(\croot_1) = 2\cos\frac{\pi}{2n+1}$ and that $b_{[i][j]} \in \{0, \pm 2\cos\frac{\pi}{2n+1}\}$, the result follows.
\end{proof}

\section{Projections of module and derived categories under foldings}
\label{section-proj}
One of the earliest and most celebrated results in the representation theory of path algebras is Gabriel's Theorem~\cite{Gabriel}. In short, Gabriel's Theorem states that a finite connected quiver $Q$ has finitely many iso-classes of indecomposable representations (over an algebraically closed field) if and only if the underlying graph of $Q$ is a simply-laced Dynkin diagram. Furthermore, the dimension vectors of the iso-classes of indecomposable representations bijectively correspond to the positive roots of the root system of the Dynkin diagram. This result was extended by Dlab and Ringel in \cite{DlabRingelFinite} to show that a finite connected valued quiver has finitely many iso-classes of indecomposable representations if and only if the underlying graph is a Dynkin diagram (including the multiply-laced diagrams). Moreover, the dimension vectors of these representations are also in bijective correspondence with positive roots.

The purpose of this section is to review and investigate the connection between a folding of quivers $Q^\gend \rightarrow Q^{\gend'}$, where $\gend$ is a simply-laced Dynkin diagram, and categories associated to a quiver of type $\gend$. For cases where $\gend'$ corresponds to a crystallographic system, some work has already been done in this respect (for example, \cite{DengDuI,DengDuII,FoldingGabriel}). Our main focus will be on the non-crystallographic cases where $\gend' \in \{\coxH_{4},\coxH_{3},\coxI_2(2n+1)\}$. Recall that in this case $Q^\gend$ is a $\wh\chebr\ps{2n+1}$-vertex-weighted quiver with weight function $\kappa$ (see Definitions~\ref{defn:RVWQ} and \ref{defn:RQfolding}).

From now on, we assume $\field$ to be an algebraically closed field. We denote by $\field Q$ the path algebra of $Q$ and $\mod*\field Q$ the category of finitely-generated right $\field Q$ modules. In particular, paths in $\field Q$ are read from left to right. In a category with Auslander-Reiten sequences (such as with $\mod*\field Q$), we denote the Auslander-Reiten translate by $\tau$.

We will show a result analogous to Gabriel's Theorem holds for these non-crystallo\-graphic foldings. Specifically, we will show that one can project the dimension vectors of indecomposable $KQ^\gend$-modules onto the set of vectors $\bigcup_{i=1}^k w_i \rootsys^+_{\gend'}$, where $\rootsys^+_{\gend'}$ is the set of positive roots of $\gend'$. This in itself is not surprising for type $\coxH$, as it follows directly from both Moody and Patera's projection of root systems and Gabriel's Theorem. We show this also holds for type $\coxI_2(2n+1)$. However what is surprising is that this projection has a remarkably well-organised structure, in the sense that each row in the Auslander-Reiten quiver of $\mod* KQ^\gend$ is such that either every module in a row projects onto positive $\gend'$-roots, or every module in a row projects onto a specific multiple of positive $\gend'$-roots. The end result of this projection is a module category that has a well-defined semiring action.

\subsection{Definitions and results}

Henceforth, for any given quiver $Q$, we respectively denote by $S(i)$, $I(i)$ and $P(i)$ the simple module, indecomposable injective module, and indecomposable projective module in $\mod*KQ$ corresponding to the vertex $i \in Q_0$. 

\begin{defn} \label{def:ProjMap}
	Let $F\colon Q^\gend \rightarrow Q^{\gend'}$ be a folding of weighted quivers, where $\gend' \in \{\coxH_3,\coxH_4,\coxI_2(2n+1)\}$. Then there exists a unique arrow $a \in Q^{\gend'}_1$ such that $\aw(a) = 2 \cos\frac{\pi}{2n+1}$, where $n=2$ if $\gend' \in \{\coxH_3,\coxH_4\}$. Define a function
	\begin{equation*}
		d_F\colon \integer^{|Q^\gend_0|} \rightarrow (\wh\chebr\ps{2n+1})^{|Q^{\gend'}_0|}
	\end{equation*}
	that maps a vector $v = (v_i)_{i\in Q^\gend_0}$ to $d_F(v) = (v'_{i'})_{i'\in Q^{\gend'}_0}$ by the weighted sum
	\begin{equation*}
		v'_{i'}=\sum_{i:F(i)=i'} \vw(i) v_i.
	\end{equation*}
	In addition, we define a map $\dimproj_F\colon \Ob(KQ^\gend) \rightarrow (\wh\chebr\ps{2n+1})^{|Q^{\gend'}_0|}$, where $\Ob(KQ^\gend)$ is the class of objects of $\mod*KQ$. We define $\dimproj_F(M)=d_F \dimvect M$ and call $\dimproj_F(M)$ the \emph{$F$-projected dimension vector} of $M$.
\end{defn}

The aim of this section is to prove the following result for module categories, which we will later extend to the bounded derived categories.

\begin{thm}\label{thm:Folding}
Let $F\colon Q^{\gend} \rightarrow Q^{\gend'}$ be a weighted folding of quivers, where $\gend \in \{\coxA_{2n},\coxD_6,\coxE_8\}$ and $\gend'\in\{\coxI_2(2n+1),\coxH_3,\coxH_4\}$. For $\gend' \in \{\coxH_3,\coxH_4\}$, define $n=2$.
	\begin{enumerate}[label=(\alph*)]
		\item For each $0 \leq j \leq n-1$, define a set
		\begin{equation*}
			\mathcal{I}\ps{j} = \left\{\tau^m I(i) \in \mod* KQ^\gend : i\in Q^\gend_0, \vw(i)=U_j\left(\cos \frac{\pi}{2n+1}\right)\!, m\in\nnint \right\}.
		\end{equation*}
		Then $\dimproj_F(M)$ is a positive root of $\gend'$ for any $M \in \mathcal{I}\ps{0}$.
		\item For any $i,j\in Q^\gend_0$ such that $F(i)=F(j)$ and any $m \in \nnint$, we have
		\begin{equation*}
			\vw(j) \dimproj_F( \tau^m I(i)) = \vw(i) \dimproj_F (\tau^m I(j)).
		\end{equation*}
		\item For each $0 \leq j \leq n-1$, we have $||\dimproj_F(M)|| = U_j\left(\cos\frac{\pi}{2n+1}\right)$ for all $M \in \mathcal{I}\ps{j}$.
	\end{enumerate}
\end{thm}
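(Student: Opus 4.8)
The plan is to reduce all three statements to a base case on the injective modules, by exploiting the interaction between the Auslander--Reiten translate $\tau$ and the Coxeter transformation, and then to transport the base case along $\tau$-orbits. Fix the orientation of $Q^\gend$ coming from the folding (as in Figure~\ref{H} and Remark~\ref{rem:IFolding}), so that each unfolding block $E_{i'}$ is an independent set of vertices and all arrows between two blocks are coherently oriented. Write $c_\gend$ for the Coxeter transformation of $Q^\gend$ acting on the Grothendieck group $\integer^{|Q^\gend_0|}$, and $c_{\gend'}=\prod_{i'}s_{i'}$ for the Coxeter element of the Coxeter group $\gend'$ acting on its root space $\real^{|Q^{\gend'}_0|}\supseteq(\wh\chebr\ps{2n+1})^{|Q^{\gend'}_0|}$ via the geometric representation, the two products being taken in the same order of blocks $i'$.

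The conceptual crux is the intertwining relation
\begin{equation*}
  d_F\circ c_\gend = c_{\gend'}\circ d_F .
\end{equation*}
Since the reflections $s_i$ with $i\in E_{i'}$ mutually commute (each block is an independent set) and $d_F(\alpha_i)=\vw(i)\alpha_{[i]}$, it suffices to verify the per-block identity $d_F\circ\big(\prod_{i\in E_{i'}}s_i\big)=s_{i'}\circ d_F$ on each simple root $\alpha_k$. For $k\in E_{i'}$ both sides send $\alpha_k$ to $-\vw(k)\alpha_{i'}$, while for $k\notin E_{i'}$ the identity reduces to
\begin{equation*}
  \sum_{\substack{i\in E_{i'}\\ i\sim k}}\vw(i) = -\,\vw(k)\,\langle\alpha_{[k]},\alpha_{i'}^\vee\rangle,
\end{equation*}
which is precisely the weighted column-sum condition (1) of Definition~\ref{defn:Unfolding} as recorded in Lemma~\ref{lem:RegRepExchange} and Proposition~\ref{prop:RegRepExchange}. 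Composing the per-block identities in the chosen block order (compatible with the orientation, so that $c_\gend$ is genuinely the Coxeter transformation attached to $\tau$) yields the full intertwining.

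Next I would set up the propagation. For a non-projective indecomposable $M$ one has $\dimvect\tau M = c_\gend(\dimvect M)$, hence $\dimvect\tau^m I(i)=c_\gend^m(\dimvect I(i))$ for every $m$ with $\tau^m I(i)\neq 0$, and by the intertwining $\dimproj_F(\tau^m I(i))=c_{\gend'}^m\big(d_F(\dimvect I(i))\big)$. Because $c_{\gend'}$ is a product of reflections it is an isometry of the $\gend'$-form and permutes the roots of $\gend'$; thus $\|\dimproj_F(\tau^m I(i))\|$ is independent of $m$, and $\dimproj_F(\tau^m I(i))$ is always a scalar multiple of a root. Moreover $d_F$ multiplies each coordinate block by the positive weights $\vw(i)$, so it sends positive roots of $\gend$ to non-negative non-zero vectors and negative roots to non-positive non-zero vectors. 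This reduces (a) and (c) to the single computation of $d_F(\dimvect I(i))$ on the injectives, and reduces (b) to the base identity $\vw(j)\,d_F(\dimvect I(i))=\vw(i)\,d_F(\dimvect I(j))$ for $F(i)=F(j)$, after which applying $c_{\gend'}^m$ to both sides delivers the general $m$.

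Finally I would treat the base case and clear up the vanishing issue in (b). For $\gend'\in\{\coxH_3,\coxH_4\}$ the base case is the finite Lusztig--Moody--Patera projection: the dimension vectors of the injectives are positive roots of $\gend\in\{\coxD_6,\coxE_8\}$, $d_F$ is the root-system projection, the weight-$1$ injectives map to positive roots and the weight-$\gratio$ injectives to $\gratio$-multiples, with the proportionality and the norm values $\{1,\gratio\}=\{U_0,U_1\}$ read off directly. For $\gend'=\coxI_2(2n+1)$ with $\gend=\coxA_{2n}$ the injectives are interval modules, so $d_F(\dimvect I(i))=\sum_k\vw(k)e_{[k]}$ is an explicit short sum of Chebyshev weights $U_k(\cos\tfrac{\pi}{2n+1})$, which I would evaluate coordinatewise using the product and symmetry identities of Lemma~\ref{lem:ChebProps}, confirming that weight-$1$ injectives give positive $\coxI_2(2n+1)$-roots and that $\|\dimproj_F(M)\|=U_j$ on $\mathcal{I}\ps{j}$. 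The remaining subtlety for (b) is that $\tau^m I(i)=0$ must force $\tau^m I(j)=0$ when $F(i)=F(j)$: since $c_\gend^m\dimvect I(i)$ and $c_\gend^m\dimvect I(j)$ are roots of $\gend$ whose $d_F$-images are the positive multiples $\vw(i)c_{\gend'}^m\beta$ and $\vw(j)c_{\gend'}^m\beta$ of a common vector, and since $d_F$ strictly preserves the sign of coordinates, one of these roots is positive exactly when the other is, so the two $\tau$-orbits have equal length. I expect the main labour to lie in this base-case bookkeeping (the Chebyshev evaluation for $\coxI_2(2n+1)$ and the $\coxE_8\to\coxH_4$ verification), while the intertwining relation carries the structural weight of the argument.
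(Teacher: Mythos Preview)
Your approach is correct but genuinely different from the paper's. The paper does not use the Coxeter transformation at all: for the $\coxH$-type foldings it proves a radical-series comparison lemma (Lemma~\ref{lem:FoldingRadSeries}) showing $\dimproj_F(\rad^k\tau^jI(\phi_i)/\rad^{k+1}\tau^jI(\phi_i))=\gratio\,\dimproj_F(\rad^k\tau^jI(i)/\rad^{k+1}\tau^jI(i))$, established first for projectives and injectives by inspecting arrows and then pushed along $\tau$ via projective presentations and the Nakayama functor; part~(a) is then deduced from Gabriel's theorem together with the Moody--Patera projection. For the $\coxI$-type foldings the paper again inducts on $\tau$ through projective presentations (Lemma~\ref{lem:IFoldingB}) and identifies the roots by a direct trigonometric computation on the two extremal rows (Lemma~\ref{lem:IRootsOfUnity}).

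Your route---the intertwining $d_F\circ c_\gend=c_{\gend'}\circ d_F$ combined with $\dimvect\tau M=c_\gend(\dimvect M)$---is more uniform: a single lemma covers all three families, the isometry of $c_{\gend'}$ gives (c) for free, and (b) collapses to the injective base case without the layer-by-layer radical analysis. The paper's approach, by contrast, stays entirely inside the module category and produces the finer statement of Lemma~\ref{lem:FoldingRadSeries} (equality of Loewy lengths and proportionality of each radical layer), which is stronger than what (b) asserts and may be of independent use. One small point to make explicit in your write-up: you should check that an admissible ordering of $Q^\gend_0$ can be chosen block-by-block, so that $c_\gend$ really factors as the product over $i'\in Q^{\gend'}_0$ of $\prod_{i\in E_{i'}}s_i$; this holds because in each of the foldings every block is an independent set and all arrows between two blocks are coherently oriented, but it deserves a sentence.
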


\begin{rem}
	Given the folding in the above theorem, let $i\in Q^\gend_0$. Then let $j\in Q^\gend_0$ be such $P(j) \cong \tau^m I(i)$ for some $m \geq 0$. The folding forces the quiver $Q^\gend$ to be of a particular weighting such that $\vw(i)=\vw(j)$. This is easy to see from the Auslander-Reiten theory of $\coxA\coxD\coxE$-quivers.
\end{rem}

\begin{rem} \label{rem:ThmDual}
	Since $KQ^\gend$ is representation-finite, one could dually work with the set
	\begin{equation*}
		\mathcal{P}\ps{j} = \left\{\tau^{-m} P(i) \in \mod* KQ^\gend : i\in Q^\gend_0, \vw(i)=U_j\left(\cos \frac{\pi}{2n+1}\right), m\in\nnint \right\}
	\end{equation*}
	for Theorem~\ref{thm:Folding}(a). Similarly, one could write (b) as
	\begin{equation*}
		\vw(j)\dimproj_F( \tau^{-m} P(i)) = \vw(i) \dimproj_F( \tau^{-m} P(j)).
	\end{equation*}
\end{rem}

\subsection{Folding onto $\coxH$-type quivers}
Foldings onto a diagram of type $\coxH$ are given in Figure~\ref{H}. Recall that $\gratio=2 \cos \frac{\pi}{5}$, the golden ratio. The $\goldint$-quiver of type $\coxH_2$ is unique up to a relabelling of the vertices, and thus serves as an easy starting point. This will provide a motivating example for the theory we will develop.

\begin{exam}
	The folding $F\colon Q^{\coxA_4} \rightarrow Q^{\coxH_2}$ must be of the following form (or its opposite).
	\begin{center}
		\begin{tikzpicture}
\draw (0.2,0.4) node {$1$};
\draw (1.2,0.4) node {$\phi_2$};
\draw (0.2,-0.4) node {$\phi_1$};
\draw (1.2,-0.4) node {$2$};

\draw (1.8,0) node {$\rightarrow$};

\draw (2.4,0) node {$[1]$};
\draw (3.4,0) node {$[2]$};
\draw (2.9,0.2) node {\footnotesize$\gratio$};
\draw [->](0.4,0.4) -- (1,0.4);
\draw [->](0.4,-0.4) -- (1,-0.4);
\draw [->](0.4,-0.2) -- (1,0.2);
\draw [->](2.6,0) -- (3.2,0);
\end{tikzpicture}
	\end{center}
	The vertices labelled $i$ have weight 1, the vertices labelled $\phi_i$ have weight $\varphi$, and the vertices labelled $i$ and $\phi_i$ map onto $[i]$. Noting the relation $\gratio^2=\gratio+1$, Theorem~\ref{thm:Folding} becomes obvious from the Auslander-Reiten quiver of $\mod* KQ^{\coxA_4}$, which is illustrated in Figure~\ref{fig:A4ARQuiver}.
\end{exam}

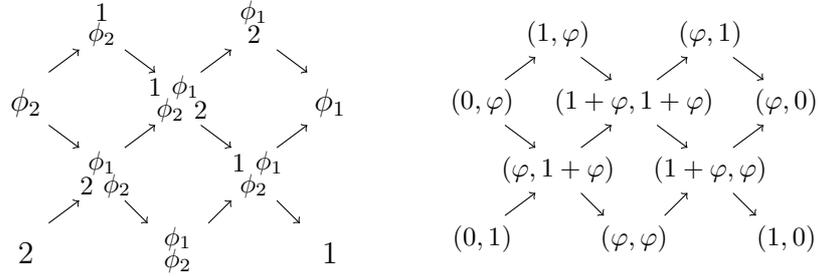
\begin{figure}[h]
	\begin{tikzpicture}
\draw (-0.5,0.5) node {$1$};

\draw (-2.5,0.7) node {\footnotesize$\phi_1$};
\draw (-2.5,0.4) node {\footnotesize$\phi_2$};

\draw (-4.5,0.5) node {$2$};

\draw (-1.7,1.7) node {\footnotesize$1$};
\draw (-1.3,1.7) node {\footnotesize$\phi_1$};
\draw (-1.5,1.4) node {\footnotesize$\phi_2$};

\draw (-3.7,1.4) node {\footnotesize$2$};
\draw (-3.3,1.4) node {\footnotesize$\phi_2$};
\draw (-3.5,1.7) node {\footnotesize$\phi_1$};

\draw (-0.5,2.5) node {$\phi_1$};

\draw (-2.4,2.7) node {\footnotesize$\phi_1$};
\draw (-2.6,2.4) node {\footnotesize$\phi_2$};
\draw (-2.8,2.7) node {\footnotesize$1$};
\draw (-2.2,2.4) node {\footnotesize$2$};

\draw (-4.5,2.5) node {$\phi_2$};

\draw (-1.5,3.7) node {\footnotesize$\phi_1$};
\draw (-1.5,3.4) node {\footnotesize$2$};
\draw (-3.5,3.7) node {\footnotesize$1$};
\draw (-3.5,3.4) node {\footnotesize$\phi_2$};

\draw [->](-4.2,2.9) -- (-3.8,3.2);
\draw [->](-3.2,3.2) -- (-2.8,2.9);
\draw [->](-2.2,2.9) -- (-1.8,3.2);
\draw [->](-1.2,3.2) -- (-0.8,2.9);
\draw [->](-4.2,2.2) -- (-3.8,1.9);
\draw [->](-3.2,1.9) -- (-2.8,2.2);
\draw [->](-2.2,2.2) -- (-1.8,1.9);
\draw [->](-1.2,1.9) -- (-0.8,2.2);
\draw [->](-4.2,0.9) -- (-3.8,1.2);
\draw [->](-3.2,1.2) -- (-2.9,0.9);
\draw [->](-2.1,0.9) -- (-1.8,1.2);
\draw [->](-1.2,1.2) -- (-0.9,0.9);

\draw (5.5,0.7) node {\footnotesize$(1,0)$};

\draw (3.5,0.7) node {\footnotesize$(\gratio,\gratio)$};

\draw (1.5,0.7) node {\footnotesize$(0,1)$};

\draw (4.5,1.6) node {\footnotesize$(1+\gratio,\gratio)$};

\draw (2.5,1.6) node {\footnotesize$(\gratio,1+\gratio)$};

\draw (5.5,2.5) node {\footnotesize$(\gratio,0)$};

\draw (3.5,2.5) node {\footnotesize$(1+\gratio,1+\gratio)$};

\draw (1.5,2.5) node {\footnotesize$(0,\gratio)$};

\draw (4.5,3.4) node {\footnotesize$(\gratio,1)$};
\draw (2.5,3.4) node {\footnotesize$(1,\gratio)$};

\draw [->](1.8,2.8) -- (2.2,3.1);
\draw [->](2.8,3.1) -- (3.2,2.8);
\draw [->](3.8,2.8) -- (4.2,3.1);
\draw [->](4.8,3.1) -- (5.2,2.8);
\draw [->](1.8,2.2) -- (2.2,1.9);
\draw [->](2.8,1.9) -- (3.2,2.2);
\draw [->](3.8,2.2) -- (4.2,1.9);
\draw [->](4.8,1.9) -- (5.2,2.2);
\draw [->](1.8,1) -- (2.2,1.3);
\draw [->](2.8,1.3) -- (3.1,1);
\draw [->](3.9,1) -- (4.2,1.3);
\draw [->](4.8,1.3) -- (5.1,1);
\end{tikzpicture}
	\caption{Left: Auslander-Reiten quiver of $KQ^{\coxA_4}$ in the folding $F\colon Q^{\coxA_4} \rightarrow Q^{\coxH_2}$. Right: The corresponding $F$-projected dimension vectors of the indecomposable $KQ^{\coxA_4}$-modules.}
	\label{fig:A4ARQuiver}
\end{figure}

For the quivers of type $\coxH_3$ and $\coxH_4$, the following technical lemma is incredibly useful.

\begin{lem} \label{lem:FoldingRadSeries}
	Let $F\colon Q^\gend \rightarrow Q^{\coxH_n}$ be a folding of quivers and write
	\begin{equation*}
		Q^\gend_0 = \{i : 1 \leq  i \leq n, \vw(i)=1\} \cup \{\phi_i : 1 \leq i \leq n, \vw(\phi_i)=\gratio\}
	\end{equation*}
	such that $F(i)=F(\phi_i)$ for each $i$. For each $1 \leq i \leq n$, consider the radical series
	\begin{equation*}
		\tau^j I(i) \supset \rad \tau^j I(i) \supset \ldots \supset \rad^m \tau^j I(i) = 0
	\end{equation*}
	with factors $X_{i,j,k} = \rad^k \tau^j I(i) / \rad^{k+1} \tau^j I(i)$, and the corresponding radical series
	\begin{equation*}
		\tau^j I(\phi_i) \supset \rad \tau^j I(\phi_i) \supset \ldots \supset \rad^{m'} \tau^j I(\phi_i) = 0
	\end{equation*}
	with factors $X_{\phi_i,j,k} = \rad^k \tau^j I(\phi_i) / \rad^{k+1} \tau^j I(\phi_i)$. Then $m=m'$ and
	\begin{equation*}
		\dimproj_F( X_{\phi_i,j,k}) = \gratio \dimproj_F( X_{i,j,k})
	\end{equation*}
	for each $1 \leq i \leq n$ and $j,k \geq 0$.
\end{lem}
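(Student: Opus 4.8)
The plan is to reduce the projected identity (ii) to a statement about honest dimension vectors and then prove it by induction on $j$, anchored at the injective modules.

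\emph{Reformulation.} First I would unpack (ii). Since each $X_{i,j,k}$ is semisimple and $\dimproj_F(S(u))=\vw(u)\,e_{[u]}$, the $[v]$-component of $\dimproj_F(X_{i,j,k})$ is $(\dimvect X_{i,j,k})_v+\gratio\,(\dimvect X_{i,j,k})_{\phi_v}\in\goldint$, where $[v]=\{v,\phi_v\}$ denotes the fibre with $\vw(v)=1$ and $\vw(\phi_v)=\gratio$. Using $\gratio^2=\gratio+1$ and that $\{1,\gratio\}$ is a $\integer$-basis of $\goldint=\chebr\ps{5}$ (so the comparison map $\integer^2\to\goldint$ is injective), the identity $\dimproj_F(X_{\phi_i,j,k})=\gratio\,\dimproj_F(X_{i,j,k})$ is equivalent, fibre by fibre, to
\[
\begin{pmatrix}(\dimvect X_{\phi_i,j,k})_v\\(\dimvect X_{\phi_i,j,k})_{\phi_v}\end{pmatrix}
=\begin{pmatrix}0&1\\1&1\end{pmatrix}
\begin{pmatrix}(\dimvect X_{i,j,k})_v\\(\dimvect X_{i,j,k})_{\phi_v}\end{pmatrix}.
\]
The matrix here is precisely the block $\rho(\croot_1)$ computed in Proposition~\ref{prop:RegRepExchange} (the regular representation of $\chebr\ps{5}$), so the lemma is really the assertion that passing from the weight-$1$ injective to its weight-$\gratio$ partner applies this block to every radical layer, together with $m=m'$.

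\emph{Base case $j=0$.} I would compute the radical layers of $I(i)$ and $I(\phi_i)$ directly. Over the hereditary algebra $KQ^\gend$ one has $\rad^k M=M\,(\rad KQ^\gend)^k$, so the radical filtration of an injective $I(v)$ is by path length: $(\dimvect I(v))_w$ counts paths $w\to v$, and $\rad^k I(v)/\rad^{k+1}I(v)$ is supported on the vertices joined to $v$ by a path of length exactly $k$. Reading these off from the explicit shapes and folding-compatible orientations of $Q^{\coxD_6}$ and $Q^{\coxE_8}$ in Figure~\ref{H}, I would verify the fibrewise relation above and $m=m'$ layer by layer. The worked $\coxH_2$ computation following Figure~\ref{fig:A4ARQuiver} is exactly this phenomenon in miniature and serves as the template.

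\emph{Inductive step, and the main obstacle.} Here I would use that the weighting is constant along each $\tau$-orbit (the Remark after Theorem~\ref{thm:Folding}), so the folding pairs the row through $I(i)$ with the row through $I(\phi_i)$ in a $\tau$-equivariant way. Passing from $j$ to $j+1$ via the Auslander--Reiten sequence $0\to\tau^{j+1}I(\bullet)\to E\to\tau^j I(\bullet)\to0$, I would propagate the radical-layer relation through the knitting. The hard part will be precisely this step: $\tau$ does not respect radical filtrations in general, so the bookkeeping cannot be carried out abstractly and must instead be done on the explicit mesh combinatorics of the AR quivers of types $\coxD_6$ and $\coxE_8$, exploiting that paired rows remain paired under knitting and that only the two weights $U_0=1$ and $U_1=\gratio$ occur (so all weight arithmetic collapses to $\gratio^2=\gratio+1$ via Lemma~\ref{lem:ChebProps}). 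Since $KQ^\gend$ is representation-finite the relevant rows are $\tau$-periodic and the verification is finite; equality of the Loewy lengths of paired modules at each step yields $m=m'$.
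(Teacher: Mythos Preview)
Your reformulation and the base case for injectives are sound and align with the paper's treatment. The genuine gap is in the inductive step. You correctly identify the obstacle---the Auslander--Reiten sequence $0\to\tau^{j+1}I(\bullet)\to E\to\tau^jI(\bullet)\to 0$ controls dimension vectors but carries no information about radical layers---and then propose to resolve it by ``explicit mesh combinatorics'' on the $\coxD_6$ and $\coxE_8$ Auslander--Reiten quivers. This is not a workable strategy: mesh relations do not encode Loewy structure, so what you are really proposing is to compute the radical series of every indecomposable (up to $120$ of them for $\coxE_8$) by hand. That is a finite verification, not an argument, and you have not carried it out.

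The paper supplies the missing idea. One first establishes the projected radical-layer identity not only for the injectives $I(i),I(\phi_i)$ but also for the projectives $P(i),P(\phi_i)$; both follow from the same path-counting you sketch, since the pattern of outgoing (resp.\ incoming) arrows at $l$ and at $\phi_l$ in $Q^\gend$ matches under the $\rho(\croot_1)$ block. With these two auxiliary facts in place, the induction runs through the explicit description of $\tau$ via the Nakayama functor rather than through almost split sequences. From a minimal projective presentation $P'\to P\to\tau^jI(i)\to 0$, the inductive hypothesis on the top layer pins down $\tp P$ (and likewise $\tp P_\gratio$); the projective result then forces every radical layer of $P$ and $P_\gratio$, hence of the syzygies, hence of $P'$ and $P'_\gratio$. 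Applying $\nu$ yields $0\to\tau^{j+1}I(i)\to\nu P'\to\nu P$ (and its $\gratio$-partner), and now the injective result transfers the layer identity through the injective envelopes to the radical layers of $\tau^{j+1}I(i)$ and $\tau^{j+1}I(\phi_i)$. This is the systematic replacement for your proposed case-check, and it is what makes the induction go through without any module-by-module computation.
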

\begin{proof}
	It is sufficient to consider the case where $\gend= \coxE_8$ and $\gend'=\coxH_4$, as the quivers of type $\coxA_4$ and $\coxD_6$ may be embedded into $\coxE_8$ in the natural way. In this case, $F$ is of the form
	\begin{center}
		\begin{tikzpicture}
\draw (-2,0.4) node {$1$};
\draw (-1,0.4) node {$2$};
\draw (0,0.4) node {$3$};
\draw (1,0.4) node {$\phi_4$};
\draw (-2,-0.4) node {$\phi_1$};
\draw (-1,-0.4) node {$\phi_2$};
\draw (0,-0.4) node {$\phi_3$};
\draw (1,-0.4) node {$4$};
\draw (-1.8,0.4) -- (-1.2,0.4);
\draw (-0.8,0.4) -- (-0.2,0.4);
\draw (0.2,0.4) -- (0.8,0.4);
\draw (-1.8,-0.4) -- (-1.2,-0.4);
\draw (-0.8,-0.4) -- (-0.2,-0.4);
\draw (0.2,-0.4) -- (0.8,-0.4);
\draw (0.2,-0.2) -- (0.8,0.2);

\draw (1.6,0) node {$\rightarrow$};

\draw (2.2,0) node {$[1]$};
\draw (3.2,0) node {$[2]$};
\draw (4.2,0) node {$[3]$};
\draw (5.2,0) node {$[4]$};
\draw (4.7,0.2) node {\footnotesize$\gratio$};
\draw (2.4,0) -- (3,0);
\draw (3.4,0) -- (4,0);
\draw (4.4,0) -- (5,0);
\end{tikzpicture}
	\end{center}
	where all arrows in the same column point in the same direction, each vertex labelled $j$ has weight $1$ and each vertex labelled $\phi_j$ has weight $\gratio$. We will first prove the result for projectives. That is, we will show that
	\begin{equation*} \tag{$\ast$} \label{eq:HRadProj}
		\dimproj_F( \rad^k P(\phi_i) / \rad^{k+1} P(\phi_i)) = \varphi \dimproj_F( \rad^k  P(i) / \rad^{k+1} P(i))
	\end{equation*}
	for each $i$ and $k$.
	
	First note that $S(v) \subseteq \rad^k P(u) / \rad^{k+1} P(u)$ with $u,v \in Q^{\coxE_8}_0$ if and only if there exists a path in $Q^{\coxE_8}$ of length $k$ from $u$ to $v$. Thus, to prove the result for projectives, it is sufficient to compare the arrows of source $l$ for each $1 \leq l \leq n$ with the arrows of source $\phi_l$ in $Q^{\coxE_8}$. It follows from the folding $F$ that we have
	\begin{center}
		\begin{tikzpicture}
			\draw [anchor=east](-9,0) node {$Q^{\coxE_8}_1\ni$};
			\draw (-8.9,0.4) node {\footnotesize$l$};
			\draw (-8.9,0) node {$\downarrow$};
			\draw (-8.9,-0.4) node {\footnotesize$l'$};
			
			\draw (-8.5,0) node {$\Rightarrow$};
			
			\draw (-8.1,0.4) node {\footnotesize$\phi_l$};
			\draw (-8.1,0) node {$\downarrow$};
			\draw (-8.1,-0.4) node {\footnotesize$\phi_{l'}$};
			\draw [anchor=west](-8,0) node {$\in Q^{\coxE_8}_1,$};
			
			\draw [anchor=east](-5,0) node {$Q^{\coxE_8}_1\ni$};
			\draw (-4.9,0.4) node {\footnotesize$3$};
			\draw (-4.9,0) node {$\downarrow$};
			\draw (-4.9,-0.4) node {\footnotesize$\phi_4$};
			
			\draw (-4.5,0) node {$\Rightarrow$};
			
			\draw (-3.7,0.4) node {\footnotesize$\phi_3$};
			\draw (-4,0) node {$\swarrow$};
			\draw (-3.4,0) node {$\searrow$};
			\draw (-4.3,-0.4) node {\footnotesize$4$};
			\draw (-3.1,-0.4) node {\footnotesize$\phi_4$};
			\draw [anchor=west](-3.1,0) node {$\in Q^{\coxE_8}_1,$};
			
			\draw [anchor=east](0,0) node {$Q^{\coxE_8}_1\ni$};
			\draw (0.1,0.4) node {\footnotesize$4$};
			\draw (0.1,0) node {$\downarrow$};
			\draw (0.1,-0.4) node {\footnotesize$\phi_3$};
			
			\draw (0.5,0) node {$\Rightarrow$};
			
			\draw (1.3,0.4) node {\footnotesize$\phi_4$};
			\draw (1,0) node {$\swarrow$};
			\draw (1.6,0) node {$\searrow$};
			\draw (0.7,-0.4) node {\footnotesize$3$};
			\draw (1.9,-0.4) node {\footnotesize$\phi_3$};
			\draw [anchor=west](1.9,0) node {$\in Q^{\coxE_8}_1$.};
		\end{tikzpicture}
	\end{center}
	Now note that a maximal path in $Q^{\coxE_8}$ from a vertex $i$ has the same length as a maximal path from $\phi_i$, and note that
	\begin{align*}
		\dimproj_F( S(\phi_l)) = d_F(e_{\phi_l}) &= \gratio e_{[l]} = \gratio d_F(e_l) = \gratio \dimproj_F( S(l)) \\
		\dimproj_F( S(l) \oplus S(\phi_l)) = d_F(e_{l} + e_{\phi_l}) &= (1 + \gratio) e_{[l]} = \gratio^2 e_{[l]} = \gratio d_F(e_{\phi_l}) = \gratio \dimproj_F( S(\phi_l)),
	\end{align*}
	where $e_v$ is the $\integer^{8}$-vector with a 1 at entry $v$ and 0 otherwise, and $e_{[v]}$ is the  $(\integer[\gratio])^{4}$-vector with a 1 at entry $[v]$ and 0 otherwise. This proves the result for projectives. The proof for injectives is similar --- we merely compare the arrows of target $l$ and $\phi_l$ instead. That is, it is easy to see that we also have
	\begin{equation*} \tag{$\ast\ast$}\label{eq:HRadInj}
		\dimproj_F( \rad^k I(\phi_i) / \rad^{k+1} I(\phi_i)) = \varphi \dimproj_F( \rad^k  I(i) / \rad^{k+1} I(i))
	\end{equation*}
	for each $i$ and $k$.
	
	The proof for the more general result in the lemma statement works by induction. So let suppose there exists $1\leq i \leq n$ and $j \geq 0$ such that
	\begin{equation*}
		\dimproj_F( X_{\phi_i,j,k}) = \gratio \dimproj_F( X_{i,j,k})
	\end{equation*}
	for all $k\geq 0$. We will show that this implies
	\begin{equation*}
		\dimproj_F( X_{\phi_i,j+1,k}) = \gratio \dimproj_F( X_{i,j+1,k})
	\end{equation*}
	for all $k\geq 0$.
	
	First consider the projective presentations
	\begin{equation*}
		P' \rightarrow P \rightarrow M \rightarrow 0,
	\end{equation*}
	\begin{equation*}
		P'_\gratio \rightarrow P_\gratio \rightarrow M_\gratio \rightarrow 0,
	\end{equation*}
	where $M = \tau^j(I(i))$ and $M_\gratio = \tau^j(I(\phi_i))$. Since $\dimproj_F( X_{\phi_i,j,0}) = \gratio \dimproj_F( X_{i,j,0})$, we have $\dimproj_F( \tp P_\gratio) = \gratio \dimproj_F( \tp P)$. By (\ref{eq:HRadProj}), this implies that
	\begin{equation*}
		\dimproj_F( \rad^{k} P_\gratio / \rad^{k+1} P_\gratio) = \gratio \dimproj_F( \rad^{k} P_\gratio / \rad^{k+1} P)
	\end{equation*}
	for each $k \geq 0$, and thus
	\begin{align*}
	\dimproj_F( \rad^{k} \syz(M_\gratio) / \rad^{k+1} \syz(M_\gratio)) &= \gratio \dimproj_F( \rad^{k} \syz(M) / \rad^{k+1} \syz(M)), \\
		\dimproj_F( \rad^{k} P'_\gratio / \rad^{k+1} P'_\gratio) &= \gratio \dimproj_F( \rad^{k} P' / \rad^{k+1} P').
	\end{align*}
	The projective presentations above give rise to exact sequences
	\begin{equation*}
		0 \rightarrow \tau M \rightarrow I' \rightarrow I,
	\end{equation*}
	\begin{equation*}
		0 \rightarrow \tau M_\gratio \rightarrow I'_\gratio \rightarrow I_\gratio,
	\end{equation*}
	where $I$, $I'$, $I_\gratio$ and $I'_\gratio$ are the injective envelopes of $\tp P$, $\tp P'$, $\tp P_\gratio$ and $\tp P'_\gratio$ respectively. By similar reasoning to the projective presentations and by (\ref{eq:HRadInj}), we get
	\begin{equation*}
		\dimproj_F( \tau M_\gratio)=\dimproj_F( X_{\phi_i,j+1,k}) = \gratio \dimproj_F( X_{i,j+1,k}) = \gratio \dimproj_F( \tau M),
	\end{equation*}
	as required. Since we know that $\dimproj_F( X_{\phi_i,0,k}) = \gratio \dimproj_F( X_{i,0,k})$ for all $1 \leq i \leq n$ and $k\geq 0$, this proves the result.
\end{proof}

\begin{exam} \label{ex:D6H3Folding}
	Consider the folding $F\colon Q^{\coxD_6} \rightarrow Q^{\coxH_3}$ given by
	\begin{center}
		\begin{tikzpicture}
\draw (-0.8,0.4) node {$1$};
\draw (0.2,0.4) node {$2$};
\draw (1.2,0.4) node {$\phi_3$};
\draw (-0.8,-0.4) node {$\phi_1$};
\draw (0.2,-0.4) node {$\phi_2$};
\draw (1.2,-0.4) node {$3$};

\draw (1.8,0) node {$\rightarrow$};

\draw (2.4,0) node {$[1]$};
\draw (3.4,0) node {$[2]$};
\draw (4.4,0) node {$[3]$};
\draw (3.9,0.2) node {\footnotesize$\gratio$};
\draw [->](-0.6,0.4) -- (0,0.4);
\draw [->](0.4,0.4) -- (1,0.4);
\draw [->](-0.6,-0.4) -- (0,-0.4);
\draw [->](0.4,-0.4) -- (1,-0.4);
\draw [->](0.4,-0.2) -- (1,0.2);
\draw [->](2.6,0) -- (3.2,0);
\draw [->](3.6,0) -- (4.2,0);
\end{tikzpicture}
	\end{center}
	where the vertices labelled $i$ have weight 1, the vertices labelled $\phi_i$ have weight $\gratio$, and the vertices labelled $i$ and $\phi_i$ project onto $[i]$. One can see Theorem~\ref{thm:Folding} (and Lemma~\ref{lem:FoldingRadSeries}) holds in Figure~\ref{fig:D6ARQuiver}.
\end{exam}

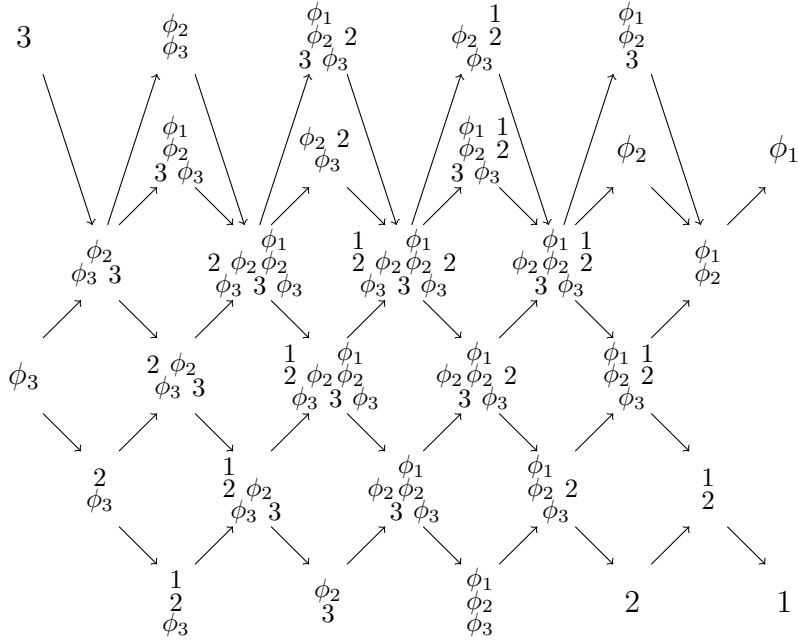
\begin{figure}[h]
	\begin{tikzpicture}
\draw (1,-6) node {$1$};

\draw (-1,-6) node {$2$};

\draw (-3,-5.7) node {\footnotesize$\phi_1$};
\draw (-3,-6) node {\footnotesize$\phi_2$};
\draw (-3,-6.3) node {\footnotesize$\phi_3$};

\draw (-5,-5.85) node {\footnotesize$\phi_2$};
\draw (-5,-6.15) node {\footnotesize$3$};

\draw (-7,-5.7) node {\footnotesize$1$};
\draw (-7,-6) node {\footnotesize$2$};
\draw (-7,-6.3) node {\footnotesize$\phi_3$};

\draw (0,-4.35) node {\footnotesize$1$};
\draw (0,-4.65) node {\footnotesize$2$};

\draw (-2.2,-4.2) node {\footnotesize$\phi_1$};
\draw (-2.2,-4.5) node {\footnotesize$\phi_2$};
\draw (-1.8,-4.5) node {\footnotesize$2$};
\draw (-2,-4.8) node {\footnotesize$\phi_3$};

\draw (-3.9,-4.2) node {\footnotesize$\phi_1$};
\draw (-3.9,-4.5) node {\footnotesize$\phi_2$};
\draw (-4.3,-4.5) node {\footnotesize$\phi_2$};
\draw (-3.7,-4.8) node {\footnotesize$\phi_3$};
\draw (-4.1,-4.8) node {\footnotesize$3$};

\draw (-6.3,-4.2) node {\footnotesize$1$};
\draw (-5.9,-4.5) node {\footnotesize$\phi_2$};
\draw (-6.3,-4.5) node {\footnotesize$2$};
\draw (-5.7,-4.8) node {\footnotesize$3$};
\draw (-6.1,-4.8) node {\footnotesize$\phi_3$};

\draw (-8,-4.35) node {\footnotesize$2$};
\draw (-8,-4.65) node {\footnotesize$\phi_3$};

\draw (-1.2,-2.7) node {\footnotesize$\phi_1$};
\draw (-0.8,-2.7) node {\footnotesize$1$};
\draw (-1.2,-3) node {\footnotesize$\phi_2$};
\draw (-0.8,-3) node {\footnotesize$2$};
\draw (-1,-3.3) node {\footnotesize$\phi_3$};

\draw (-3,-2.7) node {\footnotesize$\phi_1$};
\draw (-3,-3) node {\footnotesize$\phi_2$};
\draw (-3.4,-3) node {\footnotesize$\phi_2$};
\draw (-2.6,-3) node {\footnotesize$2$};
\draw (-2.8,-3.3) node {\footnotesize$\phi_3$};
\draw (-3.2,-3.3) node {\footnotesize$3$};

\draw (-4.7,-2.7) node {\footnotesize$\phi_1$};
\draw (-5.5,-2.7) node {\footnotesize$1$};
\draw (-4.7,-3) node {\footnotesize$\phi_2$};
\draw (-5.1,-3) node {\footnotesize$\phi_2$};
\draw (-5.5,-3) node {\footnotesize$2$};
\draw (-4.5,-3.3) node {\footnotesize$\phi_3$};
\draw (-4.9,-3.3) node {\footnotesize$3$};
\draw (-5.3,-3.3) node {\footnotesize$\phi_3$};

\draw (-6.9,-2.85) node {\footnotesize$\phi_2$};
\draw (-7.3,-2.85) node {\footnotesize$2$};
\draw (-6.7,-3.15) node {\footnotesize$3$};
\draw (-7.1,-3.15) node {\footnotesize$\phi_3$};

\draw (-9,-3) node {$\phi_3$};

\draw (0,-1.35) node {\footnotesize$\phi_1$};
\draw (0,-1.65) node {\footnotesize$\phi_2$};

\draw (-2,-1.2) node {\footnotesize$\phi_1$};
\draw (-1.6,-1.2) node {\footnotesize$1$};
\draw (-2,-1.5) node {\footnotesize$\phi_2$};
\draw (-2.4,-1.5) node {\footnotesize$\phi_2$};
\draw (-1.6,-1.5) node {\footnotesize$2$};
\draw (-1.8,-1.8) node {\footnotesize$\phi_3$};
\draw (-2.2,-1.8) node {\footnotesize$3$};

\draw (-3.8,-1.2) node {\footnotesize$\phi_1$};
\draw (-4.6,-1.2) node {\footnotesize$1$};
\draw (-3.8,-1.5) node {\footnotesize$\phi_2$};
\draw (-4.2,-1.5) node {\footnotesize$\phi_2$};
\draw (-4.6,-1.5) node {\footnotesize$2$};
\draw (-3.4,-1.5) node {\footnotesize$2$};
\draw (-3.6,-1.8) node {\footnotesize$\phi_3$};
\draw (-4,-1.8) node {\footnotesize$3$};
\draw (-4.4,-1.8) node {\footnotesize$\phi_3$};

\draw (-5.7,-1.2) node {\footnotesize$\phi_1$};
\draw (-5.7,-1.5) node {\footnotesize$\phi_2$};
\draw (-6.1,-1.5) node {\footnotesize$\phi_2$};
\draw (-6.5,-1.5) node {\footnotesize$2$};
\draw (-5.5,-1.8) node {\footnotesize$\phi_3$};
\draw (-5.9,-1.8) node {\footnotesize$3$};
\draw (-6.3,-1.8) node {\footnotesize$\phi_3$};

\draw (-8,-1.35) node {\footnotesize$\phi_2$};
\draw (-8.2,-1.65) node {\footnotesize$\phi_3$};
\draw (-7.8,-1.65) node {\footnotesize$3$};

\draw (1,0) node {$\phi_1$};

\draw (-1,0) node {$\phi_2$};

\draw (-2.7,0.3) node {\footnotesize$1$};
\draw (-3.1,0.3) node {\footnotesize$\phi_1$};
\draw (-3.1,0) node {\footnotesize$\phi_2$};
\draw (-2.7,0) node {\footnotesize$2$};
\draw (-2.9,-0.3) node {\footnotesize$\phi_3$};
\draw (-3.3,-0.3) node {\footnotesize$3$};

\draw (-5.2,0.15) node {\footnotesize$\phi_2$};
\draw (-4.8,0.15) node {\footnotesize$2$};
\draw (-5,-0.15) node {\footnotesize$\phi_3$};

\draw (-7,0.3) node {\footnotesize$\phi_1$};
\draw (-7,0) node {\footnotesize$\phi_2$};
\draw (-7.2,-0.3) node {\footnotesize$3$};
\draw (-6.8,-0.3) node {\footnotesize$\phi_3$};

\draw (-1,1.8) node {\footnotesize$\phi_1$};
\draw (-1,1.5) node {\footnotesize$\phi_2$};
\draw (-1,1.2) node {\footnotesize$3$};

\draw (-2.8,1.8) node {\footnotesize$1$};
\draw (-3.2,1.5) node {\footnotesize$\phi_2$};
\draw (-2.8,1.5) node {\footnotesize$2$};
\draw (-3,1.2) node {\footnotesize$\phi_3$};

\draw (-5.1,1.8) node {\footnotesize$\phi_1$};
\draw (-5.1,1.5) node {\footnotesize$\phi_2$};
\draw (-4.7,1.5) node {\footnotesize$2$};
\draw (-4.9,1.2) node {\footnotesize$\phi_3$};
\draw (-5.3,1.2) node {\footnotesize$3$};

\draw (-7,1.65) node {\footnotesize$\phi_2$};
\draw (-7,1.35) node {\footnotesize$\phi_3$};

\draw (-9,1.5) node {$3$};

\draw [->](0.25,-5) -- (0.75,-5.5);
\draw [->](-0.75,-5.5) -- (-0.25,-5);
\draw [->](-1.75,-5) -- (-1.25,-5.5);
\draw [->](-2.75,-5.5) -- (-2.25,-5);
\draw [->](-3.75,-5) -- (-3.25,-5.5);
\draw [->](-4.75,-5.5) -- (-4.25,-5);
\draw [->](-5.75,-5) -- (-5.25,-5.5);
\draw [->](-6.75,-5.5) -- (-6.25,-5);
\draw [->](-7.75,-5) -- (-7.25,-5.5);

\draw [->](-0.75,-3.5) -- (-0.25,-4);
\draw [->](-1.75,-4) -- (-1.25,-3.5);
\draw [->](-2.75,-3.5) -- (-2.25,-4);
\draw [->](-3.75,-4) -- (-3.25,-3.5);
\draw [->](-4.75,-3.5) -- (-4.25,-4);
\draw [->](-5.75,-4) -- (-5.25,-3.5);
\draw [->](-6.75,-3.5) -- (-6.25,-4);
\draw [->](-7.75,-4) -- (-7.25,-3.5);
\draw [->](-8.75,-3.5) -- (-8.25,-4);

\draw [->](-0.75,-2.5) -- (-0.25,-2);
\draw [->](-1.75,-2) -- (-1.25,-2.5);
\draw [->](-2.75,-2.5) -- (-2.25,-2);
\draw [->](-3.75,-2) -- (-3.25,-2.5);
\draw [->](-4.75,-2.5) -- (-4.25,-2);
\draw [->](-5.75,-2) -- (-5.25,-2.5);
\draw [->](-6.75,-2.5) -- (-6.25,-2);
\draw [->](-7.75,-2) -- (-7.25,-2.5);
\draw [->](-8.75,-2.5) -- (-8.25,-2);

\draw [->](0.25,-1) -- (0.75,-0.5);
\draw [->](-0.75,-0.5) -- (-0.25,-1);
\draw [->](-1.75,-1) -- (-1.25,-0.5);
\draw [->](-2.75,-0.5) -- (-2.25,-1);
\draw [->](-3.75,-1) -- (-3.25,-0.5);
\draw [->](-4.75,-0.5) -- (-4.25,-1);
\draw [->](-5.75,-1) -- (-5.25,-0.5);
\draw [->](-6.75,-0.5) -- (-6.25,-1);
\draw [->](-7.75,-1) -- (-7.25,-0.5);

\draw [->](-0.75,1) -- (-0.1,-1);
\draw [->](-1.9,-1) -- (-1.25,1);
\draw [->](-2.75,1) -- (-2.1,-1);
\draw [->](-3.9,-1) -- (-3.25,1);
\draw [->](-4.75,1) -- (-4.1,-1);
\draw [->](-5.9,-1) -- (-5.25,1);
\draw [->](-6.75,1) -- (-6.1,-1);
\draw [->](-7.9,-1) -- (-7.25,1);
\draw [->](-8.75,1) -- (-8.1,-1);
\end{tikzpicture}
	\caption{The Auslander-Reiten quiver of $\mod* KQ^{\coxD_6}$ associated to the folding $Q^{\coxD_6} \rightarrow Q^{\coxH_3}$, where $Q^{\coxH_3}$ is linearly oriented.}
	\label{fig:D6ARQuiver}
\end{figure}

The lemma above allows us to prove Theorem~\ref{thm:Folding} for the cases where $\gend' \in \{\coxH_2=\coxI_2(5), \coxH_3, \coxH_4\}$.
\begin{proof}
	By Gabriel's Theorem, $\dimvect(M)$ is a positive root of $\gend$ for any indecomposable $KQ^\gend$-module $M$. It follows from Figures~\ref{fig:A4ARQuiver} and \ref{fig:D6ARQuiver} and \cite{Lusztig,MoodyPatera} that the set
	\begin{equation*}
		V_F = \{\dimproj_F( M) : M \text{ indecomposable } KQ^\gend\text{-module}\}
	\end{equation*}
	may be partitioned into two subsets $V_F^1$ and $V_F^\gratio$ such that $V_F^1$ is the set of positive roots of $\gend'$ and
	\begin{equation*}
		V_F^\gratio = \{\gratio v : v \in V_F^1\}.
	\end{equation*}

	We know that for each $i \in Q^\gend_0$, we have that $\dimproj_F(\tau^m I(i))$ is either in $V_F^1$ or $V_F^\gratio$. There is precisely one other vertex $j \in Q^\gend_0$ such that $F(i)=F(j)$, and by Lemma~\ref{lem:FoldingRadSeries}, the vector $\dimproj_F(\tau^m I(j))$ belongs to a different one of the sets $V_F^1$ or $V_F^\gratio$ compared to $\dimproj_F(\tau^m I(i))$. The result for (b) then follows.
	
	For (a), that we specifically have $\dimproj_F(M) \in V_F^1$ for any $M \in \mathcal{I}$ follows from (b).
\end{proof}

\subsection{Folding onto $\coxI$-type quivers} \label{sec:IFoldings}
Throughout this section, we will work with the folding $F\colon Q^{\coxA_{2n}} \rightarrow Q^{\coxI_2(2n+1)}$ defined in Remark~\ref{rem:IFolding}. Recall that the orientation of both $Q^{\coxA_{2n}}$ and $Q^{\coxI_2(2n+1)}$ is such that all arrows go from an even vertex to an odd vertex.

\begin{rem}
	A consequence of Lemma~\ref{lem:ChebProps}(c) is that the vertex weights of $Q^{\coxA_{2n}}$ are such that $\vw(k) = \vw(n-1-k)$.
\end{rem}

\begin{lem} \label{lem:InjProjIMultiples}
	Let $F\colon Q^{\coxA_{2n}} \rightarrow Q^{\coxI_2(2n+1)}$ be as above. Then for any $0\leq k,l \leq 2n-1$, such that $k$ and $l$ are either both even or both odd, we have
	\begin{enumerate}[label=(\alph*)]
		\item $\vw(l) \dimproj_F(I(k)) = \vw(k) \dimproj_F(I(l))$; and
		\item $\vw(l) \dimproj_F(P(k)) = \vw(k) \dimproj_F(P(l))$.
	\end{enumerate}
\end{lem}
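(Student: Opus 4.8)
The plan is to compute the projected dimension vectors $\dimproj_F(P(k))$ and $\dimproj_F(I(k))$ outright and to show that each of them factors as $\vw(k)$ times a vector depending only on the parity of $k$; both identities (a) and (b) then follow at once. Since $Q^{\coxA_{2n}}$ is of type $\coxA$, every indecomposable module is an interval module (multiplicity one on a connected subinterval of $\{0,\dots,2n-1\}$) and there is at most one path between any two vertices. In the bipartite orientation of Remark~\ref{rem:IFolding} the even vertices are sources and the odd vertices are sinks, and $F$ collapses the even vertices to $[0]$ and the odd vertices to $[1]$; thus $d_F$ sums the even coordinates (weighted by $\vw$) into the $[0]$-entry and the odd coordinates into the $[1]$-entry.

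First I would read off the relevant dimension vectors. As a sink has no outgoing arrows and a source no incoming ones, $P(k)=S(k)$ for odd $k$ and $I(k)=S(k)$ for even $k$; these give $\dimproj_F(P(k))=(0,\vw(k))$ and $\dimproj_F(I(k))=(\vw(k),0)$ respectively. For even $k$ the projective $P(k)$ is the interval module on $\{k-1,k,k+1\}\cap\{0,\dots,2n-1\}$, and for odd $k$ the injective $I(k)$ is the interval module on $\{k-1,k,k+1\}\cap\{0,\dots,2n-1\}$, so that $\dimproj_F(P(k))=(\vw(k),\,\vw(k-1)+\vw(k+1))$ for even $k$ and $\dimproj_F(I(k))=(\vw(k-1)+\vw(k+1),\,\vw(k))$ for odd $k$.

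The decisive simplification is the three-term recurrence $\vw(k-1)+\vw(k+1)=\vw(1)\vw(k)$, which is exactly Lemma~\ref{lem:ChebProps}(e) with $l=1$. It reduces the formulas above to
\[\dimproj_F(P(k))=\vw(k)\,(1,\vw(1)),\qquad \dimproj_F(I(k))=\vw(k)\,(1,0)\qquad(k\text{ even}),\]
\[\dimproj_F(P(k))=\vw(k)\,(0,1),\qquad \dimproj_F(I(k))=\vw(k)\,(\vw(1),1)\qquad(k\text{ odd}).\]
The two boundary vertices require a separate check but obey the same formulas: for $P(0)$ the term $\vw(-1)$ is absent, and the identity matches because $\vw(0)=U_0(\cos\tfrac{\pi}{2n+1})=1$; for $I(2n-1)$ the term $\vw(2n)$ is absent, but $\vw(2n)=0$ by Lemma~\ref{lem:ChebProps}(b) (taken with Chebyshev parameter $2n$), so the recurrence still yields $\vw(2n-2)=\vw(1)\vw(2n-1)$.

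Granting these factorizations, the lemma is immediate. Write $p$ and $q$ for the parity-dependent vectors appearing in $\dimproj_F(P(k))=\vw(k)\,p$ and $\dimproj_F(I(k))=\vw(k)\,q$. If $k$ and $l$ have the same parity, then $p$ and $q$ are common to both, so $\vw(l)\dimproj_F(P(k))=\vw(k)\vw(l)\,p=\vw(k)\dimproj_F(P(l))$, and identically with $q$ in place of $p$ for the injectives. I expect the only real friction to lie in the boundary bookkeeping and in fixing the convention that the even vertices are the sources; the sole substantive ingredient is the Chebyshev recurrence $\vw(k-1)+\vw(k+1)=\vw(1)\vw(k)$, which is precisely what produces the shared scalar $\vw(k)$.
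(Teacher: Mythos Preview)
Your proposal is correct and follows essentially the same approach as the paper: compute $\dimproj_F(I(k))$ (and dually $\dimproj_F(P(k))$) directly from the interval structure of $A_{2n}$-modules, then use the Chebyshev recurrence $\theta_{k-1}+\theta_{k+1}=\theta_1\theta_k$ (Lemma~\ref{lem:ChebProps}(e) with $l=1$) to factor each projected vector as $\vw(k)$ times a vector depending only on parity. The paper handles only the injectives and declares (b) dual, while you spell out both cases and the two boundary vertices; your treatment is slightly more explicit but the substance is identical.
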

\begin{proof}
	(a) Assume that $Q^{\coxA_{2n}}$ is oriented as above --- the proof for the opposite quiver is dual. For the purpose of readability, we will write $\theta_m = U_m(\cos \frac{\pi}{2n+1})$. It follows from the definitions, that we have
	\begin{equation*}
		\dimproj_F(I(k))=(\theta_k,0)
	\end{equation*}
	for $k$ even. In this case, it is obvious that (a) holds. We also have
	\begin{equation*}
		\dimproj_F(I(k)) =
		\begin{cases} 
			(\theta_{k-1}+\theta_{k+1},\theta_k)	& \text{if } k < 2n-1, \\
			(\theta_{2n-2},\theta_{2n-1}) & \text{if } k = 2n-1,
		\end{cases}
	\end{equation*}
	for $k$ odd, where by Lemma~\ref{lem:ChebProps}, we may write
	\begin{align*}
		(\theta_{k-1}+\theta_{k+1},\theta_k) &= (\theta_1\theta_k, \theta_k), \\
		(\theta_{2n-2},\theta_{2n-1}) &= (\theta_1,1).
	\end{align*}
	Since each $\vw(k)=\theta_k$, it is clear that (a) holds.

	(b) The proof is dual to (a).
\end{proof}

\begin{lem} \label{lem:IFoldingB}
	Let $F\colon Q^{\coxA_{2n}} \rightarrow Q^{\coxI_2(2n+1)}$ be as above. Then for any $i,j \in Q^{\coxA_{2n}}_0$ such that $F(i)=F(j)$, and for any $m \in \nnint$,
	\begin{align*}
		\vw(j) \dimproj_F(\tau^m I(i)) &= \vw(i) \dimproj_F(\tau^m I(j)).
	\end{align*}
\end{lem}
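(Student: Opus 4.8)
The plan is to induct on $m$, with base case $m=0$ supplied by Lemma~\ref{lem:InjProjIMultiples}(a), and to run the inductive step through the Coxeter transformation. For the Dynkin algebra $KQ^{\coxA_{2n}}$ there is a Coxeter matrix $\Phi=-C^{\mathrm T}C^{-1}$ (with $C$ the Cartan matrix) satisfying $\dimvect(\tau N)=\Phi\,\dimvect N$ for every indecomposable non-projective $N$; writing $l_k$ for the integer with $\tau^{l_k}I(k)$ the terminal projective of the $\tau$-orbit of $I(k)$, we thus have $\dimvect(\tau^m I(k))=\Phi^m\dimvect I(k)$ for $0\le m\le l_k$. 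Since $d_F$ is $\integer$-linear, the whole lemma reduces to the base case once I exhibit a single $\wh\chebr\ps{2n+1}$-linear map $\Psi$ with $d_F\circ\Phi=\Psi\circ d_F$: indeed then $\dimproj_F(\tau^m I(k))=\Psi^m\dimproj_F(I(k))$ in the valid range, and because $\Psi$ commutes with the scalars $\vw(i),\vw(j)\in\wh\chebr\ps{2n+1}$, the desired identity $\vw(j)\dimproj_F(\tau^m I(i))=\vw(i)\dimproj_F(\tau^m I(j))$ follows from the $m=0$ case by applying $\Psi^m$.

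The key step, and the main obstacle, is the intertwining identity $d_F\circ\Phi=\Psi\circ d_F$ for a suitable $\Psi\in\Mat_2(\wh\chebr\ps{2n+1})$. Here I would use the block machinery of Sections~\ref{section-unf}--\ref{cheb}. Fixing the ordering of $Q^{\coxA_{2n}}_0$ from Proposition~\ref{prop:RegRepExchange}, each of the two blocks (even and odd vertices) is identified with the integral basis $\croot_0,\dots,\croot_{n-1}$ of $\chebr\ps{2n+1}$; this gives $\integer^{2n}\cong(\chebr\ps{2n+1})^2$ as $\integer$-modules, and under it $d_F$ becomes the map that reads each block as an element of $\chebr\ps{2n+1}$ and applies $\sigma\ps{2n+1}$ to each (using that the block weights are exactly $\sigma\ps{2n+1}(\croot_0),\dots,\sigma\ps{2n+1}(\croot_{n-1})$, which follows from $\vw(k)=\vw(2n-1-k)$). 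Since every arrow of $Q^{\coxA_{2n}}$ runs from the even part to the odd part, the two parts are independent sets and all directed paths have length at most one; hence $C=\mathrm{Id}+A$ with $A$ the arrow-adjacency, which in block form is unitriangular with diagonal blocks $\rho(\croot_0)=\mathrm{Id}$ and off-diagonal block $\rho(b')$ for some $b'\in\chebr\ps{2n+1}$ by Proposition~\ref{prop:RegRepExchange}. As each $\rho(\croot_i)$ is symmetric (Lemma~\ref{lem:ChebRegRep}(b)), transposition of $C$ corresponds to transposition of the associated matrix $\widehat C\in\Mat_2(\chebr\ps{2n+1})$; and $\widehat C$ is unitriangular, hence invertible over $\chebr\ps{2n+1}$. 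Therefore $\Phi$ corresponds to $\widehat\Phi=-\widehat C^{\mathrm T}\widehat C^{-1}\in\Mat_2(\chebr\ps{2n+1})$, and applying the ring homomorphism $\sigma\ps{2n+1}$ entrywise yields $d_F\circ\Phi=\Psi\circ d_F$ with $\Psi=\sigma\ps{2n+1}(\widehat\Phi)$.

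Finally, the identity must also hold in the degenerate range, which amounts to $\tau^m I(i)=0\Leftrightarrow\tau^m I(j)=0$, i.e.\ to the two rows having equal length $l_i=l_j$. By the base case the vectors $\dimproj_F(I(i))$ and $\dimproj_F(I(j))$ are positive scalar multiples of one another, hence so are $\Psi^m\dimproj_F(I(i))$ and $\Psi^m\dimproj_F(I(j))$ for every $m$, as $\Psi$ is $\wh\chebr\ps{2n+1}$-linear. Now $\tau^m I(k)\neq0$ precisely for $0\le m\le l_k$; for these $m$ the vector $\Psi^m\dimproj_F(I(k))=\dimproj_F(\tau^m I(k))$ has nonnegative coordinates (it is a projected dimension vector), whereas $\Psi^{l_k+1}\dimproj_F(I(k))=-\dimproj_F(I(k'))$, since the Coxeter transformation sends the class of a projective to minus the class of the corresponding injective, so this vector has a coordinate that is negative under $\wh\sigma\ps{2n+1}$. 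Thus $l_k+1$ is the first $m$ at which $\Psi^m\dimproj_F(I(k))$ acquires a negative coordinate; and since $\vw(i)\,\Psi^m\dimproj_F(I(j))=\vw(j)\,\Psi^m\dimproj_F(I(i))$ with $\vw(i),\vw(j)$ positive, a coordinate of one is negative exactly when the corresponding coordinate of the other is. Hence this first time is the same for $i$ and $j$, giving $l_i=l_j$, so the degenerate range is vacuous and the identity holds for all $m\in\nnint$. I expect the genuine work to be concentrated in the intertwining of the second paragraph (identifying $\Phi$ as block-$\rho$ and the attendant bookkeeping of the block ordering), with the remaining steps being formal.
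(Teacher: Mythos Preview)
Your argument is correct and takes a cleaner, more structural route than the paper's. Both proofs induct on $m$ with base case Lemma~\ref{lem:InjProjIMultiples}, but the paper runs the inductive step module-theoretically: it writes down the minimal projective presentations of $\tau^m I(i)$ and $\tau^m I(j)$, uses the radical-square-zero structure and Lemma~\ref{lem:InjProjIMultiples}(b) to see that these presentations are related by the same scalar ratio, then pushes through the Nakayama functor to obtain the claim for $\tau^{m+1}$. You instead package the translate as the Coxeter matrix $\Phi=-C^{\mathrm T}C^{-1}$ and show, via the block-$\rho$ machinery of Section~\ref{cheb} (Lemma~\ref{lem:ChebRegRep}, Proposition~\ref{prop:RegRepExchange}), that $\Phi$ lies in $\Mat_2(\rho(\chebr\ps{2n+1}))$ and hence descends along $d_F$ to a single $2\times 2$ matrix $\Psi$ over $\wh\chebr\ps{2n+1}$; the inductive step is then a one-line application of $\Psi^m$. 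This is precisely the mechanism behind Lemma~\ref{lem:RegRepExchange}(c), transplanted from the exchange matrix to the Cartan matrix, and it buys you a clean intertwining identity in place of the paper's repeated exact-sequence bookkeeping. Your treatment of the degenerate range via the sign of $\Psi^m\dimproj_F(I(k))$ is also sound, though one can short-circuit it here: for the bipartite $Q^{\coxA_{2n}}$ the $\tau$-orbit length of $I(k)$ depends only on the parity of $k$, and $F(i)=F(j)$ forces $i,j$ to share parity, so $l_i=l_j$ is automatic.
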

\begin{proof}
	The proof follows by induction. Assume the statement holds for a given $m \in \nnint$. The path algebra $KQ^{\coxA_{2n}}$ is radical square zero. Moreover for any indecomposable non-projective $M \in \mod*KQ^{\coxA_{2n}}$, we have
	\begin{equation*}
		\dimproj_F(\tp M) = (r,0) \qquad \text{and} \qquad \dimproj_F(\rad M) = (0,r')
	\end{equation*}
	for some $r,r' \in \chebr\ps{2n+1}$. It is therefore easy to see from Lemma~\ref{lem:InjProjIMultiples} that the projective presentations
	\begin{align*}
		P_1\ps{i} \rightarrow P_0\ps{i} \rightarrow \tau^m I(i) &\rightarrow 0 \\
		P_1\ps{j} \rightarrow P_0\ps{j} \rightarrow \tau^m I(j) &\rightarrow 0
	\end{align*}
	satisfy the property 
	\begin{equation*}
		\vw(j) \dimproj_F(P_k\ps{i}) = \vw(i) \dimproj_F(P_k\ps{j})
	\end{equation*}
	for each $k$. It is also straightforward to deduce from Lemma~\ref{lem:InjProjIMultiples} that with the exact sequences
	\begin{align*}
		0 \rightarrow \tau^{m+1} I(i) &\rightarrow \nu P_1\ps{i} \rightarrow \nu P_0\ps{i} \rightarrow \nu \tau^m I(i) \rightarrow 0 \\
		0 \rightarrow \tau^{m+1} I(j) &\rightarrow \nu P_1\ps{j} \rightarrow \nu P_0\ps{j} \rightarrow \nu \tau^m I(j) \rightarrow 0
	\end{align*}
	where $\nu$ is the Nakayama functor, we have
	\begin{equation*}
		\vw(j) \dimproj_F(\nu P_k\ps{i}) = \vw(i) \dimproj_F(\nu P_k\ps{j})
	\end{equation*}
	and thus,
	\begin{equation*}
		\vw(j) \dimproj_F(\tau^{m+1} I(i)) = \vw(i)\dimproj_F(\tau^{m+1} I(j)).
	\end{equation*}
	Lemma~\ref{lem:InjProjIMultiples} then completes the proof.
\end{proof}

\begin{lem} \label{lem:IRootsOfUnity}
	Let $F\colon Q^{\coxA_{2n}} \rightarrow Q^{\coxI_2(2n+1)}$ be as above and denote $\theta = \cos \frac{\pi}{2n+1}$. Let $e_F$ be the linear change-of-basis transformation from the basis of positive simple roots of $\coxI_2(2n+1)$ to the standard basis over $\real$. That is, 
	\begin{align*}
		e_F(1,0) &= (1,0)	&	e_F(0,1) &= (\cos 2n\theta, \sin 2n\theta).
	\end{align*}
	Then
	\begin{align*}
		e_F \dimproj_F(\tau^m I(0)) &= (\cos 2m\theta,\sin 2m\theta) & &(0 \leq m \leq n), \\
		e_F \dimproj_F(\tau^m I(2n-1)) &= (\cos (2m+1)\theta,\sin (2m+1)\theta)  & &(0 \leq m < n).
		\end{align*}
\end{lem}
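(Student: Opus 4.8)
The plan is to induct on $m$, reducing the statement to an explicit base case together with a single geometric step: that applying $\tau$ and then projecting amounts to rotating by $\tfrac{2\pi}{2n+1}$ in the plane $\real^2$. Throughout I read the $\theta$ of the statement as the angle $\tfrac{\pi}{2n+1}$, and I write $\theta_k=U_k(\cos\tfrac{\pi}{2n+1})$ as in the proof of Lemma~\ref{lem:InjProjIMultiples}; note that $\theta_0=\theta_{2n-1}=1$ by Lemma~\ref{lem:ChebProps}(c), so $I(0)$ and $I(2n-1)$ both have weight $1$ and their $\tau$-orbits are exactly the modules of $\mathcal{I}\ps{0}$. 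Let $R\colon\real^2\to\real^2$ be the rotation by $2\theta$. The heart of the argument is the intertwining identity
\begin{equation*}
 e_F\, d_F\, c \;=\; R\, e_F\, d_F ,
\end{equation*}
where $c$ denotes the Coxeter transformation of $KQ^{\coxA_{2n}}$ acting on dimension vectors; granting this, the two displayed formulas follow by induction from the case $m=0$.

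For the base case I would use Lemma~\ref{lem:InjProjIMultiples}. There $\dimproj_F(I(0))=(\theta_0,0)=(1,0)$, so $e_F\dimproj_F(I(0))=(1,0)=(\cos 0,\sin 0)$. For the odd orbit, $\dimproj_F(I(2n-1))=(\theta_{2n-2},\theta_{2n-1})=(\theta_1,1)$; since the second simple root of $\coxI_2(2n+1)$ sits at angle $2n\theta=\pi-\theta$, we have $e_F(0,1)=(-\cos\theta,\sin\theta)$, and as $\theta_1=2\cos\theta$ this gives
\begin{equation*}
 e_F\dimproj_F(I(2n-1))=\theta_1(1,0)+(-\cos\theta,\sin\theta)=(\cos\theta,\sin\theta),
\end{equation*}
which is the $m=0$ case of the odd formula.

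For the inductive step I would invoke the standard fact that $\dimvect(\tau M)=c\,\dimvect(M)$ for every non-projective indecomposable $M$ over the hereditary algebra $KQ^{\coxA_{2n}}$. Combined with the intertwining identity, each application of $\tau$ rotates the embedded projected vector by $2\theta$, so $e_F\dimproj_F(\tau^{m}I(0))=R^m(\cos 0,\sin 0)=(\cos 2m\theta,\sin 2m\theta)$ and $e_F\dimproj_F(\tau^{m}I(2n-1))=R^m(\cos\theta,\sin\theta)=(\cos(2m+1)\theta,\sin(2m+1)\theta)$, valid as long as the relevant modules are non-zero. To pin the ranges, observe that $\dimvect(M)$ is always effective, so $d_F(\dimvect M)$ is a non-negative combination of the two simple roots of $\coxI_2(2n+1)$; hence $e_F\dimproj_F(M)$ lies in the closed cone they span and can never be a negative root. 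If $\tau^{n}I(0)$ were non-projective, then $\tau^{n+1}I(0)$ would be a genuine module with $e_F\dimproj_F(\tau^{n+1}I(0))=(\cos(2n+2)\theta,\sin(2n+2)\theta)=-(\cos\theta,\sin\theta)$, a negative root, a contradiction; thus $\tau^{n}I(0)$ is projective and the even orbit terminates precisely at $m=n$. The same argument with angle $(2n+1)\theta=\pi$ shows $\tau^{n-1}I(2n-1)$ is projective, so the odd orbit runs over $0\le m<n$, matching the stated ranges (and, for $n=2$, Figure~\ref{fig:A4ARQuiver}).

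The main obstacle is establishing the intertwining identity $e_F d_F\,c=R\,e_F d_F$ rigorously. I would prove it in one of two ways. Geometrically, $e_F\circ d_F$ realises exactly the Moody--Patera/Lusztig projection of the root system of $\coxA_{2n}$ onto the plane of $\coxI_2(2n+1)$~\cite{Lusztig,WaveFrontsReflGroups,MoodyPatera}, under which the Coxeter element of $\coxA_{2n}$ descends to the Coxeter element of $\coxI_2(2n+1)$, namely the rotation $R$ by $2\theta$. Algebraically, one writes $c$ as the product of the simple reflections of $\coxA_{2n}$ and checks, block by block, that $d_F$ carries the reflections in the two unfolding blocks $E_{[0]},E_{[1]}$ to the two simple reflections of $\coxI_2(2n+1)$; this is the content encoded by the regular-representation block structure of the unfolding in Proposition~\ref{prop:RegRepExchange} and Lemma~\ref{lem:RegRepExchange}, since the off-diagonal block $\rho(\croot_1)$ reproduces the coefficient $2\cos\tfrac{\pi}{2n+1}$. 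The orientation of $R$ (the sign $+2\theta$ rather than $-2\theta$) is then fixed by comparing one step of the induction with a direct computation of $\dimproj_F(\tau I(0))$ from the Auslander--Reiten quiver of $KQ^{\coxA_{2n}}$.
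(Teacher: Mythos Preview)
Your approach is correct and genuinely different from the paper's. The paper computes each $\tau^m I(0)$ and $\tau^m I(2n-1)$ explicitly from the known Auslander--Reiten structure of the bipartite $A_{2n}$ quiver (e.g.\ $\tp\tau^m I(0)=S(2m)$, $\rad\tau^m I(0)=S(2m-1)$), writes down $\dimproj_F$ as $(\theta_{2m},\theta_{2m-1})$, and then verifies the claimed coordinates by bare trigonometric identities such as $\sin\theta\cos 2m\theta=\sin(2m+1)\theta+\sin 2m\theta\cos 2n\theta$. You instead prove once the intertwining $e_F\,d_F\,c=R\,e_F\,d_F$ (the Coxeter element of $A_{2n}$ projects to the rotation by $2\theta$), which reduces everything to the $m=0$ base case and iteration; your block-reflection check that $d_F\circ\prod_{i\in E_{[l]}}s_i=s'_{[l]}\circ d_F$ is exactly the computation $\theta_{j-1}+\theta_{j+1}=\theta_1\theta_j$ and is sound. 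Your route is more conceptual and ties the lemma directly to the Lusztig/Moody--Patera projection; the paper's is more self-contained, needing no statement about Coxeter elements. One point to tighten: your geometric argument (``a non-zero module cannot project to a negative root'') shows the orbits cannot extend \emph{beyond} $m=n$ (resp.\ $m=n-1$), but not that they actually \emph{reach} those values; you need either to cite the known orbit lengths in the AR quiver of bipartite $A_{2n}$, as the paper does, or add a counting argument using the $n(2n+1)$ indecomposables.
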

\begin{proof}
	The results follow from trigonometric identities and known results on the Auslander-Reiten theory of quivers of type $\coxA_{2n}$ (in particular, see \cite{ButlerRingel} for results on the Auslander-Reiten translate). We will assume the orientation of $Q^{\coxA_{2n}}$ given at the start of this subsection --- the proof for the opposite quiver is dual. For the purpose of readability, we will also write
	\begin{equation*}
		\theta_k = U_k(\cos \theta)=\frac{\sin (k+1)\theta}{\sin \theta}.
	\end{equation*}
	
	(a) The row of the Auslander-Reiten quiver containing the injective module $I(0)$ is such that
	\begin{equation*}
		\tp \tau^m I(0) = S(2m) \qquad \text{and} \qquad \rad \tau^m I(0) = S(2m-1)
	\end{equation*}
	for any $0 < m < n$, and $I(0) \cong S(0)$ and $\tau^{n}I(0) \cong S(2n-1)$. Thus, we have 
	\begin{equation*}
		\dimproj_F(\tau^m I(0)) = 
		\begin{cases}
			(1,0)								&	m=0, \\
			(\theta_{2m},\theta_{2m-1})	&	0 < m < n, \\
			(0,1)								&	m=n,
		\end{cases}
	\end{equation*}
	and hence,
	\begin{equation*}
		e_F \dimproj_F(\tau^m I(0)) =
		\begin{cases}
			(1,0)								
			&	m=0, \\
			(\theta_{2m} + \theta_{2m-1} \cos 2n\theta,\theta_{2m-1}\sin 2n\theta)	
			&	0 < m < n, \\
			(\cos 2n\theta,\sin 2n\theta)
			&	m=n.
		\end{cases}
	\end{equation*}
	The result clearly holds for $m\in\{0,n\}$, so we need only consider $0 < m < n$. By the angle summation identity
	\begin{equation*}
		\sin \theta \cos 2m\theta = \sin(2m+1)\theta - \sin 2m\theta \cos \theta = \sin(2m+1)\theta + \sin 2m\theta \cos 2n\theta,
	\end{equation*}
	we obtain
	\begin{equation*}
		\cos 2m\theta =\theta_{2m}+ \theta_{2m-1} \cos 2n\theta.
	\end{equation*}
	Trivially, we also have
	\begin{equation*}
		\sin 2m \theta=\frac{\sin 2m\theta}{\sin\theta}\sin 2n\theta
	\end{equation*}
	So $e_F \dimproj_F(\tau^m I(0)) = (\cos 2m \theta,\sin 2m \theta)$, as required.
	
	On the other hand, the row of the Auslander-Reiten quiver containing the injective module $I(2n-1)$ is such that
	\begin{align*}
		\dimproj_F(\tau^m I(2n-1)) &= (\theta_{2n-2m-2},\theta_{2n-2m-1}) \\
		&=(\theta_{2m+1},\theta_{2m})
	\end{align*}
	for any $0 \leq m < n$. By similar arguments to the above, the result follows from the identities
	\begin{align*}
		\sin \theta \cos (2m+1)\theta &= \sin(2m+2)\theta + \sin (2m+1)\theta \cos 2n\theta, \\
		\sin (2m+1) \theta&=\frac{\sin (2m+1)\theta}{\sin\theta}\sin 2n\theta.
	\end{align*}
\end{proof}

We can now prove Theorem~\ref{thm:Folding} for foldings onto $\coxI$-type quivers.
\begin{proof}
	(a) That $\dimproj_F(M)$ is a positive root of $\coxI_2(2n+1)$ for any $M \in \mathcal{I}\ps{0}$ follows from Lemma~\ref{lem:IRootsOfUnity}.
	
	(b) This is a direct consequence of Lemma~\ref{lem:IFoldingB}.
	
	(c) This follows directly from (a) and (b).
\end{proof}

	\begin{exam} \label{ex:I7}
	Consider the folding $F\colon Q^{\coxA_6} \rightarrow Q^{\coxI_2(7)}$ of the form
	\begin{center}
		\begin{tikzpicture}
\draw (-0.7,0) node {$F\colon$};

\draw (0,0.8) node {$0$};
\draw (1.2,0.8) node {$1$};
\draw (0,0) node {$2$};
\draw (1.2,0) node {$3$};
\draw (0,-0.8) node {$4$};
\draw (1.2,-0.8) node {$5$};

\draw (1.8,0) node {$\rightarrow$};

\draw (2.4,0) node {$[0]$};
\draw (4,0) node {$[1]$};
\draw (3.2,0.2) node {\footnotesize$2 \cos \frac{\pi}{7}$};
\draw [->](0.3,0.8) -- (0.9,0.8);
\draw [->](0.3,0) -- (0.9,0);
\draw [->](0.3,-0.8) -- (0.9,-0.8);
\draw [->](0.3,0.2) -- (0.9,0.6);
\draw [->](0.3,-0.6) -- (0.9,-0.2);

\draw [->](2.7,0) -- (3.7,0);
\end{tikzpicture}
	\end{center}
	where $F(0)=F(2)=F(4)=[0]$, $F(1)=F(3)=F(5)=[1]$, and $\vw(i) = U_i(\cos \frac{\pi}{7})$ for each $i$. Explicitly, we have
	\begin{align*}
		\vw(0)&=1,			&	\vw(1)&= 2x, \\
		\vw(2)&=4x^2-1,		&	\vw(3)&= 4x^2-1, \\
		\vw(4)&=2x,			&	\vw(5)&= 1,
	\end{align*}
	where $x = \cos \frac{\pi}{7}$. The module category $KQ^{\coxA_6}$ then maps to the $\coxI_2(7)$ root lattice via the function $\dimproj_F$ as shown in Figure~\ref{fig:I7AR}
\end{exam}

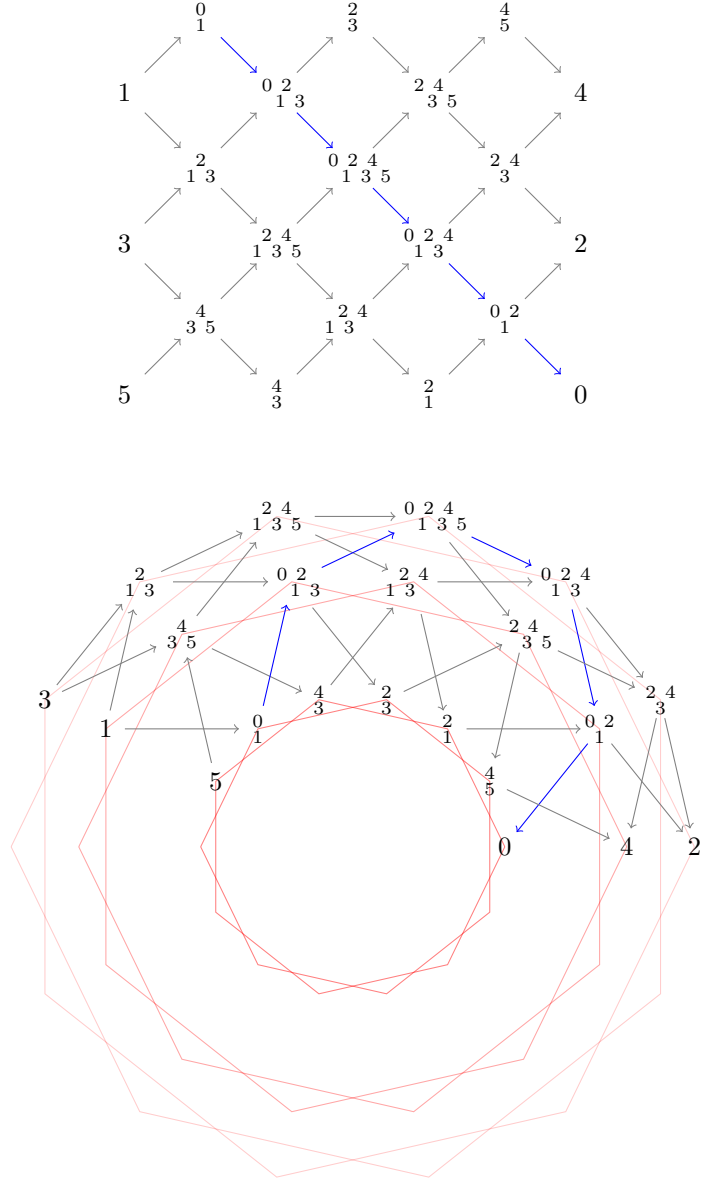
\begin{figure}[h] 
	\usetikzlibrary{calc}
\def\y{0.25cm}
\begin{tikzpicture}[scale=2]
	\foreach \x in {0,...,3} {
		\foreach \z in {0,2,4} {
			\coordinate (t\x_I\z) at ($(1.5-\x,{\z/2 + 3})$);
		}
	}
	\foreach \x in {0,...,2} {
		\foreach \z in {1,3,5} {
			\coordinate (t\x_I\z) at ($(1-\x,{((\z - 1)/2 + 3.5)})$);
		}
	}
	
	\draw[<-, shorten <= \y*1.5,shorten >= \y*1.5,blue] (t0_I0) -- (t0_I1);
	\draw[<-, shorten <= \y*1.5,shorten >= \y*1.5,gray] (t0_I2) -- (t0_I1);
	\draw[<-, shorten <= \y*1.5,shorten >= \y*1.5,gray] (t0_I2) -- (t0_I3);
	\draw[<-, shorten <= \y*1.5,shorten >= \y*1.5,gray] (t0_I4) -- (t0_I3);
	\draw[<-, shorten <= \y*1.5,shorten >= \y*1.5,gray] (t0_I4) -- (t0_I5);
	
	\draw[->, shorten <= \y*1.5,shorten >= \y*1.5,gray] (t1_I0) -- (t0_I1);
	\draw[->, shorten <= \y*1.5,shorten >= \y*1.5,blue] (t1_I2) -- (t0_I1);
	\draw[->, shorten <= \y*1.5,shorten >= \y*1.5,gray] (t1_I2) -- (t0_I3);
	\draw[->, shorten <= \y*1.5,shorten >= \y*1.5,gray] (t1_I4) -- (t0_I3);
	\draw[->, shorten <= \y*1.5,shorten >= \y*1.5,gray] (t1_I4) -- (t0_I5);
	
	\draw[<-, shorten <= \y*1.5,shorten >= \y*1.5,gray] (t1_I0) -- (t1_I1);
	\draw[<-, shorten <= \y*1.5,shorten >= \y*1.5,gray] (t1_I2) -- (t1_I1);
	\draw[<-, shorten <= \y*1.5,shorten >= \y*1.5,blue] (t1_I2) -- (t1_I3);
	\draw[<-, shorten <= \y*1.5,shorten >= \y*1.5,gray] (t1_I4) -- (t1_I3);
	\draw[<-, shorten <= \y*1.5,shorten >= \y*1.5,gray] (t1_I4) -- (t1_I5);
	
	\draw[->, shorten <= \y*1.5,shorten >= \y*1.5,gray] (t2_I0) -- (t1_I1);
	\draw[->, shorten <= \y*1.5,shorten >= \y*1.5,gray] (t2_I2) -- (t1_I1);
	\draw[->, shorten <= \y*1.5,shorten >= \y*1.5,gray] (t2_I2) -- (t1_I3);
	\draw[->, shorten <= \y*1.5,shorten >= \y*1.5,blue] (t2_I4) -- (t1_I3);
	\draw[->, shorten <= \y*1.5,shorten >= \y*1.5,gray] (t2_I4) -- (t1_I5);
	
	\draw[<-, shorten <= \y*1.5,shorten >= \y*1.5,gray] (t2_I0) -- (t2_I1);
	\draw[<-, shorten <= \y*1.5,shorten >= \y*1.5,gray] (t2_I2) -- (t2_I1);
	\draw[<-, shorten <= \y*1.5,shorten >= \y*1.5,gray] (t2_I2) -- (t2_I3);
	\draw[<-, shorten <= \y*1.5,shorten >= \y*1.5,gray] (t2_I4) -- (t2_I3);
	\draw[<-, shorten <= \y*1.5,shorten >= \y*1.5,blue] (t2_I4) -- (t2_I5);
	
	\draw[->, shorten <= \y*1.5,shorten >= \y*1.5,gray] (t3_I0) -- (t2_I1);
	\draw[->, shorten <= \y*1.5,shorten >= \y*1.5,gray] (t3_I2) -- (t2_I1);
	\draw[->, shorten <= \y*1.5,shorten >= \y*1.5,gray] (t3_I2) -- (t2_I3);
	\draw[->, shorten <= \y*1.5,shorten >= \y*1.5,gray] (t3_I4) -- (t2_I3);
	\draw[->, shorten <= \y*1.5,shorten >= \y*1.5,gray] (t3_I4) -- (t2_I5);
	
	\draw (t0_I0) node {\footnotesize$0$};
	\draw (t1_I0) node {\footnotesize$\begin{smallmatrix} 2 \\ 1  \end{smallmatrix}$};
	\draw (t2_I0) node {\footnotesize$\begin{smallmatrix} 4 \\ 3  \end{smallmatrix}$};
	\draw (t3_I0) node {\footnotesize$5$};
	
	\draw (t0_I1) node {\footnotesize$\begin{smallmatrix} 0 \ 2 \\ \quad 1 \quad \end{smallmatrix}$};
	\draw (t1_I1) node {\footnotesize$\begin{smallmatrix} 2 \ 4 \\ 1 \ 3 \quad \end{smallmatrix}$};
	\draw (t2_I1) node {\footnotesize$\begin{smallmatrix} \quad 4 \quad \\ 3 \  5 \end{smallmatrix}$};
	
	\draw (t0_I2) node {\footnotesize$2$};
	\draw (t1_I2) node {\footnotesize$\begin{smallmatrix} 0 \  2 \  4 \\ 1 \  3 \end{smallmatrix}$};
	\draw (t2_I2) node {\footnotesize$\begin{smallmatrix} 2 \  4 \\ 1 \  3 \  5 \end{smallmatrix}$};
	\draw (t3_I2) node {\footnotesize$3$};
	
	\draw (t0_I3) node {\footnotesize$\begin{smallmatrix} 2 \ 4 \\ 3 \end{smallmatrix}$};
	\draw (t1_I3) node {\footnotesize$\begin{smallmatrix} 0 \ 2 \ 4 \\ \quad 1 \ 3 \ 5 \end{smallmatrix}$};
	\draw (t2_I3) node {\footnotesize$\begin{smallmatrix} \quad 2 \quad \\ 1 \ 3 \end{smallmatrix}$};
	
	\draw (t0_I4) node {\footnotesize$4$};
	\draw (t1_I4) node {\footnotesize$\begin{smallmatrix} 2 \  4 \\ \quad 3 \  5 \end{smallmatrix}$};
	\draw (t2_I4) node {\footnotesize$\begin{smallmatrix} 0 \  2 \\ \quad 1 \  3 \end{smallmatrix}$};
	\draw (t3_I4) node {\footnotesize$1$};
	
	\draw (t0_I5) node {\footnotesize$\begin{smallmatrix} 4 \\  5 \end{smallmatrix}$};
	\draw (t1_I5) node {\footnotesize$\begin{smallmatrix} 2 \\  3 \end{smallmatrix}$};
	\draw (t2_I5) node {\footnotesize$\begin{smallmatrix} 0 \\  1 \end{smallmatrix}$};

	\foreach \x in {1,...,7} {
		\coordinate (p\x) at ($({cos((180/7)*\x-(180/7))}, {sin((180/7)*\x-(180/7))})$);
		\coordinate (u1p\x) at ($({2*cos((180/7))*cos((180/7)*\x-(180/7))}, {2*cos((180/7))*sin((180/7)*\x-(180/7))})$);
		\coordinate (u2p\x) at ($({(4*(cos((180/7)))^2-1)*cos((180/7)*\x-(180/7))}, {(4*(cos((180/7)))^2-1)*sin((180/7)*\x-(180/7))})$);
	}
	\foreach \x in {1,...,7} {
		\coordinate (n\x) at ($({cos((180/7)*\x+(6*180/7))}, {sin((180/7)*\x+(6*180/7))})$);
		\coordinate (u1n\x) at ($({2*cos((180/7))*cos((180/7)*\x+(6*180/7))}, {2*cos((180/7))*sin((180/7)*\x+(6*180/7))})$);
		\coordinate (u2n\x) at ($({(4*(cos((180/7)))^2-1)*cos((180/7)*\x+(6*180/7))}, {(4*(cos((180/7)))^2-1)*sin((180/7)*\x+(6*180/7))})$);
	}
	\draw[red,opacity=0.5] (p1) -- (p3)--(p5) --(p7)-- (n2) -- (n4) -- (n6) -- (p1);
	\draw[red,opacity=0.5] (n1) -- (n3)--(n5) -- (n7) -- (p2) -- (p4) --(p6)-- (n1);
	\draw[red,opacity=0.35] (u1p1) -- (u1p3)--(u1p5)--(u1p7) -- (u1n2) -- (u1n4) -- (u1n6) -- (u1p1);
	\draw[red,opacity=0.35] (u1n1) -- (u1n3)--(u1n5)--(u1n7) -- (u1p2) -- (u1p4) -- (u1p6) -- (u1n1);
	\draw[red,opacity=0.2] (u2p1) -- (u2p3)--(u2p5)--(u2p7) -- (u2n2) -- (u2n4) -- (u2n6) -- (u2p1);
	\draw[red,opacity=0.2] (u2n1) -- (u2n3)--(u2n5)--(u2n7) -- (u2p2) -- (u2p4) -- (u2p6) -- (u2n1);
	
	\draw (p1) node {\footnotesize$0$};
	\draw (p3) node {\footnotesize$\begin{smallmatrix} 2 \\ 1  \end{smallmatrix}$};
	\draw (p5) node {\footnotesize$\begin{smallmatrix} 4 \\ 3  \end{smallmatrix}$};
	\draw (p7) node {\footnotesize$5$};
	
	\draw (u1p2) node {\footnotesize$\begin{smallmatrix} 0 \ 2 \\ \quad 1 \quad \end{smallmatrix}$};
	\draw (u1p4) node {\footnotesize$\begin{smallmatrix} 2 \ 4 \\ 1 \ 3 \quad \end{smallmatrix}$};
	\draw (u1p6) node {\footnotesize$\begin{smallmatrix} \quad 4 \quad \\ 3 \  5 \end{smallmatrix}$};
	
	\draw (u2p1) node {\footnotesize$2$};
	\draw (u2p3) node {\footnotesize$\begin{smallmatrix} 0 \  2 \  4 \\ 1 \  3 \end{smallmatrix}$};
	\draw (u2p5) node {\footnotesize$\begin{smallmatrix} 2 \  4 \\ 1 \  3 \  5 \end{smallmatrix}$};
	\draw (u2p7) node {\footnotesize$3$};
	
	\draw (u2p2) node {\footnotesize$\begin{smallmatrix} 2 \ 4 \\ 3 \end{smallmatrix}$};
	\draw (u2p4) node {\footnotesize$\begin{smallmatrix} 0 \ 2 \ 4 \\ \quad 1 \ 3 \ 5 \end{smallmatrix}$};
	\draw (u2p6) node {\footnotesize$\begin{smallmatrix} \quad 2 \quad \\ 1 \ 3 \end{smallmatrix}$};
	
	\draw (u1p1) node {\footnotesize$4$};
	\draw (u1p3) node {\footnotesize$\begin{smallmatrix} 2 \  4 \\ \quad 3 \  5 \end{smallmatrix}$};
	\draw (u1p5) node {\footnotesize$\begin{smallmatrix} 0 \  2 \\ \quad 1 \  3 \end{smallmatrix}$};
	\draw (u1p7) node {\footnotesize$1$};
	
	\draw (p2) node {\footnotesize$\begin{smallmatrix} 4 \\  5 \end{smallmatrix}$};
	\draw (p4) node {\footnotesize$\begin{smallmatrix} 2 \\  3 \end{smallmatrix}$};
	\draw (p6) node {\footnotesize$\begin{smallmatrix} 0 \\  1 \end{smallmatrix}$};
	
	\draw[<-, shorten <= \y,shorten >= \y,blue] (p1) -- (u1p2);
	\draw[<-, shorten <= \y,shorten >= \y,gray] (u1p1) -- (u2p2);
	\draw[<-, shorten <= \y,shorten >= \y,gray] (u2p1) -- (u2p2);
	\draw[<-, shorten <= \y,shorten >= \y,gray] (u2p1) -- (u1p2);
	\draw[<-, shorten <= \y,shorten >= \y,gray] (u1p1) -- (p2);
	
	\draw[<-, shorten <= \y*1.25,shorten >= \y,gray] (p2) -- (u1p3);
	\draw[<-, shorten <= \y*1.25,shorten >= \y*1.5,blue] (u1p2) -- (u2p3);
	\draw[<-, shorten <= \y*1.5,shorten >= \y*1.75,gray] (u2p2) -- (u2p3);
	\draw[<-, shorten <= \y*1.5,shorten >= \y*2,gray] (u2p2) -- (u1p3);
	\draw[<-, shorten <= \y,shorten >= \y,gray] (u1p2) -- (p3);
	
	\draw[<-, shorten <= \y,shorten >= \y*1.75,gray] (p3) -- (u1p4);
	\draw[<-, shorten <= \y*1.25,shorten >= \y*1.75,gray] (u1p3) -- (u2p4);
	\draw[<-, shorten <= \y*2,shorten >= \y*2.5,blue] (u2p3) -- (u2p4);
	\draw[<-, shorten <= \y*1.75,shorten >= \y*1.25,gray] (u2p3) -- (u1p4);
	\draw[<-, shorten <= \y*1.25,shorten >= \y,gray] (u1p3) -- (p4);
	
	\draw[<-, shorten <= \y,shorten >= \y*1.75,gray] (p4) -- (u1p5);
	\draw[<-, shorten <= \y*1.5,shorten >= \y*2.25,gray] (u1p4) -- (u2p5);
	\draw[<-, shorten <= \y*1.75,shorten >= \y*2,gray] (u2p4) -- (u2p5);
	\draw[<-, shorten <= \y*2,shorten >= \y*1.75,blue] (u2p4) -- (u1p5);
	\draw[<-, shorten <= \y*1.75,shorten >= \y,gray] (u1p4) -- (p5);
	
	\draw[<-, shorten <= \y,shorten >= \y*1.75,gray] (p5) -- (u1p6);
	\draw[<-, shorten <= \y*1.25,shorten >= \y*1.75,gray] (u1p5) -- (u2p6);
	\draw[<-, shorten <= \y*2,shorten >= \y*1.25,gray] (u2p5) -- (u2p6);
	\draw[<-, shorten <= \y*1.75,shorten >= \y*1.25,gray] (u2p5) -- (u1p6);
	\draw[<-, shorten <= \y*1.25,shorten >= \y*1.25,blue] (u1p5) -- (p6);
	
	\draw[<-, shorten <= \y,shorten >= \y,gray] (p6) -- (u1p7);
	\draw[<-, shorten <= \y*1.5,shorten >= \y,gray] (u1p6) -- (u2p7);
	\draw[<-, shorten <= \y*1.5,shorten >= \y,gray] (u2p6) -- (u2p7);
	\draw[<-, shorten <= \y*1.5,shorten >= \y,gray] (u2p6) -- (u1p7);
	\draw[<-, shorten <= \y*1.25,shorten >= \y,gray] (u1p6) -- (p7);
\end{tikzpicture}
	\caption{Above: The Auslander-Reiten quiver of the module category $\mod* KQ^{\coxA_6}$, with the ray of source $P(0)$ highlighted blue. Below: The projection of $\mod*KQ^{\coxA_6}$ onto $\real^2$ via the function $\dimproj_F$. The vertices of the innermost heptagons are the roots of $\coxI_2(7)$, with the positive roots being those that are labelled. The vertices of the middle pair of heptagons are the $2\cos\frac{\pi}{7}$ multiples of the root vectors, and the outermost vertices are the $(4\cos^2\frac{\pi}{7} -1)$ multiples.} \label{fig:I7AR}
\end{figure}

\subsection{Projections of derived categories}
Theorem~\ref{thm:Folding} naturally extends from the module category to the bounded derived category. Recall that the bounded derived category $\der_\gend=\bder(KQ^\gend)$ of the module category $\calM_\gend=\mod*KQ^\gend$ is a triangulated category with shift functor $\sus$, where as before, $\gend \in \{\coxA_{2n}, \coxD_6,\coxE_8\}$. The indecomposable objects of $\der_\gend$ are isomorphic to chain complexes with an indecomposable $KQ^\gend$-module in some degree $i$ and 0 in degree other than $i$. By a slight abuse of notation, for each indecomposable module $M \in \calM_\gend$ we denote by $M \in \der_\gend$ the indecomposable object corresponding to $M$ in degree 0. The object $\sus^i M$ is then the indecomposable object corresponding to $M$ in degree $i$. The morphisms of $\der_\gend$ are given by
\begin{equation*}
	\Hom_{\der_\gend}(\sus^i M, \sus^j N) =
	\begin{cases}
		\Ext^{j-i}_{\calM_\gend}(M,N)	&	\text{if } i \leq j, \\
		0									&	\text{otherwise}
	\end{cases}
\end{equation*}
and a composition of morphisms is given by the Yoneda product
\begin{equation*}
	\Ext^{j-i}_{\calM_\gend}(M,N) \times \Ext^{k-j}_{\calM_\gend}(N,L) \rightarrow \Ext^{k-i}_{\calM_\gend}(M,L).
\end{equation*}
The category $\der_\gend$ also has Aulsander-Reiten triangles and an Auslander-Reiten quiver with Auslander-Reiten translate $\dgart$.

The following statements are easy consequences of the known structure of $\der_\gend$, where $\gend \in \{\coxA_{2n},\coxD_6,\coxE_8\}$.

\begin{lem} \label{lem:ProjDerTranslates}
	Let $F\colon Q^\gend \rightarrow Q^{\gend'}$ be a weighted folding of quivers with $\gend \in \{\coxA_{2n},\coxD_6,\coxE_8\}$ and $\gend' \in \{\coxI_2(2n+1),\coxH_3,\coxH_4\}$. Then for any $i \in Q^\gend_0$, we have
	\begin{equation*}
		\dgart \sus^k P(i) \cong \sus^{k-1} I(j)
	\end{equation*}
	such that $\vw(j) = \vw(i)$.
\end{lem}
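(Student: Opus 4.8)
The plan is to deduce the statement from the known triangulated structure of $\der_\gend$, using that it is the bounded derived category of a hereditary algebra of Dynkin type. The single structural input I would invoke is Happel's description of the Auslander--Reiten translate of such a category: $\der_\gend$ admits a Serre functor, realised by the derived Nakayama functor $\nu$, and its Auslander--Reiten translate is $\dgart \cong \nu \circ \sus^{-1}$. In particular $\dgart$ is an autoequivalence commuting with the shift $\sus$, which is the property that drives the whole argument.

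First I would reduce to the case $k=0$. Since $\dgart$ commutes with $\sus$, we have $\dgart \sus^k P(i) \cong \sus^k \dgart P(i)$, so it suffices to identify $\dgart P(i)$ and then reapply $\sus^k$.

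Next I would compute $\dgart P(i)$ directly from $\dgart \cong \nu \circ \sus^{-1}$. Because $\nu$ also commutes with $\sus$, this yields $\dgart P(i) \cong \sus^{-1}\nu P(i)$. The Nakayama functor sends the projective cover of $S(i)$ to the injective envelope of $S(i)$, i.e.\ $\nu P(i) \cong I(i)$, whence $\dgart P(i) \cong \sus^{-1} I(i)$ and therefore $\dgart \sus^k P(i) \cong \sus^{k-1} I(i)$. Taking $j=i$ settles the isomorphism, and the weight condition $\vw(j)=\vw(i)$ then holds trivially; alternatively, if one prefers to name the injective object lying in the same row of the Auslander--Reiten quiver of $\der_\gend$ as $P(i)$, the required equality of weights is precisely the matching of weights along $\tau$-orbits recorded in the remark following Theorem~\ref{thm:Folding}.

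The content here is purely structural, so I expect no serious obstacle. The only point that needs care is that projectives carry no Auslander--Reiten sequence in $\mod* KQ^\gend$, so the translate of $P(i)$ genuinely lives one shift down in $\der_\gend$ rather than in degree $0$. This is exactly the phenomenon encoded by the factor $\sus^{-1}$ in $\dgart \cong \nu \circ \sus^{-1}$, and once this identification is invoked no further computation is required.
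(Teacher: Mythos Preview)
Your proposal is correct. The paper proceeds case by case: for the $\coxH$-type foldings it records $\dgart \sus^k P(i) \cong \sus^{k-1} I(i)$ directly, while for $\gend=\coxA_{2n}$ it asserts $\dgart \sus^k P(j) \cong \sus^{k-1} I(2n-1-j)$ and then invokes the Chebyshev symmetry $\vw(j)=\vw(2n-1-j)$ from Lemma~\ref{lem:ChebProps} to obtain the weight condition. Your route through the Serre-functor identity $\dgart \simeq \nu\,\sus^{-1}$ together with $\nu P(i)\cong I(i)$ yields $j=i$ uniformly, so the weight equality is automatic and neither a type-by-type split nor the Chebyshev identity is required. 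What you trade away is only the paper's explicit labelling, in the $\coxA_{2n}$ case, of which injective sits in the same Auslander--Reiten row as $P(j)$; what you gain is a shorter, type-independent argument.
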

\begin{proof}
	For foldings $F\colon Q^{\coxA_{2n}} \rightarrow Q^{\coxI_2(2n+1)}$ with vertices labelled as in Subsection~\ref{sec:IFoldings}, it is known that
	\begin{equation*}
		\dgart \sus^k P(j) \cong \sus^{k-1}I(2n-1-j).
	\end{equation*}
	We know from Lemma~\ref{lem:ChebProps}(d) that $\vw(j)=\vw(2n-1-j)$. For foldings onto quivers of type $\coxH_3$ and $\coxH_4$, the result is straightforward, as
	\begin{equation*}
		\dgart \sus^k P(i) \cong \sus^{k-1} I(i)
	\end{equation*}
	for any $i \in Q^\gend_0$.
\end{proof}

One can extend the projection map $\dimproj_F$ to the derived category with the map
\begin{equation*}
	\derdim_F\colon\Ob(\der_\gend) \rightarrow \wh\chebr\ps{2n+1}
\end{equation*}
defined for each object $M= \oplus_{i \in \integer} \sus^i M_i \in \der_\gend$ with $M_i$ in degree 0 by
\begin{equation*}
	\derdim_F(M) = \sum_{2i \in \integer} \dimproj_F(M_i) - \sum_{2i+1 \in \integer} \dimproj_F(M_i).
\end{equation*}

\begin{prop} \label{prop:DerivedProj}
	Let $F\colon Q^\gend \rightarrow Q^{\gend'}$ be a weighted folding of quivers with $\gend \in \{\coxA_{2n},\coxD_6,\coxE_8\}$ and $\gend' \in \{\coxI_2(2n+1),\coxH_3,\coxH_4\}$.  Let $\sus^k M, \sus^k N \in \der_\gend$ be indecomposable objects for some $k \in \integer$ such that $\sus^k M \cong \dgart^m I(i)$ and $\sus^k N \cong \dgart^m I(j)$ for some $m \in \integer$ and some $i,j \in Q^\gend_0$ such that $F(i)=F(j)$. Suppose that $\vw(i) =1$. Then
	\begin{enumerate}[label=(\alph*)]
		\item $\derdim_F(\sus^{k} M)$ is a positive root of $\gend'$ if $k$ is even and is a negative root of $\gend'$ if $k$ is odd;
		\item $\vw(i) \derdim_F(\sus^{k} N) = \vw(j) \derdim_F(\sus^k M)$.
	\end{enumerate}
\end{prop}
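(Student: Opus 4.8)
The plan is to reduce both parts to the module-category statement Theorem~\ref{thm:Folding} by following the $\dgart$-orbit of $I(i)$ through $\der_\gend$ and controlling each crossing of a shift boundary. The only computation I need at the outset is that for an indecomposable module $L$ the object $\sus^k L$ has $L$ in degree $k$ and zero elsewhere, so the definition of $\derdim_F$ gives $\derdim_F(\sus^k L) = (-1)^k\dimproj_F(L)$. Since $\sus^k M$ and $\sus^k N$ sit in a common degree $k$ by hypothesis, part (a) reduces to showing that the underlying module $M$ satisfies ``$\dimproj_F(M)$ is a positive root of $\gend'$'', and part (b) reduces, after cancelling the common factor $(-1)^k$, to the purely module-theoretic identity $\vw(i)\dimproj_F(N)=\vw(j)\dimproj_F(M)$.

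Next I would analyse the orbit. While $\dgart$ acts as the module translate $\tau$ the object remains in a fixed degree $k$, and the underlying modules run through the $\tau$-translates $\tau^a I(i_k)$ of the injective $I(i_k)$ opening that degree, until the orbit reaches a projective $P(p)$; there Lemma~\ref{lem:ProjDerTranslates} gives $\dgart\sus^k P(p)\cong\sus^{k-1}I(\sigma(p))$ with $\vw(\sigma(p))=\vw(p)$, so the orbit drops to degree $k-1$ opening with an injective of the same weight. Using the remark after Theorem~\ref{thm:Folding} that a projective $\tau^a I(i_k)$ has the same weight as $I(i_k)$, an induction on the number of crossings shows that every underlying module along the orbit of $I(i)$ is a $\tau$-translate of an injective of weight $\vw(i)$. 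In particular, when $\vw(i)=1$ each such module lies in $\mathcal{I}\ps{0}$, so Theorem~\ref{thm:Folding}(a) makes its $F$-projected dimension vector a positive root; multiplying by $(-1)^k$ gives part (a).

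For part (b) I would run the analogous induction on $m$ for the two orbits simultaneously, the inductive claim being that at each step the objects lie in a common degree $k$ with underlying modules $\tau^a I(i_k)$ and $\tau^a I(j_k)$ for one and the same exponent $a$, where $F(i_k)=F(j_k)$, $\vw(i_k)=\vw(i)$ and $\vw(j_k)=\vw(j)$. Granting this, Theorem~\ref{thm:Folding}(b) applied to the pair $I(i_k), I(j_k)$ gives exactly $\vw(j)\dimproj_F(\tau^a I(i_k))=\vw(i)\dimproj_F(\tau^a I(j_k))$, i.e.\ the reduced form of (b). The base case $m=0$ (so $k=0$ and $a=m$) is Theorem~\ref{thm:Folding}(b) itself, and negative $m$ is handled identically using $\dgart\inv$, since $\dgart$ is an autoequivalence and the orbit is determined by any one of its members.

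The crux, and the place I expect the real work, is maintaining this synchronisation across a shift boundary, i.e.\ verifying the inductive claim at a crossing. Two things must be checked. First, the two orbits must reach a projective at the same step $a$: this is forced by Theorem~\ref{thm:Folding}(b), for if $\tau^a I(i_k)=0$ while $\tau^a I(j_k)\neq 0$ then the identity $\vw(j)\dimproj_F(\tau^a I(i_k))=\vw(i)\dimproj_F(\tau^a I(j_k))$ would read $0=\vw(i)\dimproj_F(\tau^a I(j_k))$, which is impossible since a nonzero module has a nonzero, positively weighted projected dimension vector and $\vw(i)>0$; hence the two $\tau$-orbits have equal length. Second, the injectives $I(\sigma(p)), I(\sigma(q))$ opening the next degree must remain paired, i.e.\ $F(\sigma(p))=F(\sigma(q))$, where $P(p), P(q)$ are the terminal projectives of the two synchronised orbits; this reduces to checking that the permutations of $Q^\gend_0$ induced by the module-category Nakayama translation and by $\sigma$ both respect the folding blocks, which is routine from the explicit orientations of Figure~\ref{H} and Remark~\ref{rem:IFolding}: in the foldings onto $\coxH_3,\coxH_4$ one has $\sigma=\mathrm{id}$, while for $Q^{\coxA_{2n}}\to Q^{\coxI_2(2n+1)}$ Lemma~\ref{lem:ProjDerTranslates} gives $\sigma(p)=2n-1-p$, which interchanges the even and odd blocks and hence preserves the block partition. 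This closes the induction and establishes both (a) and (b).
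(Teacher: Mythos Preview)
Your proposal is correct and takes essentially the same approach as the paper: the paper's own proof is the one-line assertion that the proposition follows directly from Theorem~\ref{thm:Folding}, Lemma~\ref{lem:ProjDerTranslates} and the definition of $\derdim_F$. Your version simply makes explicit the induction on degree crossings that these three ingredients encode; the only place you might tighten is that you verify $\sigma$ respects the block partition while merely asserting the same for the module-category Nakayama permutation $\nu$, but this is indeed routine (and in the $\coxA_{2n}$ case one has $\nu=\sigma$).
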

\begin{proof}
	This follows directly from Theorem~\ref{thm:Folding}, Lemma~\ref{lem:ProjDerTranslates} and the definition of $\derdim_F$.
\end{proof}

\section{Semiring actions on categories associated to unfolded quivers}
\label{section-action}

Given a folding $F\colon Q^\gend \rightarrow Q^{\gend'}$ with $\gend \in \{\coxA_{2n},\coxD_6,\coxE_8\}$ and $\gend' \in \{\coxI_2(2n+1),\coxH_3,\coxH_4\}$, the projection map $\dimproj_F$ hints at a nice semiring action on $\mod*KQ^\gend$ and its bounded derived category, which we explore in this section. 

\subsection{$R_+$-coefficient categories}

First we will define how we want a semiring $R_+$ to act on the categories.

\begin{defn} \label{def:RPCoeffCat}
	Let $R_+$ be a partially ordered commutative semiring and $\calM$ be a Krull-Schmidt, $K$-linear, abelian or triangulated category. We say $\calM$ has the structure of an \emph{$R_+$-coefficient category} (or equivalently, has a weak semiring action of $R_+$) if it is equipped with a family of endofunctors $\{\wh{r} : r \in R_+\}$ such that the following axioms hold for any $r,s \in R_+$:
	\begin{enumerate}[label=(M\arabic*)]
		\item $\wh{r+s} \simeq \wh{r} \oplus \wh{s}$.
		\item $\wh{rs} \simeq \wh{r}\wh{s}$.
		\item $\wh{1}_{R_+} \simeq 1_\calM$.
		\item $\wh{0}_{R_+} \simeq 0_\calM$.
		\item $\wh{r}$ is exact (if $\calM$ is abelian) or triangulated (if $\calM$ is triangulated).
		\item $\wh{r}$ is faithful and $K$-linear on all $\Ext_\calM$-spaces. That is, for any $i \in \integer$ and any objects $X,Y \in \calM$, the element $r \in R_+$ induces an injective linear map
		\begin{equation*}
			\varepsilon_{r,X,Y}\ps{i}\colon \Ext_\calM^i(X,Y) \rightarrow \Ext_\calM^i(\wh{r}X,\wh{r}Y).
		\end{equation*}
	\end{enumerate}
\end{defn}

Given any morphism $f \in \Hom_\calM(X,Y)$ in an $R_+$-coefficient category, the (weak) semiring action of $R_+$ on $\calM$ is well-defined up to natural isomorphism. Therefore for any morphism $f \in \Hom_\calM(X,Y)$, we often abuse notation and write $\wh{r} f \in \Hom_\calM(\wh{r}X,\wh{r}Y)$ as $r f \in \Hom_\calM(rX,rY)$ for each $r \in R_+$. Thus, the objects and morphisms will appear to have coefficients in $R_+$, hence the name.

An immediate consequence to the axioms above is the following.
\begin{rem} \label{rem:RCoeffConseq}
	Let $\calM$ be an $R_+$-coefficient category and let $R$ be the corresponding ring given by the Grothendieck group construction on the additive commutative monoid structure of $R_+$. Then the Grothendieck group $G_0(\calM)$ of $\calM$ has the structure of an $R$-module via the action $\pm r [M] = \pm[rM]$ for each $r \in R_+$ and $[M] \in G_0(\calM)$.
\end{rem}

 The main aim of this section is to prove the following result.

\begin{thm} \label{thm:Action}
	Let $F\colon Q^\gend \rightarrow Q^{\gend'}$ be a folding of quivers with $\gend' \in \{\coxH_3,\coxH_4, \coxI_2(2n+1)\}$. Then $\calM_F=\mod*KQ^\gend$ and $\der_F=\bder(KQ^\gend)$ are $\chebsr\ps{2n+1}$-coefficient categories, where $n=2$ if $\gend' \in \{\coxH_3,\coxH_4\}$.
\end{thm}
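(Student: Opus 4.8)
The plan is to build the endofunctors directly from the combinatorics assembled in Sections~\ref{section-proj} and~\ref{cheb}, and then to verify the axioms (M1)--(M6) of Definition~\ref{def:RPCoeffCat} one at a time, reducing most of them to the single fact that the regular representation $\rho$ of Definition~\ref{def:RegRep} is a ring homomorphism whose matrices $\rho(\croot_k)$ are symmetric with entries in $\{0,1\}$ (Lemma~\ref{lem:ChebRegRep}). Since $\{\croot_0,\ldots,\croot_{n-1}\}$ is a basis of $\chebsr\ps{2n+1}$ each of whose elements is a non-negative sum of generators (Remark~\ref{rem:ChebBasis}), it suffices to construct exact endofunctors $\wh{\croot_k}$ for $0\le k\le n-1$ and then to set $\wh{r}=\bigoplus_k \wh{\croot_k}^{\oplus a_k}$ for $r=\sum_k a_k\croot_k$. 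With this definition (M1) is immediate from additivity of $r\mapsto(a_k)$, (M3) holds because $\rho(\croot_0)$ is the identity matrix, and (M4) is the empty direct sum.

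The construction of $\wh{\croot_k}$ rests on the row/column structure of the Auslander--Reiten quiver established in Theorem~\ref{thm:Folding}. For each vertex of $Q^{\gend'}$ the preimage block $E_{i'}$ produces a family of $\tau$-orbits (rows), one of each weight $U_0,\ldots,U_{n-1}$, and these rows are aligned so that each ``column'' (a fixed $\tau$-power within the block) contains exactly one indecomposable $M_j$ of each weight index $j$. I would define $\wh{\croot_k}$ on such an indecomposable by $\wh{\croot_k}(M_j)=\bigoplus_q M_q^{\oplus (\rho(\croot_k))_{jq}}$, an honest direct sum because the entries are $0$ or $1$, and extend additively to all of $\calM_F$ by Krull--Schmidt. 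By construction this prescription commutes with $\tau$, since $\tau$ permutes columns while $\wh{\croot_k}$ acts inside each column.

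The hard part will be promoting this object-level recipe to a genuine exact functor, that is, defining $\wh{\croot_k}$ on morphisms and verifying (M5). My plan is to define the action on the irreducible maps of the Auslander--Reiten quiver (using that, for a representation-finite hereditary algebra, $\calM_F$ is equivalent to the mesh category of its Auslander--Reiten quiver) and to check compatibility with the mesh relations; since $\rho(\croot_k)$ is tailored to the recursion $\croot_1\croot_j=\croot_{j-1}+\croot_{j+1}$ that governs the mesh translate, the functor sends each almost split sequence to a direct sum of almost split sequences, and together with its behaviour on projectives and injectives this yields exactness. I expect this step, rather than any of the algebraic axioms, to carry the real technical content; it is also the cleanest place to pass to $\der_F$, whose Auslander--Reiten quiver is the untruncated $\integer\gend$ with a uniform column structure, so that the same definition produces a triangulated functor commuting with $\sus$ (using Lemma~\ref{lem:ProjDerTranslates} and Proposition~\ref{prop:DerivedProj}).

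With the functors in hand, (M2) follows formally: on objects both $\wh{\croot_k}\wh{\croot_l}$ and $\wh{\croot_k\croot_l}$ evaluate to $\bigoplus_q M_q^{\oplus (\rho(\croot_k\croot_l))_{jq}}$, because $\rho$ is a ring homomorphism and the product rule of Definition~\ref{def:ChebSemiring} is precisely property~(e) of Lemma~\ref{lem:ChebProps}; the natural isomorphism on morphisms is forced because both composites are assembled from the same column data. Finally, for (M6) I would exploit that $\croot_k\croot_k=\croot_0+\croot_2+\cdots+\croot_{2k}$ contains $\croot_0=1$ with positive coefficient, so by (M2) the functor $\wh{\croot_k}\wh{\croot_k}\simeq\wh{\croot_k^2}$ has the identity functor as a direct summand; hence the map induced on each $\Ext$-space by $\wh{\croot_k}$ is a split monomorphism, giving faithfulness and injectivity of $\varepsilon_{r,X,Y}\ps{i}$ for the generators. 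For a general $r=\sum_k a_k\croot_k$ with $a_{k_0}>0$, the product $\croot_{k_0}r$ again has positive $\croot_0$-coefficient (there is no cancellation in the semiring), so $\wh{\croot_{k_0}}\circ\wh{r}$ contains the identity as a summand and the same splitting argument forces $\varepsilon_{r,X,Y}\ps{i}$ to be injective. The derived statements are obtained by the identical arguments with ``exact'' replaced by ``triangulated''.
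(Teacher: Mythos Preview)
Your approach is genuinely different from the paper's, and it is worth spelling out the contrast. The paper does \emph{not} work via the mesh category or the Auslander--Reiten quiver at all: it defines $\wh{\croot_k}$ directly at the level of quiver representations. For an arbitrary module $M$, viewed as the data $(M_i,M_a)$ of vector spaces and linear maps, the paper writes down $(\croot_k M)_i$ as an explicit direct sum of certain $M_j$'s and $(\croot_k M)_a$ as an explicit block matrix built from the $M_b$'s (with one free scalar $\omega\in K$, resp.\ $\lambda\in K$, that must avoid a finite bad set in order for the isomorphism $\croot_k\croot_l M\cong(\croot_k\croot_l)M$ to hold). Crucially, the action on a morphism $f=(f_i)$ is declared to be \emph{block-diagonal}: $(\croot_k f)_i=\bigoplus_{j\in V(i,k)} f_j$. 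With that definition, (M5) is immediate because a block-diagonal linear map between direct sums is exact coordinatewise, and (M6) is immediate because $f$ itself appears as one of the diagonal blocks. The entire proof in the paper is two sentences: ``(M1)--(M4) follow from the construction; (M5)--(M6) follow from the fact that each $r$ acts diagonally on morphisms.''

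Your route via the mesh category and the regular representation $\rho$ is conceptually appealing, and your argument for (M6) (that $\croot_k^2$, and more generally $\croot_{k_0}r$, always has a positive $\croot_0$-coefficient, so the identity functor splits off) is slicker than what the paper does. The point to flag is (M5). Knowing that an additive endofunctor of $\calM_F$ sends each almost split sequence to a direct sum of almost split sequences, together with its effect on projectives and injectives, does \emph{not} by itself imply exactness on an arbitrary short exact sequence; the mesh-category equivalence gives you only an additive functor, and the abelian structure is extra data transported from $\mod KQ^\gend$. You would need either to pass to $\der_F$ first (build the functor on the stable translation quiver $\integer\gend$, check it commutes with $\Sigma$, conclude it is triangulated, and then restrict to the heart), or else to realise $\wh{\croot_k}$ as tensoring with a projective bimodule. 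Either patch works, but it is precisely here that the paper's down-to-earth block-matrix construction pays off: once the formulas are written, there is nothing left to check. Conversely, your approach makes the \emph{naturality} of (M2) and the semiring compatibility more transparent, since everything is encoded in the single ring homomorphism $\rho$, whereas in the paper the isomorphism $\croot_k\croot_l M\cong(\croot_k\croot_l)M$ depends on the choice of generic scalar and is asserted rather than exhibited.
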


To prove the above, we will define how $\chebsr\ps{2n+1}$ acts on the category $\calM_F=\mod*KQ^\gend$. Firstly, we will describe the principle of how we want $\chebsr\ps{2n+1}$ to act on objects. Let $\alpha$ be a positive root of $\gend'$. Then by Theorem~\ref{thm:Folding}, there exists a set
\begin{equation*}
	\mathbf{M}_{\alpha} = \{M\ps{0}_{\alpha}, \ldots, M\ps{n-1}_{\alpha}\}
\end{equation*}
of pairwise non-isomorphic indecomposable objects of $\calM_F$ such that
\begin{equation*}
	\dimproj_F(M\ps{k}_{\alpha}) = U_k\left(\cos\frac{\pi}{2n+1}\right) \alpha.
\end{equation*}
Moreover, for any positive root $\beta\neq \alpha$, the elements of $\mathbf{M}_\alpha \cap \mathbf{M}_\beta = \emptyset$. Given $0 \leq l \leq k \leq n-1$, we will define the action of $\croot_k \in \chebsr\ps{2n+1}$ on $\calM_F$ such that 
\begin{equation} \tag{$\ast$} \label{eq:ActOnObj}
	\croot_k M\ps{l}_\alpha 
	\cong \croot_l M\ps{k}_\alpha 
	\cong (\croot_k \croot_l) M\ps{0}_\alpha 
	\cong \left(\sum_{j=0}^l \croot_{k-l+2j} \right) M\ps{0}_\alpha 
	\cong \bigoplus_{j=0}^l M\ps{k-l+2j}_\alpha.
\end{equation}
That is, the action of $\croot_k$ on a module $M_\alpha\ps{l}$ respects the multiplication rule for Chebyshev polynomials of the second kind.

Describing the action on morphisms of $\calM_F$ requires us to be more explicit. So next, let us establish the following notation. Let $\{\eta_i : i \in Q^\gend_0\}$ be a complete set of primitive orthogonal idempotents of $KQ^\gend$ and let $M$ be a $KQ^\gend$-module. Let $a\colon i\rightarrow j \in Q^\gend_1$ and let $M_{a}\colon M_i \rightarrow M_j$ be the restriction of the linear action of $a$ on $M$ to the vector subspace $M_i=M\eta_i$. The data given by all vector subspaces $M_i$ and all linear maps $M_a$ entirely determine the structure of $M$ --- this is equivalently the module $M$ expressed as a $K$-representation of $Q^\gend$, which in this setting, is more convenient to work with. For any $N\in {\calM_F}$ and morphism $f \in \Hom_{\calM_F}(M,N)$, we then recall by Schur's lemma that $f$ can be written diagonally as $(f_i)_{i \in Q^\gend_0}$ with $f_i=f|_{M_i} \colon M_i \rightarrow N_i$.

For $\coxH_m$-type quivers, recall that $\chebsr\ps{5}\cong\goldsr$ and $Q_0 = \{i, \phi_i : 1 \leq i \leq m\}$. Throughout, the reader may notice that the relation $\gratio^2 = \gratio + 1$ comes into play. We define the object $\gratio M \in \calM_F$ to be the $KQ^\gend$-module with the following structure. We have vector subspaces
\begin{equation*}
	(\gratio M)_i = M_{\phi_i} \qquad \text{and} \qquad (\gratio M)_{\phi_i} = M_{\phi_i} \oplus M_i.
\end{equation*}
For each arrow $a \colon i \rightarrow j$ and corresponding arrow $b \colon \phi_i \rightarrow \phi_j$, we have
\begin{equation*}
	(\gratio M)_a = M_b \qquad \text{and} \qquad (\gratio M)_b = \begin{pmatrix} M_b & 0 \\ 0 & M_a \end{pmatrix}.
\end{equation*}
For the unique arrows $c_1\colon \phi_i \rightarrow j$, $c_2\colon \phi_i \rightarrow \phi_j$ and $c_3\colon i \rightarrow \phi_j$, we have
\begin{equation*}
	(\gratio M)_{c_1} = \begin{pmatrix} M_{c_2} & M_{c_3} \end{pmatrix}, \qquad
	(\gratio M)_{c_2} = \begin{pmatrix} M_{c_2} & \omega M_{c_3} \\ \omega M_{c_1} & 0 \end{pmatrix}, \qquad (\gratio M)_{c_3} = \begin{pmatrix} M_{c_2} \\ M_{c_1}\end{pmatrix},
\end{equation*}
where $\omega \in K \setminus \{0,1,2\inv\}$. The construction is independent (up to isomorphism) of the choice of $\omega$ subject to the condition above. The exclusion of the subset $\{0,1,2\inv\}\subset K$ is to ensure the isomorphism (\ref{eq:ActOnObj}) holds. For any $f \in \Hom_{\calM_F}(M,N)$, we define $\gratio f \in \Hom_{\calM_F}(\gratio M,\gratio N)$ by
\begin{equation*}
	(\gratio f)_i = f_{\phi_i} \qquad \text{and} \qquad (\gratio f)_{\phi_i} = \begin{pmatrix} f_{\phi_i} & 0 \\ 0 & f_{i} \end{pmatrix}.
\end{equation*}

For $\coxI$-type quivers, the principle of the construction is similar to the above. For this case, we need some additional notation. Recall that $Q_0^\gend= \{i : 0 \leq i \leq 2n -1\}$ and $\kappa(i)=U_i(\cos\frac{\pi}{2n+1})$. We will assume that even vertices are sources and odd vertices are sinks --- the construction is dual for the opposite quiver. Let $Q^\gend_1 = \{a_1,\ldots, a_{2n-1}\}$, where each arrow $a_i$ is the unique arrow between the vertices $i-1,i \in Q_0^\gend$. Define a surjection of sets $h\colon Q_0^\gend \rightarrow \{0,\ldots,n-1\}$ by $h(i) = k$ if $U_{i}(\cos \frac{\pi}{2n+1}) = U_{k}(\cos \frac{\pi}{2n+1})$. For each $j \in Q_0^\gend$, define an injection of sets $h_{j}\inv\colon\{0,\ldots,n-1\} \rightarrow Q_0^\gend$ by $h_{j}\inv(k) = i$ if $h(i) = k$ and $F(i)=F(j)$, and note that $h$ is a left inverse to $h_j\inv$ for all $j \in Q_0^\gend$. Next, for each $i \in Q^\gend_0$ and each $0 \leq k \leq n-1$ define an index set
\begin{equation*}
	V(i,k) = \{h_i\inv(l) \in Q_0^\gend : \croot_l \text{ is a summand of } \croot_{h(i)}\croot_k \in \chebsr\ps{2n+1}\}.
\end{equation*}

We may now proceed with the construction, for which we will later provide a specific example. Given an object $M \in \calM_F$, we define the object $\croot_k M \in \calM_F$ to be the $KQ^\gend$-module with the following structure. We have vector subspaces
\begin{equation*}
	(\croot_k M)_i = \bigoplus_{j \in V(i,k)} M_j.
\end{equation*}
We define the linear map $(\croot_k M)_{a_{i}}$ to be the restriction onto $(\croot_k M)_{i'}$ of the linear map
\begin{equation*}
	\begin{pmatrix}
		\lambda M_{a_1}	& M_{a_2}			& 0 					& \cdots	& 0 \\
		0					& \lambda M_{a_3}	& M_{a_4}			& \ddots	& \vdots \\
		0					& 0						& \lambda M_{a_5}	& \ddots	& 0 \\
		\vdots				& \vdots				& \ddots				& \ddots	& M_{a_{2n-1}} \\
		0					& 0						& \cdots		& 0			& \lambda M_{a_{2n-1}}
	\end{pmatrix} \colon
	\bigoplus_{0 \leq 2j \leq 2n-2} M_{2j} \rightarrow \bigoplus_{0 \leq 2j+1 \leq 2n-1} M_{2j+1}
\end{equation*}	
composed with the canonical surjection onto $(\croot_k M)_{i''}$, where $i'$ and $i''$ are the source and target vertices of $a_{i}$ respectively, $\lambda = 1$ if $i$ is odd, and $\lambda \in K \setminus J$ for some finite set $J$ if $i$ is even. The set $J$ is determined such that the isomorphism (\ref{eq:ActOnObj}) holds. For any $f \in \Hom_{\calM_F}(M,N)$, we define $\croot_k f \in \Hom_{\calM_F}(\croot_k M,\croot_k N)$ by
\begin{equation*}
	(\croot_k f)_i = \bigoplus_{j \in V(i,k)} f_j.
\end{equation*}

Finally, for both $\coxH$ and $\coxI$-type quivers, we define 
\begin{equation*}
	(r+s)f\colon (r+s) M \rightarrow (r+s) N = rf \oplus sf\colon rM \oplus sM \rightarrow rN \oplus sN
\end{equation*}
for any $r,s \in \chebsr\ps{2n+1}$ such that $r+s$ is irreducible, and we define $\croot_0 = 1 \in \chebsr\ps{2n+1}$ to act by the identity functor.

\begin{exam}\label{ex:I7Functors}
	Let $F$ be the folding given in Example~\ref{ex:I7}. Then $\chebsr\ps{7}$ acts on an object $M \in \calM_F$ in the following way.
	
	\begin{align*}
		(\croot_1 M)_0 &= M_4				&	(\croot_1 M)_1 &= M_3 \oplus M_5 \\
		(\croot_1 M)_2 &= M_2 \oplus M_4	&	(\croot_1 M)_3 &= M_1 \oplus M_3 \\
		(\croot_1 M)_4 &= M_0 \oplus M_2	&	(\croot_1 M)_5 &= M_1
	\end{align*}
	\begin{align*}
		(\croot_1 M)_{a_1} &=
		\begin{pmatrix}
			M_{a_4} \\ M_{a_5}
		\end{pmatrix}
		& (\croot_1 M)_{a_2} &=
		\begin{pmatrix}
			\lambda M_{a_3}	& M_{a_4} \\
			0					& \lambda M_{a_5}
		\end{pmatrix}
		\\ (\croot_1 M)_{a_3} &=
		\begin{pmatrix}
			M_{a_2}	& 0 \\
			M_{a_3}	& M_{a_4}
		\end{pmatrix}
		& (\croot_1 M)_{a_4} &=
		\begin{pmatrix}
			\lambda M_{a_1}	& M_{a_2} \\
			0					& \lambda M_{a_3}
		\end{pmatrix}
		\\ (\croot_1 M)_{a_5} &=
		\begin{pmatrix}
			M_{a_1} & M_{a_2}
		\end{pmatrix}
	\end{align*}
	\begin{align*}
		(\croot_2 M)_0 &= M_2								&	(\croot_2 M)_1 &= M_1 \oplus M_3 \\
		(\croot_2 M)_2 &= M_0 \oplus M_2 \oplus M_4	&	(\croot_2 M)_3 &= M_1 \oplus M_3 \oplus M_5 \\
		(\croot_2 M)_4 &= M_2 \oplus M_4					&	(\croot_2 M)_5 &= M_3
	\end{align*}
	\begin{align*}
		(\croot_2 M)_{a_1} &=
		\begin{pmatrix}
			M_{a_2} \\ M_{a_3}
		\end{pmatrix}
		& (\croot_2 M)_{a_2} &=
		\begin{pmatrix}
			\lambda M_{a_1}	& M_{a_2}			& 0 \\
			0					& \lambda M_{a_3}	& M_{a_4} 
		\end{pmatrix}
		\\ (\croot_2 M)_{a_3} &=
		\begin{pmatrix}
			M_{a_1}	& M_{a_2}	& 0 \\
			0			& M_{a_3}	& M_{a_4} \\
			0			& 0				& M_{a_5}
		\end{pmatrix}
		& (\croot_2 M)_{a_4} &=
		\begin{pmatrix}
			M_{a_2}			& 0 \\
			\lambda M_{a_3}	& M_{a_4} \\
			0					& \lambda M_{a_5}
		\end{pmatrix}
		\\ (\croot_2 M)_{a_5} &=
		\begin{pmatrix}
			M_{a_3} & M_{a_4}
		\end{pmatrix}
	\end{align*}
	where $\lambda \in K \setminus J$. The set $J$ can be computed to be
	\begin{equation*}
		J= \{0,1, 2\inv, 3 \cdot 2\inv, 3\inv, \zeta_1,\zeta_2, 2\inv \cdot \zeta_1,2\inv \cdot\zeta_2\},
	\end{equation*}
	where $\zeta_1, \zeta_2$ are roots of $x^2 -3x +1$. Figure~\ref{fig:I7CoeffAR} shows the Aulsander-Reiten quiver of $\calM_F$, with indecomposable objects written as images under the actions of $\croot_1,\croot_2 \in \chebsr\ps{7}$.
\end{exam}

\begin{proof}[Proof of Theorem~\ref{thm:Action}]
	First we will prove the theorem for the module category $\calM_F$. (M1)-(M4) follows by definition --- and in particular, the isomorphism (\ref{eq:ActOnObj}). (M5)-(M6) follows from the fact that each $r \in \chebsr\ps{2n+1}$ acts diagonally on morphisms.
	
	To see that the bounded derived category $\der_F$ is also a $\chebsr^{2n+1}$-coefficient category, one defines $r\sus^k X = \sus^k rX$ for any $k \in \integer$ and any object $X \in \der_F$ in degree 0. Since $KQ^\gend$ is hereditary and that $R_+$ acts linearly and faithfully on $\Ext^1_{\calM_F}$-spaces, the result follows.
\end{proof}

\begin{figure}[b]
	\centering
	\begin{tikzpicture}[scale=0.9]
\draw[blue] (1,-6) node {\footnotesize$I(1)$};
\draw[blue] (-1.8,-6) node {\footnotesize$\tau I(1)$};
\draw[blue] (-4.6,-6) node {\footnotesize$\tau^2 I(1)$};
\draw[blue] (-7.4,-6) node {\footnotesize$\tau\inv I(1)$};
\draw[blue] (-10.2,-6) node {\footnotesize$P(1)$};

\draw[violet] (-0.4,-4.5) node {\footnotesize$I(2)$};
\draw[violet] (-3.2,-4.5) node {\footnotesize$\tau I(2)$};
\draw[violet] (-6,-4.5) node {\footnotesize$\tau^2 I(2)$};
\draw[violet] (-8.8,-4.5) node {\footnotesize$\tau\inv P(2)$};
\draw[violet] (-11.6,-4.5) node {\footnotesize$P(2)$};

\draw[red] (-1.8,-3) node {\footnotesize$\gratio I(3)$};
\draw[red] (-4.6,-3) node {\footnotesize$\gratio \tau I(3)$};
\draw[red] (-7.4,-3) node {\footnotesize$\gratio \tau^2 I(3)$};
\draw[red] (-10.2,-3) node {\footnotesize$\gratio \tau\inv P(3)$};
\draw[red] (-13,-3) node {\footnotesize$\gratio P(3)$};

\draw[violet] (-0.4,-1.5) node {\footnotesize$\gratio I(2)$};
\draw[violet] (-3.2,-1.5) node {\footnotesize$\gratio \tau I(2)$};
\draw[violet] (-6,-1.5) node {\footnotesize$\gratio \tau^2 I(2)$};
\draw[violet] (-8.8,-1.5) node {\footnotesize$\gratio \tau\inv P(2)$};
\draw[violet] (-11.6,-1.5) node {\footnotesize$\gratio P(2)$};

\draw[blue] (1,0) node {\footnotesize$\gratio I(1)$};
\draw[blue] (-1.8,0) node {\footnotesize$\gratio \tau I(1)$};
\draw[blue] (-4.6,0) node {\footnotesize$\gratio \tau^2 I(1)$};
\draw[blue] (-7.4,0) node {\footnotesize$\gratio \tau\inv P(1)$};
\draw[blue] (-10.2,0) node {\footnotesize$\gratio P(1)$};

\draw[red] (-1.8,1.5) node {\footnotesize$I(3)$};
\draw[red] (-4.6,1.5) node {\footnotesize$\tau I(3)$};
\draw[red] (-7.4,1.5) node {\footnotesize$\tau^2 I(3)$};
\draw[red] (-10.2,1.5) node {\footnotesize$\tau\inv P(3)$};
\draw[red] (-13,1.5) node {\footnotesize$P(3)$};

\draw [->](-0.15,-5) -- (0.75,-5.5);
\draw [->](-1.55,-5.5) -- (-0.65,-5);
\draw [->](-2.95,-5) -- (-2.05,-5.5);
\draw [->](-4.35,-5.5) -- (-3.45,-5);
\draw [->](-5.75,-5) -- (-4.85,-5.5);
\draw [->](-7.15,-5.5) -- (-6.25,-5);
\draw [->](-8.55,-5) -- (-7.65,-5.5);
\draw [->](-9.95,-5.5) -- (-9.05,-5);
\draw [->](-11.35,-5) -- (-10.45,-5.5);

\draw [->](-1.55,-3.5) -- (-0.65,-4);
\draw [->](-2.95,-4) -- (-2.05,-3.5);
\draw [->](-4.35,-3.5) -- (-3.45,-4);
\draw [->](-5.75,-4) -- (-4.85,-3.5);
\draw [->](-7.15,-3.5) -- (-6.25,-4);
\draw [->](-8.55,-4) -- (-7.65,-3.5);
\draw [->](-9.95,-3.5) -- (-9.05,-4);
\draw [->](-11.35,-4) -- (-10.45,-3.5);
\draw [->](-12.75,-3.5) -- (-11.85,-4);

\draw [->](-1.55,-2.5) -- (-0.65,-2);
\draw [->](-2.95,-2) -- (-2.05,-2.5);
\draw [->](-4.35,-2.5) -- (-3.45,-2);
\draw [->](-5.75,-2) -- (-4.85,-2.5);
\draw [->](-7.15,-2.5) -- (-6.25,-2);
\draw [->](-8.55,-2) -- (-7.65,-2.5);
\draw [->](-9.95,-2.5) -- (-9.05,-2);
\draw [->](-11.35,-2) -- (-10.45,-2.5);
\draw [->](-12.75,-2.5) -- (-11.85,-2);

\draw [->](-0.15,-1) -- (0.75,-0.5);
\draw [->](-1.55,-0.5) -- (-0.65,-1);
\draw [->](-2.95,-1) -- (-2.05,-0.5);
\draw [->](-4.35,-0.5) -- (-3.45,-1);
\draw [->](-5.75,-1) -- (-4.85,-0.5);
\draw [->](-7.15,-0.5) -- (-6.25,-1);
\draw [->](-8.55,-1) -- (-7.65,-0.5);
\draw [->](-9.95,-0.5) -- (-9.05,-1);
\draw [->](-11.35,-1) -- (-10.45,-0.5);

\draw [->](-1.4,1) -- (-0.5,-1);
\draw [->](-3.1,-1) -- (-2.2,1);
\draw [->](-4.2,1) -- (-3.3,-1);
\draw [->](-5.9,-1) -- (-5,1);
\draw [->](-7,1) -- (-6.1,-1);
\draw [->](-8.7,-1) -- (-7.8,1);
\draw [->](-9.8,1) -- (-8.9,-1);
\draw [->](-11.5,-1) -- (-10.6,1);
\draw [->](-12.7,1) -- (-11.7,-1);

\end{tikzpicture}
	\caption{The Auslander-Reiten quiver of $\mod*KQ^{\coxD_6}$ viewed as $\goldsr$-coefficient category under the folding $F \colon Q^{\coxD_6} \rightarrow Q^{\coxH_3}$.} \label{fig:H3CoeffAR}
\end{figure}

\begin{figure}
	\centering
	\def\y{0.35cm}
\def\z{1}
\begin{tikzpicture}[scale=2]
	\foreach \x in {0,...,3} {
		\foreach \z in {0,2,4} {
			\coordinate (t\x_I\z) at ($(1.5*1-\x*1,{\z/2*0.8 + 3*0.8})$);
		}
	}
	\foreach \x in {0,...,2} {
		\foreach \z in {1,3,5} {
			\coordinate (t\x_I\z) at ($(1*1-\x*1,{(0.8*(\z - 1)/2 + 0.8*3.5)})$);
		}
	}
	
	\draw[<-, shorten <= \y*\z,shorten >= \y*\z] (t0_I0) -- (t0_I1);
	\draw[<-, shorten <= \y*\z,shorten >= \y*\z] (t0_I2) -- (t0_I1);
	\draw[<-, shorten <= \y*\z,shorten >= \y*\z] (t0_I2) -- (t0_I3);
	\draw[<-, shorten <= \y*\z,shorten >= \y*\z] (t0_I4) -- (t0_I3);
	\draw[<-, shorten <= \y*\z,shorten >= \y*\z] (t0_I4) -- (t0_I5);
	
	\draw[->, shorten <= \y*\z,shorten >= \y*\z] (t1_I0) -- (t0_I1);
	\draw[->, shorten <= \y*\z,shorten >= \y*\z] (t1_I2) -- (t0_I1);
	\draw[->, shorten <= \y*\z,shorten >= \y*\z] (t1_I2) -- (t0_I3);
	\draw[->, shorten <= \y*\z,shorten >= \y*\z] (t1_I4) -- (t0_I3);
	\draw[->, shorten <= \y*\z,shorten >= \y*\z] (t1_I4) -- (t0_I5);
	
	\draw[<-, shorten <= \y*\z,shorten >= \y*\z] (t1_I0) -- (t1_I1);
	\draw[<-, shorten <= \y*\z,shorten >= \y*\z] (t1_I2) -- (t1_I1);
	\draw[<-, shorten <= \y*\z,shorten >= \y*\z] (t1_I2) -- (t1_I3);
	\draw[<-, shorten <= \y*\z,shorten >= \y*\z] (t1_I4) -- (t1_I3);
	\draw[<-, shorten <= \y*\z,shorten >= \y*\z] (t1_I4) -- (t1_I5);
	
	\draw[->, shorten <= \y*\z,shorten >= \y*\z] (t2_I0) -- (t1_I1);
	\draw[->, shorten <= \y*\z,shorten >= \y*\z] (t2_I2) -- (t1_I1);
	\draw[->, shorten <= \y*\z,shorten >= \y*\z] (t2_I2) -- (t1_I3);
	\draw[->, shorten <= \y*\z,shorten >= \y*\z] (t2_I4) -- (t1_I3);
	\draw[->, shorten <= \y*\z,shorten >= \y*\z] (t2_I4) -- (t1_I5);
	
	\draw[<-, shorten <= \y*\z,shorten >= \y*\z] (t2_I0) -- (t2_I1);
	\draw[<-, shorten <= \y*\z,shorten >= \y*\z] (t2_I2) -- (t2_I1);
	\draw[<-, shorten <= \y*\z,shorten >= \y*\z] (t2_I2) -- (t2_I3);
	\draw[<-, shorten <= \y*\z,shorten >= \y*\z] (t2_I4) -- (t2_I3);
	\draw[<-, shorten <= \y*\z,shorten >= \y*\z] (t2_I4) -- (t2_I5);
	
	\draw[->, shorten <= \y*\z,shorten >= \y*\z] (t3_I0) -- (t2_I1);
	\draw[->, shorten <= \y*\z,shorten >= \y*\z] (t3_I2) -- (t2_I1);
	\draw[->, shorten <= \y*\z,shorten >= \y*\z] (t3_I2) -- (t2_I3);
	\draw[->, shorten <= \y*\z,shorten >= \y*\z] (t3_I4) -- (t2_I3);
	\draw[->, shorten <= \y*\z,shorten >= \y*\z] (t3_I4) -- (t2_I5);
	
	\draw[blue] (t0_I0) node {\footnotesize$M_{0}$};
	\draw[blue] (t1_I0) node {\footnotesize$M_{2}$};
	\draw[blue] (t2_I0) node {\footnotesize$M_{4}$};
	\draw[blue] (t3_I0) node {\footnotesize$M_{6}$};
	
	\draw[red] (t0_I1) node {\footnotesize$\croot_1 M_{1}$};
	\draw[red] (t1_I1) node {\footnotesize$\croot_1 M_{3}$};
	\draw[red] (t2_I1) node {\footnotesize$\croot_1 M_{5}$};
	
	\draw[blue] (t0_I2) node {\footnotesize$\croot_2 M_{0}$};
	\draw[blue] (t1_I2) node {\footnotesize$\croot_2 M_{2}$};
	\draw[blue] (t2_I2) node {\footnotesize$\croot_2 M_{4}$};
	\draw[blue] (t3_I2) node {\footnotesize$\croot_2 M_{6}$};
	
	\draw[red] (t0_I3) node {\footnotesize$\croot_2 M_{1}$};
	\draw[red] (t1_I3) node {\footnotesize$\croot_2 M_{3}$};
	\draw[red] (t2_I3) node {\footnotesize$\croot_2 M_{5}$};
	
	\draw[blue] (t0_I4) node {\footnotesize$\croot_1 M_{0}$};
	\draw[blue] (t1_I4) node {\footnotesize$\croot_1 M_{2}$};
	\draw[blue] (t2_I4) node {\footnotesize$\croot_1 M_{4}$};
	\draw[blue] (t3_I4) node {\footnotesize$\croot_1 M_{6}$};
	
	\draw[red] (t0_I5) node {\footnotesize$M_{1}$};
	\draw[red] (t1_I5) node {\footnotesize$M_{3}$};
	\draw[red] (t2_I5) node {\footnotesize$M_{5}$};
	
%
%
%
%
\end{tikzpicture}
	\caption{The Auslander-Reiten quiver of $\mod*KQ^{\coxA_6}$ viewed as $\chebsr\ps{7}$-coefficient category under the folding $F \colon Q^{\coxA_6} \rightarrow Q^{\coxI_2(7)}$. Modules are labelled such that $\dimproj_F(M_k)$ corresponds to the point $\e^{\frac{k \pi \imunit}{7}}$.} \label{fig:I7CoeffAR}
\end{figure}
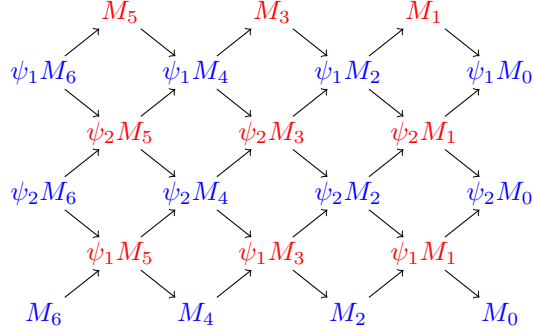

For each of the weighted foldings $F\colon Q^\gend \rightarrow Q^{\gend'}$, we respectively call the categories $\calM_F$ and $\der_F$ equipped with the appropriate semiring actions given in Theorem~\ref{thm:Action} the abelian and triangulated $R_+$-coefficient categories corresponding to $F$, where $R_+ = \chebsr\ps{2n+1}$.

\begin{rem} \label{rem:FoldingMultiplication}
	For each of the foldings $F\colon Q^\gend \rightarrow Q^{\gend'}$ with $\gend' \in \{\coxH_3,\coxH_4,\coxI_2(2n+1)\}$, the action of $R_+ = \chebsr\ps{2n+1}$ on $\calM_F$ and $\der_F$ is defined such that the following hold for any $M \in \calM_F$, any $X \in \der_F$ and any $r \in R_+$:
	\begin{enumerate}[label=(\alph*)]
		\item If $X$ is concentrated in degree $k$, then $rX$ is concentrated in degree $k$.
		\item $\dimproj_F(rM) = \srhom\ps{2n+1}(r) \dimproj_F(M)$ and $\derdim_F(rX) = \srhom\ps{2n+1}(r) \derdim_F(X)$, where $\srhom\ps{2n+1}\colon \chebsr\ps{2n+1} \rightarrow \wh\chebr\ps{2n+1}$ is the homomorphism described in Remark~\ref{rem:ChebsrOrder}.
		\item $rM$ (resp. $rX$) is a direct sum of objects that belong to the same column of the Auslander-Reiten quiver of $\calM_F$ (resp. $\der_F$). In particular, $\Hom_{\calM_F}(N,N')=0$ (resp. $\Hom_{\der_F}(Y,Y')=0$) for any non-isomorphic indecomposable direct summands $N,N' \subseteq rM$ (resp. $Y,Y' \subseteq rX$). Moreover, $\tau_{\calM_F} rM \cong r \tau_{\calM_F} M$ and  $\tau_{\der_F} rX \cong r \tau_{\der_F} X$.
		\item Suppose $rM \cong M_1 \oplus \ldots \oplus M_m$. Then the action of $r$ maps an Aulsander-Reiten sequence starting (or ending) in $M$ to a direct sum of Auslander-Reiten sequences starting (resp. ending) in each $M_i$. A similar statement holds for $r X$ and Auslander-Reiten triangles starting/ending in $X$.
	\end{enumerate}
\end{rem}

\begin{cor}
	Let $F\colon Q^\gend \rightarrow Q^{\gend'}$ be a folding of quivers with $\gend' \in \{\coxH_3,\coxH_4, \coxI_2(2n+1)\}$, and let $\calM_F$ be the corresponding abelian $\chebsr\ps{2n+1}$-coefficient category. Then the Grothendieck group $G_0(\calM_F)$ has the structure of a free $\chebr\ps{2n+1}$-module of rank $|Q_0^{\gend'}|$.
\end{cor}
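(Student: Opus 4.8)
The plan is to construct an explicit isomorphism of $\chebr\ps{2n+1}$-modules
\[
	\beta\colon G_0(\calM_F) \longrightarrow \bigl(\chebr\ps{2n+1}\bigr)^{|Q^{\gend'}_0|},
\]
realised as a lift of the projection $\dimproj_F$ from the quotient $\wh\chebr\ps{2n+1}$ up to $\chebr\ps{2n+1}$ itself. First I would identify the coefficient ring: by Remark~\ref{rem:RCoeffConseq}, $G_0(\calM_F)$ is a module over the ring obtained by group-completing the additive monoid of $R_+=\chebsr\ps{2n+1}$, and by Remark~\ref{rem:ChebBasis} this monoid is freely generated by $\croot_0,\dots,\croot_{n-1}$ (carrying the Chebyshev multiplication), so its completion is exactly $\chebr\ps{2n+1}$. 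Since $KQ^\gend$ is a finite-dimensional hereditary algebra, $G_0(\calM_F)=G_0(\mod*KQ^\gend)$ is free over $\integer$ with basis the classes $[S(i)]$, $i\in Q^\gend_0$, of the simple modules. Writing $E_{i'}=F^{-1}(i')$ for the unfolding block over $i'\in Q^{\gend'}_0$, and letting $h(i)$ be defined by $\vw(i)=U_{h(i)}(\cos\tfrac{\pi}{2n+1})$, I would then set $\beta([S(i)])=\croot_{h(i)}\,e_{F(i)}$ on this basis, where $e_{i'}$ denotes the standard free generator of the $i'$-th factor.

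Next I would check that $\beta$ is an isomorphism of abelian groups. As $i$ runs over a single block $E_{i'}$, the value $h(i)$ runs bijectively over $\{0,\dots,n-1\}$ (each block contains exactly one vertex of each weight $U_0,\dots,U_{n-1}$, and these weights are pairwise distinct by Lemma~\ref{lem:ChebProps}). Hence $\beta$ carries the $\integer$-basis $\{[S(i)]\}$ of the domain bijectively onto $\{\croot_l\,e_{i'} : 0\le l\le n-1,\ i'\in Q^{\gend'}_0\}$, which is a $\integer$-basis of the codomain because $\{\croot_0,\dots,\croot_{n-1}\}$ is an integral basis of $\chebr\ps{2n+1}$ (Remark~\ref{rem:ChebBasis}). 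Composing $\beta$ componentwise with $\srhom\ps{2n+1}$ recovers $\dimproj_F$, confirming that $\beta$ is the asserted lift of the projection.

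It remains to verify that $\beta$ is $\chebr\ps{2n+1}$-linear, which I expect to be the main obstacle. It suffices to test the generators $\croot_k$ ($1\le k\le n-1$) on the basis vectors $[S(i)]$. By construction (Section~\ref{section-action}), the endofunctor $\croot_k$ acts vertex-wise through the regular representation $\rho$ of $\chebr\ps{2n+1}$, so it preserves the block decomposition of $G_0(\calM_F)$, and a direct inspection of the defining formulas shows that, writing $p=h(i)$, the multiplicity of the weight-$U_l$ simple of the block $F(i)$ among the composition factors of $\croot_k S(i)$ is precisely the matrix entry $\rho(\croot_k)_{p,l}$. Thus $\beta(\croot_k[S(i)])=\sum_{l=0}^{n-1}\rho(\croot_k)_{p,l}\,\croot_l\,e_{F(i)}$. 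Here Lemma~\ref{lem:ChebRegRep} does the essential work: since $\rho(\croot_k)$ is symmetric with entries in $\{0,1\}$, we have $\rho(\croot_k)_{p,l}=\rho(\croot_k)_{l,p}$, which is exactly the coefficient of $\croot_l$ in the product $\croot_k\croot_p$; hence the sum collapses to $\croot_k\croot_p\,e_{F(i)}=\croot_k\,\beta([S(i)])$. This yields $\chebr\ps{2n+1}$-linearity, so $\beta$ is an isomorphism of $\chebr\ps{2n+1}$-modules, exhibiting $G_0(\calM_F)$ as free of rank $|Q^{\gend'}_0|$. The delicate points to pin down are the identification of the composition multiplicities with the entries of $\rho(\croot_k)$ uniformly across the $\coxH$- and $\coxI$-type constructions, and the fact that no multiplicity exceeds one; both are furnished by the symmetry and $0/1$ properties in Lemma~\ref{lem:ChebRegRep}.
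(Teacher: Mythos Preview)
Your proposal is correct and follows essentially the same approach as the paper's proof: both identify the weight-$1$ simples $\{S(j):\vw(j)=1\}$ as a free $\chebr\ps{2n+1}$-basis of $G_0(\calM_F)$ by noting that every simple $S(i)$ is $\croot_{h(i)}$ applied to the weight-$1$ simple in its block. The paper's proof is a two-sentence sketch that simply asserts this; you have spelt out the isomorphism $\beta$ explicitly and, more notably, verified $\chebr\ps{2n+1}$-linearity via the symmetry of $\rho(\croot_k)$ from Lemma~\ref{lem:ChebRegRep}(b), a step the paper leaves entirely implicit. Your version is thus a more careful execution of the same idea rather than a different route.
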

\begin{proof}
	The $\chebr\ps{2n+1}$-module structure of $G_0(\calM_F)$ follows from Remark~\ref{rem:RCoeffConseq}(a). That $G_0(\calM_F)$ is free of rank $|Q_0^{\gend'}|$ follows from the fact that a basis for $G_0(\calM_F)$ is given by the iso-classes of simple modules, and that any simple module is obtained as an $R_+$-multiple of a simple module that projects onto a simple root of $\gend'$.
\end{proof}

\subsection{The reduced Auslander-Reiten quiver of an $R_+$-coefficient category}
It is possible to construct a simplified version of the Auslander-Reiten quiver for an $R_+$-coefficient category, which we call the \emph{reduced Auslander-Reiten quiver}. This takes its inspiration from the valuations on the Auslander-Reiten quiver of the module category of Artin algebras over non-algebraically-closed fields (cf. \cite[VII]{AuslanderReitenSmalo}).

\begin{defn} \label{def:Generators}
	Let $R_+$ be a partially ordered semiring and $R$ be the associated ring (Remark~\ref{rem:RCoeffConseq}) and suppose $\mathcal{M}$ is an abelian $R_+$-coefficient category. We say an object $N \in \calM$ is \emph{$R_+$-generated by $M \in \calM$} if there exists a split exact sequence
	\begin{equation*}
		0 \rightarrow sM \rightarrow rM \rightarrow L \rightarrow 0
	\end{equation*}
	such that $N \cong L$. Similarly, for a triangulated $R_+$-coefficient category $\der$, we say an object $N \in \der$ is $R_+$-generated by $M \in \der$ if there exists a split triangle
	\begin{equation*}
		sM \rightarrow rM \rightarrow L \rightarrow \sus sM
	\end{equation*}
	with monic first morphism such that $N \cong L$. In both the abelian and triangulated cases, we call the pair $(r,s)$ the \emph{$R_+$-generating pair} of $N$ by $M$, and call the element $r-s \in R$ the \emph{$R$-index} of $N$ with respect to $M$. We denote the class of all objects $R_+$-generated by $M$ by $\gen_M$.
	
	Let $\Gamma$ be a set of objects of $\calM$ and define $\calM(\Gamma)$ to be the full subcategory of $\calM$ whose objects are isomorphic to finite direct sums of the objects in the set
	\begin{equation*}
		\gen_\Gamma=\{L \in \gen_M :  M \in \Gamma \}.
	\end{equation*}
	We say $\Gamma$ is a set of \emph{$R_+$-generators} of $\calM$ if $\calM(\Gamma) \simeq \calM$.
\end{defn}

\begin{rem} \label{rem:SplitGen}
	By the properties of triangulated categories, the condition that $f$ is a monomorphism in a triangle
	\begin{equation*}
		\xymatrix@C=2ex{
		sM \ar[r]^-f& rM \ar[r]^-g& L \ar[r]^-h& \sus sM
		}
	\end{equation*}
	is strong enough to ensure that $rM \cong sM \oplus L$, that $g$ is an epimorphism and that $h$ is the zero morphism. Thus, the triangle is split. See \cite[IV.1]{GelfandManin} for an account.
\end{rem}

\begin{defn} \label{defn:RPPO}
	Given a set $\Gamma$ of objects of $\calM$, we can endow the category $\calM(\Gamma)$ with a partial ordering given by $L_1 \leq L_2$ if and only if $L_1 \cong L_2$ or $L_1$ and $L_2$ are $R_+$-generated by a common $M \in \Gamma$ with respective $R$-indices $r_1$ and $r_2$  such that $r_1 < r_2$.
\end{defn}

\begin{defn} \label{def:MinGenerators}
	We say a set $\Gamma$ of $R_+$-generators of $\calM$ is \emph{basic} if the elements of $\Gamma$ are pairwise non-isomorphic indecomposable objects of $\calM$. We say a set $\Gamma$ of basic $R_+$-generators is \emph{minimal} if for any other set of basic $R_+$-generators $\Gamma'$, we have an injective map of sets $\theta\colon\Gamma \rightarrow \Gamma'$ and inclusions $\iota\colon \Gamma \rightarrow \Gamma \cup \Gamma'$ and $\iota'\colon \Gamma' \rightarrow \Gamma \cup \Gamma'$ such that for any $M \in \Gamma$, we have $\iota(M) \leq \iota'\theta(M)$ in the category $\calM(\Gamma \cup \Gamma')\simeq\calM$. 
\end{defn}

For the $R_+$-coefficient categories $\calM_F$ and $\der_F$ that we have constructed from a weighted folding of quivers $F$, the above notions of $R_+$-generators and minimality can be deduced from the geometry given by the projection maps $\dimproj_F$ and $\derdim_F$ respectively.

\begin{lem} \label{lem:FoldingGenerators}
	Let $F\colon Q^\gend \rightarrow Q^{\gend'}$ be a folding of quivers with $\gend'\in\{\coxH_3,\coxH_4,\coxI_2(2n+1)\}$ and let $\calM_F$ and $\der_F$ be the corresponding abelian and triangulated $R_+$-coefficient categories, respectively.
	\begin{enumerate}[label=(\alph*)]
		\item  Let $M_\alpha \in \calM_F$ be an indecomposable object such that $\dimproj_F(M_\alpha)=\alpha$ is a positive root of $\gend'$. Then an indecomposable $L \in \calM_F$ is $R_+$-generated by $M_\alpha$ if and only if $\dimproj_F(L)$ is collinear to $\alpha$.
		\item Let $X_\alpha \in \der_F$ be an indecomposable object such that $\derdim_F(X_\alpha)=\alpha$ is a root of $\gend'$. Then an indecomposable $Y \in \der_F$ is $R_+$-generated by $X_\alpha$ if and only if $Y$ is concentrated in the same degree as $X_\alpha$ and $\derdim_F(Y)$ is collinear to $\alpha$.
	\end{enumerate}
\end{lem}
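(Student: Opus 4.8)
The plan is to reduce both parts to two facts: that the projection maps are additive on direct sums, and the scaling identities $\dimproj_F(rM)=\srhom\ps{2n+1}(r)\dimproj_F(M)$ and $\derdim_F(rX)=\srhom\ps{2n+1}(r)\derdim_F(X)$ of Remark~\ref{rem:FoldingMultiplication}(b). For the forward implication of (a), suppose $L$ is $R_+$-generated by $M_\alpha$, so that the defining split exact sequence gives $rM_\alpha\cong sM_\alpha\oplus L$ for the generating pair $(r,s)$. Since $\dimvect$ is additive on direct sums and $d_F$ is $\real$-linear, $\dimproj_F$ is additive on direct sums; together with Remark~\ref{rem:FoldingMultiplication}(b) this yields $\dimproj_F(L)=\bigl(\srhom\ps{2n+1}(r)-\srhom\ps{2n+1}(s)\bigr)\alpha=\sigma\ps{2n+1}(r-s)\,\alpha$, where $r-s\in R$ is the $R$-index and $\sigma\ps{2n+1}$ is the ring homomorphism of Remark~\ref{rem:ChebsrOrder}. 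As $\sigma\ps{2n+1}(r-s)\in\real$, the vector $\dimproj_F(L)$ is collinear to $\alpha$.

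For the converse of (a), I would first invoke Theorem~\ref{thm:Folding} to conclude that every indecomposable of $\calM_F$ has projected dimension vector of the form $U_j(\cos\tfrac{\pi}{2n+1})\beta$ for a positive root $\beta$ of $\gend'$ and some $0\le j\le n-1$. Because the root systems $\coxH_3,\coxH_4,\coxI_2(2n+1)$ are reduced, no two distinct positive roots are proportional, and a count of indecomposables of $\calM_F$ against the number of such vectors shows that $\dimproj_F$ is injective on indecomposables. Hence the indecomposables projecting onto the ray through $\alpha$ are exactly $\mathbf M_\alpha=\{M\ps0_\alpha,\dots,M\ps{n-1}_\alpha\}$, and an indecomposable $L$ with $\dimproj_F(L)$ collinear to $\alpha$ must satisfy $L\cong M\ps{j}_\alpha$ for some $j$. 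Since $M_\alpha\cong M\ps0_\alpha$, the isomorphism $\croot_j M\ps0_\alpha\cong M\ps{j}_\alpha$ from (\ref{eq:ActOnObj}) identifies $L$ with $\croot_j M_\alpha$, so the split exact sequence $0\to 0\to \croot_j M_\alpha\to L\to 0$ with generating pair $(\croot_j,0)$ shows $L\in\gen_{M_\alpha}$.

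Part (b) I would deduce from (a) by shifting. Every indecomposable of $\der_F$ is $\sus^k M$ for an indecomposable module $M$, and $\derdim_F(\sus^kM)=(-1)^k\dimproj_F(M)$; writing $X_\alpha=\sus^kM$, the hypothesis that $\derdim_F(X_\alpha)=\alpha$ is a root forces $\dimproj_F(M)=(-1)^k\alpha=:\beta$ to be a positive root, so $M\cong M_\beta$. For the forward direction, the split triangle gives $rX_\alpha\cong sX_\alpha\oplus Y$; Remark~\ref{rem:FoldingMultiplication}(a) keeps $rX_\alpha$ and $sX_\alpha$ (hence their summand $Y$) concentrated in degree $k$, and additivity of $\derdim_F$ together with Remark~\ref{rem:FoldingMultiplication}(b) gives collinearity of $\derdim_F(Y)$ with $\alpha$ as in (a). Conversely, if $Y=\sus^kN$ is concentrated in degree $k$ with $\derdim_F(Y)$ collinear to $\alpha$, then $\dimproj_F(N)$ is collinear to $\beta=\dimproj_F(M)$, so (a) supplies a split exact sequence $0\to sM\to rM\to N\to0$; applying the triangulated functor $\sus^k$, which commutes with the action via $r\sus^kX=\sus^k rX$ and carries split monomorphisms to split monomorphisms, produces the split triangle $sX_\alpha\to rX_\alpha\to Y\to\sus sX_\alpha$, whence $Y\in\gen_{X_\alpha}$.

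The main obstacle is the converse of (a): converting the geometric hypothesis ``$\dimproj_F(L)$ collinear to $\alpha$'' into the algebraic conclusion ``$L\cong M\ps{j}_\alpha$''. This step rests entirely on the precise description of the image of $\dimproj_F$ supplied by Theorem~\ref{thm:Folding} (ultimately the Lusztig--Moody--Patera projection), ensuring that no unexpected indecomposable lands on the ray through $\alpha$. Once injectivity of $\dimproj_F$ on indecomposables is in hand, the remaining assertions are formal consequences of the additivity of the projection maps and the multiplication rule (\ref{eq:ActOnObj}).
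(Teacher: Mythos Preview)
Your proof is correct and follows essentially the same route as the paper's own argument: the forward direction in both (a) and (b) uses additivity of the projection together with Remark~\ref{rem:FoldingMultiplication}(b), and the converse reduces via Theorem~\ref{thm:Folding} to identifying $L$ with some $\croot_jM_\alpha$, then exhibiting the obvious split sequence with generating pair $(\croot_j,0)$. Your explicit justification of the injectivity of $\dimproj_F$ on indecomposables (via reducedness of the root system and a count against Gabriel's theorem) spells out a step the paper takes for granted when it invokes Theorem~\ref{thm:Folding} to conclude $L\in\mathbf{M}_\alpha$, but the content is the same.
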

\begin{proof}
	(a) For clarity, we have $R_+ = \chebsr\ps{2n+1}$ in all $\coxI$-type cases and $R_+ = \chebsr\ps{5} \cong \goldsr$ in the $\coxH$-type cases. Recall that by Theorem~\ref{thm:Folding}, the construction in Theorem~\ref{thm:Action}, and by Remark~\ref{rem:FoldingMultiplication}(b), for each positive root $\alpha$ of $\gend'$, there exists a set
	\begin{equation*}
		\mathbf{M}_\alpha = \left\{M_{\alpha},\croot_1 M_{\alpha},\ldots, \croot_{n-1} M_{\alpha}\right\}
	\end{equation*}
	of pairwise non-isomorphic indecomposable modules such that
	\begin{equation*}
		\dimproj_F(\croot_i M_{\alpha})=\srhom\ps{2n+1}(\croot_i)\dimproj_F(M_{\alpha}) = U_i\left(\cos\frac{\pi}{2n+1}\right) \alpha.
	\end{equation*}
	
	It follows that $L \in \calM_F$ is $\chebsr\ps{2n+1}$-generated by $M_\alpha$ if and only if $L$ is isomorphic to a direct sum of objects in $\mathbf{M}_\alpha$. The sufficiency follows from the fact that any element of $\chebsr\ps{2n+1}$ is of the form $r=a\ps{0}\croot_0+\ldots+a\ps{n-1}\croot_{n-1}$ with each $a\ps{i} \in \nnint$ and that $\bigcup_{\alpha \in \posroots(\gend')} \mathbf{M}_\alpha$ is a complete set of non-isomorphic indecomposables of $\calM_F$, where $\posroots(\gend')$ is the set of positive roots of $\gend'$. The necessity follows from the split exact sequences
	\begin{equation*}
		0 \rightarrow s M_\alpha \rightarrow (s + r)M_\alpha \rightarrow r M_\alpha \rightarrow 0
	\end{equation*}
	for any $s \in \chebsr\ps{2n+1}$. On the other hand, it is also clear that for any $L_1,L_2 \in \mathbf{M}_\alpha$, we have that $\dimproj_F(L_1)$ is collinear to $\dimproj_F(L_2)$. Conversely, if an indecomposable $L \in \calM_F$ is such that $\dimproj_F(L)$ is collinear to $M_\alpha$ for some positive root $\alpha$ of $\gend'$, then Theorem~\ref{thm:Folding} tells us that $L \in \mathbf{M}_\alpha$, as required.
	
	(b) This follows from the proof of (a) along with Remark~\ref{rem:FoldingMultiplication}(a) and that $R_+$ acts by a triangulated functor.
\end{proof}

\begin{prop} \label{prop:GenGeometry}
	Let $\calM_F$ and $\der_F$ be as previously stated and $\leq$ be the partial ordering in Definition~\ref{defn:RPPO}.
	\begin{enumerate}[label=(\alph*)]
		\item For any indecomposable $L_1,L_2 \in \calM_F$, we have the following:
		\begin{enumerate}[label=(\roman*)]
			\item $L_1$ and $L_2$ have a common $R_+$-generator if and only if $\dimproj_F(L_1)$ is collinear to $\dimproj_F(L_2)$;
		\item $L_1 < L_2$ if and only if $\dimproj_F(L_1)$ is collinear to $\dimproj_F(L_2)$ and $||\dimproj_F(L_1)||<||\dimproj_F(L_2)||$.
		\end{enumerate}
		\item For any indecomposable $X_1,X_2 \in \der_F$, we have the following:
		\begin{enumerate}[label=(\roman*)]
			\item $X_1$ and $X_2$ have a common $R_+$-generator if and only if $X_1$ and $X_2$ are concentrated in the same degree and $\dimproj_F(X_1)$ is collinear to $\dimproj_F(X_2)$;
		\item $X_1 < X_2$ if and only if $X_1$ and $X_2$ are concentrated in the same degree, $\dimproj_F(X_1)$ is collinear to $\dimproj_F(X_2)$, and $||\dimproj_F(X_1)||<||\dimproj_F(X_2)||$.
		\end{enumerate}
	\end{enumerate} 
\end{prop}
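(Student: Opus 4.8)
The plan is to deduce the whole statement from Lemma~\ref{lem:FoldingGenerators} together with two soft facts about the projections: that $\dimproj_F$ (resp. $\derdim_F$) is additive on split short exact sequences (resp. split triangles), being induced by the additive dimension-vector map, and that it scales correctly, $\dimproj_F(rM)=\srhom\ps{2n+1}(r)\,\dimproj_F(M)$, by Remark~\ref{rem:FoldingMultiplication}(b). The only genuinely external input I need is that the root systems of types $\coxH_3$, $\coxH_4$ and $\coxI_2(2n+1)$ are reduced with roots of fixed length, so that distinct positive roots are never collinear; combined with Theorem~\ref{thm:Folding} (every indecomposable of $\calM_F$ projects to a \emph{positive} multiple of a positive root of $\gend'$), this shows that if $\dimproj_F(L_1)$ and $\dimproj_F(L_2)$ are collinear then they are positive multiples of one and the same positive root $\alpha$.

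For (a)(i) I would argue as follows. If $L_1,L_2$ share a common $R_+$-generator $M$, then each sits in a split sequence $0\to s_iM\to r_iM\to L_i\to 0$, so that $[L_i]=(r_i-s_i)[M]$ in $G_0(\calM_F)$ by Remark~\ref{rem:RCoeffConseq}; applying the additive map $\dimproj_F$ and the scaling identity gives $\dimproj_F(L_i)=\sigma\ps{2n+1}(r_i-s_i)\,\dimproj_F(M)$, so both projected vectors are scalar multiples of $\dimproj_F(M)$ and hence collinear. Conversely, if the two projected vectors are collinear, the root-system fact produces a single positive root $\alpha$ to which both are collinear, and Lemma~\ref{lem:FoldingGenerators}(a) exhibits the indecomposable $M_\alpha$ (with $\dimproj_F(M_\alpha)=\alpha$) as a common generator.

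For (a)(ii) I would unwind the definition of the partial order: $L_1<L_2$ holds exactly when $L_1,L_2$ have a common generator $M$ (equivalent to collinearity by part (i)) and their $R$-indices satisfy $r_1-s_1<r_2-s_2$ in $R=\chebr\ps{2n+1}$. Because $G_0(\calM_F)$ is free over $\chebr\ps{2n+1}$, the index of $L_i$ relative to $M$ is the unique element with $[L_i]=(\mathrm{index}_i)[M]$, and the computation above shows $\dimproj_F(L_i)=\sigma\ps{2n+1}(\mathrm{index}_i)\,\dimproj_F(M)$ with $\sigma\ps{2n+1}(\mathrm{index}_i)$ a \emph{positive} real (since each indecomposable generated by $M_\alpha$ projects to a positive multiple of $\alpha$). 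Taking norms, $||\dimproj_F(L_i)||=\wh\sigma\ps{2n+1}\sigma\ps{2n+1}(\mathrm{index}_i)\,||\dimproj_F(M)||$, so by the very definition of the order via the embedding into $\real$ (Remark~\ref{rem:ChebsrOrder}) the index inequality is equivalent to the norm inequality; this gives (ii). Part (b) runs identically with $\dimproj_F$ replaced by $\derdim_F$, Lemma~\ref{lem:FoldingGenerators}(a) by (b), and split sequences by split triangles. The additional ``same degree'' clause is automatic: by Remark~\ref{rem:FoldingMultiplication}(a) the functor acts within a single degree of a stalk complex, so any common generator forces $X_1,X_2$ into its degree, and conversely a common root can only be reached inside one fixed degree; note also that for objects concentrated in a single degree $k$ the invariant $\derdim_F$ differs from $\dimproj_F$ of the underlying module only by the global sign $(-1)^{k}$, which affects neither collinearity nor norms.

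The step I expect to be most delicate is matching the order on $R$-indices with the order on norms in (a)(ii)/(b)(ii): one must verify that an indecomposable $L_i$ generated by $M_\alpha$ has a genuinely \emph{positive} index (equivalently that $\dimproj_F(L_i)$ and $\dimproj_F(M_\alpha)$ point in the same direction, not the opposite one), so that norm and index increase together; and in the derived setting one must check that fixing the degree removes the sign ambiguity in $\derdim_F$ before invoking collinearity. Everything else is a formal consequence of additivity, the scaling identity, and freeness of the Grothendieck group.
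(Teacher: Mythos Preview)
Your proposal is correct and follows essentially the same route as the paper: both arguments pivot on Lemma~\ref{lem:FoldingGenerators} for the converse direction, on Remark~\ref{rem:FoldingMultiplication}(a),(b) for the degree-preservation and scaling identities, and on translating the order on $R$-indices into the norm comparison via the real embedding of Remark~\ref{rem:ChebsrOrder}. The paper proves (b) first and deduces (a) as the degree-zero special case, whereas you do (a) first and then carry it over to (b); you are also slightly more explicit than the paper in isolating the reducedness of the root system and the positivity of the $R$-index as the points requiring care, which the paper handles implicitly through Proposition~\ref{prop:DerivedProj} and the parenthetical ``positive $\chebr\ps{2n+1}$-indices''.
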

\begin{proof}
	We will prove (b), as the proof for (a) is identical to (b) concentrated in degree 0.
	
	(b)(i) By Remark~\ref{rem:FoldingMultiplication}(a), if $X_1$ and $X_2$ are both $R_+$-generated by $Y \in \der_F$, then $X_1$ and $X_2$ are concentrated in the same degree. It further follows from Proposition~\ref{prop:DerivedProj} and Remark~\ref{rem:FoldingMultiplication}(b) and (c) that $\derdim(X_1)$ and $\derdim(X_2)$ are some $\wh\chebr{2n+1}$-multiples of a common root $\alpha$ of $\gend'$. Thus, $X_1$ and $X_2$ have a common $R_+$ generator only if $X_1$ and $X_2$ are concentrated in the same degree and $\derdim(X_1)$ and $\derdim(X_2)$ are collinear. Conversely, if $X_1$ and $X_2$ are concentrated in the same degree and $\derdim(X_1)$ and $\derdim(X_2)$ are collinear, then $X_1$ and $X_2$ have a common $R_+$-generator by Lemma~\ref{lem:FoldingGenerators}.
	
	(ii) $X_1 < X_2$ if and only if $X_1 \not\cong X_2$ and both have a common $R_+$-generator with respective (positive) $\chebr\ps{2n+1}$-indices $r_1 < r_2$. By Lemma~\ref{lem:FoldingGenerators} and Remark~\ref{rem:FoldingMultiplication}(b), this is true if and only if $X_1$ and $X_2$ are concentrated in the same degree, $\derdim(X_1)$ and $\derdim(X_2)$ are collinear, and $\wh\sigma\ps{2n+1}\sigma\ps{2n+1}(r_1) < \wh\sigma\ps{2n+1}\sigma\ps{2n+1}(r_2)$. The result then follows from the fact that $||\dimproj_F(X_1)||<||\dimproj_F(X_2)||$ if and only if $\wh\sigma\ps{2n+1}\sigma\ps{2n+1}(r_1) < \wh\sigma\ps{2n+1}\sigma\ps{2n+1}(r_2)$.
\end{proof}

\begin{thm} \label{thm:Generators}
	Let $F\colon Q^\gend \rightarrow Q^{\gend'}$ be a folding of quivers with $\gend=\{\coxA_{2n},\coxD_6,\coxE_8\}$ and $\gend'=\{\coxH_4,\coxH_3,\coxI_2(2n+1)\}$, and let $\calM_F$ and $\der_F$ be the corresponding abelian and triangulated $R_+$-coefficient categories respectively. Define sets
	\begin{align*}
		\Gamma_{\calM_F} &=\{M \in \calM_F : M \text{ indecomposable and }\dimproj_F(M) \text{ is a positive root of } \gend'\}, \\
		\Gamma_{\der_F} &=\{X \in \der_F : X \text{ indecomposable and }\derdim_F(X) \text{ is a root of } \gend'\}.
	\end{align*}
	Then $\Gamma_{\calM_F}$ and $\Gamma_{\der_F}$ are minimal sets of $R_+$-generators for $\calM_F$ and $\der_F$ respectively that are both unique up to isomorphic elements.
\end{thm}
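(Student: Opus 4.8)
The plan is to prove the statement for $\calM_F$ in full and then transfer it verbatim to $\der_F$, since both arguments are local to a single ``ray'' of the projection and so are insensitive to the difference between the finite abelian and the infinite triangulated setting. Write $R_+=\chebsr\ps{2n+1}$, and for a positive root $\alpha$ of $\gend'$ use the ray $\mathbf{M}_\alpha=\{M_\alpha,\croot_1M_\alpha,\dots,\croot_{n-1}M_\alpha\}$ furnished by Theorem~\ref{thm:Folding} and the construction in Theorem~\ref{thm:Action}, where $M_\alpha$ is the unique indecomposable with $\dimproj_F(M_\alpha)=\alpha$. I will lean on three facts: distinct positive roots give disjoint rays, so $\{\mathbf{M}_\alpha\}_\alpha$ partitions the indecomposables of $\calM_F$; Lemma~\ref{lem:FoldingGenerators}(a) identifies the indecomposables in $\gen_{M_\alpha}$ with exactly $\mathbf{M}_\alpha$; and Proposition~\ref{prop:GenGeometry}(a) says two indecomposables are comparable only when collinear, the smaller being the one of smaller norm. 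Since $U_j(\cos\tfrac{\pi}{2n+1})$ is strictly increasing for $0\le j\le n-1$ with $U_0=1$, the object $M_\alpha$ is the unique norm-minimal member of $\mathbf{M}_\alpha$.

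I would first check that $\Gamma_{\calM_F}$ is a basic set of $R_+$-generators: every object of $\calM_F$ is a finite direct sum of indecomposables, and by the partition and Lemma~\ref{lem:FoldingGenerators}(a) every indecomposable lies in $\gen_{M_\alpha}$ for a unique $\alpha$, so $\calM_F(\Gamma_{\calM_F})\simeq\calM_F$; the elements are indecomposable by definition and pairwise non-isomorphic because distinct roots have distinct projected dimension vectors. For minimality, let $\Gamma'$ be any basic set of $R_+$-generators. To generate the indecomposable $M_\alpha$ some $N\in\Gamma'$ must satisfy $M_\alpha\in\gen_N$; the defining split sequence exhibits $M_\alpha$ as a summand of $rN$, all of whose summands are collinear to $N$, forcing $N\in\mathbf{M}_\alpha$. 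Thus $\Gamma'$ meets every ray, and choosing $\theta(M_\alpha)\in\Gamma'\cap\mathbf{M}_\alpha$ defines an injection $\theta\colon\Gamma_{\calM_F}\to\Gamma'$ (injective by disjointness of rays); since $M_\alpha$ is norm-minimal in its ray, Proposition~\ref{prop:GenGeometry}(a) gives $M_\alpha\le\theta(M_\alpha)$, which is precisely the condition of Definition~\ref{def:MinGenerators}.

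For uniqueness, let $\Gamma'$ be any minimal basic set of $R_+$-generators and apply the minimality of $\Gamma'$ to the comparison set $\Gamma_{\calM_F}$: this yields an injection $\theta'\colon\Gamma'\to\Gamma_{\calM_F}$ with $N\le\theta'(N)$ for every $N$. Each $\theta'(N)$ is a root-projector $M_\beta$, and because $M_\beta$ is norm-minimal in its ray, $N\le M_\beta$ forces $N\cong M_\beta$ by Proposition~\ref{prop:GenGeometry}(a); hence \emph{every} element of $\Gamma'$ is itself a root-projector. Since $\Gamma'$ generates, the ray-meeting argument forces it to contain the root-projector of every ray, while injectivity of $\theta'$ forbids repetitions, so $\Gamma'\cong\Gamma_{\calM_F}$ element-wise. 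The derived statement follows by the same four steps after replacing $\dimproj_F$, $\mathbf{M}_\alpha$ and Lemma~\ref{lem:FoldingGenerators}(a) by $\derdim_F$, the degree-graded rays $\sus^k\mathbf{M}_\alpha$, and Lemma~\ref{lem:FoldingGenerators}(b), and invoking Proposition~\ref{prop:DerivedProj} and Proposition~\ref{prop:GenGeometry}(b); the only change is that rays are now indexed by a root together with a homological degree, and every comparison remains confined to one such ray.

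The main obstacle is the uniqueness claim. A priori an element of a competing generating set need not project to a root, and Lemma~\ref{lem:FoldingGenerators} even permits a larger object of a ray to $R_+$-generate the smaller root-object, so one cannot argue by comparing cardinalities or by a finite-permutation argument (the latter being unavailable anyway since $\Gamma_{\der_F}$ is infinite). The resolution is to feed $\Gamma_{\calM_F}$ back into the minimality hypothesis for the competitor $\Gamma'$: the order condition $N\le\theta'(N)$ together with the norm-minimality of root-projectors pins each element of $\Gamma'$ to a root-projector, after which generation forces the set to be exactly $\Gamma_{\calM_F}$. A secondary point requiring care is verifying that ``$\derdim_F(X)$ is a root'' really does single out the norm-minimal object of each degree-graded ray, which rests on $U_j(\cos\tfrac{\pi}{2n+1})>1$ for $j\ge1$ and on the roots of $\gend'$ all having a single length in the cases under consideration.
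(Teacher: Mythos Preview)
Your proof is correct and follows the same route as the paper: generation via Lemma~\ref{lem:FoldingGenerators}, minimality via the norm comparison of Proposition~\ref{prop:GenGeometry}, and the transfer to $\der_F$ via Proposition~\ref{prop:DerivedProj}. The only notable difference is that your uniqueness argument is considerably more explicit than the paper's, which simply invokes the fact that there is a unique indecomposable $M_\alpha$ for each positive root; you instead feed $\Gamma_{\calM_F}$ back into the minimality hypothesis for the competitor $\Gamma'$ and use norm-minimality to pin down each element, which is a cleaner way to close the argument and works uniformly in the infinite derived setting.
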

\begin{proof}
	Let $\mathcal{A}$ denote the category $\calM_F$ or $\der_F$. That $\Gamma_{\mathcal{A}}$ is a basic set of $R_+$-generators for $\mathcal{A}$ follows from Theorems~\ref{thm:Folding}, Proposition~\ref{prop:DerivedProj}, \ref{thm:Action}, and Lemma~\ref{lem:FoldingGenerators}. That $\Gamma_{\mathcal{A}}$ is minimal follows from Proposition~\ref{prop:GenGeometry} and Remark~\ref{rem:FoldingMultiplication}. That $\Gamma_{\calM_F}$ is unique up to isomorphic elements follows from the fact that there exists a unique object $M_\alpha \in \calM_F$ up to isomorphism for each positive root of $\gend'$. That $\Gamma_{\der_F}$ is unique up to isomorphic elements follows from the fact, in each even degree, there exists a unique object $X_\alpha \in \calM_F$ up to isomorphism for each positive root of $\gend'$, and in each odd degree, there exists a unique object $X_\alpha \in \calM_F$ up to isomorphism for each negative root of $\gend'$.
\end{proof}

\begin{cor}
	For each non-projective object $M \in \Gamma_{\calM_F}$, we have $\tau_{\calM_F} M \in \Gamma_{\calM_F}$. For each object $X \in \Gamma_{\der_F}$, we have $\tau_{\der_F} X \in \Gamma_{\der_F}$.
\end{cor}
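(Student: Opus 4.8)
The plan is to show that $\Gamma_{\calM_F}$ and $\Gamma_{\der_F}$ are \emph{exactly} the indecomposables lying in a row of weight $1$, and that the Auslander--Reiten translate keeps an indecomposable inside its own row; the two assertions then follow at once, since a translate of a weight-$1$ indecomposable is again a weight-$1$ indecomposable.

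First I would identify $\Gamma_{\calM_F}$. As $KQ^\gend$ is representation-finite and hereditary, every indecomposable is isomorphic to $\tau^m I(i)$ for some $i\in Q^\gend_0$ and $m\in\nnint$, and it inherits the weight $\vw(i)$ of its row. By Theorem~\ref{thm:Folding}(a) the modules in $\mathcal{I}\ps{0}$ (those with $\vw(i)=U_0(\cos\frac{\pi}{2n+1})=1$) project to positive roots of $\gend'$. Conversely, Theorem~\ref{thm:Folding}(c) gives $\|\dimproj_F(M)\|=U_j(\cos\frac{\pi}{2n+1})$ for $M\in\mathcal{I}\ps{j}$, and the values $U_j(\cos\frac{\pi}{2n+1})$ for $1\le j\le n-1$ all strictly exceed $U_0=1$ (Lemma~\ref{lem:ChebProps}(f) together with the closed form $U_j(\cos\frac{\pi}{2n+1})=\sin\frac{(j+1)\pi}{2n+1}/\sin\frac{\pi}{2n+1}$). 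Since $\gend'$ has a single root length, its positive roots are precisely the $\dimproj_F$-images of minimal norm, so $\dimproj_F(M)$ is a positive root if and only if $M\in\mathcal{I}\ps{0}$. Thus $\Gamma_{\calM_F}=\mathcal{I}\ps{0}$ is the class of weight-$1$ indecomposables.

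For the module statement I would then use that the weight is constant along each row of the Auslander--Reiten quiver and that $\tau=\tau_{\calM_F}$ sends an indecomposable to the preceding object of the same row. Hence for a non-projective $M\cong\tau^m I(i)\in\Gamma_{\calM_F}$ (so $\vw(i)=1$) we get $\tau M\cong\tau^{m+1}I(i)$, still an indecomposable in $\mathcal{I}\ps{0}$, and by the previous paragraph $\dimproj_F(\tau M)$ is a positive root, i.e.\ $\tau M\in\Gamma_{\calM_F}$. For the derived statement, every indecomposable of $\der_F$ is $\sus^k M$ with $M$ an indecomposable $KQ^\gend$-module, and Proposition~\ref{prop:DerivedProj}(a) shows $\derdim_F(\sus^k M)$ is a root exactly when $M$ is a weight-$1$ module; hence $\Gamma_{\der_F}=\{\sus^k M : M\in\mathcal{I}\ps{0},\,k\in\integer\}$. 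Extending the row weight to $\der_F$ by giving $\sus^k M$ the weight of $M$, I would check it is constant along each $\tau_{\der_F}$-orbit: within a fixed degree $\tau_{\der_F}$ agrees with the module translate and preserves weight by the module case, while the only change of degree along an orbit occurs when $\tau_{\der_F}$ is applied to a shifted projective, where Lemma~\ref{lem:ProjDerTranslates} gives $\tau_{\der_F}\sus^k P(i)\cong\sus^{k-1}I(j)$ with $\vw(j)=\vw(i)$, again preserving weight. Since $\tau_{\der_F}$ is an autoequivalence carrying indecomposables to indecomposables, it maps $\Gamma_{\der_F}$ bijectively onto itself.

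The step I expect to require the most care is the degree-crossing in the derived orbit argument: one must confirm that the row weight is genuinely well-defined and remains constant as a $\tau_{\der_F}$-orbit passes from one cohomological degree to the next. This is precisely the content extracted from Lemma~\ref{lem:ProjDerTranslates}, which guarantees that the injective reached in degree $k-1$ sits in the same weight as the projective it came from in degree $k$.
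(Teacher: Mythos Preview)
Your proof is correct and follows essentially the same approach as the paper: both arguments identify $\Gamma_{\calM_F}$ and $\Gamma_{\der_F}$ with the weight-$1$ indecomposables via Theorem~\ref{thm:Folding} and Proposition~\ref{prop:DerivedProj}, and then observe that $\tau$ preserves this class. The paper's proof is a one-liner because those results are already stated in terms of $\tau$-translates $\tau^m I(i)$ and $\tau_{\der_F}^m I(i)$, so closure under $\tau$ is immediate from the indexing; your explicit verification via Lemma~\ref{lem:ProjDerTranslates} of the degree-crossing is precisely what underlies Proposition~\ref{prop:DerivedProj}, so you are unpacking the same content rather than taking a different route.
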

\begin{proof}
	This follows from Theorem~\ref{thm:Folding} and Proposition~\ref{prop:DerivedProj} and the fact that the objects in $\Gamma_{\calM_F}$ and $\Gamma_{\der_F}$ map to roots of $\gend'$ under $\dimproj_F$ and $\derdim_F$ respectively.
\end{proof}

\begin{rem} \label{rem:SimplerGen}
	Let $F\colon Q^\gend \rightarrow Q^{\gend'}$ be a weighted folding with $\gend' = \{\coxH_4, \coxH_3, \coxI_2(2n+1) \}$. Given the minimal set of $R_+$-generators $\Gamma$ for $\calM_F$, we can actually use a simpler notion of $R_+$-generation. In particular, we have
	\begin{equation*}
		\gen_M = \{L \in \calM_F: L \cong rM, r \in R_+\}
	\end{equation*}
	for any $M \in \Gamma$. Note however that this statement is not necessarily true for $N \not\in \Gamma$.
\end{rem}

\begin{defn} \label{def:ReducedAR}
	Let $\calM$ be an $R_+$-coefficient category with an Auslander-Reiten quiver and suppose that $\calM$ has a minimal set of $R_+$-generators $\Gamma$ that is unique up to isomorphic elements and closed under $\tau_\calM$. The \emph{reduced Auslander-Reiten quiver} $\mathcal{A}(\calM)$ of $\calM$ is an $R$-valued translation quiver such that the following hold:
	\begin{enumerate}[label=(A\arabic*)]
		\item The vertices of $\mathcal{A}(\calM)$ are $R_+$-generators $\Gamma \subset \calM$.
		\item The translation of $\mathcal{A}(\calM)$ is the Auslander-Reiten translation $\tau_\calM$.
		\item There exists a valued morphism $\xymatrix{M \ar[r]^-{(r_1,r_2)} & N}$ in $\mathcal{A}(\calM)$ if and only if (i)-(iii) hold.
		\begin{enumerate}[label=(\roman*)]
			\item There exist irreducible morphisms $X_1 \rightarrow N$ and $M \rightarrow X_2$ such that $X_1$ is $R_+$-generated by $M$ with $R$-index $r_1$ and $X_2$ is $R_+$-generated by $N$ with $R$-index $r_2$.
			\item If there exists an Auslander-Reiten sequence/triangle
			\begin{equation*}
				\tau_\calM N \rightarrow X' \rightarrow N \rightarrow
			\end{equation*}
			then $X' \cong X'_0 \oplus X_1$ for some (possibly zero) object $X'_0$ such that no direct summand of $X'_0$ is in $\gen_M$.
			\item If there exists an Auslander-Reiten sequence/triangle
			\begin{equation*}
				M \rightarrow X'' \rightarrow \tau_\calM\inv M \rightarrow
			\end{equation*}
			then $X'' \cong X''_0 \oplus X_2$ for some (possibly zero) object $X''_0$ such that no direct summand of $X''_0$ is in $\gen_N$.
		\end{enumerate}
	\end{enumerate}
\end{defn}

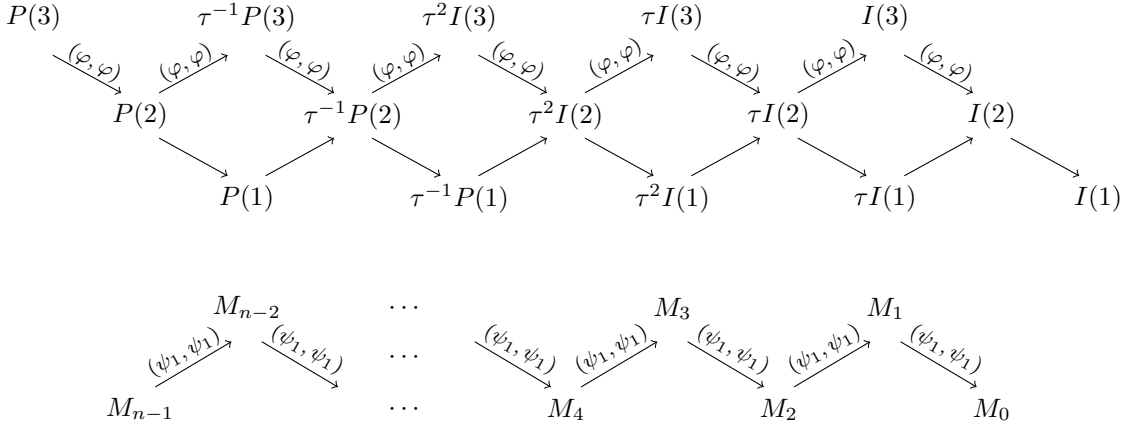
\begin{figure}
	\centering
	\begin{tikzpicture}
\draw[black] (1,-5.8) node {\footnotesize$I(1)$};
\draw[black] (-1.8,-5.8) node {\footnotesize$\tau I(1)$};
\draw[black] (-4.6,-5.8) node {\footnotesize$\tau^2 I(1)$};
\draw[black] (-7.4,-5.8) node {\footnotesize$\tau\inv P(1)$};
\draw[black] (-10.2,-5.8) node {\footnotesize$P(1)$};

\draw[black] (-0.4,-4.7) node {\footnotesize$I(2)$};
\draw[black] (-3.2,-4.7) node {\footnotesize$\tau I(2)$};
\draw[black] (-6,-4.7) node {\footnotesize$\tau^2 I(2)$};
\draw[black] (-8.8,-4.7) node {\footnotesize$\tau\inv P(2)$};
\draw[black] (-11.6,-4.7) node {\footnotesize$P(2)$};

\draw[black] (-1.8,-3.4) node {\footnotesize$I(3)$};
\draw[black] (-4.6,-3.4) node {\footnotesize$\tau I(3)$};
\draw[black] (-7.4,-3.4) node {\footnotesize$\tau^2 I(3)$};
\draw[black] (-10.2,-3.4) node {\footnotesize$\tau\inv P(3)$};
\draw[black] (-13,-3.4) node {\footnotesize$P(3)$};

\draw [->](-0.15,-5) -- (0.75,-5.5);
\draw [->](-1.55,-5.5) -- (-0.65,-5);
\draw [->](-2.95,-5) -- (-2.05,-5.5);
\draw [->](-4.35,-5.5) -- (-3.45,-5);
\draw [->](-5.75,-5) -- (-4.85,-5.5);
\draw [->](-7.15,-5.5) -- (-6.25,-5);
\draw [->](-8.55,-5) -- (-7.65,-5.5);
\draw [->](-9.95,-5.5) -- (-9.05,-5);
\draw [->](-11.35,-5) -- (-10.45,-5.5);

\draw [->](-1.55,-3.9) -- (-0.65,-4.4);
\draw [->](-2.95,-4.4) -- (-2.05,-3.9);
\draw [->](-4.35,-3.9) -- (-3.45,-4.4);
\draw [->](-5.75,-4.4) -- (-4.85,-3.9);
\draw [->](-7.15,-3.9) -- (-6.25,-4.4);
\draw [->](-8.55,-4.4) -- (-7.65,-3.9);
\draw [->](-9.95,-3.9) -- (-9.05,-4.4);
\draw [->](-11.35,-4.4) -- (-10.45,-3.9);
\draw [->](-12.75,-3.9) -- (-11.85,-4.4);

\node[rotate=-29] at (-1,-4) {\tiny$(\gratio,\gratio)$};
\node[rotate=-29] at (-3.8,-4) {\tiny$(\gratio,\gratio)$};
\node[rotate=-29] at (-6.63,-3.98) {\tiny$(\gratio,\gratio)$};
\node[rotate=-29] at (-9.44,-3.97) {\tiny$(\gratio,\gratio)$};
\node[rotate=-29] at (-12.2,-4) {\tiny$(\gratio,\gratio)$};
\node[rotate=29] at (-2.54,-3.95) {\tiny$(\gratio,\gratio)$};
\node[rotate=29] at (-5.35,-3.94) {\tiny$(\gratio,\gratio)$};
\node[rotate=29] at (-8.2,-4) {\tiny$(\gratio,\gratio)$};
\node[rotate=29] at (-11,-4) {\tiny$(\gratio,\gratio)$};

\draw[black] (-0.4,-8.6) node {\footnotesize$M_0$};
\draw[black] (-3.2,-8.6) node {\footnotesize$M_2$};
\draw[black] (-6,-8.6) node {\footnotesize$M_4$};
\draw[black] (-8.1,-8.6) node {\footnotesize$\cdots$};
\draw[black] (-11.6,-8.6) node {\footnotesize$M_{n-1}$};

\draw [->](-1.6,-7.7) -- (-0.6,-8.3);
\draw [->](-3,-8.3) -- (-2,-7.7);
\draw [->](-4.4,-7.7) -- (-3.4,-8.3);
\draw [->](-5.8,-8.3) -- (-4.8,-7.7);
\draw [->](-7.2,-7.7) -- (-6.2,-8.3);
\draw[black] (-8.1,-7.26) node {\footnotesize$\cdots$};
\draw [->](-10,-7.7) -- (-9,-8.3);
\draw [->](-11.4,-8.3) -- (-10.4,-7.7);

\node[rotate=-29] at (-1,-7.8) {\tiny$(\croot_1,\croot_1)$};
\node[rotate=-29] at (-3.8,-7.8) {\tiny$(\croot_1,\croot_1)$};
\node[rotate=-29] at (-6.6,-7.8) {\tiny$(\croot_1,\croot_1)$};
\node[rotate=-29] at (-9.4,-7.8) {\tiny$(\croot_1,\croot_1)$};
\node[rotate=29] at (-2.6,-7.8) {\tiny$(\croot_1,\croot_1)$};
\node[rotate=29] at (-5.4,-7.8) {\tiny$(\croot_1,\croot_1)$};
\node[rotate=29] at (-11.02,-7.84) {\tiny$(\croot_1,\croot_1)$};

\draw[black] (-1.8,-7.26) node {\footnotesize$M_1$};
\draw[black] (-4.6,-7.26) node {\footnotesize$M_3$};
\draw[black] (-8.1,-7.9) node {\footnotesize$\cdots$};
\draw[black] (-10.2,-7.26) node {\footnotesize$M_{n-2}$};
\end{tikzpicture}
	\caption{Reduced Auslander-Reiten quivers of type $\coxH_3$ (top) and $\coxI_2(2n+1)$ (bottom). The top is a reduction of the Auslander-Reiten quiver of $\mod*KQ^{\coxD_6}$ with respect to the action of $\goldsr$. The bottom is a reduction of $\mod*KQ^{\coxA_{2n}}$ with respect to $\chebsr\ps{2n+1}$. Unlabelled arrows have valuation (1,1).} \label{fig:ReducedAR}
\end{figure}

\section{Cluster categories and mutations arising from weighted foldings}
\label{section-mutations}
Throughout this section, let $F\colon Q^\gend \rightarrow Q^{\gend'}$ be a weighted folding of quivers with $\gend' = \{\coxH_4,\coxH_3,\coxI_2(2n+1)\}$. By the results of the last section, the folding $F$ gives rise to an associated $R_+$-coefficient category $\calM_F$ by Theorem~\ref{thm:Action}, and the corresponding bounded derived category $\der_F=\bder(\calM_F)$ inherits the semiring action on $\calM_F$. Here $R_+=\chebsr\ps{2n+1}$, where $n=2$ in the $\coxH$-type cases.

The category $\calM_F$ is the module category of a finite-dimensional hereditary algebra of finite-representation type. Thus it makes sense to construct its \emph{cluster category} $\clus_F$ (see for example \cite{ClusterTiltingSurvey}). Recall that the cluster category of $\calM_F$ is the quotient category $\clus_F = \der_F / (\dfart\inv\sus)$. This category inherits the Auslander-Reiten triangles and translation from $\der_F$ with the identification $\sus M \cong \dfart M$. Moreover, $\clus_F$ naturally inherits the semiring action on $\der_F$. Specifically, one has a canonical quotient functor $G \colon \der_F \rightarrow \clus_F$ that is triangulated (due to \cite{KellerOrbitCats}) such that $M \cong N \Rightarrow G(M) \cong G(N)$ and $\cfart G(M) \cong G(\dfart M)$ for any $M,N \in \der_F$. The $R_+$-action on $\clus_F$ is then defined by $rG = Gr$, which is well-defined because the $R_+$-action on $\der_F$ preserves the degree of indecomposable objects and commutes with both $\sus$ and $\dfart$. Given any minimal set of $R_+$-generators $\Gamma_{\der_F}$ of $\der_F$ that is closed under $\sus$ and $\dfart$, one can similarly define a set of minimal $R_+$-generators for $\clus_F$ as the set of iso-classes
\begin{equation*}
	\Gamma_{\clus_F} = \{[M] \in \clus_F : M \in G(\Gamma_{\der_F})\},
\end{equation*}
and we can define the corresponding reduced Auslander-Reiten quiver of $\clus_F$. We also have an analogue of Remark~\ref{rem:SimplerGen} for sets of minimal $R_+$-generators.

Further recall from \cite{BMRRT} the following definitions, which we will later generalise to the setting of $R_+$-coefficient categories.
\begin{defn}[\cite{BMRRT}]
	Let $\clus_{A}$ be the cluster category of a finite-dimensional hereditary algebra $A$. An object $T \in \clus_{A}$ is called a \emph{(cluster)-tilting object} if
	\begin{enumerate}[label=(\roman*)]
		\item $T$ is \emph{rigid}: $\Hom_{\clus_{A}}(T, \tau_{\clus_{A}} T)=0$; and
		\item $T$ is maximal: if there exists $X \in \clus_{A}$ such that $\Hom_{\clus_{A}}(T \oplus X,\tau_{\clus_{A}}(T \oplus X))=0$, then $X \in \add T$, where $\add T$ is the full subcategory of $\clus_{A}$ containing direct summands of finite direct sums of copies of $T$.
	\end{enumerate}
	We say $T$ is \emph{basic} if the indecomposable direct summands of $T$ are pairwise non-isomorphic. We call the algebras Morita equivalent to $\End_{\clus_{A}}(T)^{\op}$ \emph{cluster-tilted algebras}.
\end{defn}

\begin{prop}[\cite{BMRRT}] \label{prop:TiltingSummands}
	Let $T \in \clus_{A}$ be a basic tilting object. Then $T$ has $|Q_0|$ indecomposable direct summands.
\end{prop}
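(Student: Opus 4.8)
The plan is to prove the equality by two inequalities, leveraging the $2$-Calabi--Yau structure of $\clus_A$ and reducing the hard direction to the classical count of summands of a tilting module over a hereditary algebra. Write $n = |Q_0|$. First I would recall, from the realisation of $\clus_A$ as the orbit category of $\bder(\mod*A)$ by $\tau^{-1}\sus$, the identification $\sus \cong \tau_{\clus_A}$, so that rigidity of $T$ reads $\Hom_{\clus_A}(T,\tau_{\clus_A}T) = \Ext^1_{\clus_A}(T,T) = 0$; the $2$-Calabi--Yau isomorphism $\Ext^1_{\clus_A}(X,Y) \cong \D\Ext^1_{\clus_A}(Y,X)$ then makes this condition symmetric in the indecomposable summands of $T \cong \bigoplus_j T_j$.

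For the upper bound I would use that the indecomposables of $\clus_A$ are represented by $\ind(\mod*A) \sqcup \{\sus P(i) : i \in Q_0\}$, a fundamental domain for $\tau^{-1}\sus$. Each summand $T_j$ therefore has a well-defined class $[T_j] \in K_0(\bder(\mod*A)) \cong \integer^n$, with a module contributing its dimension vector and a shifted projective its negative. A short Euler-form argument shows that rigidity forces the classes $[T_j]$ to be linearly independent over $\integer$, since any non-trivial relation would produce a non-zero self-extension; linear independence in $\integer^n$ bounds the number of summands above by $n$.

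The reverse inequality — that maximality forces at least $n$ summands — is the step I expect to be the main obstacle. The idea is a Bongartz-type completion: were $T$ to have $k < n$ summands, the classes $[T_j]$ would span a proper sublattice, and I would need to produce an indecomposable $X \notin \add T$ with $T \oplus X$ still rigid, contradicting maximality. To organise this I would invoke the main theorem of \cite{BMRRT}, by which a basic rigid object of $\clus_A$ arises as the image of a basic (partial) tilting module over a hereditary algebra $B$ derived equivalent to $A$, with the number of indecomposable summands preserved. Completing that partial tilting module to a genuine tilting module over $B$ and transporting the new summand back would supply the required complement $X$. Finally, since the summand classes of a basic tilting module over a hereditary algebra with $n$ simples form a $\integer$-basis of $K_0(\mod*B) \cong \integer^n$, such a module has exactly $n$ summands, whence $T$ has $|Q_0| = n$ indecomposable direct summands, as claimed.
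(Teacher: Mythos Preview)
The paper does not supply its own proof of this proposition: it is quoted verbatim from \cite{BMRRT} and used as a black box (see the proof of Theorem~\ref{thm:Length}, which simply cites Proposition~\ref{prop:TiltingSummands}). So there is no argument in the present paper to compare against.

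Your sketch is broadly in line with how the result is established in \cite{BMRRT}: the decisive step there is precisely the one you invoke for the lower bound, namely that every basic tilting object of $\clus_A$ is induced from a basic tilting module over a hereditary algebra $B$ derived equivalent to $A$, after which the classical Bongartz/Happel--Ringel count gives exactly $|Q_0|$ summands. Note, however, that this single reduction already yields the \emph{equality}, so your separate upper-bound argument via linear independence in $K_0(\bder(\mod*A))$ is redundant --- and, as written, it is also the weakest link. The claim that a non-trivial relation among the $[T_j]$ would ``produce a non-zero self-extension'' is not immediate in the cluster category: the classes you assign depend on a choice of fundamental-domain representative, and the Euler pairing that makes the hereditary argument work does not descend cleanly to $\clus_A$, whose Hom-spaces are sums over the $\tau^{-1}\sus$-orbit rather than single derived Hom-spaces. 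If you want to keep an independent upper bound, it is cleaner to lift $T$ to $\mod*B$ first and run the Euler-form argument there; but once you have done that lift, you already have the exact count and the K-theory step is unnecessary.
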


\begin{prop}[\cite{BMRTilting}]
	Let $T \in \clus_{A}$ be a tilting object. Then the functor $\Hom_{\clus_A}(T,{-})$ induces an equivalence of categories $\clus_A / \add \tau_{\clus_A} T \rightarrow \mod*\End_{\clus_{A}}(T)^{\op}$.
\end{prop}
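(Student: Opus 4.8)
The plan is to prove the stronger statement that the functor $G=\Hom_{\clus_A}(T,\blank)$ is full and dense, and that $G(f)=0$ holds precisely when $f$ factors through $\add\tau_{\clus_A}T$; the induced functor on the quotient is then automatically an equivalence. Write $B=\End_{\clus_A}(T)^{\op}$. Throughout I use that $\clus_A$ is a Hom-finite, Krull--Schmidt, $2$-Calabi--Yau triangulated category, so Serre duality holds and $\tau_{\clus_A}\cong\sus$ (as recorded in the excerpt), that it has Auslander--Reiten triangles, and that a basic cluster-tilting object obeys the Ext-characterisation $\add T=\{X\in\clus_A:\Ext^1_{\clus_A}(T,X)=0\}$ from \cite{BMRRT}. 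The starting observation is that $G$ restricts to an equivalence $\add T\simeq\proj B$ carrying $T$ to the regular module $B$: this is immediate from $\Hom_{\clus_A}(T,T)\cong B$ together with the Yoneda-type full faithfulness of $\Hom_{\clus_A}(T,\blank)$ on $\add T$.

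The key structural input is that every $X\in\clus_A$ admits a triangle
\[
	T_1 \to T_0 \xrightarrow{\ \pi\ } X \to \sus T_1, \qquad T_0,T_1\in\add T.
\]
To produce it, take $\pi$ to be a (minimal) right $\add T$-approximation of $X$ --- these exist since $T$ has finitely many indecomposable summands and $\clus_A$ is Hom-finite --- and complete it to a triangle with third term $T_1$. Applying $\Hom_{\clus_A}(T,\blank)$ and using rigidity ($\Ext^1_{\clus_A}(T,T_0)=0$) together with surjectivity of $\pi_*$ shows $\Ext^1_{\clus_A}(T,T_1)=0$, whence $T_1\in\add T$ by the Ext-characterisation. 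Applying $G$ to the triangle yields a projective presentation $GT_1\to GT_0\to GX\to 0$ in $\mod*B$. Density follows: any $M\in\mod*B$ has a projective presentation $P_1\to P_0\to M\to 0$; writing $P_i=GT_i'$ with $T_i'\in\add T$, lifting $P_1\to P_0$ through the equivalence $\add T\simeq\proj B$ to a map $T_1'\to T_0'$, and taking its cone produces an object mapping to $M$ under $G$. Fullness is obtained the same way: a map $GX\to GY$ lifts to a chain map between the associated projective presentations, which by full faithfulness on $\add T$ comes from maps $T_i^X\to T_i^Y$, and axiom (TR3) completes these to a morphism $X\to Y$ inducing the given map under $G$.

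It remains to identify the kernel and to establish faithfulness on the quotient. For indecomposable $X$ one has $GX=\Hom_{\clus_A}(T,\sus\,\sus^{-1}X)=\Ext^1_{\clus_A}(T,\sus^{-1}X)$, so $GX=0$ iff $\sus^{-1}X\in\add T$ iff $X\in\add\sus T=\add\tau_{\clus_A}T$; thus $\ker G=\add\tau_{\clus_A}T$ on objects. For morphisms, suppose $f\colon X\to Y$ satisfies $G(f)=0$. Since $T_0\in\add T$ and $\Hom_{\clus_A}(T,f)=0$, also $\Hom_{\clus_A}(T_0,f)=0$, so $f\circ\pi=0$; applying $\Hom_{\clus_A}(\blank,Y)$ to the triangle above then gives that $f$ factors through the connecting map $X\to\sus T_1$, and since $\sus T_1\in\add\sus T=\add\tau_{\clus_A}T$, the map $f$ factors through $\add\tau_{\clus_A}T$. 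Conversely any map through $\add\tau_{\clus_A}T=\ker G$ is killed by $G$. Hence $G$ induces a functor $\overline G\colon\clus_A/\add\tau_{\clus_A}T\to\mod*B$ that is full, faithful and dense, i.e. an equivalence.

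The main obstacle is the Ext-characterisation $\add T=\{X:\Ext^1_{\clus_A}(T,X)=0\}$ underlying the existence of the two-term $\add T$-presentations: this is exactly where the cluster-tilting (maximal rigid, not merely rigid) hypothesis and the $2$-Calabi--Yau property are essential, and it is imported from \cite{BMRRT}. Once it is in place, every remaining step is a formal manipulation of approximations, triangles and projective presentations.
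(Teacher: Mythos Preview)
The paper does not give its own proof of this proposition: it is stated with the attribution \cite{BMRTilting} and used as a black box. Your argument is a correct and complete reconstruction of the standard proof from that reference---approximation triangles, density and fullness via lifting projective presentations through $\add T\simeq\proj B$, and identification of the kernel on morphisms via factoring through the connecting map---so there is nothing to compare beyond noting that you have supplied what the paper deliberately omitted.
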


\begin{defn}[\cite{BMRRT}]
	An object $T \in \clus_{A}$ is called an \emph{almost complete tilting object} if $\Hom_{\clus_{A}}(T,\tau_{\clus_{A}} T) = 0$ and there exists an indecomposable object $X \in \clus_{A}$ such that $T \oplus X$ is a cluster-tilting object. We call such an object $X$ a \emph{complement} of $T$.
\end{defn}

\begin{prop}[\cite{BMRRT,BMRMutation}] \label{prop:TiltingComplements}
	Let $T \in \clus_{\catH}$ be an almost complete tilting object. Then $T$ has exactly two complements $X$ and $X'$. The corresponding cluster-tilted algebras $\End_{\clus_{A}}(T \oplus X')^{\op}$ and $\End_{\clus_{A}}(T \oplus X'')^{\op}$ are respectively isomorphic to algebras $KQ'/I'$ and $KQ''/I''$ such that $Q'$ differs from $Q''$ by a mutation at a single vertex.
\end{prop}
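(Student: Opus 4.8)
This proposition recalls foundational results of~\cite{BMRRT,BMRMutation}, so the most economical justification is a citation; nevertheless, here is the strategy I would follow to prove it directly. The plan is to separate the two assertions---the count of complements and the comparison of quivers---and throughout to exploit that $\clus_A$ is a $2$-Calabi--Yau triangulated category in which $\tau_{\clus_A} \cong \sus$ (the identification already noted in Section~\ref{section-mutations}), giving functorial isomorphisms $\Ext^1_{\clus_A}(M,N) \cong D\Ext^1_{\clus_A}(N,M)$.

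First I would establish that $T$ has exactly two complements. Given a complement $X$, take a minimal right $\add T$-approximation $B \to X$ and complete it to a triangle
\begin{equation*}
	X' \to B \to X \to \sus X'.
\end{equation*}
Using the approximation property together with the $2$-Calabi--Yau duality, one checks that $T \oplus X'$ is again rigid and maximal, so $X'$ is a complement, and that $X' \not\cong X$ (else $X$ would be a summand of $B \in \add T$, contradicting that $T \oplus X$ is basic). For uniqueness I would show $\dim_K \Ext^1_{\clus_A}(X, Y) = 1$ for every complement $Y \not\cong X$; feeding the generator of this one-dimensional space into a triangle and comparing with the exchange triangle above forces $Y \cong X'$. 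Hence the complements are exactly $X$ and $X'$.

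Next, set $T_1 = T \oplus X$ and $T_2 = T \oplus X'$, and let $Q'$, $Q''$ be the Gabriel quivers of $\End_{\clus_A}(T_1)^{\op}$ and $\End_{\clus_A}(T_2)^{\op}$. These quivers share a common vertex set, with a distinguished vertex $k$ corresponding to $X$ in $Q'$ and to $X'$ in $Q''$, and the number of arrows between two vertices is read off from the spaces of irreducible morphisms between the corresponding summands (computable homologically from $\rad/\rad^2$ of the relevant $\Hom$-spaces in $\clus_A$). I would then use the two exchange triangles
\begin{equation*}
	X' \to B \to X \to \sus X', \qquad X \to B' \to X' \to \sus X, \quad B,B' \in \add T,
\end{equation*}
and apply the homological functors $\Hom_{\clus_A}(T,-)$ and $\Hom_{\clus_A}(-,T)$ to express the arrow multiplicities of $Q''$ around $k$ in terms of those of $Q'$. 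A direct bookkeeping of arrows \emph{into} $k$, \emph{out of} $k$, and \emph{through} $k$ then matches the three cases of the Fomin--Zelevinsky rule, reproducing the term $\tfrac{1}{2}\!\left(|b_{ik}|b_{kj} + b_{ik}|b_{kj}|\right)$ together with the cancellation of any $2$-cycles, giving $Q'' = \mu_k(Q')$.

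The hard part is this last step: controlling precisely how arrows incident to and passing through the exchanged vertex transform, including the sign bookkeeping that produces the mutation term and the cancellation of the resulting $2$-cycles. This is exactly the computation carried out in~\cite{BMRMutation}, to which I would ultimately appeal for the full verification.
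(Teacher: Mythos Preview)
Your proposal is appropriate: the paper gives no proof of this proposition, treating it purely as a recalled result from~\cite{BMRRT,BMRMutation}, and you correctly identify this at the outset. The sketch you provide is faithful to the arguments in those references---the exchange-triangle construction and the $\dim\Ext^1=1$ uniqueness argument for the count of complements come from~\cite{BMRRT}, and the homological bookkeeping of arrows through the exchanged vertex is precisely the computation of~\cite{BMRMutation}---so nothing further is needed.
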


For the next result from classical cluster-tilting theory, we require the notion of \emph{approximations} in additive categories, which is similar to the notion of almost split morphisms in Auslander-Reiten theory. Let $\mathcal{X}$ be an additive category and let $\mathcal{Y} \subseteq \mathcal{X}$ be an additive subcategory. We say a morphism $f \in \Hom_{\mathcal{X}}(Y,X)$ with $Y \in \mathcal{Y}$ is called a \emph{right $\mathcal{Y}$-approximation} if for any morphism $f' \in \Hom_{\mathcal{X}}(Y', X)$ with $Y' \in \mathcal{Y}$, there exists a morphism $g \in \Hom_{\mathcal{X}}(Y',Y)$ such that the following diagram commutes:
\begin{equation*}
	\xymatrix{Y' \ar[d]^-g \ar[dr]^-{f'} \\ Y \ar[r]^-f & X}
\end{equation*}
We say that a morphism $f \in \Hom_{\mathcal{X}}(X,X')$ is \emph{right minimal} if for any $g \in \End_{\mathcal{X}}(X)$ such that $fg=f$, the morphism $g$ is an automorphism.

There is also a notion of \emph{left $\mathcal{Y}$-approximations} and the notion of \emph{left minimal} morphisms, which are both dual to their right counterparts. We say that a morphism is a \emph{right} (resp. \emph{left}) \emph{minimal $\mathcal{Y}$-approximation} if it is both right (left) minimal and a right (left) $\mathcal{Y}$-approximation.

\begin{prop}[\cite{BMRRT}] \label{prop:approximations}
	Let $T \in \clus_A$ be an almost complete tilting object with complements $X_1, X_2 \in \clus_A$. Then there exist triangles
	\begin{align*}
		\xymatrix@1{X_1 \ar[r]^-{g} & E} &\xymatrix@1{\ar[r]^-{f} & X_2 \ar[r] &  \tau_{\clus_A} X_1} \\
		\xymatrix@1{X_2 \ar[r]^-{g'} & E'} &\xymatrix@1{\ar[r]^-{f'} & X_1 \ar[r] &  \tau_{\clus_A} X_2}
	\end{align*}
	in $\clus_A$ such that $f$ and $f'$ are right minimal $\add T$-approximations, and $g$ and $g'$ are left minimal $\add T$-approximations.
\end{prop}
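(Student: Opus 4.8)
The plan is to construct the two exchange triangles from minimal $\add T$-approximations and then to identify their outer terms with $X_1$ and $X_2$ using the uniqueness of complements (Proposition~\ref{prop:TiltingComplements}). Throughout I would use the standard structural features of $\clus_A$: it is a $2$-Calabi--Yau triangulated category, so $\Ext^1_{\clus_A}(M,N)\cong \D\Ext^1_{\clus_A}(N,M)$ and $\sus\cong\tau_{\clus_A}$ (which is why the connecting maps land in $\tau_{\clus_A}X_i$); it satisfies $\Ext^1_{\clus_A}(T,T)=0$ and $\Ext^1_{\clus_A}(T,X_i)=\Ext^1_{\clus_A}(X_i,T)=0$ because $T\oplus X_i$ is tilting; and it has finite representation type, so $\add T$ is functorially finite and minimal right/left $\add T$-approximations exist.

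First I would build one triangle. Take a minimal right $\add T$-approximation $f\colon E\to X_2$ and complete it to a triangle $W\xrightarrow{g} E\xrightarrow{f} X_2\to \sus W$. Since $X_2\notin\add T$, the map $f$ is not a split epimorphism, so the triangle is non-split and $W\neq 0$. Applying $\Hom_{\clus_A}(T,-)$, and using that $f_*$ is surjective (as $f$ is a right $\add T$-approximation) together with $\Ext^1_{\clus_A}(T,E)=0$, yields $\Ext^1_{\clus_A}(T,W)=0$; $2$-Calabi--Yau duality then gives $\Ext^1_{\clus_A}(W,T)=0$. Granting rigidity of $W$ (see below), a summand count using that $T$ has $|Q_0|-1$ indecomposable summands and that a rigid object has at most $|Q_0|$ such summands (Proposition~\ref{prop:TiltingSummands}) forces $W$ to be indecomposable with $T\oplus W$ tilting; hence $W$ is a complement of $T$. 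As the triangle is non-split, $W\not\cong X_2$, so Proposition~\ref{prop:TiltingComplements} identifies $W\cong X_1$. This produces the triangle $X_1\xrightarrow{g} E\xrightarrow{f} X_2\to \sus X_1\cong\tau_{\clus_A}X_1$, in which $f$ is a right minimal $\add T$-approximation by construction. That $g$ is a left $\add T$-approximation follows by applying $\Hom_{\clus_A}(-,T)$ to the rotated triangle and invoking $\Ext^1_{\clus_A}(X_2,T)=0$, and its left minimality follows from the right minimality of $f$ by the usual minimality properties of triangles. Interchanging the roles of $X_1$ and $X_2$ gives the second triangle.

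The main obstacle is the rigidity step $\Ext^1_{\clus_A}(W,W)=0$ --- equivalently, that mutating a cluster-tilting object at a single indecomposable summand again yields a cluster-tilting object. I would establish it by the standard $2$-Calabi--Yau argument: apply $\Hom_{\clus_A}(W,-)$ and $\Hom_{\clus_A}(-,W)$ to the exchange triangle and combine the resulting long exact sequences with $2$-CY duality and the rigidity of $X_2$ and of $E\in\add T$, so that any self-extension of $W$ is trapped between vanishing terms.

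An alternative that sidesteps re-proving rigidity exploits the fact that both complements are already given in the hypothesis: one first shows $\dim_K\Ext^1_{\clus_A}(X_2,X_1)=1$, where nonvanishing is immediate (if $\Ext^1_{\clus_A}(X_2,X_1)=0$ then, by $2$-CY, $T\oplus X_1\oplus X_2$ would be rigid with $|Q_0|+1$ indecomposable summands, contradicting Proposition~\ref{prop:TiltingSummands}), and then forms the triangle directly from the unique nonzero extension class; one-dimensionality is exactly what guarantees both that the middle term lies in $\add T$ and that the resulting approximations are minimal. Either route reduces the statement to the $2$-Calabi--Yau mutation machinery of~\cite{BMRRT}.
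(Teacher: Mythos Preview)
The paper does not supply a proof of this proposition: it is quoted verbatim from \cite{BMRRT} as background for the cluster-tilting theory being recalled, and is used later (e.g.\ in the proof of Theorem~\ref{thm:Complements} and in Corollary~\ref{thm:RPApproximations}) as a black box. So there is no ``paper's own proof'' to compare against.

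That said, your sketch is a faithful reconstruction of the argument in \cite{BMRRT}. A couple of points worth tightening. First, to conclude that $W$ is indecomposable you implicitly use that no indecomposable summand of $W$ lies in $\add T$; this follows from the right minimality of $f$ (a summand $T'\in\add T$ of $W$ would split off inside $E$ since $\Ext^1_{\clus_A}(T',X_2)=0$, contradicting minimality), and should be stated. Second, your reference to Proposition~\ref{prop:TiltingSummands} for the bound ``a rigid object has at most $|Q_0|$ indecomposable summands'' is slightly off: that proposition asserts the count for tilting objects, while the bound for merely rigid objects is a separate (standard) fact from \cite{BMRRT}. Your alternative route via $\dim_K\Ext^1_{\clus_A}(X_2,X_1)=1$ is also the one taken in \cite{BMRRT}, but note that one-dimensionality is not automatic from the counting argument you give (which only yields nonvanishing); in \cite{BMRRT} it is deduced after the exchange triangle is already in hand.
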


We shall now generalise these concepts and results to our setting of cluster categories arising from foldings. For the theory that follows, we will define the following sets for each object $M \in \clus_F$:
\begin{align*}
	\gen_M &= \{L \in \clus_{F} : L \cong rM, r \in R_+\} \\
	\mathcal{I}_M &= \{L \in \mathcal{G}_M : L \text{ indecomposable}\}
\end{align*}

The latter of these sets satisfies some small but important properties which we will use later. Namely, we have the following lemmas:

\begin{lem}\label{lem:IndSetSize}
	Let $F\colon Q^\gend \rightarrow Q^{\gend'}$ be a folding of quivers, and suppose $\Gamma$ is a minimal set of $R_+$-generators for $\clus_F$. Then for any $M \in \Gamma$ we have $|Q^\gend_0| = |\mathcal{I}_{M}| |Q^{\gend'}_0|$.
\end{lem}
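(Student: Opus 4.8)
The plan is to evaluate the two factors on the right-hand side separately and show their product recovers $|Q^\gend_0|$. Write $R_+=\chebsr\ps{2n+1}$ as in Theorem~\ref{thm:Action}, with $n=2$ in the $\coxH$-type cases; I will show that $|\mathcal{I}_M|=n$ for every $M\in\Gamma$ and that $|Q^\gend_0|=n\,|Q^{\gend'}_0|$, so that the two sides agree. The first step is to pin down $M$: because $\Gamma$ is \emph{minimal} and $\Gamma_{\clus_F}=G(\Gamma_{\der_F})$, Theorem~\ref{thm:Generators} forces $M$ (up to isomorphism) to be a base generator, i.e.\ the image of an indecomposable object of $\der_F$ whose projection is a root $\alpha$ of $\gend'$. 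Equivalently $M\cong\croot_0 M$ is the base object of its family $\{\croot_k M : 0\le k\le n-1\}$, and by the cluster analogue of Remark~\ref{rem:SimplerGen} we have $\gen_M=\{rM : r\in R_+\}$. Minimality is essential here: since $\croot_1$ is not invertible in $R_+$, an object such as $\croot_1 M$ cannot $R_+$-generate $M$, so a non-base object could never sit in a minimal generating set.

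For $|\mathcal{I}_M|=n$ I would use the integral basis $\{\croot_0,\dots,\croot_{n-1}\}$ of Remark~\ref{rem:ChebBasis}: any $r\in R_+$ is $r=\sum_{k=0}^{n-1}a_k\croot_k$ with $a_k\in\nnint$, and axiom (M1) of Definition~\ref{def:RPCoeffCat} gives $rM\cong\bigoplus_{k=0}^{n-1}(\croot_k M)^{\oplus a_k}$. By the isomorphism~(\ref{eq:ActOnObj}) with $l=0$, each $\croot_k M$ is indecomposable, so $rM$ is indecomposable exactly when $r=\croot_k$ for a single $k$; hence $\mathcal{I}_M=\{\croot_0 M,\dots,\croot_{n-1}M\}$. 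That these $n$ objects are pairwise non-isomorphic follows from Remark~\ref{rem:FoldingMultiplication}(b), which identifies their projections with $U_k(\cos\tfrac{\pi}{2n+1})\,\alpha$, together with the fact that the scalars $U_k(\cos\tfrac{\pi}{2n+1})=\sin\tfrac{(k+1)\pi}{2n+1}\big/\sin\tfrac{\pi}{2n+1}$ are strictly increasing for $0\le k\le n-1$ (as then $\tfrac{(k+1)\pi}{2n+1}\le\tfrac{n\pi}{2n+1}<\tfrac\pi2$). Therefore $|\mathcal{I}_M|=n$.

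For $|Q^\gend_0|$ I would read off the block structure of the unfolding. The blocks $E_{[i]}$ are indexed by the vertices $[i]\in Q^{\gend'}_0$ and partition $Q^\gend_0$, so $|Q^\gend_0|=\sum_{[i]\in Q^{\gend'}_0}|E_{[i]}|$. By Proposition~\ref{prop:RegRepExchange} every block is $S_{[i][j]}=\rho(b'_{[i][j]})$, where $\rho\colon\chebr\ps{2n+1}\to\integer^{n\times n}$ is the regular representation (Definition~\ref{def:RegRep}) with respect to the rank-$n$ basis of Remark~\ref{rem:ChebBasis}; hence each block is an $n\times n$ matrix and $|E_{[i]}|=n$ for every $[i]$. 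This gives $|Q^\gend_0|=n\,|Q^{\gend'}_0|$, and combining with the previous paragraph yields $|Q^\gend_0|=n\,|Q^{\gend'}_0|=|\mathcal{I}_M|\,|Q^{\gend'}_0|$.

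I expect the one delicate point to be the count $|\mathcal{I}_M|=n$ rather than the block count. The latter is immediate from Proposition~\ref{prop:RegRepExchange} and the identification of $\chebr\ps{2n+1}$ as a free module of rank $n$. The former requires care in $\clus_F$, where (unlike in $\der_F$) no projection map $\derdim_F$ is available to separate objects; I would therefore lift the family $\{\croot_k M\}$ to $\der_F$ and use Remark~\ref{rem:FoldingMultiplication}(a),(c)---that $R_+$ acts within a single degree and a single Auslander--Reiten column---to conclude that the quotient functor $G$ keeps these $n$ indecomposables pairwise non-isomorphic.
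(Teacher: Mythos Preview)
Your proposal is correct and follows essentially the same strategy as the paper: compute $|\mathcal{I}_M|$ and $|Q^\gend_0|/|Q^{\gend'}_0|$ separately and observe both equal $n$ (with $n=2$ in the $\coxH$-cases). The paper's own proof is a terse case split---for $\coxH$-type it simply states $\mathcal{I}_M=\{M,\gratio M\}$ and $|Q^\gend_0|=2|Q^{\gend'}_0|$, and for $\coxI$-type it appeals to the shape of the Auslander--Reiten quiver of bipartite $\coxA_{2n}$ together with Remark~\ref{rem:FoldingMultiplication}(c)---whereas you give a uniform argument covering both cases via the $\chebsr\ps{2n+1}$-basis $\{\croot_0,\dots,\croot_{n-1}\}$ and the block size coming from the regular representation $\rho$ in Proposition~\ref{prop:RegRepExchange}. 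Your treatment of the delicate point (pairwise non-isomorphism of the $\croot_k M$ in $\clus_F$, where no projection map is available) by lifting to $\der_F$ and using that $R_+$ acts within a single column is exactly the right way to handle it, and your parenthetical about $\croot_1$ not being invertible in $R_+$ is consistent with the Section~\ref{section-mutations} definition $\gen_M=\{L:L\cong rM,\ r\in R_+\}$ used here.
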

\begin{proof}
	For foldings onto $H$-type quivers, this follows from the fact that $\mathcal{I}_M = \{M, \gratio M\}$ for any $M \in \Gamma$ and that $|Q^\gend_0|=2 |Q^{\gend'}_0|$. For foldings $F\colon Q^{\coxA_{2n}} \rightarrow Q^{\coxI_2(2n+1)}$, this follows from the shape of the Auslander-Reiten quiver of a bipartite $\coxA_{2n}$ quiver and Remark~\ref{rem:FoldingMultiplication}(c).
\end{proof}

\begin{lem} \label{lem:DisjointGeneration}
	Suppose $\Gamma$ is a minimal set of $R_+$-generators of $\clus_F$. Then $M_1,M_2 \in \Gamma$ are distinct if and only if $\mathcal{I}_{M_1} \cap \mathcal{I}_{M_2} = \emptyset$.
\end{lem}
\begin{proof}
	If $M_1 \cong M_2$, then we clearly have that $\mathcal{I}_{M_1} \cap \mathcal{I}_{M_2} \neq \emptyset$ by Lemma~\ref{lem:IndSetSize}. Conversely, if $M_1 \not\cong M_2$ then note that $\mathcal{I}_{M_1} \cap \mathcal{I}_{M_2} = \emptyset$ by Proposition~\ref{prop:DerivedProj}, Remark~\ref{rem:FoldingMultiplication}(c), and the fact that the action of $R_+$ commutes with both $\sus$ and $\dfart$.
\end{proof}

We shall now generalise cluster-tilting to our setting.

\begin{defn}
	Let $F \colon Q^\gend \rightarrow Q^{\gend'}$ be a weighted folding, and let $\clus_F$ be the cluster category of the associated $R_+$-coefficient category. Let $\Gamma$ be a minimal set of $R_+$-generators for $\clus_F$. We say that an object $T \in \clus_F$ is \emph{basic (cluster)-$R_+$-tilting} if the following hold:
	\begin{enumerate}[label=(T\arabic*)]
		\item $T \cong \bigoplus_{i=1}^m T_i$ with each $T_i \in \Gamma$ and $T_i \not\cong T_j$ for any $i \neq j$.
		\item $T$ is $R_+$-rigid: $\Hom_{\clus_F}(T',\cfart T'') = 0$ for any $T',T'' \in \mathcal{G}_T$.
		\item $T$ is maximal: if there exists $X \in \Gamma$ such that $T \oplus X$ is $R_+$-rigid, then $X$ is isomorphic to a direct summand of $T$.
	\end{enumerate}
\end{defn}

Before we proceed with our main results for this section, we must prove some technical lemmas which further quantify the above conditions.

\begin{lem} \label{lem:RPRigidEquiv}
	Let $\Gamma$ be a minimal set of $R_+$-generators of $\clus_F$, and let $T= \bigoplus_{i=1}^{|T|} T_i \in \clus_F$ with each $T_i \in \Gamma$. Then $T$ is $R_+$-rigid if and only if for any $i,j \in \{1,\ldots,|T|\}$ we have $\Hom_{\clus_F}(Z_i,\cfart Z_j)=0$ for any $Z_i \in \mathcal{I}_{T_i}$ and $Z_j \in \mathcal{I}_{T_j}$.
\end{lem}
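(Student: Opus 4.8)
The plan is to reduce $R_+$-rigidity --- a condition quantified over the large class $\mathcal{G}_T$ --- to the finite indecomposable data recorded by the sets $\mathcal{I}_{T_i}$, exploiting that the semiring action, the bifunctor $\Hom_{\clus_F}$ and the translate $\cfart$ are all additive. The engine of the whole argument is a single decomposition: for any $r \in R_+$ one has $rT = \widehat r\!\left(\bigoplus_i T_i\right) \cong \bigoplus_i rT_i$ because $\widehat r$ is additive, and each $rT_i$ is a finite direct sum of indecomposables drawn from $\mathcal{I}_{T_i}$.

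Establishing this last description of $rT_i$ is the step I would treat most carefully, since it is the only place invoking the explicit construction of the action in Section~\ref{section-action} rather than formal additivity. Writing $r = \sum_k a_k \croot_k$ with $a_k \in \nnint$, the construction gives $rT_i \cong \bigoplus_k a_k\,(\croot_k T_i)$, where each $\croot_k T_i$ is an indecomposable object of $\gen_{T_i}$ and hence an element of $\mathcal{I}_{T_i}$ (cf. Remarks~\ref{rem:SimplerGen} and~\ref{rem:FoldingMultiplication}(c)); conversely every element of $\mathcal{I}_{T_i}$ arises as some $\croot_k T_i$. Thus the indecomposable summands of $rT_i$ are exactly elements of $\mathcal{I}_{T_i}$, and likewise $\cfart(sT_j)$ is a direct sum of objects $\cfart Z_j$ with $Z_j \in \mathcal{I}_{T_j}$.

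Granting this, both implications drop out. For any $r,s \in R_+$, additivity of $\Hom_{\clus_F}$ and of $\cfart$ gives
\begin{equation*}
	\Hom_{\clus_F}(rT,\cfart sT) \cong \bigoplus_{i,j}\ \bigoplus_{Z_i,\,Z_j} \Hom_{\clus_F}(Z_i,\cfart Z_j),
\end{equation*}
the inner sum ranging over the indecomposable summands $Z_i$ of $rT_i$ and $Z_j$ of $sT_j$, all lying in $\mathcal{I}_{T_i}$ and $\mathcal{I}_{T_j}$ respectively. For $(\Leftarrow)$: if every $\Hom_{\clus_F}(Z_i,\cfart Z_j)$ vanishes then the whole right-hand side vanishes for all $r,s$, which is exactly $\Hom_{\clus_F}(T',\cfart T'')=0$ for all $T'=rT,\,T''=sT \in \mathcal{G}_T$, i.e. $R_+$-rigidity. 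For $(\Rightarrow)$: given $Z_i \in \mathcal{I}_{T_i}$ and $Z_j \in \mathcal{I}_{T_j}$, pick $r,s$ with $Z_i$ a summand of $rT_i$ and $Z_j$ a summand of $sT_j$; then $Z_i,Z_j$ are summands of $rT,sT \in \mathcal{G}_T$, so the term $\Hom_{\clus_F}(Z_i,\cfart Z_j)$ is a direct summand of $\Hom_{\clus_F}(rT,\cfart sT)=0$ and therefore vanishes.

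In short, apart from the summand description of $rT_i$ --- which is the real content and rests on Section~\ref{section-action} --- the lemma is just additivity of $\Hom_{\clus_F}$ and $\cfart$ combined with unpacking the definitions of $\mathcal{G}_T$ and $\mathcal{I}_{T_i}$.
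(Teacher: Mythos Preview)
Your proposal is correct and follows essentially the same argument as the paper's proof: both identify $\gen_T$ with $\{rT : r \in R_+\}$, decompose $rT \cong \bigoplus_i rT_i$, observe that the indecomposable summands of $rT_i$ are exactly the elements of $\mathcal{I}_{T_i}$ (and conversely each $Z_i \in \mathcal{I}_{T_i}$ occurs as a summand of some $r'T$), and then use additivity of $\Hom_{\clus_F}$ and $\cfart$ to conclude. Your write-up is a bit more explicit about the additivity step, but there is no substantive difference in strategy.
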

\begin{proof}
	By definition, we have $T' \in \gen_T$ if and only if we have $T' \cong rT \cong \bigoplus_{i=1}^{|T|} r T_i$ for some $r \in R_+$. For each such $T' \cong rT \in \gen_T$, write $T'_i = rT_i$. Now since $T_i \in \Gamma$, every indecomposable direct summand $Z_i$ of $T'_i$ is such that $Z_i \in \mathcal{I}_{T_i}$. Conversely, for any $Z_i \cong r' T_i \in \mathcal{I}_{T_i}$, there exists $T'' \cong r' T \in \gen_T$ such that $Z_i$ is a direct summand of $T''$. Thus, we have $\Hom_{\clus_F}(T', \cfart T'')=0$ for any $T', T'' \in \gen_T$ if and only if for any $i,j \in \{1,\ldots,|T|\}$, we have $\Hom_{\clus_F}(Z_i,\cfart Z_j)=0$ for any $Z_i \in \mathcal{I}_{T_i}$ and $Z_j \in \mathcal{I}_{T_j}$, as required.
\end{proof}

\begin{lem}\label{lem:ClusExact}
	Let $\Gamma$ be a minimal set of $R_+$-generators for $\clus_F$ and let $M_1,M_2 \in \Gamma$. Suppose there exists $i$ and $j$ such that $\Hom_{\clus_F}(\croot_i M_1, \croot_j M_2)\neq 0$. Then
	\begin{enumerate}[label=(\alph*)]
		\item $\Hom_{\clus_F}(\croot_1\croot_i M_1, \croot_{j-1} M_2)\neq 0$ and $\Hom_{\clus_F}(\croot_1\croot_i M_1, \croot_{j+1} M_2)\neq 0$;
		\item $\Hom_{\clus_F}(\croot_{i-1} M_1, \croot_1\croot_j M_2)\neq 0$ and $\Hom_{\clus_F}(\croot_{i+1} M_1, \croot_1\croot_j M_2)\neq 0$.
	\end{enumerate}
\end{lem}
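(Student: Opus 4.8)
The plan is to carry out everything inside $\clus_F$, exploiting two features of the semiring action established earlier: each $\widehat{\croot_k}$ is a triangulated autofunctor commuting with $\sus$ and $\cfart$ that acts faithfully on all morphism spaces (axiom (M6) together with Theorem~\ref{thm:Action}), and the fusion relation $\croot_1\croot_k=\croot_{k-1}+\croot_{k+1}$ holds in $\chebsr\ps{2n+1}$ (read off from Definition~\ref{def:ChebSemiring}, with the conventions $\croot_{-1}=\croot_{2n}=0$ and $\croot_n=\croot_{n-1}$ absorbing the boundary indices). By (M1) and (M2) this reads on objects as $\croot_1\croot_k M\cong\croot_{k-1}M\oplus\croot_{k+1}M$. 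The single extra ingredient I will need is the \emph{self-adjointness} of $\widehat{\croot_1}$, i.e.\ a natural isomorphism $\Hom_{\clus_F}(\croot_1 X,Y)\cong\Hom_{\clus_F}(X,\croot_1 Y)$. Granting this, both parts of the lemma become a one-line manipulation, and the two separate ``$j-1$ and $j+1$'' (resp.\ ``$i-1$ and $i+1$'') claims fall out simultaneously.

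Assuming self-adjointness, I would argue part (a) as follows. Moving the leading $\croot_1$ across the Hom gives $\Hom_{\clus_F}(\croot_1\croot_i M_1,\croot_{j\pm 1}M_2)\cong\Hom_{\clus_F}(\croot_i M_1,\croot_1\croot_{j\pm 1}M_2)$. The decisive observation is that $\croot_j$ occurs as a summand of \emph{both} neighbouring fusion products, $\croot_1\croot_{j-1}=\croot_{j-2}+\croot_j$ and $\croot_1\croot_{j+1}=\croot_j+\croot_{j+2}$; hence $\Hom_{\clus_F}(\croot_i M_1,\croot_j M_2)$ is a direct summand of each of $\Hom_{\clus_F}(\croot_i M_1,\croot_1\croot_{j\pm 1}M_2)$. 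Since that summand is nonzero by hypothesis, both displayed spaces in (a) are nonzero. Part (b) is the mirror image: push $\croot_1$ onto the \emph{first} factor, $\Hom_{\clus_F}(\croot_{i\pm 1}M_1,\croot_1\croot_j M_2)\cong\Hom_{\clus_F}(\croot_1\croot_{i\pm 1}M_1,\croot_j M_2)$, and use that $\croot_i$ is a common summand of $\croot_1\croot_{i-1}$ and $\croot_1\croot_{i+1}$. Thus the ``both directions'' phenomenon is exactly the statement that $\croot_j$ (respectively $\croot_i$) lies in the fusion of $\croot_1$ with \emph{either} of its neighbours --- the characteristic symmetry of the Chebyshev/Clebsch--Gordan rule.

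The real content, and where I expect the difficulty to concentrate, is therefore the self-adjointness of $\widehat{\croot_1}$. I would first establish it on $\calM_F$ directly from the explicit description of the action: the incidence data defining $\croot_1 M$ is governed by the matrix $\rho(\croot_1)$, which is symmetric by Lemma~\ref{lem:ChebRegRep}(b), and this symmetry is precisely what produces a natural perfect pairing $\Hom_{\calM_F}(\croot_1 M,N)\cong\Hom_{\calM_F}(M,\croot_1 N)$, and likewise on $\Ext^1$ since $KQ^\gend$ is hereditary. Equivalently, one can manufacture a unit and counit from the splitting $\croot_1\croot_1\cong 1\oplus\croot_2$ and verify the zigzag identities, the admissible scalars $\omega$ and $\lambda$ in the construction being chosen exactly so these hold. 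Either route gives the adjunction on $\calM_F$; because the $R_+$-action commutes with $\sus$ and $\cfart$, it extends to $\der_F$ and descends to the orbit category $\clus_F$. The remaining boundary bookkeeping --- where a neighbour collapses, e.g.\ $\croot_{j+1}=\croot_j$ at $j=n-1$, or a target such as $\croot_{j-1}$ degenerates to $0$ --- is routine via the reduction axioms of Definition~\ref{def:ChebSemiring} and merely confirms the statement for the indices that keep the relevant generators nonzero.
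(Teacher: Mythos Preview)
Your approach is genuinely different from the paper's. The paper never invokes an adjunction: it lifts a nonzero morphism in $\clus_F$ to a morphism (or extension class) between the corresponding objects of $\calM_F$, factors through the image, and uses Remark~\ref{rem:FoldingMultiplication}(c) to see that the cokernel lies in Auslander--Reiten columns disjoint from that of $\wt M_2$, hence contains no $\croot_{j'}\wt M_2$ as a summand. Applying $\croot_1$ to the factorisation and arguing by contradiction then forces the image of $\croot_1\wt h$ to hit \emph{both} $\croot_{j-1}\wt M_2$ and $\croot_{j+1}\wt M_2$. The separation of the two targets thus comes from a structural fact about cokernels, not from moving $\croot_1$ across the $\Hom$. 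Your route via self-adjointness of $\widehat{\croot_1}$ would be slicker and more transferable if that adjunction were available.

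The gap is that your justification of self-adjointness does not close. The symmetry of $\rho(\croot_1)$ only matches the vertex-space data of $\Hom_{\calM_F}(\croot_1 M,N)$ and $\Hom_{\calM_F}(M,\croot_1 N)$; for the ``crossing'' arrows $c_1,c_2,c_3$ in type $\coxH$ (and the $\lambda$-dependent maps in type $\coxI$) the arrow-compatibility constraints on the two sides are genuinely different, and a direct check on the formulas in Section~\ref{section-action} shows that the naive bijection of data does not intertwine them. Your alternative via unit and counit from $\croot_1\croot_1\cong 1\oplus\croot_2$ requires the zigzag identities, which are a concrete computation you have not carried out; the scalars $\omega,\lambda$ were constrained in the paper only so that the fusion isomorphism (\ref{eq:ActOnObj}) holds, not so that $\widehat{\croot_1}$ is Frobenius, and you assert rather than verify that these coincide. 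Finally, note that faithfulness (M6) alone gives only $\Hom(\croot_1\croot_i M_1,\croot_{j-1}M_2\oplus\croot_{j+1}M_2)\neq 0$, without separating the two summands --- exactly the point where either the adjunction or the paper's cokernel argument is needed.
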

\begin{proof}
	We have $\Hom_{\der_F}(\sus^k\croot_i\wt{M}_1,\sus^l\croot_j\wt{M}_2) \neq 0$, where $\sus^k\wt{M}_1,\sus^l\wt{M}_2 \in \der_F$ are representatives of $M_1, M_2 \in \clus_F$, and by an abuse of notation, $\wt{M}_1, \wt{M}_2 \in \calM_F$. In particular, either $l=k$ or $l=k+1$.
	
	Consider the case where $l=k$. Then we have a pair of exact sequences
	\begin{align*}
		&\xymatrix@1{
			0
				\ar[r] &
			\Ker \wt{g}
				\ar[r]&
			\croot_i \wt{M}_1
				\ar[r]^-{\wt{g}} &
			\im \wt{f}
				\ar[r]&
			0
		} \\
		&\xymatrix@1{
			0
				\ar[r] &
			\im \wt{f}
				\ar[r]^-{\wt{h}}&
			\croot_j \wt{M}_2
				\ar[r] &
			\Coker \wt{h}
				\ar[r]&
			0
		},
	\end{align*}
	where $\wt{f} \in \Hom_{\calM_F}(\croot_i \wt{M}_1,\croot_j \wt{M}_2)$ is non-zero. Since $\croot_i \wt{M}_1$ and $\croot_j \wt{M}_2$ are non-isomorphic indecomposable modules, $\Coker \wt{h}$ is either zero (if $\wt{h}$ is surjective) or it is isomorphic to a direct sum of indecomposable modules that reside in columns of the Aulsander-Reiten quiver distinct from the columns containing $\wt{M}_1$ and $\wt{M}_2$. Importantly, we note from this that $\croot_{j'}\wt{M}_2$ is not a direct summand of $\Coker \wt{h}$ for any $j'$. Multiplying by $\croot_1$ yields
	\begin{align*}
		&\xymatrix@1{
			0
				\ar[r] &
			\croot_1 \Ker \wt{g}
				\ar[r]&
			\croot_1\croot_i \wt{M}_1
				\ar[r]^-{\croot_1\wt{g}} &
			\croot_1\im \wt{f}
				\ar[r]&
			0
		} \\
		&\xymatrix@1{
			0
				\ar[r] &
			\croot_1\im \wt{f}
				\ar[r]^-{\croot_1\wt{h}}&
			\croot_{j-1} \wt{M}_2 \oplus \croot_{j+1} \wt{M}_2
				\ar[r] &
			\croot_1\Coker \wt{h}
				\ar[r]&
			0
		}.
	\end{align*}
	Now we necessarily have that $\im \croot_1 \wt{h} \cap \croot_{j-1} \wt{M}_2 \neq \emptyset$ and $\im \croot_1 \wt{h} \cap \croot_{j+1} \wt{M}_2\neq \emptyset$: otherwise either $\croot_{j-1} \wt{M}_2$ or $\croot_{j+1} \wt{M}_2$ is a direct summand of $\croot_1 \Coker \wt{h}$, which by Remark~\ref{rem:FoldingMultiplication}(c) would contradict the previous observation that $\croot_j \wt{M}_2$ is not a direct summand of $\Coker \wt{h}$. Thus, there must exist non-zero morphisms $\wt{f}'_{j-1} \colon \croot_1\croot_{i} \wt{M}_1 \rightarrow \croot_{j-1}\wt{M}_2$ and $\wt{f}'_{j+1} \colon \croot_1\croot_{i} \wt{M}_1 \rightarrow \croot_{j+1}\wt{M}_2$. This implies that $\Hom_{\clus_F}(\croot_1\croot_{i}M_1, \croot_{j-1} M_2) \neq 0$ and $\Hom_{\clus_F}(\croot_1\croot_{i}M_1, \croot_{j+1} M_2) \neq 0$, as required. That we also have $\Hom_{\clus_F}(\croot_{i-1} M_1, \croot_1\croot_j M_2)\neq 0$ and $\Hom_{\clus_F}(\croot_{i+1} M_1, \croot_1\croot_j M_2)\neq 0$ follows by dual reasoning to the above.
	
	Now consider the case where $l=k+1$. Then $\Ext^1_{\calM_F}(\croot_i \wt{M}_1, \croot_j \wt{M}_2) \neq 0$ and consequently we have a morphism of non-split exact sequences
	\begin{equation*}
		\xymatrix{
			0
				\ar[r]
			& \croot_j \wt{P}_1
				\ar[r]
				\ar[d]^-{\wt{g}}
			& \croot_j \wt{P}_0
				\ar[r]
				\ar[d]
			& \croot_j \wt{M}_2
				\ar[r]
				\ar@{=}[d]
			& 0
			\\
			0
				\ar[r]
			& \croot_i \wt{M}_1
				\ar[r]
			& \wt{N}
				\ar[r]
			& \croot_j \wt{M}_2
				\ar[r]
			& 0,
		}
	\end{equation*}
	where the top row is a projective resolution of $\croot_j \wt{M}_2$, and $\wt{N}$ is determined by $\wt{g}$. Multiplying by $\croot_1$ yields
	\begin{equation*}
		\xymatrix{
			0
				\ar[r]
			& \croot_{j-1} \wt{P}_1 \oplus \croot_{j+1} \wt{P}_1
				\ar[r]
				\ar[d]^-{\croot_1 \wt{g}}
			& \croot_{j-1} \wt{P}_0 \oplus \croot_{j+1} \wt{P}_0
				\ar[r]
				\ar[d]
			& \croot_{j-1} \wt{M}_2 \oplus \croot_{j+1} \wt{M}_2
				\ar[r]
				\ar@{=}[d]
			& 0
			\\
			0
				\ar[r]
			& \croot_1 \croot_i \wt{M}_1
				\ar[r]
			& \croot_1\wt{N}
				\ar[r]
			& \croot_{j-1} \wt{M}_2 \oplus \croot_{j+1} \wt{M}_2
				\ar[r]
			& 0,
		}
	\end{equation*}
	where by the previous argument, we have $\croot_1 \wt{g} = \left(\begin{smallmatrix} \wt{g}_- & \wt{g}_+ \end{smallmatrix}\right)$ with $\wt{g}_-,\wt{g}_+ \neq 0$. Restricting to either direct summand yields a non-split exact sequence. The dual argument also applies, which proves (a) and (b) for this final case.
\end{proof}

\begin{lem} \label{lem:ClusHomCols}
	Let $\Gamma$ be a minimal set of $R_+$-generators for $\clus_F$, and let $M_1,M_2 \in \Gamma$.
	\begin{enumerate}[label=(\alph*)]
		\item Suppose there exists $L_2 \in \mathcal{I}_{M_2}$ such that $\Hom_{\clus_F}(L_1, L_2)=0$ for any $L_1 \in \mathcal{I}_{M_1}$. Then $\Hom_{\clus_F}(L'_1, L'_2)=0$ for any $L'_1 \in \mathcal{I}_{M_1}$ and $L'_2 \in \mathcal{I}_{M_2}$.
		\item Suppose there exists $L_1 \in \mathcal{I}_{M_1}$ such that $\Hom_{\clus_F}(L_1, L_2)=0$ for any $L_2 \in \mathcal{I}_{M_2}$. Then $\Hom_{\clus_F}(L'_1, L'_2)=0$ for any $L'_1 \in \mathcal{I}_{M_1}$ and $L'_2 \in \mathcal{I}_{M_2}$.
	\end{enumerate}
\end{lem}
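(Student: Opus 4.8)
The plan is to prove both parts by contraposition, using Lemma~\ref{lem:ClusExact} as a propagation engine that spreads the non-vanishing of a single $\Hom$ space across all of $\mathcal{I}_{M_1}$ and $\mathcal{I}_{M_2}$. First I would record the structure of these index sets: by Lemma~\ref{lem:FoldingGenerators} together with its cluster-category analogue (via the quotient functor $G\colon \der_F \rightarrow \clus_F$ and the remark on minimal $R_+$-generators for $\clus_F$) and Lemma~\ref{lem:IndSetSize}, the set $\mathcal{I}_{M_1}$ consists of exactly the $n$ pairwise non-isomorphic indecomposables $\croot_0 M_1, \croot_1 M_1, \ldots, \croot_{n-1} M_1$, and likewise $\mathcal{I}_{M_2}=\{\croot_j M_2 : 0 \leq j \leq n-1\}$. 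It is then natural to arrange the relevant $\Hom$ spaces into the $n\times n$ array
\begin{equation*}
	H_{ij} = \Hom_{\clus_F}(\croot_i M_1, \croot_j M_2), \qquad 0 \leq i,j \leq n-1,
\end{equation*}
so that the hypothesis of (a) says that some column $j_0$ of $H$ vanishes identically, the hypothesis of (b) says that some row $i_0$ vanishes identically, and in both cases the desired conclusion is that the whole array vanishes.

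The key observation is that Lemma~\ref{lem:ClusExact} becomes a statement about adjacent columns and rows of $H$. In $\chebsr\ps{2n+1}$ one has $\croot_1\croot_i = \croot_{i-1}+\croot_{i+1}$ (with the boundary conventions $\croot_{-1}=0$ and $\croot_n = \croot_{n-1}$ coming from the axioms of Definition~\ref{def:ChebSemiring}), so by (M1) and (M2) of Definition~\ref{def:RPCoeffCat} we get $\croot_1\croot_i M_1 \cong \croot_{i-1}M_1 \oplus \croot_{i+1}M_1$, and additivity of $\Hom$ in the first variable turns Lemma~\ref{lem:ClusExact}(a) into: whenever $H_{ij}\neq 0$, the column $j-1$ (for $j\geq 1$) and the column $j+1$ each contain a nonzero entry. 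For part (a) I would then begin from the negation of the conclusion, namely a single nonzero entry $H_{i_0 j_0'}\neq 0$, and propagate leftward one column at a time down to column $0$ and rightward up to column $n-1$; since the path graph on $\{0,\ldots,n-1\}$ is connected under unit steps, every column acquires a nonzero entry, so no column can be identically zero. This contradicts the hypothesis, proving (a). Part (b) is entirely dual: using $\croot_1\croot_j M_2 \cong \croot_{j-1}M_2\oplus\croot_{j+1}M_2$ and Lemma~\ref{lem:ClusExact}(b), a nonzero $H_{ij}$ forces nonzero entries in rows $i-1$ and $i+1$, and the same connectivity argument shows every row is nonzero.

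The point requiring the most care — rather than a genuine obstacle — is the behaviour at the two ends of the index range, where the reductions $\croot_{-1}=0$ and $\croot_n=\croot_{n-1}$ make one propagation direction degenerate (a leftward step from column $0$, or a rightward step from column $n-1$, produces no new column). I would check that the surviving direction still suffices to traverse $\{0,\ldots,n-1\}$, and I would treat the base case $n=2$ (the $\coxH_3$ and $\coxH_4$ settings, where $\mathcal{I}_M=\{M,\gratio M\}$ and $H$ is merely a $2\times 2$ array) explicitly, since there the propagation is a single step. The remaining ingredient is the routine fact that a nonzero morphism out of, or into, a finite direct sum restricts to a nonzero morphism on some summand, which is exactly what converts the $\croot_1\croot_i$-level conclusions of Lemma~\ref{lem:ClusExact} back into statements about individual entries $H_{ij}$.
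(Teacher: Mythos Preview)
Your proposal is correct and follows essentially the same route as the paper: both argue by contraposition, index $\mathcal{I}_{M_k}$ as $\{\croot_j M_k : 0 \le j \le n-1\}$, and then iteratively invoke Lemma~\ref{lem:ClusExact} to propagate a single nonzero $\Hom$ across all columns (for (a)) or rows (for (b)). Your array formulation $H_{ij}$ and your explicit treatment of the boundary indices $j=0$ and $j=n-1$ make the induction cleaner than the paper's terse ``straightforward consequence of the inductive application of Lemma~\ref{lem:ClusExact}(a)'', but the mathematical content is the same.
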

\begin{proof}
	(a) By Remark~\ref{rem:SimplerGen}, every element of $\mathcal{I}_{M_k}$ may be written as $\croot_j M_k$. We will prove the contrapositive of the lemma statement in this notation. Namely, that if there exist $i$ and $j$ such that $\Hom_{\clus_F}(\croot_i M_1, \croot_j M_2) \neq 0$, then for any $j'$, there exists $i'$ such that $\Hom_{\clus_F}(\croot_{i'} M_1, \croot_{j'} M_2) \neq 0$. The result is trivial in the cases where either $M_1 \cong M_2$ or where $j=0$, so we will assume neither is the case (a priori). For all other cases, the result is actually a straightforward consequence of the inductive application of Lemma~\ref{lem:ClusExact}(a).
	
	(b) The proof is similar to (a), with the use of Lemma~\ref{lem:ClusExact}(b) instead.
\end{proof}

The condition of being $R_+$-tilting is closely related to the classical notion of tilting. In fact, the next theorem shows that basic $R_+$-tilting objects of $\clus_F$ correspond to a subset of basic tilting objects of $\clus_F$.

\begin{thm} \label{thm:TiltingEquiv}
	Let $T= \bigoplus_{i=1}^{|T|} T_i \in \clus_F$, and let
	\begin{equation*}
		\wh{T} = \bigoplus_{i=1}^{|T|}\bigoplus_{Z \in \mathcal{I}_{T_i}} Z \in \clus_F.
	\end{equation*}
	Then $T$ is a basic $R_+$-tilting object if and only if $\wh{T}$ is a basic tilting object.
\end{thm}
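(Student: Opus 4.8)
The plan is to prove both implications by reducing, via Lemma~\ref{lem:RPRigidEquiv}, every $R_+$-theoretic condition on $T$ to an ordinary tilting-theoretic condition on $\wh T$, and then to invoke the classical fact (Proposition~\ref{prop:TiltingSummands} together with the theory of~\cite{BMRRT}) that in the cluster category of a representation-finite hereditary algebra a basic object is a tilting object if and only if it is a maximal rigid object. Throughout I would use that the indecomposable summands of $\wh T$ are exactly the elements of $\bigcup_i \mathcal{I}_{T_i}$, that these sets are pairwise disjoint precisely when the $T_i$ are pairwise non-isomorphic (Lemma~\ref{lem:DisjointGeneration}), and that $\cfart$ commutes with the $R_+$-action, so that $\cfart \mathcal{I}_M = \mathcal{I}_{\cfart M}$ with $\cfart M \in \Gamma$ (Remark~\ref{rem:FoldingMultiplication} and the corollary to Theorem~\ref{thm:Generators}).

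First I would dispose of the basic and rigid conditions, which transfer in both directions at once. Since $T_1,\ldots,T_{|T|} \in \Gamma$, Lemma~\ref{lem:DisjointGeneration} shows that $\wh T$ is basic (its indecomposable summands are pairwise non-isomorphic) if and only if the $T_i$ are pairwise non-isomorphic, i.e.\ if and only if (T1) holds. Moreover $\Hom_{\clus_F}(\wh T,\cfart \wh T)$ decomposes as the direct sum of all $\Hom_{\clus_F}(Z_i,\cfart Z_j)$ with $Z_i \in \mathcal{I}_{T_i}$ and $Z_j \in \mathcal{I}_{T_j}$, so by Lemma~\ref{lem:RPRigidEquiv} the object $\wh T$ is rigid if and only if $T$ is $R_+$-rigid, which is (T2). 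Thus the genuine content of the theorem is the equivalence of the two maximality conditions. For the implication ``$\wh T$ basic tilting $\Rightarrow$ $T$ basic $R_+$-tilting'' only (T3) then remains: if $X \in \Gamma$ and $T \oplus X$ is $R_+$-rigid, Lemma~\ref{lem:RPRigidEquiv} says exactly that $\wh T \oplus \wh X$ is rigid, where $\wh X = \bigoplus_{Z \in \mathcal{I}_X} Z$. As $\wh T$ is a tilting object, hence maximal rigid, every indecomposable summand of $\wh X$ lies in $\add \wh T$; picking any $Z_0 \in \mathcal{I}_X$ gives $Z_0 \in \mathcal{I}_{T_i}$ for some $i$, so $\mathcal{I}_X \cap \mathcal{I}_{T_i} \neq \emptyset$ and Lemma~\ref{lem:DisjointGeneration} forces $X \cong T_i$. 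Hence $X$ is a summand of $T$, which is (T3).

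The converse is where the work lies. Assuming $T$ is basic $R_+$-tilting, I must show $\wh T$ is maximal rigid. Suppose instead $\wh T \oplus X$ is rigid for some indecomposable $X \notin \add \wh T$, and write $X \in \mathcal{I}_M$ for the unique $M \in \Gamma$ with $X \in \gen_M$ (Remark~\ref{rem:SimplerGen}); disjointness gives $M \not\cong T_i$ for all $i$. The strategy is to upgrade the rigidity of $\wh T \oplus X$ to rigidity of $\wh T \oplus \wh M$ with $\wh M = \bigoplus_{Z \in \mathcal{I}_M} Z$, because by Lemma~\ref{lem:RPRigidEquiv} the latter is equivalent to $T \oplus M$ being $R_+$-rigid, which by (T3) would force $M \cong T_i$, a contradiction. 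The cross terms $\Hom_{\clus_F}(\mathcal{I}_{T_i},\cfart \mathcal{I}_M)$ and $\Hom_{\clus_F}(\mathcal{I}_M,\cfart \mathcal{I}_{T_i})$ are exactly what Lemma~\ref{lem:ClusHomCols} controls: were some such Hom non-zero, then with $M_2 = \cfart M$ (resp.\ $M_1 = M$) the contrapositive of Lemma~\ref{lem:ClusHomCols}(a) (resp.\ (b)) would propagate non-vanishing to the specific summand $\cfart X \in \mathcal{I}_{\cfart M}$ (resp.\ $X \in \mathcal{I}_M$), contradicting $\Hom_{\clus_F}(\wh T,\cfart X)=0$ (resp.\ $\Hom_{\clus_F}(X,\cfart \wh T)=0$), both of which hold because $\wh T \oplus X$ is rigid. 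This leaves only the internal terms $\Hom_{\clus_F}(\mathcal{I}_M,\cfart \mathcal{I}_M)$, that is, the assertion that $M$ is itself $R_+$-rigid.

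The main obstacle is precisely this last point: that every single generator $M \in \Gamma$ is automatically $R_+$-rigid, equivalently that $\wh M$ is rigid. The diagonal self-extensions $\Hom_{\clus_F}(\croot_c M, \cfart \croot_c M)$ vanish because each $\croot_c M$ is indecomposable and every indecomposable of a Dynkin-type cluster category is rigid, but the off-diagonal terms $\Hom_{\clus_F}(\croot_a M, \cfart \croot_b M)$ with $a \neq b$ need a genuine argument. Here I would combine three ingredients: all objects of $\mathcal{I}_M$ lie in a single column of the Auslander--Reiten quiver with pairwise vanishing $\Hom_{\clus_F}$ (Remark~\ref{rem:FoldingMultiplication}(c)); $\derdim_F$ sends them to pairwise collinear roots (Theorem~\ref{thm:Folding} and Proposition~\ref{prop:DerivedProj}); and the $2$-Calabi--Yau symmetry $\Hom_{\clus_F}(A,\cfart B) \cong D\Hom_{\clus_F}(B,\cfart A)$ of $\clus_F$ together with the index-propagation of Lemma~\ref{lem:ClusExact}. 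The idea is that a non-zero extension between two distinct collinear members of the column would, by walking the indices of Lemma~\ref{lem:ClusExact} towards the diagonal (using $\croot_1 \croot_a = \croot_{a-1} + \croot_{a+1}$ and the symmetry to move both indices together), force a non-zero self-extension of some member, which is impossible. Once $\wh M$ is known to be rigid, all three families of Hom-spaces vanish, $T \oplus M$ is $R_+$-rigid, and (T3) delivers the contradiction; together with the basic and rigid transfers this finishes the proof. I expect the automatic rigidity of $\wh M$ to be the most delicate step, since it is exactly where the boundary reflection rules and multiplication of the Chebyshev semiring $\chebsr\ps{2n+1}$ enter the combinatorics of the index walk.
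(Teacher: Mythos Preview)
Your argument follows the same route as the paper's: Lemma~\ref{lem:DisjointGeneration} for the transfer of basicness, Lemma~\ref{lem:RPRigidEquiv} for the equivalence of ($R_+$-)rigidity, and Lemma~\ref{lem:ClusHomCols} to upgrade rigidity of $\wh T \oplus Y$ (with $Y$ a single indecomposable in $\mathcal{I}_M$) to rigidity of $\wh T \oplus \wh M$. The paper runs both maximality directions as contrapositives while you do one directly, but that is cosmetic.

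You are right to isolate the self-rigidity of $\wh M$ for $M \in \Gamma$ as a separate step: Lemma~\ref{lem:ClusHomCols} applied with $M_1 = M$ and $M_2 = \cfart M$ needs a full row or column of vanishing Homs as hypothesis, and rigidity of the single indecomposable $Y$ supplies only the one entry $\Hom_{\clus_F}(Y,\cfart Y)=0$. The paper's sentence ``by Lemma~\ref{lem:ClusHomCols}, we know that $\wh T \oplus \wh X$ is basic and rigid'' passes over this. However, your proposed fix via Lemma~\ref{lem:ClusExact} and the $2$-Calabi--Yau symmetry is shaky as written: Lemma~\ref{lem:ClusExact}(a) only yields a non-zero morphism from the \emph{sum} $\croot_1\croot_a M = \croot_{a-1}M \oplus \croot_{a+1}M$, not from the particular summand $\croot_{a+1}M$ you need to walk towards the diagonal, so the induction does not obviously terminate at a self-extension for $\coxI$-type. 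A one-line alternative avoids the issue entirely: since $\Gamma_{\clus_F}$ is closed under $\cfart$ and every indecomposable of a Dynkin cluster category is a $\cfart$-translate of a projective, any $M \in \Gamma$ is $\cfart^m P(i)$ for some $i$ with $\vw(i)=1$; then $\wh M = \cfart^m \wh{P(i)}$ and $\wh{P(i)} = \bigoplus_{F(j)=F(i)} P(j)$ is a direct summand of the projective tilting object, hence rigid.
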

\begin{proof}
	If $T$ basic then $\wh{T}$ basic by Lemma~\ref{lem:DisjointGeneration} and property (T1). The converse follows from the fact that $T_i \in \mathcal{I}_{T_i}$.

	By Lemma~\ref{lem:RPRigidEquiv}, $T$ is $R_+$-rigid if and only if for any $i,j \in \{1,\ldots, |T|\}$ we have $\Hom_{\clus_F}(Z_i, \cfart Z_j) =0$ for any $Z_i \in \mathcal{I}_{T_i}$ and any $Z_j \in \mathcal{I}_{T_j}$. But this is true if and only if $\Hom_{\clus_F}(\widehat{T},\cfart \wh{T})=0$, and thus $T$ is $R_+$-rigid if and only if $\wh{T}$ is rigid.
	
	We can now prove the proposition statement. The remaining condition to check is the equivalence of the maximality conditions, which is easier to prove by the contrapositive statement. So suppose $T$ is  basic and $R_+$-rigid, but not maximal. Then there must exist some $X \in \Gamma$ such that $T \oplus X$ is basic and $R_+$-rigid. This implies that $\wh{T} \oplus \wh{X}$ is basic and rigid, where $\wh{X}\cong\bigoplus_{Z \in \mathcal{I}_{X}} Z$. But then clearly $\wh{T}$ is not maximal, because $\wh{X} \not \in \add \wh{T}$ by the fact that $\wh{T} \oplus \wh{X}$ is basic.
	
	Conversely, suppose $\wh{T}$ is basic and rigid, but not maximal. Then by Proposition~\ref{prop:TiltingSummands}, we must have $|\wh{T}| \neq |Q_0^\gend|$. In fact, we actually have $|\wh{T}| \leq |Q_0^\gend| - |\mathcal{I}_X|$ for some $X \in \Gamma$. Let $Y \in \clus_F$ be such that $\wh{T} \oplus Y$ is basic and rigid, and let $X \in \Gamma$ be such that $Y \in \mathcal{I}_X$. Then by Lemma~\ref{lem:ClusHomCols}, we know that $\wh{T} \oplus \wh{X}$ is basic and rigid, where $\wh{X} \cong \bigoplus_{Z \in \mathcal{I}_{X}} Z$. But this is true if and only if $T \oplus X$ is basic and $R_+$-rigid. So $T$ cannot be maximal, as required.
\end{proof}

Henceforth, we will refer to the basic tilting object $\wh{T} \in \clus_F$ defined in the above theorem as the tilting object that corresponds to the basic $R_+$-tilting object $T$.

\begin{thm}
  \label{thm:Length}
 	Let $T \in \clus_F$ be a basic $R_+$-tilting object. Then $T$ has $|Q_0^{\gend'}|$ indecomposable direct summands.
\end{thm}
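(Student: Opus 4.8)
The plan is to reduce the statement to the classical count of Proposition~\ref{prop:TiltingSummands} via the correspondence $T \mapsto \wh T$ established in Theorem~\ref{thm:TiltingEquiv}, and then to convert the known number of indecomposable summands of $\wh T$ into the desired number of summands of $T$ by a purely numerical argument. Writing $T = \bigoplus_{i=1}^{|T|} T_i$ with each $T_i \in \Gamma$ pairwise non-isomorphic (condition (T1)), Theorem~\ref{thm:TiltingEquiv} tells us that $\wh T = \bigoplus_{i=1}^{|T|}\bigoplus_{Z\in\mathcal{I}_{T_i}} Z$ is a basic tilting object of $\clus_F$. Since $\clus_F$ is the cluster category of $KQ^\gend$, Proposition~\ref{prop:TiltingSummands} applies and shows that $\wh T$ has exactly $|Q_0^\gend|$ indecomposable direct summands.

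First I would justify that the number of indecomposable summands of $\wh T$ equals $\sum_{i=1}^{|T|} |\mathcal{I}_{T_i}|$. The indecomposable summands of $\wh T$ are precisely the elements of $\bigcup_{i=1}^{|T|}\mathcal{I}_{T_i}$; because the $T_i$ are pairwise non-isomorphic, Lemma~\ref{lem:DisjointGeneration} guarantees $\mathcal{I}_{T_i}\cap\mathcal{I}_{T_j}=\emptyset$ for $i\neq j$, so this union is disjoint and no summand is counted twice. (This disjointness is exactly what makes $\wh T$ basic, consistent with Theorem~\ref{thm:TiltingEquiv}.) Next I would invoke Lemma~\ref{lem:IndSetSize}, which gives $|\mathcal{I}_{T_i}| = |Q_0^\gend|/|Q_0^{\gend'}|$ uniformly for every $i$. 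Equating the two expressions for the number of summands of $\wh T$ yields
\begin{equation*}
	|Q_0^\gend| = \sum_{i=1}^{|T|} |\mathcal{I}_{T_i}| = |T|\,\frac{|Q_0^\gend|}{|Q_0^{\gend'}|},
\end{equation*}
and dividing through by $|Q_0^\gend|/|Q_0^{\gend'}|$ gives $|T| = |Q_0^{\gend'}|$, as required.

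This argument is essentially complete once the two input lemmas are in place, so there is no serious analytic obstacle; the only point needing genuine care is the bookkeeping of summands, namely verifying that distinct generators $T_i$ contribute genuinely disjoint families of indecomposables so that $\sum_i |\mathcal{I}_{T_i}|$ really counts the summands of $\wh T$ without repetition. That is precisely the content of Lemma~\ref{lem:DisjointGeneration}, which rests on Proposition~\ref{prop:DerivedProj} and the degree- and column-preservation properties recorded in Remark~\ref{rem:FoldingMultiplication}(c). With that disjointness secured, the remaining step is the elementary division above.
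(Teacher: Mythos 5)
Your proof is correct and follows exactly the route the paper takes: the paper's own proof is a one-line citation of Lemma~\ref{lem:IndSetSize}, Theorem~\ref{thm:TiltingEquiv} and Proposition~\ref{prop:TiltingSummands}, and your argument simply spells out the bookkeeping (including the disjointness from Lemma~\ref{lem:DisjointGeneration}) that those citations leave implicit.
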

\begin{proof}
	This is a consequence of Lemma~\ref{lem:IndSetSize}, Theorem~\ref{thm:TiltingEquiv} and Proposition~\ref{prop:TiltingSummands}.
\end{proof}

\begin{defn}
	Let $F \colon Q^\gend \rightarrow Q^{\gend'}$ be a weighted folding, and let $\clus_F$ be the cluster category of the associated $R_+$-coefficient category. Let $\Gamma$ be a minimal set of $R_+$-generators for $\clus_F$. An object $T \in \clus_F$ is called an \emph{almost complete basic $R_+$-tilting object} if $T$ is $R_+$-rigid and there exists an object $X \in \Gamma$ such that $T \oplus X$ is a basic $R_+$-tilting object. We call such an object $X$ a \emph{complement} of $T$.
\end{defn}

\begin{thm}
\label{thm:Complements}
  Let $T$ be an almost complete basic $R_+$-tilting object of $\clus_F$. Then $T$ has exactly two complements.
\end{thm}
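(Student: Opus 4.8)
The plan is to reduce everything to classical cluster--tilting theory via Theorem~\ref{thm:TiltingEquiv}, and then to exploit the fact that all homomorphism and rigidity conditions between $R_+$-columns are ``all-or-nothing'' (Lemmas~\ref{lem:RPRigidEquiv} and~\ref{lem:ClusHomCols}). Write $T=\bigoplus_{i=1}^{m}T_i$ with $m=|Q^{\gend'}_0|-1$ and each $T_i\in\Gamma$, fix a complement $X_1$ (which exists by definition of an almost complete $R_+$-tilting object), and set $k=|\mathcal{I}_{X}|=|Q^{\gend}_0|/|Q^{\gend'}_0|$ (Lemma~\ref{lem:IndSetSize}). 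By Theorem~\ref{thm:TiltingEquiv} the object $\wh T\oplus\wh{X_1}$ is a basic tilting object of $\clus_F$, with $|Q^{\gend}_0|$ indecomposable summands by Proposition~\ref{prop:TiltingSummands}. Throughout I will freely use that $\cfart$ commutes with the $R_+$-action and sends columns to columns (Remark~\ref{rem:FoldingMultiplication}(c)), so that $\cfart\mathcal{I}_{Y}=\mathcal{I}_{\cfart Y}$.

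For the existence of a \emph{second} complement I would produce one concretely. Choose any $Z_1\in\mathcal{I}_{X_1}$ and consider $U=\wh T\oplus\wh{X_1}\ominus Z_1$. This is a summand of a tilting object, hence rigid, with $|Q^{\gend}_0|-1$ indecomposable summands; so it is a classical almost complete tilting object and, by Proposition~\ref{prop:TiltingComplements}, it has exactly two complements, namely $Z_1$ and a second indecomposable $Z_1'\notin\add U$. Let $X_2\in\Gamma$ be the unique generator with $Z_1'\in\mathcal{I}_{X_2}$; then $X_2\not\cong X_1$ and $X_2\not\cong T_i$ for all $i$, since the columns $\mathcal{I}_{X_2},\mathcal{I}_{X_1},\mathcal{I}_{T_i}$ are pairwise disjoint (Lemma~\ref{lem:DisjointGeneration}) while $Z_1'\notin\add U$. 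The object $U\oplus Z_1'$ is tilting, hence rigid, so in particular $\Hom_{\clus_F}(Z_1',\cfart Z_1')=0$, $\Hom_{\clus_F}(\wh T,\cfart Z_1')=0$ and $\Hom_{\clus_F}(Z_1',\cfart\wh T)=0$. Applying Lemma~\ref{lem:ClusHomCols} to the columns $X_2$ and $\cfart X_2$, and to $X_2$ against each column $T_i$, these single vanishings propagate to all representatives; combined with Lemma~\ref{lem:RPRigidEquiv} this shows $\wh T\oplus\wh{X_2}$ is rigid. Since it is basic with $|Q^{\gend}_0|$ summands (Lemma~\ref{lem:IndSetSize}), it is a maximal rigid, hence tilting, object, so $T\oplus X_2$ is a basic $R_+$-tilting object by Theorem~\ref{thm:TiltingEquiv}; that is, $X_2$ is a complement of $T$ distinct from $X_1$.

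To show there are \emph{at most} two complements, I would argue that completing $\wh T$ to a tilting object is a rank-$k$ problem governed by a reduction. The basic tilting objects of $\clus_F$ having $\wh T$ as a summand are in bijection, via Iyama--Yoshino (Calabi--Yau) reduction with respect to the rigid object $\wh T$, with the basic tilting objects of a $2$-Calabi--Yau cluster category $\clus'$ of rank $k$; under this bijection $\wh T\oplus\wh X\mapsto\wh X$, and a complement $X$ of $T$ corresponds precisely to a tilting object of $\clus'$ that is a single $R_+$-column $\mathcal{I}_X$. The point is that $\wh T$ consists of $m=|Q^{\gend'}_0|-1$ of the $|Q^{\gend'}_0|$ columns, so the reduction inherits an $R_+$-coefficient structure coming from a weighted folding whose target has a single vertex and no arrows; a rank-one folded cluster category admits exactly two $R_+$-column tilting objects, corresponding to the two clusters of the rank-one case. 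Transporting these back through the bijection yields exactly the complements $X_1,X_2$ above, and no others.

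The main obstacle is precisely this last step: verifying that the reduction by $\add\wh T$ carries the $R_+$-coefficient structure of $\clus_F$ to that of a rank-one weighted folding, so that ``column tilting objects of $\clus'$'' are genuinely controlled. Equivalently---and this is the route I would take to keep the argument self-contained---one avoids the reduction and shows directly that any complement $X\neq X_1$ must equal $X_2$: starting from $\wh T\oplus\wh X$ and repeatedly applying the classical exchange of Proposition~\ref{prop:approximations} to mutate the $k$ indecomposables of $\mathcal{I}_X$ one at a time, the column-uniformity of Lemma~\ref{lem:ClusHomCols} forces each intermediate mutation to remain compatible with the column structure, and the uniqueness of right minimal $\add U$-approximations pins down the resulting column as $\mathcal{I}_{X_2}$. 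The delicate part is controlling the intermediate (non-column) tilting objects produced by these single mutations, and it is here that the ``all-or-nothing'' Hom-vanishing between columns is indispensable. Combining existence and uniqueness then gives exactly two complements.
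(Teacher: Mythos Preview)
Your existence argument for a second complement is correct and matches the paper's approach for $\coxH$-type foldings essentially line for line. (The paper treats the $\coxI$-type case separately and more directly via the Auslander--Reiten combinatorics of bipartite $\coxA_{2n}$, where the complements are simply the two neighbouring columns; you do not single this out, but your argument still goes through there.)

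The gap is in the ``at most two'' direction, and you have identified it yourself. Neither of your proposed routes is carried to completion: the Iyama--Yoshino reduction would require checking that the reduced category is again a cluster category of the correct weighted-folding type, which is not established anywhere in the paper and is not obvious; and the iterated single-mutation argument, as you say, needs control of the intermediate non-column tilting objects that you do not provide.

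The paper's argument is different and more hands-on; it avoids any categorical reduction. For $\coxH$-type it proceeds as follows. Having found $X_1$ and $X_2$ with $\mathcal{I}_{X_2}=\{Y,Y'\}$, it first proves the extra fact that $\wh T\oplus X_1\oplus Y'$ is also tilting (not just $\wh T\oplus Y\oplus Y'$), using Lemma~\ref{lem:ClusExact} to show that $\Hom(X_1,\cfart Y)\neq 0$ forces $\Hom(X_1,\cfart Y')=0$. It then supposes a third complement $X_3$ exists; the same reasoning produces a fourth complement $X_4$ and the analogous four tilting objects. The crux is now Proposition~\ref{prop:approximations}: in the exchange triangles $X_1\to E\to Y\to$, $\gratio X_1\to E'\to Y'\to$, and $X_3\to E''\to Z\to$, the minimal approximations $f,f',f''$ are simultaneously $\add(\wh T\oplus\gratio X_1)$- and $\add(\wh T\oplus Y')$-approximations (resp.\ their analogues), so $E,E',E''\in\add\wh T$. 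Applying $\Hom_{\clus_F}(-,X_3)$ to the first two triangles and $\Hom_{\clus_F}(-,X_1)$, $\Hom_{\clus_F}(-,\gratio X_1)$ to the third yields $\Hom(X_1,\cfart X_3)=\Hom(\gratio X_1,\cfart X_3)=\Hom(X_3,\cfart X_1)=\Hom(X_3,\cfart\gratio X_1)=0$. Hence $\wh T\oplus\wh{X_1}\oplus X_3$ is basic and rigid with $|Q^\gend_0|+1$ summands, contradicting Proposition~\ref{prop:TiltingSummands}. This is the missing idea: rather than tracking a sequence of single mutations, the paper uses the approximation triangles once to force mutual rigidity between $\mathcal{I}_{X_1}$ and $X_3$ and derive a numerical contradiction.
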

\begin{proof}
	For foldings $F\colon Q^{\coxA_{2n}} \rightarrow Q^{\coxI_2(2n+1)}$, the theorem is a straightforward consequence of well-known results on the Auslander-Reiten theory of $\coxA$-type quivers (cf. \cite[Lemma 2.3, 2.5, Theorem 2.13]{CCS} for the setting of cluster categories and \cite[pp. 166-172]{ButlerRingel}, \cite[Sec. 2, Theorem]{CBMaps} for module categories of type $\coxA$). In particular, for any indecomposable object $M \in \clus_F$, let $N_1,N_2 \in \clus_F$ be the indecomposable objects in the top-most and bottom-most rows of the Auslander-Reiten quiver that are given by the rays of irreducible morphisms of source $M$ (of which there are at most two). Note that if $M$ is already located on the top-most (resp. bottom-most) row of the Auslander-Reiten quiver, then we set $N_1 = M$ (resp. $N_2 = M$). Next, let $L \in \clus_F$ be the indecomposable object given by the intersection of the (unique) rays of source $N_1$ and $N_2$. Note that if $N_1= M$ (resp. $N_2 = M$), then $L=N_2$ (resp. $L= N_1$). Then $\Hom_{\clus_F}(M,M') \neq 0$ if and only if $M'$ is located within the rectangular lattice of the Auslander-Reiten quiver spanned by the corners $M$, $N_1$, $N_2$ and $L$ (inclusive of the boundary). See Figure~\ref{fig:TypeAHoms} for an illustration.
	
	\begin{figure}[b]
		\centering
		\def\z{0.5}
		\begin{tikzpicture}
			\foreach \x in {0,2,4,6,8} {
				\foreach \y in {0,2,4} {
					\coordinate (v\x\y) at ($\x*(\z, 0)+\y*(0,\z)$);
					\draw (v\x\y) ellipse (0.05 and 0.05);
				}
			}
			\foreach \x in {1,3,5,7} {
				\foreach \y in {1,3,5} {
					\coordinate (v\x\y) at ($\x*(\z, 0)+\y*(0,\z)$);
					\draw (v\x\y) ellipse (0.05 and 0.05);
				}
			}
			
			\foreach \x in {0,8} {
				\foreach \y in {0,2,4} {
					\draw[] (v\x\y) ellipse (0.05 and 0.05);
				}
			}
		 	\foreach \x in {1} {
		 		\foreach \y in {1,3,5} {
		 			\draw[] (v\x\y) ellipse (0.05 and 0.05);
		 		}
		 	}
			
			\draw[cyan] (v00) -- (v55);
			\draw[cyan, fill, fill opacity = 0.2] (v02) -- (v35) -- (v53) -- (v20) -- (v02);
			\draw[cyan, fill, fill opacity = 0.2] (v04) -- (v15) -- (v51) -- (v40) -- (v04);
			\draw[anchor = west] (v02) node {\footnotesize $M$};
			\draw[anchor = north] (v35) node {\footnotesize $N_1$};
			\draw[anchor = south] (v20) node {\footnotesize $N_2$};
			\draw[anchor = east] (v53) node {\footnotesize $L$};
		\end{tikzpicture}
		\caption{The Auslander-Reiten quiver of $\clus_F$ for a folding $F\colon Q^{\coxA_6} \rightarrow Q^{\coxI_2(7)}$, with irreducible morphisms omitted. The leftmost column represents an object $T \in \Gamma$ which is almost complete $R_+$-tilting. $\Hom_{\clus_F}(M,M') \neq 0$ for an indecomposable $M'$ if and only if $M'$ resides in the shaded region spanned by the corners $M$, $N_1$, $N_2$ and $L$ (inclusive of the boundary). $\Hom_{\clus_F}(\wh{T},M') \neq 0$ for an indecomposable $M'$ if and only if $M'$ resides within any shaded region (or line).} \label{fig:TypeAHoms}
	\end{figure}
	
	Now in the case of the folding $F\colon Q^{\coxA_{2n}} \rightarrow Q^{\coxI_2(2n+1)}$, we have $T \in \Gamma$ by Theorem~\ref{thm:Length}. By Theorem~\ref{thm:TiltingEquiv}, $T$ corresponds to a basic and rigid object $\wh{T}$, which is precisely the direct sum of indecomposable objects in a single column of the Auslander-Reiten quiver of $\clus_F$. It therefore follows from the exposition above that $\Hom_{\clus_F}(\wh{T},M') =0$ for any indecomposable object $M'$ that resides in a column of the Auslander-Reiten quiver between $2n$ to $2n+2$ places to the right of the column containing $T$ (inclusive). In fact, the converse is also true: $\Hom_{\clus_F}(\wh{T},M') \neq 0$ if $L$ resides in a column at most $2n-1$ places to the right of $T$, as illustrated in Figure~\ref{fig:TypeAHoms}. Thus, $\Hom_{\clus_F}(\wh{T},\cfart M') =0$ if and only if $M'$ is an indecomposable residing in a neighbouring column to $T$, or the same column as $T$. Consequently, $T$ has exactly two complements $X$ and $X'$, which correspond to the neighbouring columns of $T$ in the Aulsander-Reiten quiver. This proves the result for the folding $F\colon Q^{\coxA_{2n}} \rightarrow Q^{\coxI_2(2n+1)}$. Thus, we will assume for the remainder of the proof that $F$ is instead a folding onto a $\coxH$-type quiver.

	First, we show that $T$ has at least two complements. By definition, $T$ has at least one complement $X_1$. Let $\wh{T} \oplus \wh{X}_1 \in \clus_F$ be the corresponding tilting object, where $\wh{X}_1=X_1 \oplus \gratio X_1$. Consider the object $\wh{T} \oplus \wh{X}_1 / X_1 \in \clus_F$. This object is almost tilting, and by Proposition~\ref{prop:TiltingComplements}, has exactly two complements, one of which is $X_1$. Let $Y$ be the other complement and let $X_2 \in \Gamma$ be such that $Y \in \mathcal{I}_{X_2}=\{X_2, \gratio X_2\}$. Then $\wh{T} \oplus Y$ is basic and rigid, and $X_2$ is necessarily not isomorphic to any direct summand of $T \oplus X_1$. By Lemma~\ref{lem:ClusHomCols}, this implies that $\wh{T} \oplus \wh{X}_2$ is basic and rigid, where $\wh{X}_2 = X_2 \oplus \gratio X_2$. By Lemma~\ref{lem:IndSetSize} and Proposition~\ref{prop:TiltingSummands}, $\wh{T} \oplus \wh{X}_2$ is also maximal and hence tilting. Thus $T \oplus X_2$ is $R_+$-tilting (Theorem~\ref{thm:TiltingEquiv}), and hence $X_2$ is another complement of $T$. So $T$ has at least two complements.
	
	We now show that $T$ has at most two complements. Let $X_1$ and $X_2$ be as previously stated, and let $Y' \not\cong Y$ be the other object in $\mathcal{I}_{X_2}$. To summarise, we already know that
	\begin{equation*}
		\wh{T} \oplus X_1 \oplus \gratio X_1, \qquad \wh{T} \oplus Y \oplus Y', \qquad
		\wh{T} \oplus Y \oplus \gratio X_1
	\end{equation*}
	are basic tilting objects. The first step of the proof is to show that $\wh{T} \oplus X_1 \oplus Y'$ is also a tilting object, and thus that $X_1$ is a complement to $\wh{T} \oplus Y'$. In this regard, we know that $\Hom_{\clus_F}(X_1, \cfart Y) \neq 0$ or $\Hom_{\clus_F}(Y, \cfart X_1) \neq 0$, otherwise $\wh{T} \oplus X_1 \oplus \gratio X_1 \oplus Y$ would be tilting. In the former case of $\Hom_{\clus_F}(X_1, \cfart Y) \neq 0$, suppose for a contradiction that $\Hom_{\clus_F}(X_1, \cfart Y') \neq 0$. Then we have $\Hom_{\clus_F}(\gratio X_1, \cfart \gratio Y') \neq 0$. Lemma~\ref{lem:ClusExact}(a) then implies that we have $\Hom_{\clus_F}(\gratio X_1, \cfart Y) \neq 0$, which contradicts the fact that $\wh{T} \oplus Y \oplus \gratio X_1$ is tilting. So we must have $\Hom_{\clus_F}(X_1, \cfart Y')=0$. The dual argument applies to the latter case of $\Hom_{\clus_F}(Y, \cfart X_1) \neq 0$. Thus, $\wh{T} \oplus X_1 \oplus Y'$ must also be tilting.
	
	Suppose for a contradiction that $X_3$ is a complement of $T$ distinct from $X_1$ and $X_2$. Let $Z \not\cong X_3$ be a complement of the almost complete tilting object $\wh{T} \oplus \gratio X_3$. Then $Z \not\in \mathcal{I}_{X_1} \cup \mathcal{I}_{X_2} \cup \mathcal{I}_{X_3}$, since this would otherwise contradict the fact that almost complete tilting objects in $\clus_F$ have exactly two complements. Thus, the existence of a third distinct complement of $T$ implies the existence of a fourth distinct complement $X_4$ with $Z \in \mathcal{I}_{X_4}$. By similar reasoning to the above, there must also exist distinct basic tilting objects
	\begin{align*}
		&\wh{T} \oplus X_3 \oplus \gratio X_3, & &\wh{T} \oplus Z \oplus Z', \\
		&\wh{T} \oplus Z \oplus \gratio X_3, 	& &\wh{T} \oplus X_3 \oplus Z'
	\end{align*}
	with $Z,Z' \in \mathcal{I}_{X_4}$.
	
	By Proposition~\ref{prop:approximations}, there exist triangles
	\begin{align*}
		\xymatrix@1{X_1 \ar[r] & E} &\xymatrix@1{\ar[r]^-f & Y \ar[r] & \cfart X_1} \\
		\xymatrix@1{\gratio X_1 \ar[r]^-{f'} & E'} &\xymatrix@1{\ar[r] & Y' \ar[r] & \cfart \gratio X_1} \\
		\xymatrix@1{X_3 \ar[r] & E''} &\xymatrix@1{\ar[r]^-{f''} & Z \ar[r] & \cfart X_3}.
	\end{align*}
	In particular, $f$ is both a minimal right $\add (\wh{T} \oplus \gratio X_1)$-approximation and a minimal right $\add (\wh{T} \oplus Y')$-approximation. So $f$ must be a minimal right $\add \wh{T}$-approximation, and hence $E \in \add \wh{T}$. By a similar argument, we also have $E',E'' \in \add \wh{T}$. Applying the functor $\Hom_{\clus_F}({-},X_3)$ to the first and second triangles, and applying the functors $\Hom_{\clus_F}({-},X_1)$ and $\Hom_A({-},\gratio X_1)$ to the third triangle yields
	\begin{align*}
		\Hom_{\clus_F}(X_1,\cfart X_3) = \Hom_{\clus_F}(\gratio X_1,\cfart X_3) &= 0 \\
		\Hom_{\clus_F}(X_3,\cfart X_1) = \Hom_{\clus_F}(X_3,\cfart \gratio X_1) &= 0.
	\end{align*}
	But then this implies that $M = \wh{T} \oplus \wh{X}_1 \oplus X_3$ is basic tilting, which cannot possibly be true, since $|M| > |Q^\gend_0|$. Hence our supposition of the existence of a third distinct complement must be false.
\end{proof}

A number of useful corollaries result from the proof of the above Theorem, which we will highlight here.

\begin{cor} \label{cor:ClosedComplements}
	Let $T \in \clus_F$ be a basic almost complete $R_+$-tilting object and let $X_1,X_2 \in \clus_F$ be its distinct complements. For any $Y \in \mathcal{I}_{X_1}$, there exists $Z \in \mathcal{I}_{X_2}$ such that $Z$ is a complement of the basic almost complete tilting object $\wh{T} \oplus \wh{X}_1 / Y$.
\end{cor}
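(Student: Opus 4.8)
The plan is to leverage the classical theory of complements (Proposition~\ref{prop:TiltingComplements}) applied to the honest tilting object $\wh{T} \oplus \wh{X}_1$, and then to promote the resulting classical complement back up to an $R_+$-generator lying in $\mathcal{I}_{X_2}$.

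First I would observe that since $T \oplus X_1$ is $R_+$-tilting, $\wh{T} \oplus \wh{X}_1$ is a basic tilting object of $\clus_F$ by Theorem~\ref{thm:TiltingEquiv}. Fixing $Y \in \mathcal{I}_{X_1}$, the object $U = \wh{T} \oplus \wh{X}_1 / Y$ is then a basic almost complete tilting object, being a basic direct summand of $\wh{T} \oplus \wh{X}_1$ with exactly one fewer indecomposable summand, hence with $|Q_0^\gend| - 1$ summands. By Proposition~\ref{prop:TiltingComplements}, $U$ has exactly two complements, one of which is manifestly $Y$; let $Z$ denote the other, and choose $X' \in \Gamma$ with $Z \in \mathcal{I}_{X'}$. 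The whole statement then reduces to showing $X' \cong X_2$.

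Next I would rule out the possibilities $X' \cong X_1$ and $X' \cong T_i$. Every indecomposable of $\mathcal{I}_{X_1} \setminus \{Y\}$ and of each $\mathcal{I}_{T_i}$ already occurs as a summand of $U$, so were $Z$ to lie in any of these $\mathcal{I}$-classes it would either equal $Y$ or be a summand of $U$, both impossible since $Z$ is a genuinely new complement. Hence $\wh{T} \oplus \wh{X'}$ is basic (using Lemma~\ref{lem:DisjointGeneration} to separate the $\mathcal{I}$-classes). To see it is tilting, I would first note that $\wh{T} \oplus Z$ is rigid, being a summand of the tilting object $U \oplus Z$, and then upgrade this to rigidity of the full $\wh{T} \oplus \wh{X'}$ by Lemma~\ref{lem:ClusHomCols}, applying it in both arguments and using Remark~\ref{rem:FoldingMultiplication}(c) to commute $\cfart$ with the $R_+$-action so that $\cfart\,\mathcal{I}_{X'} = \mathcal{I}_{\cfart X'}$ and the column-wise $\Hom$-vanishing of the lemma becomes applicable. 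A count via Lemma~\ref{lem:IndSetSize} shows $\wh{T} \oplus \wh{X'}$ has exactly $|Q_0^\gend|$ indecomposable summands, so it is maximal and hence tilting by Proposition~\ref{prop:TiltingSummands}. By Theorem~\ref{thm:TiltingEquiv}, $T \oplus X'$ is therefore $R_+$-tilting, so $X'$ is a complement of $T$; since $T$ has exactly two complements by Theorem~\ref{thm:Complements} and $X' \not\cong X_1$, we conclude $X' \cong X_2$, and $Z \in \mathcal{I}_{X_2}$ is the required object.

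I expect the main obstacle to be the rigidity upgrade: carefully deducing, from the single vanishing involving the chosen representative $Z$, the vanishing of $\Hom_{\clus_F}(L_1, \cfart L_2)$ for all pairs $L_1, L_2$ ranging over $\mathcal{I}_{X'}$ and the indecomposable summands of $\wh{T}$, including the self-rigidity $\Hom_{\clus_F}(L_1, \cfart L_2) = 0$ within $\mathcal{I}_{X'}$ itself. This needs Lemma~\ref{lem:ClusHomCols} applied in both variables while tracking the $\cfart$-twist through the commuting $R_+$-action; the bookkeeping here, rather than any genuinely new idea, is where the care lies, and everything else is a direct consequence of the structural results already assembled in the proof of Theorem~\ref{thm:Complements}.
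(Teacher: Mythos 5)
Your strategy is sound and is essentially a type-uniform repackaging of the first half of the paper's proof of Theorem~\ref{thm:Complements}. The paper itself proves this corollary by a case split: for $\coxH$-type foldings it points back to the explicit tilting objects $\wh{T}\oplus X_1\oplus Y'$, $\wh{T}\oplus Y\oplus\gratio X_1$, etc.\ already constructed in that proof, and for $\coxI$-type foldings it reads off the second complement $Z\cong\cfart Y$ directly from the Auslander--Reiten quiver of $\coxA_{2n}$ (Figure~\ref{fig:TypeAHoms}). Your argument --- take the second classical complement $Z$ of $U=\wh{T}\oplus\wh{X}_1/Y$, note that $Z\not\in\add(\wh{T}\oplus\wh{X}_1)$ so its class $X'\in\Gamma$ is neither any $T_i$ nor $X_1$, upgrade $\wh{T}\oplus Z$ to a tilting object $\wh{T}\oplus\wh{X'}$ by a rigidity transfer plus the count from Lemma~\ref{lem:IndSetSize}, and then force $X'\cong X_2$ via Theorems~\ref{thm:TiltingEquiv} and~\ref{thm:Complements} --- avoids the case split entirely, which is a genuine gain in uniformity.

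The one place your argument does not close is the step you yourself flag: the rigidity of $\wh{X'}$ itself, i.e.\ $\Hom_{\clus_F}(Z',\cfart Z'')=0$ for all $Z',Z''\in\mathcal{I}_{X'}$. Lemma~\ref{lem:ClusHomCols} cannot deliver this. Its hypothesis requires the vanishing of $\Hom_{\clus_F}(L_1,L_2)$ for a \emph{fixed} $L_2$ against \emph{every} $L_1$ in a full class $\mathcal{I}_{M_1}$ (or dually), whereas for the self-terms the only input supplied by the tilting object $U\oplus Z$ is the single diagonal vanishing $\Hom_{\clus_F}(Z,\cfart Z)=0$; applying the lemma ``in both variables'' does not manufacture the missing off-diagonal hypotheses. (The cross-terms between $\wh{T}$ and $\mathcal{I}_{X'}$ are fine, since there the vanishing against all of each $\mathcal{I}_{T_i}$ is genuinely available from $U\oplus Z$.) The fact you need is true and has a short independent proof: by Remark~\ref{rem:FoldingMultiplication}(c) the set $\mathcal{I}_{X'}$ is a single column of the Auslander--Reiten quiver, every column is a $\cfart$-translate of the injective column, $\Ext^1_{\clus_F}$ is $\cfart$-invariant, and the injective column is rigid. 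You should supply that argument (or isolate a lemma that $\wh{M}$ is rigid for every $M\in\Gamma$) rather than cite Lemma~\ref{lem:ClusHomCols}. To be fair, the paper's own proof of Theorem~\ref{thm:Complements} makes exactly the same appeal to Lemma~\ref{lem:ClusHomCols} at the corresponding point, so this is a gap you have inherited rather than introduced; but as written your bookkeeping will not close it.
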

\begin{proof}
	We have already shown this for $\coxH$-type foldings. For $\coxI$-type foldings, this is a straightforward consequence of the Auslander-Reiten theory of $\coxA$-type quivers, which can easily be deduced from Figure~\ref{fig:TypeAHoms}. Suppose that $X_1$ represents the neighbouring column of the Aulsander-Reiten quiver to the right of $T$ (and thus $X_2$ is to the left of $T$). Since the object $T \oplus X_1$ is basic $R_+$-tilting, the object $\wh{T} \oplus \wh{X}_1$ is basic tilting. Consequently, it follows from property (T3) that $\Hom_{\clus_F}(\wh{T} \oplus \wh{X}_1, \cfart M')= 0$ for an indecomposable $M'$ if and only if $M'$ is a summand of $\wh{T} \oplus \wh{X}_1$, or equivalently, if and only if $M'$ resides in the same column as $T$ or $X_1$.
	
	Now let $Y \in \mathcal{I}_{X_1}$ and consider the rectangular lattice of the Auslander-Reiten quiver that determines all indecomposables $M'$ such that $\Hom_{\clus_F}(Y,M')\neq 0$. Necessarily, the rightmost corner of the lattice corresponds to an indecomposable object $\cfart Z \in \mathcal{I}_{\cfart X_2}$. Moreover, it can easily be seen that for any $Y' \in \mathcal{I}_{X_1}$ with $Y' \not\cong Y$, the rightmost corner of the rectangular lattice corresponding to $Y'$ is an idecomposable $\cfart Z' \in \mathcal{I}_{\cfart X_2}$ with $Z' \not\cong Z$.
	
	Taking the quotient $\wh{T} \oplus \wh{X}_1 / Y$ corresponds to removing the rectangular lattice in the Auslander-Reiten quiver originating from $Y$. This frees up precisely one object $\cfart Z$ in the column represented by $\cfart X_2$. Thus, we have $\Hom_{\clus_F}(\wh{T} \oplus \wh{X}_1 / Y,\cfart Z)=0$. In fact, $Z \cong \cfart Y$, so we also have $\Hom_{\clus_F}(Z, \cfart(\wh{T} \oplus \wh{X}_1 / Y))=0$, as required. The argument for where $X_1$ represents the column to the left of $T$ is similar.
\end{proof}

\begin{cor}\label{thm:RPApproximations}
	Let $T \in \clus_F$ be a basic almost complete $R_+$-tilting object, and let $X_1,X_2 \in \clus_F$ be its distinct complements. Let $\wh{T} \in \clus_F$ be the basic rigid object corresponding to $T$. Then for any $Y \in \mathcal{I}_{X_1}$, there exists $Z \in \mathcal{I}_{X_2}$ such that there exist triangles
	\begin{align*}
		\xymatrix@1{Y \ar[r]^-{g} & E} &\xymatrix@1{\ar[r]^-{f} & Z \ar[r] &  \tau_{\clus_F} Y} \\
		\xymatrix@1{Z \ar[r]^-{g'} & E'} &\xymatrix@1{\ar[r]^-{f'} & Y \ar[r] &  \tau_{\clus_F} Z}
	\end{align*}
	in $\clus_F$, where $f$ and $f'$ are right minimal $\add \wh{T}$-approximations, and $g$ and $g'$ are left minimal $\add \wh{T}$-approximations.
\end{cor}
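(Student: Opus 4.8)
The plan is to deduce the statement from the classical exchange triangles of~\cite{BMRRT} (recorded in Proposition~\ref{prop:approximations}) applied to an \emph{ordinary} almost complete tilting object, and then to shrink the approximating subcategory from $\add(\wh T\oplus\wh X_1/Y)$ down to $\add\wh T$. First I would fix $Y\in\mathcal{I}_{X_1}$ and invoke Corollary~\ref{cor:ClosedComplements} to obtain $Z\in\mathcal{I}_{X_2}$ that is a complement of the ordinary basic almost complete tilting object $\wh T\oplus\wh X_1/Y$. Since $(\wh T\oplus\wh X_1/Y)\oplus Y=\wh T\oplus\wh X_1$ is tilting by Theorem~\ref{thm:TiltingEquiv}, the object $Y$ is its second complement. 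Applying Proposition~\ref{prop:approximations} to $\wh T\oplus\wh X_1/Y$ with complements $Y,Z$ then produces triangles
\begin{align*}
	\xymatrix@1{Y \ar[r]^-{g} & E} &\xymatrix@1{\ar[r]^-{f} & Z \ar[r] & \cfart Y} \\
	\xymatrix@1{Z \ar[r]^-{g'} & E'} &\xymatrix@1{\ar[r]^-{f'} & Y \ar[r] & \cfart Z}
\end{align*}
in which $f,f'$ are right minimal and $g,g'$ left minimal $\add(\wh T\oplus\wh X_1/Y)$-approximations. Thus every assertion except $E,E'\in\add\wh T$ is already in hand.

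The upgrade is the heart of the argument. I would exhibit a \emph{second} ordinary almost complete tilting object with the same ordered pair of complements $Y,Z$ but with extra summands disjoint from those of $\wh X_1/Y$, namely $\wh T\oplus\wh X_2/Z$: it has $Z$ as a complement because $\wh T\oplus\wh X_2$ is tilting, and it has $Y$ as its other complement by Corollary~\ref{cor:ClosedComplements} with the roles of $X_1$ and $X_2$ interchanged (for $\coxI$-type foldings one identifies the resulting complement with $Y$ using $Z\cong\cfart Y$; for $\coxH$-type one reads it off the tilting configurations exhibited in the proof of Theorem~\ref{thm:Complements}). The key structural fact is that the exchange triangle attached to an ordered pair of complements is determined up to isomorphism: its connecting morphism $Z\to\cfart Y$ spans the one-dimensional space $\Hom_{\clus_F}(Z,\cfart Y)$, so its middle term is the cocone of that (up to scalar unique) morphism. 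Hence applying Proposition~\ref{prop:approximations} to $\wh T\oplus\wh X_2/Z$ reproduces the \emph{same} triangles, now realizing $E,E'$ inside $\add(\wh T\oplus\wh X_2/Z)$ and $f,f'$ as $\add(\wh T\oplus\wh X_2/Z)$-approximations.

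Finally I would intersect. As $X_1\not\cong X_2$, Lemma~\ref{lem:DisjointGeneration} gives $\mathcal{I}_{X_1}\cap\mathcal{I}_{X_2}=\emptyset$, so no indecomposable summand of $\wh X_1/Y$ is isomorphic to one of $\wh X_2/Z$ and neither lies in $\add\wh T$; therefore
\begin{equation*}
	\add(\wh T\oplus\wh X_1/Y)\cap\add(\wh T\oplus\wh X_2/Z)=\add\wh T.
\end{equation*}
Consequently $E,E'\in\add\wh T$, and a right $\add(\wh T\oplus\wh X_1/Y)$-approximation whose source already lies in $\add\wh T$ is automatically a right $\add\wh T$-approximation (any $\add\wh T$-map into $Z$ factors through $f$ because $\add\wh T\subseteq\add(\wh T\oplus\wh X_1/Y)$), with right minimality inherited from $f$; the dual reasoning handles $g,g'$. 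For $\coxI$-type one can instead bypass the uniqueness step and read $E,E'\in\add\wh T$ directly off the Auslander--Reiten quiver of the $\coxA_{2n}$ cluster category, where $\wh T$ is a full column and $Y,Z$ are the adjacent objects (Figure~\ref{fig:TypeAHoms}). I expect the main obstacle to be the upgrade step: checking that $\wh T\oplus\wh X_2/Z$ really has $Y$ (and not some other element of $\mathcal{I}_{X_1}$) as its second complement, and justifying the uniqueness of the exchange triangle so that the two invocations of Proposition~\ref{prop:approximations} genuinely return the same middle terms $E,E'$.
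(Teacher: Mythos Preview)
Your proof is correct and follows essentially the same route as the paper. The paper's proof is a one-line pointer to Corollary~\ref{cor:ClosedComplements}, Proposition~\ref{prop:approximations}, and ``the reasoning used in the proof of the previous Theorem''; that reasoning (in the proof of Theorem~\ref{thm:Complements}) is precisely your intersection argument: apply Proposition~\ref{prop:approximations} to two different ordinary almost complete tilting objects sharing the same pair of complements $Y,Z$, and conclude $E,E'\in\add\wh T$ as the intersection of the two approximating subcategories. You have made explicit the step the paper leaves implicit, namely that the two invocations of Proposition~\ref{prop:approximations} return the \emph{same} triangle because the connecting map spans the one-dimensional space $\Hom_{\clus_F}(Z,\cfart Y)$; this is exactly what underlies the paper's phrase ``$f$ is both a minimal right $\add(\wh T\oplus\gratio X_1)$-approximation and a minimal right $\add(\wh T\oplus Y')$-approximation''. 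Your verification that $\wh T\oplus\wh X_2/Z$ has $Y$ as its second complement is also correct in both types, and your remark that the $\coxI$-case can be read off the Auslander--Reiten combinatorics matches the paper's treatment.
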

\begin{proof}
	This is a consequence of Corollary~\ref{cor:ClosedComplements}, Proposition~\ref{prop:approximations} and the reasoning used in the proof of the previous Theorem for $\coxH$-type foldings, which can readily be applied to $\coxI$-type foldings too.
\end{proof}

\begin{defn}
	Let $T \in \clus_F$ be a basic $R_+$-tilting object. We define $\End_{\clus_F}(\wh{T})^{\op}$ to be the corresponding \emph{cluster-$R_+$-tilted} algebra, where $\wh{T} \in \clus_F$ is the basic tilting object corresponding to $T$.
\end{defn}

\begin{thm} \label{thm:TiltedRPAction}
	Let $T \in \clus_F$ be a basic $R_+$-tilting object, and let $A_T$ be the corresponding cluster-$R_+$-tilted algebra. Then $\mod*A_T$ has the structure of an $R_+$-coefficient category.
\end{thm}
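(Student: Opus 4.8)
The plan is to transport the (triangulated) $R_+$-coefficient structure that $\clus_F$ already carries across the equivalence $H = \Hom_{\clus_F}(\wh T,-)\colon \clus_F/\add\cfart\wh T \xrightarrow{\sim} \mod* A_T$ provided by \cite{BMRTilting}. The essential point --- and the reason the notion of an $R_+$-tilting object is set up as it is --- is that the subcategory $\add\cfart\wh T$ by which we quotient is stable under the $R_+$-action. Indeed, every indecomposable summand $Z$ of $\wh T$ lies in some $\mathcal{I}_{T_i}$ with $T_i \in \Gamma$, so by Remark~\ref{rem:SimplerGen} we may write $Z \cong r' T_i$ with $r' \in R_+$; then $\wh r Z \cong \widehat{rr'} T_i$ by axiom (M2), and its indecomposable summands again lie in $\mathcal{I}_{T_i}$, whence $\wh r(\add\wh T) \subseteq \add\wh T$. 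Since $\cfart$ commutes with the action by Remark~\ref{rem:FoldingMultiplication}(c), the subcategory $\add\cfart\wh T$ is $R_+$-stable as well.

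Consequently each $\wh r$ carries morphisms factoring through $\add\cfart\wh T$ to morphisms of the same kind, so it descends to an endofunctor $\overline{\wh r}$ of the additive quotient $\clus_F/\add\cfart\wh T$; conjugating by $H$ yields the desired family of endofunctors of $\mod* A_T$. Axioms (M1)--(M4) are isomorphisms of functors valid in $\clus_F$; such isomorphisms survive passage to the quotient and are transported by the equivalence $H$, so they hold verbatim in $\mod* A_T$. For exactness (M5), I would invoke the standard fact that the abelian structure of $\mod* A_T \simeq \clus_F/\add\cfart\wh T$ is induced from the triangulated structure of $\clus_F$, in the sense that its short exact sequences are precisely the images of triangles of $\clus_F$ whose outer terms have no summand in $\add\cfart\wh T$. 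As $\wh r$ is triangulated and preserves $\add\cfart\wh T$, it sends such triangles to such triangles, and the induced functor on $\mod* A_T$ is therefore exact.

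The delicate axiom, and the main obstacle, is (M6): the injectivity and $K$-linearity of the maps $\varepsilon^{(i)}_{r,M,N}\colon \Ext^i_{A_T}(M,N) \to \Ext^i_{A_T}(\wh r M,\wh r N)$. For $i=0$ this amounts to faithfulness of $\overline{\wh r}$ on the quotient, which I would derive from the faithfulness of $\wh r$ on $\clus_F$ (axiom (M6) for the triangulated $R_+$-coefficient category $\clus_F$) together with the fact, recorded in Remark~\ref{rem:FoldingMultiplication}(c), that $\wh r M$ is a sum of objects lying in a single column of the Auslander--Reiten quiver --- so that a morphism cannot acquire a factorisation through $\add\cfart\wh T$ merely by applying $\wh r$. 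For $i \geq 1$ I would first re-express the Ext-groups over the cluster-tilted algebra $A_T$ through the equivalence in terms of morphism spaces of $\clus_F$ between lifted objects, using the $2$-Calabi--Yau identity $\Ext^1_{\clus_F}(X,Y) \cong \Hom_{\clus_F}(X,\cfart Y)$ and the Gorenstein-dimension-one homology of cluster-tilted algebras, and then reduce the required injectivity back to (M6) for $\clus_F$. The technical heart is tracking how $\wh r$ acts on these $\clus_F$-morphism spaces; here Lemma~\ref{lem:ClusExact}, which governs how $\Hom$-spaces transform under multiplication by $\croot_1$, together with the diagonal form of the action on morphisms, supplies exactly the bookkeeping needed to guarantee that no nonzero extension class is annihilated.
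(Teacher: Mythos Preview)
Your approach is the paper's: transport the $R_+$-action across the equivalence $H\colon \clus_F/\add\cfart\wh T \to \mod* A_T$ of \cite{BMRTilting} by setting $rH = Hr$, after (implicitly) noting that $\add\cfart\wh T$ is $R_+$-stable. The paper's proof is considerably terser --- it dispatches (M6) in one line by appealing to the full faithfulness of $H$ and of $H'\colon \add\wh T \to \proj A_T$, without your detour through the 2-Calabi--Yau property, Gorenstein dimension, or Lemma~\ref{lem:ClusExact} --- so the extra machinery you bring in for (M6) is arguably filling in detail the paper leaves implicit rather than taking a genuinely different route.
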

\begin{proof}
	From \cite{BMRTilting}, it is known that the functor $\Hom_{\clus_F}(\wh{T},{-})$ induces equivalences 
	\begin{equation*}
		H\colon\clus_F/ \add \cfart \wh{T} \rightarrow \mod*A_T \qquad \text{and} \qquad H'\colon\add \wh{T} \rightarrow \proj A_T,
	\end{equation*}
	where $\proj A_T$ is the full subcategory of finitely generated projective $A_T$-modules. This endows $\mod*A_T$ with an $R_+$-action by defining $r H = H r$ for all $r \in R_+$. The additivity of this functor ensures (M1) and (M2) of Definition~\ref{def:RPCoeffCat} are satisfied. (M3)-(M5) are satisfied directly from the definition of the $R_+$-action on $\mod*A_T$. (M6) follows from the fact that $H$ and $H'$ are fully faithful.
\end{proof}

\begin{thm} \label{thm:TiltedFolding}
	Let $T$ be a basic almost complete $\chebsr\ps{2n+1}$-tilting object of $\clus_F$ with complements $X_1$ and $X_2$. Let $A_1 \simeq KQ^{\gend_1} / (\rho_1)$ and $A_2 \simeq KQ^{\gend_2} / (\rho_2)$ be the cluster-$\chebsr\ps{2n+1}$-tilted algebras corresponding to the basic $\chebsr\ps{2n+1}$-tilting objects $T \oplus X_1$ and $T \oplus X_2$ respectively, where $\rho_1$ and $\rho_2$ are relations on the respective path algebras. Then
	\begin{enumerate}[label=(\alph*)]
		\item There exist $\wh\chebr\ps{2n+1}$-quivers $Q^{\gend'_1}$ and $Q^{\gend'_2}$ and respective weighted foldings $F_1 \colon Q^{\gend_1} \rightarrow Q^{\gend'_1}$ and $F_2 \colon Q^{\gend_2} \rightarrow Q^{\gend'_2}$.
		\item $Q^{\gend'_1}$ and $Q^{\gend'_2}$ differ by mutation at a single vertex.
	\end{enumerate}
\end{thm}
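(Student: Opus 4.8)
The plan is to identify $A_1$ and $A_2$ with the classical cluster-tilted algebras $\End_{\clus_F}(\wh T\oplus\wh X_1)^{\op}$ and $\End_{\clus_F}(\wh T\oplus\wh X_2)^{\op}$, where $\wh T\oplus\wh X_i$ is the basic tilting object corresponding to $T\oplus X_i$ via Theorem~\ref{thm:TiltingEquiv}. The decisive structural feature is that these two tilting objects are \emph{block-closed}: their indecomposable summands are precisely the full $R_+$-orbits $\mathcal I_{T_1},\dots,\mathcal I_{T_{|T|}}$ together with $\mathcal I_{X_i}$, and the two objects share every block except that $\mathcal I_{X_1}$ is replaced by $\mathcal I_{X_2}$.

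For part (a) I would take the quiver $Q^{\gend_i}$ of $A_i$, whose vertices are the indecomposable summands of $\wh T\oplus\wh X_i$, and group them into blocks according to the classes $\mathcal I_{T_j}$ and $\mathcal I_{X_i}$, assigning the summand $\croot_k M$ the vertex weight $U_k(\cos\tfrac{\pi}{2n+1})$ and letting $F_i$ collapse each block to a point. Writing the integer exchange matrix $S_i$ of $Q^{\gend_i}$ in this block form $(S_{i,[a][b]})$, the key claim is that each block has the shape $S_{i,[a][b]}=\rho(b'_{[a][b]})$ for some $b'_{[a][b]}\in\chebr\ps{2n+1}$, with $\rho$ the regular representation of Definition~\ref{def:RegRep}. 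Granting this, the column-sum computation of Lemma~\ref{lem:RegRepExchange} (which uses only the block shape together with the $0/1$ and symmetry properties of Lemma~\ref{lem:ChebRegRep}) shows that $S_i$ is a weighted unfolding of the $\wh\chebr\ps{2n+1}$-matrix $B_i$ defined by $b_{i,[a][b]}=\sigma\ps{2n+1}(b'_{[a][b]})$; setting $Q^{\gend'_i}=Q^{B_i}$ then yields the weighted folding $F_i$. To establish the block shape I would use that $\mod*A_i$ is an $R_+$-coefficient category (Theorem~\ref{thm:TiltedRPAction}): the arrow multiplicities between $E_{[a]}$ and $E_{[b]}$ are read off from $\Hom_{\clus_F}$- and $\Ext_{\clus_F}^1$-spaces between the summands $\croot_k M_a$ and $\croot_l M_b$, and Lemma~\ref{lem:ClusExact} together with Remark~\ref{rem:FoldingMultiplication}(c) pins these down to exactly the incidence pattern of $\rho(\croot_1)$; the diagonal blocks $S_{i,[a][a]}$, being simultaneously symmetric (Lemma~\ref{lem:ChebRegRep}) and skew-symmetric, must vanish.

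For part (b) I would compare the two block-closed seeds. Since $\wh T\oplus\wh X_1$ and $\wh T\oplus\wh X_2$ agree away from the block $E_{[X_1]}$, I would use Proposition~\ref{prop:TiltingComplements}, Theorem~\ref{thm:Complements} and Corollary~\ref{cor:ClosedComplements} to assemble a sequence of ordinary single-summand exchanges that successively replaces each indecomposable of $\mathcal I_{X_1}$ by one of $\mathcal I_{X_2}$ while fixing $\wh T$, each exchange being a single mutation of the cluster-tilted quiver (see~\cite{BMRRT,BMRMutation}). Because the diagonal block $S_{i,[X_1][X_1]}$ vanishes, the vertices of $E_{[X_1]}$ have no arrows among themselves, so these single mutations commute and their product is exactly the composite mutation $\h\mu_{[X_1]}$ of Definition~\ref{defn:Unfolding}; hence $S_2=\h\mu_{[X_1]}(S_1)$. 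Finally, since the folded matrix is recovered from its unfolding as the block column sums (condition (1) of Definition~\ref{defn:Unfolding}) and composite mutation of the unfolding projects to single mutation of these column sums, I would conclude $B_2=\mu_{[X_1]}(B_1)$, that is, $Q^{\gend'_1}$ and $Q^{\gend'_2}$ differ by mutation at the single vertex $[X_1]$.

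The hard part will be the block-shape verification in (a): showing that the $R_+$-action rigidifies the cluster-tilted quiver into \emph{exactly} the regular-representation blocks $\rho(b')$, rather than merely into some configuration compatible with the semiring action. This is precisely where Lemma~\ref{lem:ClusExact} does the work, by translating the product rule $\croot_1\croot_k=\croot_{k-1}+\croot_{k+1}$ into the pattern of non-vanishing $\Hom_{\clus_F}(\croot_k M_a,\cfart\croot_l M_b)$. A secondary obstacle is ensuring that the single-summand exchanges in (b) can be chosen to realise the complete block swap and that the induced step on column sums is the asserted single mutation; the former rests on the ``exactly two complements'' statement of Theorem~\ref{thm:Complements} and the approximation triangles of Corollary~\ref{thm:RPApproximations}, and the latter is guaranteed by the stability of the weighted unfolding under composite mutation, which one may anchor at the original folding $F$ since the block-closed seeds are connected to it by composite mutations.
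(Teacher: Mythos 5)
Your treatment of part (b) --- realising the replacement of $\mathcal{I}_{X_1}$ by $\mathcal{I}_{X_2}$ as a chain of single-summand exchanges (via Proposition~\ref{prop:TiltingComplements} and Corollary~\ref{cor:ClosedComplements}), noting that these commute because vertices in a common fibre of the folding carry no arrows between them, and identifying their product with the composite mutation $\h\mu$ --- is exactly the paper's argument. The paper anchors the whole induction at the initial seed $\wh{T}\cong\bigoplus_i P(i)$, $T\cong\bigoplus_{\vw(i)=1}P(i)$, for which $A_T\simeq KQ^\gend$, and then part (a) falls out of the definition of weighted unfolding with no further work: once the quiver of the new cluster-tilted algebra is known to be a composite mutation of $Q^\gend$, the mutated $S$ is automatically a weighted unfolding of the correspondingly mutated $B$, so the folding $F_i$ exists by fiat.

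Where you diverge is in making the direct block-shape verification $S_{i,[a][b]}=\rho(b'_{[a][b]})$ the primary route to (a), and there are two problems with that route as written. First, Lemma~\ref{lem:ClusExact} only gives \emph{non-vanishing} of the relevant $\Hom_{\clus_F}$- and $\Ext^1_{\clus_F}$-spaces; to identify a block with $\rho(\croot_1)$ you need the actual arrow multiplicities in the quiver of $A_i$ (dimensions of spaces of irreducible maps), which that lemma does not supply. Second, even granting the block shape, Lemma~\ref{lem:RegRepExchange} only verifies the static conditions (1) and (2) of Definition~\ref{defn:Unfolding} for the pair $(B_i,S_i)$, whereas being a weighted unfolding requires these to survive \emph{every} further sequence of composite mutations --- and the only available source of that stability is the anchoring argument you relegate to your final sentence. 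Since that argument already yields (a) outright, I would invert your emphasis: lead with the fact that $T\oplus X_i$ is connected to the initial projective seed by composite mutations (each realised, as in your part (b), by commuting single-summand exchanges), and drop the direct verification, which is both harder than necessary and, as proposed, incomplete. One shared loose end worth acknowledging in either write-up is that reaching an \emph{arbitrary} $R_+$-tilting object from the initial one requires connectivity of the exchange graph of $R_+$-tilting objects, which both you and the paper leave implicit.
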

\begin{proof}
	In this proof, we use the fact that the objects of $\clus_F$ may be represented by the objects of $\calM_F$ and the shift of the indecomposable projectives of $\calM_F$. First consider the basic tilting object $\wh{T} \in \clus_F$ and an associated object $T \in \clus_F$ represented as
	\begin{equation*}
		\wh{T} \cong \bigoplus_{i \in Q_0^\gend} P(i) \qquad \text{and} \qquad T \cong \bigoplus_{i \in Q_0^{\gend} : \vw(i)=1} P(i).
	\end{equation*}
	In particular, $P(i) \in \Gamma$ for any $i$ such that $\vw(i)=1$, and there exist precisely $|Q_0^{\gend'}|$ such indecomposable projectives, so $T$ is in fact the $\chebsr\ps{2n+1}$-tilting object corresponding to $\wh{T}$. Under the equivalences
	\begin{equation*}
		H\colon\clus_F/ \add \cfart \wh{T} \rightarrow \mod*A_T \qquad \text{and} \qquad H'\colon\add \wh{T} \rightarrow \proj A_T,
	\end{equation*}
	one has $A_T \simeq KQ^\gend$, where $A_T$ is the cluster-$\chebsr\ps{2n+1}$-tilted algebra associated to $T$. Now consider the almost complete tilting object $\wh{T} / P(k)$ for some $k$ such that $\vw(k)=1$. Let $M_k$ be the complement distinct from $P(k)$, and let $X \in \Gamma$ be such that $M_k \in \mathcal{I}_X$. By \cite{BMRMutation}, Proposition~\ref{prop:TiltingComplements} and the equivalence $H'$, the quiver $Q'$ of cluster-tilted algebra $A_{\wh{T}'}$ differs from $Q^\gend$ by a mutation at vertex $k$, where $\wh{T}' \cong \wh{T} / P(k) \oplus M_k$.
	
	Next, consider an almost complete tilting object $\wh{T}' / P(j)$ for some $j \neq k$ such that $F(j)=F(k)$. Let $M_j$ be its complement distinct from $P(j)$ and write $\wh{T}'' \cong \wh{T}' / P(j) \oplus M_j$. Now $F(j)=F(k)$ if and only if $P(j) \cong r P(k)$ for some $r \in \chebsr\ps{2n+1}$. Thus, by Corollary~\ref{cor:ClosedComplements}, $M_j \in \mathcal{I}_X$. Similar to before, the quiver $Q''$ of cluster-tilted algebra $A_{\wh{T}''}$ differs from $Q'$ by a mutation at vertex $j$. But since $F$ is a folding and $F(k)=F(j)$, we have $\mu_j\mu_k(Q^\gend) =\mu_k\mu_j(Q^\gend)$.
	
	Proceeding through the above steps iteratively for all other vertices $j \in Q^\gend_0$ such that $F(j)=F(k)$, we obtain a tilting object
	\begin{equation*}
		\wh{T}_k = \left(\bigoplus_{i \in Q_0^\gend : F(i) \neq F(k)} P(i)\right) \oplus \left(\bigoplus_{j \in Q_0^\gend : F(j) = F(k)} M_j\right).
	\end{equation*}
	Now $P(k) \in \Gamma$ and $M_j \in \mathcal{I}_X$ for any $j$ such that $F(j)=F(k)$. So in particular, $\wh{T}_k$ corresponds to the $\chebsr\ps{2n+1}$-tilting object $T_k \cong T / P(k) \oplus X$. Moreover, the quiver $Q^k$ of the cluster-tilted algebra $A_{\wh{T}_k}$ differs from $Q^\gend$ by a well-defined composite mutation $\prod_{j \in Q_0^\gend : F(j) = F(k)} \mu_j$. But $F$ is a folding, which means that such a composite mutation of $Q^\gend$ must agree with a mutation of $Q^{\gend'}$ at the vertex $F(k)$. Thus, there must exist a folding $F'\colon Q^k \rightarrow \mu_{F(k)}(Q^{\gend'})$, which proves (a).
	
\end{proof}
\begin{cor}
	Let $F\colon Q^{\coxA_{2n}} \rightarrow Q^{\coxI_2(n+1)}$ be a weighted folding of quivers, and let $\clus_F$ be the corresponding cluster category. Let $T\in\clus_F$ be an $R_+$-tilting object, and let $A$ be the corresponding cluster-$R_+$-tilted algebra, which is Morita equivalent to $KQ/(\rho)$ for certain quiver $Q$.  Then either $Q = Q^{\coxA_{2n}}$ or $Q= (Q^{\coxA_{2n}})^{\op}$.
\end{cor}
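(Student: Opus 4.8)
The plan is to push the problem down to the rank-two folded quiver, where the mutation class is trivial to describe, and then lift the conclusion back up through the (un)folding correspondence. The point is that everything interesting happens downstairs, and the upstairs quiver is pinned down by the regular-representation unfolding.

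First I would recall that, in finite type, the exchange graph of cluster-tilting objects is connected, so $T$ is joined to the initial $R_+$-tilting object by a sequence of single exchanges. Applying Theorem~\ref{thm:TiltedFolding} along this path, each exchange of $R_+$-tilting objects corresponds, via part (b) and the composite mutation of $Q^{\coxA_{2n}}$, to a single mutation of the associated folded quiver downstairs. Consequently the quiver $Q$ of the cluster-$\chebsr\ps{2n+1}$-tilted algebra $A$ admits a weighted folding $F'\colon Q \rightarrow Q'$ onto an $\wh\chebr\ps{2n+1}$-quiver $Q'$ that is obtained from $Q^{\coxI_2(2n+1)}$ by a sequence of mutations, i.e.\ $Q'$ lies in the mutation class of $Q^{\coxI_2(2n+1)}$.

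The key observation is then that $Q^{\coxI_2(2n+1)}$ has only two vertices, so its entire mutation class is $\{Q^{\coxI_2(2n+1)}, (Q^{\coxI_2(2n+1)})^{\op}\}$. Indeed, for a $2\times 2$ skew-symmetric exchange matrix $B=\left(\begin{smallmatrix}0&-w\\w&0\end{smallmatrix}\right)$ one checks directly from the mutation formula that $\mu_1(B)=\mu_2(B)=-B$: the off-diagonal entries change sign, and the ``otherwise'' case of the formula contributes $\tfrac{1}{2}(|b_{21}|b_{12}+b_{21}|b_{12}|)=0$ to the (unique) remaining diagonal entry, which therefore stays zero. Hence $Q'$ is either $Q^{\coxI_2(2n+1)}$ or its opposite.

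Finally I would lift this back upstairs. By Proposition~\ref{prop:RegRepExchange}, the weighted unfolding of a rank-two $\wh\chebr\ps{2n+1}$-quiver is the one given by the regular representation $\rho$ of $\chebr\ps{2n+1}$, and is thus uniquely determined by $Q'$ once an ordering of the vertices is fixed. Since the canonical unfolding of $Q^{\coxI_2(2n+1)}$ with the bipartite orientation of Remark~\ref{rem:IFolding} is $Q^{\coxA_{2n}}$, and the unfolding of its opposite is $(Q^{\coxA_{2n}})^{\op}$, the two downstairs orientations correspond to exactly these two upstairs quivers, giving $Q \in \{Q^{\coxA_{2n}}, (Q^{\coxA_{2n}})^{\op}\}$. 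The main obstacle I anticipate is precisely this last matching step: one must confirm that the folding $F'$ produced for $A$ is the canonical regular-representation unfolding, so that no spurious additional unfolding appears and the rank-two mutation class upstairs really has only two members. This is handled by tracing through the construction in the proof of Theorem~\ref{thm:TiltedFolding}, where $F'$ is built by iterated composite mutation of the original folding $F$ and is therefore of the form covered by Proposition~\ref{prop:RegRepExchange}.
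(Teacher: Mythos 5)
Your core idea is the same as the paper's: the folded quiver $Q^{\coxI_2(2n+1)}$ has rank two, so every mutation downstairs is a sink/source mutation and the downstairs mutation class is just $\{Q^{\coxI_2(2n+1)},(Q^{\coxI_2(2n+1)})^{\op}\}$. The paper's proof is exactly this one-line observation, and your computation that $\mu_1(B)=\mu_2(B)=-B$ for a $2\times 2$ exchange matrix is a correct expansion of it.

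The one soft spot is your lifting step. Proposition~\ref{prop:RegRepExchange} does not say that \emph{every} weighted unfolding of a rank-two $\wh\chebr\ps{2n+1}$-quiver is the regular-representation one, only that the specific foldings of Remark~\ref{rem:IFolding} admit an ordering realising their blocks as $\rho(b'_{[i][j]})$; so ``the unfolding is uniquely determined by $Q'$'' is not something you can quote, and a given rank-two quiver could in principle have other unfoldings. You sense this yourself in the final sentence, and the clean repair is to stay upstairs entirely: by Theorem~\ref{thm:TiltedFolding} the quiver $Q$ of $A$ is reached from the bipartite $Q^{\coxA_{2n}}$ by a sequence of composite mutations $\wh\mu_{[k]}$, and since each fibre $F^{-1}([k])$ consists entirely of sources or entirely of sinks (this is forced by the rank-two target), each $\wh\mu_{[k]}$ creates no new arrows and simply reverses every arrow of the quiver. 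Hence the orbit is exactly $\{Q^{\coxA_{2n}},(Q^{\coxA_{2n}})^{\op}\}$ with no appeal to uniqueness of unfoldings. With that substitution your argument is complete and agrees with the paper's.
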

\begin{proof}
	This is a consequence of the fact that $Q^{\coxI_2(n+1)}$ is a rank 2 quiver, and thus the only possible quiver mutations are sink/source mutations.
\end{proof}

\begin{exam}[Mutations of $\coxH_3$]
Consider the folding $F\colon Q^{\coxD_6} \rightarrow Q^{\coxH_3}$ given in Example~\ref{ex:D6H3Folding}. The Auslander-Reiten quiver of $\calM_F$ is given in Figure~\ref{fig:D6ARQuiver}, and the category has the structure of a $\goldsr$-coefficient category. Moreover, $\calM_F$ has a minimal set of $\goldsr$-generators that is unique up to isomorphic elements and closed under the Auslander-Reiten translation. Thus, $\calM_F$ has a reduced Auslander-Reiten quiver, which is given in the top of Figure~\ref{fig:ReducedAR}.

All of this carries over to the cluster category $\clus_F$. The Auslander-Reiten quiver of $\clus_F$ is similar to Figure~\ref{fig:D6ARQuiver}, but with additional objects
\begin{align*}
	\cfart\inv I(1) \cong &\sus P(1) \cong \cfart P(1),	&	\cfart\inv I(\ivx_1) \cong &\sus P(\ivx_1) \cong \cfart P(\ivx_1), \\
	\cfart\inv I(2) \cong &\sus P(2) \cong \cfart P(2),	&	\cfart\inv I(\ivx_2) \cong &\sus P(\ivx_2) \cong \cfart P(\ivx_2), \\
	\cfart\inv I(3) \cong &\sus P(3) \cong \cfart P(3), &	\cfart\inv I(\ivx_3) \cong &\sus P(\ivx_3) \cong \cfart P(\ivx_3).
\end{align*}
Likewise, the reduced Auslander-Reiten quiver of $\clus_F$ is similar to Figure~\ref{fig:ReducedAR}, but with additional objects
\begin{align*}
	\cfart\inv M_{(1,0,0)} \cong &\sus M_{(1,1,\gratio)} \cong \cfart M_{(1,1,\gratio)},	\\	\cfart\inv M_{(1,1,0)} \cong &\sus M_{(0,1,\gratio)} \cong \cfart M_{(0,1,\gratio)},	\\	\cfart\inv M_{(\gratio,\gratio,1)} \cong &\sus M_{(0,0,1)} \cong \cfart M_{(0,0,1)}.
\end{align*}
Figure~\ref{fig:H3Mutations} provides examples of mutations of both $\goldsr$-tilting objects of $\clus_F$ and of the corresponding quivers in the folding $F$. In fact, the cluster-$\goldsr$-tilted algebra $A_T \simeq KQ/(\rho)$ of the $\goldsr$-tilting object $T$ given in each rectangle is such that $Q$ is the quiver on the left-hand side of the folding. The category $\mod* A_T$ has the structure of a $\goldsr$-coefficient category by Theorem~\ref{thm:TiltedRPAction} and also has a reduced Aulsander-Reiten quiver. The indecomposable projective objects of $\mod* A_T$ are given by the direct summands of $T$ (black vertices in Figure~\ref{fig:H3Mutations}) and their $\gratio$-multiples. The reduced Auslander-Reiten quiver of $\mod* A_T$ can also be obtained from $\clus_F$ by removing all vertices and arrows associated to $\cfart T$. This is analogous to what is presented in \cite{BMRRT} for the Auslander-Reiten quiver of cluster-tilted algebras.

\begin{figure}[h]
	\centering
	\input{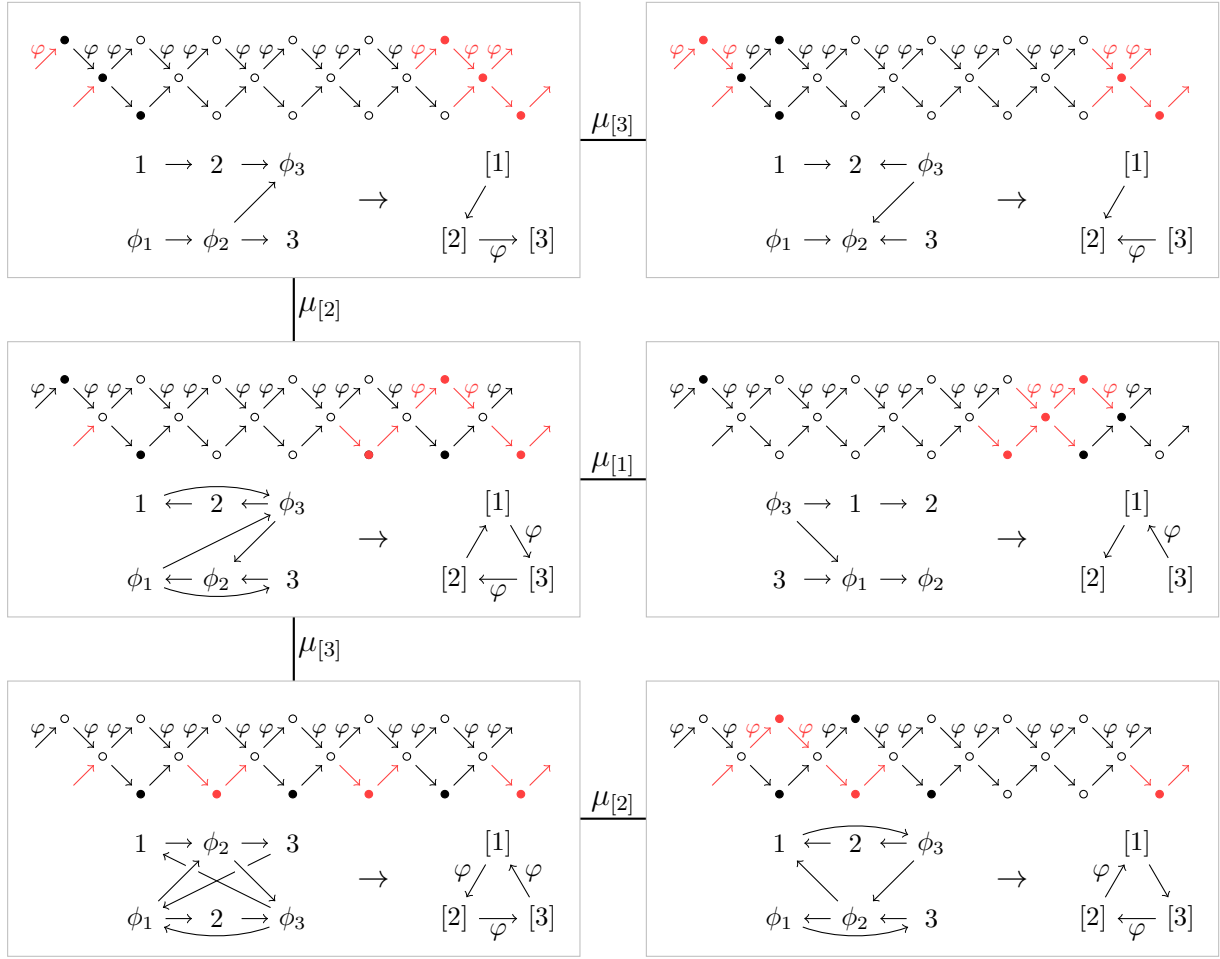}
	\caption{Examples of mutations of $\coxH_3$. Each rectangle contains a weighted folding $F \colon Q \rightarrow Q'$ of type $\coxH_3$ (bottom) and the reduced Auslander-Reiten quiver of  $\clus_F$ and $\calM_F$ (top). For the Auslander-Reiten quiver: black vertices correspond to an $R_+$-tilting object $T \in \clus_F$; red vertices correspond to $\cfart T$; arrows marked with $\gratio$ have valuation $(\gratio,\gratio)$; and the leftmost arrows with (seemingly) no source vertex are identified with the rightmost arrows with no target vertex. The reduced Auslander-Reiten quiver of $\calM_F$ is obtained by deleting all red vertices and arrows. For the folding: vertices labelled $\ivx_i$ have weight $\gratio$, and all other vertices and the unlabelled arrows have weight 1. All rectangles are related by mutation.} \label{fig:H3Mutations}
\end{figure}

\end{exam}

\section{$\gv$-vectors and $\cv$-vectors}
\label{section-cg}
There is a classical notion of $\gv$-vectors and $\cv$-vectors for integer exchange matrices introduced in~\cite{FominZelevinskyIV}, which play an important role in the theory of cluster algebras. In this section we adapt these notions to exchange matrices of types $H_4$, $H_3$ and $I_2(2n+1)$, and show that natural properties of $\cv$- and $\gv$-vectors continue to hold.

\subsection{Extended exchange matrices and tropical $y$-seeds}
Let $J$ be an index set and denote by $\mathbb{T}_J$ the $|J|$-regular tree with edges labelled by elements of $J$, such that every vertex is incident to edges with all labels distinct. Choose a vertex $t_0$ of $\mathbb{T}_J$.
Fix an $|J| \times |J|$ exchange matrix $B$ over $R$, and define a $2|J| \times |J|$ \emph{extended exchange matrix} over $R$
\begin{equation*}
	\wt{B}_{t_0} =
	\begin{pmatrix}
		B_{t_0} \\ C_{t_0}
	\end{pmatrix},
\end{equation*}
where $B_{t_0}=B$ and $C_{t_0}$ is an identity matrix. Define the {\em initial tropical $y$-seed} as $\{B_{t_0}; \ccv_{j,t_0} : j \in J\}$, where each $\ccv_{j,t_0}$ is the $j$-th column of the matrix $C_{t_0}$. The vectors $ \ccv_{j,t_0}$ are called {\em initial $\cv$-vectors}. We can now define a {\em tropical $y$-seed pattern} by assigning to every vertex $t$ of  $\mathbb{T}_J$ a tropical $y$-seed $\{B_{t}; \ccv_{j,t_0} : j \in J\}$, where $\{\ccv_{j,t_0}:j \in J\}$ are the column vectors of the matrix $C_t$, and for an edge $t \stackrel{k}{\text{-----}} t'$ the matrices
\begin{equation*}
	\wt{B}_{t}=	\begin{pmatrix}
		B_{t} \\ C_{t}
	\end{pmatrix}\qquad\text{and}\qquad\wt{B}_{t'}=	\begin{pmatrix}
		B_{t'} \\ C_{t'}
	\end{pmatrix}
\end{equation*}	
are related by a mutation at $k \in J$. The matrices $C_t$ are called \emph{$C$-matrices}, and the vectors $ \ccv_{j,t}$ are {\em $\cv$-vectors}. Abusing notation, we will refer to tropical $y$-seeds as {\em seeds} (as we have no other types of seeds in the paper). 

It was conjectured in~\cite{FominZelevinskyIV} and proved in~\cite{DWZ2,GHKK} that for $R=\integer$ the $\cv$-vectors are {\em sign-coherent}: for every $\cv$-vector all its components are either non-positive or non-negative. If $B$ defines an acyclic quiver, the sign-coherence is also implied by the following result~\cite{DWZ2,SpeyerThomas}: all $\cv$-vectors are roots of a certain root system constructed by $B$. Although the sign-coherence does not hold in general for $R$-quivers, a straightforward computation shows the following:
\begin{lem} \label{lem:Roots}
	Let $B$ be of type $H_4$, $H_3$ or $I_2(n)$. Then all $\cv$-vectors are roots of the corresponding root system.
\end{lem}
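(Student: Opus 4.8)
I would treat the rank-two matrices and the higher-rank ones by separate means. For $B$ of type $\coxI_2(n)$ (for every $n$) the computation is genuinely direct: the $y$-seed pattern is governed by the alternating sequence of mutations $\mu_1,\mu_2,\mu_1,\dots$, and starting from $C_{t_0}=I$ one computes the $\cv$-vectors explicitly from the rank-two mutation rule. In finite type this sequence closes up into a finite cyclic orbit, and one reads off that the $\cv$-vectors so obtained are exactly $\pm$ the roots of $\coxI_2(n)$, their coordinates being Chebyshev values $U_k(\cos\tfrac{\pi}{n})\in\integer[2\cos\tfrac{\pi}{n}]$. This disposes of $\coxI_2(n)$ for all $n$, and in particular of $\coxI_2(2n+1)$.

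For $\gend'\in\{\coxH_3,\coxH_4\}$ I would instead reduce to the integer unfolding, exploiting the weighted folding of Section~\ref{section-unf} and the root-system projection of~\cite{Lusztig,MoodyPatera} already used in the proof of Theorem~\ref{thm:Folding}. Let $S$ be the exchange $\integer$-matrix of type $\gend\in\{\coxD_6,\coxE_8\}$ unfolding $B$; since $S$ defines an acyclic quiver of finite type, the results of~\cite{DWZ2,SpeyerThomas} show that every $\cv$-vector of $S$ is a root of the root system of type $\gend$, and in particular is sign-coherent. I would then lift the $y$-seed pattern of $B$ to that of $S$: forming the extended matrices $\wt B_{t_0}=\bigl(\begin{smallmatrix}B\\ I\end{smallmatrix}\bigr)$ and $\wt S_{t_0}=\bigl(\begin{smallmatrix}S\\ I\end{smallmatrix}\bigr)$, the key claim is that the unfolding relation extends from the $B$-block to the full extended matrix, so that for each $[j]\in Q^{\gend'}_0$ the projection $d_F$ (Definition~\ref{def:ProjMap}) of the column of $C_{S,t}$ indexed by the weight-$1$ vertex over $[j]$ equals the $[j]$-th $\cv$-vector of $B$. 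At $t_0$ this holds because $d_F$ sends the basis vector of a weight-$1$ vertex to the corresponding basis vector; the substance lies in its preservation under the composite mutation $\h\mu_{k}$.

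Granting this, each $\cv$-vector of $B$ is the $d_F$-image of a $\cv$-vector of $S$ sitting in a weight-$1$ position, and since $d_F$ carries roots of type $\gend$ into $\bigcup_i w_i\,\rootsys_{\gend'}$ with the weight-$1$ components landing exactly on roots of $\gend'$, it follows that every $\cv$-vector of $B$ is a root of $\gend'$. I expect the preservation claim to be the main obstacle: one must verify that sign-coherence of the $\cv$-vectors of $S$ is precisely what makes $d_F$ commute with the composite mutation of the lower block (the mutation formula for the $C$-rows involving the same absolute values $|b_{kj}|$ as the $B$-block), so that projecting a genuine $\cv$-vector of $S$ returns a genuine $\cv$-vector of $B$ rather than an unrelated column of a projected matrix.
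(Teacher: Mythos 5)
Your overall architecture is the one the paper itself uses, but split across two places: the lemma as stated is justified in the text only by ``a straightforward computation'' (a finite check, which your explicit rank-two treatment of $\coxI_2(n)$ realises, and which is the only argument available for even $n$, since the categorical machinery is developed only for $\coxI_2(2n+1)$), while your unfolding argument for $\coxH_3,\coxH_4$ is essentially the paper's second, computation-free proof given later as Corollary~\ref{thm:SignCoherent}. You also correctly isolate the main technical point, namely that sign-coherence of the $\cv$-vectors of $S$ is what makes $d_F$ commute with the composite mutation of the lower block; this is exactly Proposition~\ref{prop:CBlockMutation} and its corollary ($\mathbf{d}_F(C_{\wh{t}})=C'_t$), where the blocks of $C_{\wh{t}}$ are shown to be regular representations $\rho(r)$ of sign-coherent elements $r$ of $\chebr\ps{2n+1}$.

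There is, however, a genuine gap in your final step. Granting $\mathbf{d}_F(C_{\wh t})=C'_t$, a $\cv$-vector of $B$ is $d_F(\ccv_{j,\wh t})$ for a weight-one vertex $j$, and $\ccv_{j,\wh t}=\pm\dimvect M$ for some indecomposable $M$; but Theorem~\ref{thm:Folding} only gives $\dimproj_F(M)=U_k(\cos\tfrac{\pi}{2n+1})\,\alpha$ for \emph{some} $0\le k\le n-1$ and positive root $\alpha$ of $\gend'$. Which $k$ occurs is governed by the row of the Auslander--Reiten quiver containing $M$, not by the fact that the column of $C_{\wh t}$ is indexed by a weight-one vertex, so your assertion that the ``weight-one components land exactly on roots'' conflates two different indexings and does not rule out $k>0$ (i.e.\ the projected column being $\gratio\alpha$ rather than $\alpha$ in the $\coxH$ cases). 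The paper closes this by contradiction: if $k>0$, then by the block structure the companion column $\ccv_{j',\wh t}$ (with $F(j')=F(j)$ and $\vw(j')=\sigma\ps{2n+1}(\croot_l)$, $l\neq 0$) would project to $\pm\sigma\ps{2n+1}(\croot_k\croot_l)\alpha$, which by the product rule and the linear independence of $\croot_0,\dots,\croot_{n-1}$ (Remark~\ref{rem:ChebBasis}) is the projected dimension vector of a decomposable object and of no indecomposable one --- contradicting that $\ccv_{j',\wh t}$ is itself $\pm$ a root of $\gend$. You would need to supply this (or an equivalent) argument for your reduction to be complete.
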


In particular, Lemma~\ref{lem:Roots} implies that all $\cv$-vectors for quivers of types  $H_4$, $H_3$ or $I_2(n)$ are sign-coherent. We will show later that Lemma~\ref{lem:Roots} can also be proved without any computations by applying results on the associated module category (see Corollary~\ref{thm:SignCoherent}). It was shown in~\cite{NZ} that, assuming sign-coherence of $\cv$-vectors, integer $\cv$- and $\gv$-vectors satisfy {\em tropical duality}: given $C$- and $G$-matrices $C_t$ and $G_t$, one has $G_t=(C_t^T)^{-1}$. This motivates the following definition.

\begin{defn} \label{def:G}
	Let $\wt{B}_t=(B_t,C_t)$ be an extended exchange matrix with columns indexed by a set $J$, where $B_{t_0}$ is an exchange matrix of type $H_4$, $H_3$ or $I_2(n)$. Define the corresponding {\em $G$-matrix} by
	\begin{equation*}
		G_t=(C_t^T)^{-1}.
	\end{equation*}
	We define {\em $\gv$-vectors} $\{\ggv_{j,t}:j \in J\}$ to be the column vectors of $G_t$.
\end{defn}

\begin{rem} \label{rem:MutG}
	Note that the definition~\ref{def:G} gives us a way to {\em mutate $G$-matrices}: given a $G$-matrix, we compute the corresponding $C$-matrix, mutate the seed, and then take the inverse transpose of the $C$-matrix again. The results of~\cite{NZ} actually show that mutating $G$-matrices in this way is the same as mutation given by the explicit formula in~\cite[(6.12)--(6.13)]{FominZelevinskyIV}.
\end{rem}
        

 
\subsection{Compatibility of $\cv$- and $\gv$-vectors with (un)folding}
Recall that given a weighted folding of quivers $F\colon Q^\gend \rightarrow Q^{\gend'}$ with $\gend' \in \{\coxH_{4},\coxH_{3},\coxI_2(2n+1)\}$, we have projection maps $\dimproj_F=d_F \dimvect$ and $\derdim_F$ that respectively map the objects in the categories $\calM_F$ and $\der_F$ to vectors in the root lattice of $\gend'$ (see Definition~\ref{def:ProjMap} for a recollection of the map $d_F$.) We will define a matrix generalisation of the map $d_F$ by
\begin{align*}
	\mathbf{d}_F\colon \integer^{|Q_0^\gend| \times |Q_0^\gend|} &\rightarrow \wh{\chebr}_{2n+1}^{|Q_0^{\gend'}| \times |Q_0^{\gend'}|} \\
	(\mathbf{v}_j)_{j \in Q_0^\gend} &\mapsto (d_F(\mathbf{v}_j))_{j \in Q_0^\gend : \vw(j)=1},
\end{align*}
where each $\mathbf{v}_j$ is a column vector.

Consider seed patterns for the exchange matrices $B$ and $B'$ corresponding to $Q^\gend$ and $Q^{\gend'}$. If $t$ is a vertex of $\mathbb{T}_{Q_0^{\gend'}}$ with assigned seed $\{B'_{t}; \ccv'_{[j],t}: [j] \in Q_0^{\gend'}\}$, we denote by $\wh{t}$ the vertex of $\mathbb{T}_{Q_0^{\gend}}$ with assigned seed $\{B_{\wh t}; \ccv_{j,\wh t}: j \in Q_0^{\gend}\}$ obtained as a result of the unfolding procedure. That is, if $B'_t$ is obtained from $B'_{t_0}$ by a mutation sequence $\mu_{[k_1]}\dots\mu_{[k_m]}$, then $B_{\wh t}$ is obtained from $B_{\wh{t}_0}$ by a mutation sequence $\wh\mu_{[k_1]}\ldots\wh\mu_{[k_m]}$, where $\wh\mu_{[k_i]}$ denotes composite mutation at each vertex $l \in Q_0^\gend$ such that $F(l)=[k_i]$. The main aim of this section is to establish the following compatibility result.

\begin{thm} \label{thm:Cube}
	Let $F\colon Q^\gend \rightarrow Q^{\gend'}$ be a weighted folding of quivers, with $\gend' = \{\coxH_{4},\coxH_{3}$, $\coxI_2(2n+1)\}$. Let $t_1$ and $t_2$ be vertices of $\mathbb{T}_{Q_0^{\gend'}}$ connected by an edge labelled $[k]$. Then the following diagram commutes:
		\begin{center}
			\begin{tikzpicture}[scale=1.5]
	\draw (0,0,0) node {$C_{\widehat{t}_1}$};
	\draw (2,0,0) node {$G_{\widehat{t}_1}$};
	\draw (0,-2,0) node {$C_{\widehat{t}_2}$};
	\draw (2,-2,0) node {$G_{\widehat{t}_2}$};
	
	\draw (0,0,-2) node {$C'_{t_1}$};
	\draw (2,0,-2) node {$G'_{t_1}$};
	\draw (0,-2,-2) node {$C'_{t_2}$};
	\draw (2,-2,-2) node {$G'_{t_2}$};
	
	\draw[|->, shorten <= 2.3ex, shorten >= 2.3ex] (0,0,-2) -- (2,0,-2);
	\draw[|->, shorten <= 2.3ex, shorten >= 2.3ex] (0,0,-2) -- (0,-2,-2);
	\draw[|->, shorten <= 2.3ex, shorten >= 2.3ex] (2,0,-2) -- (2,-2,-2);
	\draw[|->, shorten <= 2.3ex, shorten >= 2.3ex] (0,-2,-2) -- (2,-2,-2);
	
	\draw [white,fill=white] (-0.1,-0.72,-2) rectangle (0.1,-0.83,-2);
	\draw [white,fill=white](1.95,-1.15,0) rectangle (2.05,-1.3,0);
	
	\draw[|->, shorten <= 2ex, shorten >= 2ex] (0,0,0) -- (2,0,0);
	\draw[|->, shorten <= 2ex, shorten >= 2ex] (0,0,0) -- (0,-2,0);
	\draw[|->, shorten <= 2ex, shorten >= 2ex] (2,0,0) -- (2,-2,0);
	\draw[|->, shorten <= 2ex, shorten >= 2ex] (0,-2,0) -- (2,-2,0);
	
	\draw[|->, shorten <= 2ex, shorten >= 2ex] (0,0,0) -- (0,0,-2);
	\draw[|->, shorten <= 2ex, shorten >= 2ex] (2,0,0) -- (2,0,-2);
	\draw[|->, shorten <= 2ex, shorten >= 2ex] (0,-2,0) -- (0,-2,-2);
	\draw[|->, shorten <= 2ex, shorten >= 2ex] (2,-2,0) -- (2,-2,-2);
	
	\draw (1.3,0.15,0) node {\footnotesize$({-}^T)^{-1}$};
	\draw (1,0.15,-2) node {\footnotesize$({-}^T)^{-1}$};
	\draw (1,-1.85,0) node {\footnotesize$({-}^T)^{-1}$};
	\draw (0.75,-1.85,-2) node {\footnotesize$({-}^T)^{-1}$};
	
	\draw (0.22,-1.2,-2) node {\footnotesize$\mu_{[k]}$};
	\draw (2.22,-1.1,-2) node {\footnotesize$\mu_{[k]}$};
	\draw (0.22,-0.9,0) node {\footnotesize$\widehat{\mu}_{[k]}$};
	\draw (2.22,-0.8,0) node {\footnotesize$\widehat{\mu}_{[k]}$};
	
	\draw (-0.14,0.14,-1) node {\footnotesize$\mathbf{d}_F$};
	\draw (1.86,0.14,-1) node {\footnotesize$\mathbf{d}_F$};
	\draw (-0.14,-1.86,-1.1) node {\footnotesize$\mathbf{d}_F$};
	\draw (1.86,-1.86,-1.1) node {\footnotesize$\mathbf{d}_F$};
\end{tikzpicture}
		\end{center}
\end{thm}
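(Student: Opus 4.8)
The plan is to prove that the cube commutes by checking its six faces and factoring all of them through an intermediate ``$\chebr\ps{2n+1}$-level'' of extended exchange matrices. Write $N=|Q_0^\gend|$ and $n'=|Q_0^{\gend'}|$. First I would record $\mathbf{d}_F$ in matrix form: there is a weighted row-folding matrix $P$ (with $P_{[i],\hat\imath}=\vw(\hat\imath)$ when $F(\hat\imath)=[i]$ and $0$ otherwise) and a column-selection matrix $E$ picking out, in each block $E_{[j]}$, the unique vertex of weight $1$, so that $\mathbf{d}_F(M)=PME$ and $PE=I_{n'}$. The central algebraic observation, following Lemmas~\ref{lem:RegRepExchange} and~\ref{lem:ChebRegRep}, is that if a matrix has the block regular-representation form $\rho(X)$ for some $X\in(\chebr\ps{2n+1})^{n'\times n'}$ (meaning the $E_{[i]}\times E_{[j]}$ block equals $\rho(X_{[i][j]})$, with $\rho$ the regular representation of Definition~\ref{def:RegRep}), then $\mathbf{d}_F(\rho(X))=\sigma\ps{2n+1}(X)$, the entrywise image under the homomorphism $\sigma\ps{2n+1}\colon\chebr\ps{2n+1}\to\wh\chebr\ps{2n+1}$. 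This uses that the weight vector $\mathbf{w}=(\sigma\ps{2n+1}(\croot_0),\dots,\sigma\ps{2n+1}(\croot_{n-1}))^T$ satisfies $\mathbf{w}^T\rho(r)=\sigma\ps{2n+1}(r)\,\mathbf{w}^T$ and $\mathbf{w}^T\mathbf{e}_0=1$, where $\mathbf{e}_0$ selects the $\croot_0$-coordinate.

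The technical core is the claim that \emph{at every vertex} $\wh t$ the extended unfolded exchange matrix has block regular-representation form, i.e.\ $B_{\wh t}=\rho(B''_t)$ and $C_{\wh t}=\rho(C''_t)$ for matrices $B''_t,C''_t$ over $\chebr\ps{2n+1}$, with $\sigma\ps{2n+1}(B''_t)=B'_t$ and $\sigma\ps{2n+1}(C''_t)=C'_t$. At $t_0$ this holds by Proposition~\ref{prop:RegRepExchange} for the $B$-part, while $C_{\wh t_0}=I_N=\rho(I_{n'})$ since $\rho(\croot_0)=I_n$. For the inductive step I would introduce mutation of extended matrices \emph{at the $\chebr\ps{2n+1}$-level}, where absolute values are taken with respect to the total order induced by $\wh\sigma\ps{2n+1}\sigma\ps{2n+1}$ into $\real$ (Remark~\ref{rem:ChebsrOrder}), and show that blockwise $\rho$ intertwines this mutation at $[k]$ with the integer composite mutation $\wh\mu_{[k]}=\prod_{\hat k\in E_{[k]}}\mu_{\hat k}$, while entrywise $\sigma\ps{2n+1}$ intertwines it with the folded mutation $\mu_{[k]}$. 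The inputs are the weighted-unfolding conditions (1),(2) verified in Lemma~\ref{lem:RegRepExchange}, the fact that $\rho(\croot_1)$ has $0/1$ entries (Lemma~\ref{lem:ChebRegRep}), and that the $\mu_{\hat k}$ commute inside a block.

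With this structure in place, the faces follow quickly. The back and front faces are the tropical-duality squares $G=(C^T)^{-1}$ on the unfolded and folded levels; they commute by the definition of how $G$-matrices are transported through mutation (Definition~\ref{def:G}, Remark~\ref{rem:MutG}), consistent with the genuine $\gv$-vector mutation by~\cite{NZ} since $\cv$-vectors are sign-coherent (integer finite type upstairs, Lemma~\ref{lem:Roots} downstairs). The left face reduces to $\mathbf{d}_F(C_{\wh t})=C'_t$ at both vertices, which is exactly $\sigma\ps{2n+1}(C''_t)=C'_t$ from the core claim. For the top and bottom faces, using that $\rho(r)$ is symmetric (Lemma~\ref{lem:ChebRegRep}(b)), so that blockwise transpose corresponds to transposing $X$ over $\chebr\ps{2n+1}$, and that $\rho$ and $\sigma\ps{2n+1}$ are ring homomorphisms commuting with inversion, I would compute
\begin{equation*}
	\mathbf{d}_F(G_{\wh t})=\mathbf{d}_F\bigl((C_{\wh t}^T)^{-1}\bigr)=\sigma\ps{2n+1}\bigl(((C''_t)^T)^{-1}\bigr)=\bigl((\sigma\ps{2n+1}(C''_t))^T\bigr)^{-1}=\bigl((C'_t)^T\bigr)^{-1}=G'_t.
\end{equation*}
The right face then follows formally from the top, bottom, front and back faces, so the entire cube commutes.

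The step I expect to be the main obstacle is the inductive preservation of the block regular-representation structure under composite mutation: mutation is nonlinear (it involves absolute values), so it is not automatic that $\wh\mu_{[k]}$ applied to $\rho(B''_t)$ again has the form $\rho(\,\cdot\,)$. Making this precise requires tracking signs carefully — showing that the real images under $\wh\sigma\ps{2n+1}\sigma\ps{2n+1}$ determine the same signs as the integer mutation, so that the $\chebr\ps{2n+1}$-level mutation is well defined and compatible with $\rho$ — and this is where finiteness of the type, the explicit $0/1$ shape of $\rho(\croot_i)$, and sign-coherence of the $\cv$-vectors (Lemma~\ref{lem:Roots}) all enter. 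Once the $\chebr\ps{2n+1}$-level mutation is intertwined by $\rho$ and $\sigma\ps{2n+1}$, the cube factors through this intermediate level and commutativity is immediate.
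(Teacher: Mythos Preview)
Your plan matches the paper's approach closely: both factor through an intermediate $\chebr\ps{2n+1}$-level, establish inductively that every $C_{\wh t}$ has block regular-representation form $\rho(C''_t)$ with $\sigma\ps{2n+1}(C''_t)=C'_t$, and then read off the faces of the cube from this structure.

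Two small corrections are worth making. First, for the inductive preservation of block form under $\wh\mu_{[k]}$, the sign-coherence input you need is that of the \emph{integer} $\cv$-vectors of $Q^\gend$ (this is what forces each $r_{[i][j]}$ to lie in the sign-coherent cone $\Lambda$, since otherwise the $\croot_0$-column of $\rho(r_{[i][j]})$ would already fail sign-coherence upstairs); Lemma~\ref{lem:Roots} concerns the folded side and is what drives the back face via~\cite{NZ}, not the induction. Second, your top/bottom computation presupposes that $(C''_t)^T$ is invertible over $\chebr\ps{2n+1}$ so that $\rho$ and $\sigma\ps{2n+1}$ can be pushed through the inverse; the paper secures this by a short separate lemma showing $\det C''_t=\pm 1$ (same column-operation and sign-flip argument as for $|C'_t|=\pm 1$). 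Once that determinant is known your ``$\rho$ commutes with inversion'' line is valid; the paper instead writes down the adjugate explicitly (using $n'\le 4$ in the $\coxH$ cases and $n'=2$ in the $\coxI$ case) and checks $\rho\bigl((C''_t)^{-T}\bigr)\cdot C_{\wh t}^T=\id$ by direct block multiplication, which amounts to the same thing.
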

In particular, Theorem~\ref{thm:Cube} gives an equivalent definition of $\cv$- and $\gv$-vectors for types $\{\coxH_{4},\coxH_{3},\coxI_2(2n+1)\}$: these can be defined by the projection of $\cv$- and $\gv$-vectors in the (weighted) unfolding via the map $d_F$. 


\begin{rem}
	The front and back faces of the cube follow from sign-coherence and \cite{NZ}, so we need only prove that the other four faces of the cube are commutative.
\end{rem}
	
The regular representation $\rho\colon \chebr\ps{2n+1} \rightarrow \integer^{n \times n}$ (Definition~\ref{def:RegRep}) and the surjective ring homomorphism $\sigma\ps{2n+1}\colon \chebr\ps{2n+1} \rightarrow \wh\chebr\ps{2n+1}$ (Remark~\ref{rem:ChebsrOrder}) will play a crucial role in the proof. Throughout, we use the ordering $\leq$ of $Q^\gend_0$ from  Proposition~\ref{prop:RegRepExchange}. This, in turn, determines a `nice' ordering of the rows and columns of the associated exchange matrix that allows us to prove a number of results on $C$-matrices, $G$-matrices.

\begin{lem}\label{lem:dFMatMap}
	Let $X=(x_{ij})_{i,j \in Q^{\gend}_0}$ be a matrix with entries in $\integer$, let $Y=(y_{[i][j]})_{[i],[j] \in Q^{\gend'}_0}$ be a matrix with entries in $\chebr\ps{2n+1}$, and let $Z=(z_{[i][j]})_{[i],[j] \in Q^{\gend'}_0}$ be a matrix with entries in $\wh\chebr\ps{2n+1}$. Consider $X$ as a block matrix $(X_{[i][j]})_{[i],[j] \in Q^{\gend'}_0}$, where $x_{kl}$ is an entry of $X_{[i][j]}$ if $F(k)=[i]$ and $F(l)=[j]$. Suppose that $X$, $Y$ and $Z$ are such that $X_{[i][j]}=\rho(y_{[i][j]})$ and $\sigma\ps{2n+1}(y_{[i][j]}) = z_{[i][j]}$. Then $\mathbf{d}_F(X)=Z$.
\end{lem}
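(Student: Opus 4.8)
The plan is to unwind the definition of $\mathbf{d}_F$ and reduce the identity $\mathbf{d}_F(X)=Z$ to an entrywise statement about the regular representation $\rho$ and the homomorphism $\sigma\ps{2n+1}$. First I would fix, for each vertex $[j]\in Q^{\gend'}_0$, the distinguished vertex $j_0\in E_{[j]}$ with $\vw(j_0)=1$. By construction of the weighted folding, the vertex of a block $E_{[i']}$ that the ordering of Proposition~\ref{prop:RegRepExchange} assigns to the basis element $\croot_k$ carries weight $\vw=\sigma\ps{2n+1}(\croot_k)=U_k(\cos\tfrac{\pi}{2n+1})$; since $\sigma\ps{2n+1}(\croot_0)=1$ and, by Lemma~\ref{lem:ChebProps}(f), $U_k(\cos\tfrac{\pi}{2n+1})>1$ for $1\le k\le n-1$, the unique weight-$1$ vertex $j_0$ of each block is precisely the one assigned to $\croot_0=1$. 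Because $\mathbf{d}_F$ retains only the columns indexed by weight-$1$ vertices, its columns are naturally indexed by $Q^{\gend'}_0$ via $[j]\mapsto j_0$, and it suffices to prove that the $[i']$-component of $d_F$ applied to the $j_0$-th column of $X$ equals $z_{[i'][j]}$ for all $[i'],[j]$.

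By definition of $d_F$ this component is $\sum_{i:F(i)=[i']}\vw(i)\,x_{ij_0}$, where the entries $x_{ij_0}$ with $i\in E_{[i']}$ constitute the $j_0$-th column of the block $X_{[i'][j]}=\rho(y_{[i'][j]})$. The key observation is that $j_0$ corresponds to the multiplicative identity $\croot_0=1$, so this column is $\rho(y_{[i'][j]})\,v(1)=v(y_{[i'][j]}\cdot 1)=v(y_{[i'][j]})$, the coordinate vector of $y_{[i'][j]}$ in the basis $\{\croot_0,\ldots,\croot_{n-1}\}$ (Definition~\ref{def:RegRep}). Writing $y_{[i'][j]}=\sum_{k=0}^{n-1}a_k\croot_k$ with $a_k\in\integer$, the entry $x_{ij_0}$ attached to the vertex $i\leftrightarrow\croot_k$ is therefore $a_k$, while its weight is $\vw(i)=\sigma\ps{2n+1}(\croot_k)$.

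Substituting and using that $\sigma\ps{2n+1}$ is a ring homomorphism, hence $\integer$-linear, I would then conclude
\begin{equation*}
	\sum_{i:F(i)=[i']}\vw(i)\,x_{ij_0}=\sum_{k=0}^{n-1}\sigma\ps{2n+1}(\croot_k)\,a_k=\sigma\ps{2n+1}\!\left(\sum_{k=0}^{n-1}a_k\croot_k\right)=\sigma\ps{2n+1}(y_{[i'][j]})=z_{[i'][j]},
\end{equation*}
which is exactly the asserted identity. The computation itself is short; the only genuine obstacle is organisational, namely keeping straight the three index conventions in play — the global vertex labels of $Q^\gend$, the within-block basis $\{\croot_k\}$ underlying $\rho$, and the vertex weights — and in particular verifying that the distinguished weight-$1$ column of each regular-representation block is the coordinate vector of the corresponding entry of $Y$. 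This last identification, which rests on $\croot_0$ being the identity of $\chebr\ps{2n+1}$, is precisely what makes the projection $d_F$ recover the homomorphism $\sigma\ps{2n+1}$, and the statement holds uniformly in the $\coxH$- and $\coxI$-type cases (with $n=2$ in the former).
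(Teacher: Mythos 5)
Your proof is correct and follows essentially the same route as the paper's: identify the weight-$1$ column of each block of $X$ with the $\croot_0$-column of $\rho(y_{[i][j]})$ (i.e.\ the coordinate vector $v(y_{[i][j]})$), and match the row indexed by a vertex of weight $\sigma\ps{2n+1}(\croot_{k'})$ with the $\croot_{k'}$-th row of the regular representation. The only difference is that you carry out explicitly the final weighted-sum computation $\sum_k \sigma\ps{2n+1}(\croot_k)a_k=\sigma\ps{2n+1}(y_{[i][j]})$, which the paper leaves as ``a consequence of the definitions''.
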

\begin{proof}
	For each $[j] \in Q^{\gend'}_0$, consider the block column vector $\mathbf{X}_{[j]} = (X_{[i][j]})_{[i] \in Q^{\gend'}_0}$. By assumption, we have $\mathbf{X}_{[j]} = (\rho(y_{[i][j]}))_{[i] \in Q^{\gend'}_0}$ for some $y_{[i][j]} \in \chebr\ps{2n+1}$. Since $\rho$ is the regular representation of $\chebr\ps{2n+1}$, it is clear that each column of $\mathbf{X}_{[j]}$ is uniquely determined by $\croot_0$-th column of each matrix $\rho(y_{[i][j]})$. In particular, this implies that every column of $\mathbf{X}_{[j]}$ is uniquely determined by the column indexed by $l \in Q^\gend_0$ such that $F(l)=[j]$ and $\vw(l)=1$. Now given any $k \in Q^\gend_0$ such that $F(k)=[i]$, the ordering $\leq$ of $Q^\gend_0$ is such that the $k$-th row of $X$ coincides with the $\croot_{k'}$-th row of the matrix $\rho(y_{[i][j]})$ for some $0 \leq k' \leq n-1$. In particular, our setup is such that $\vw(k)=\sigma\ps{2n+1}(\croot_{k'})$. It is then a consequence of the definitions that we have $\mathbf{d}_F(X)=Z$.
\end{proof}

Given $\wh{t} \in \tree_{Q^\gend_0}$, recall that the matrix $(B_{\wh{t}})_{i,j \in Q_0^\gend}$ has block structure $(B_{\wh{t}})_{[i],[j] \in Q_0^{\gend'}}$, where the $([i],[j])$-th block consists of entries indexed by $(k,l)$ such that $F(k)=[i]$ and $F(l)=[j]$. This block structure can be extended to the matrices $C_{\wh{t}}$ in the natural way. This allows us to prove a result similar to Proposition~\ref{prop:RegRepExchange} for $C$-matrices.

\begin{prop}\label{prop:CBlockMutation}
	Let $t_1$ be a vertex of $\tree_{Q^{\gend'}_0}$, and consider the corresponding vertex $\wh{t}_1 \in \tree_{Q^{\gend}_0}$ related to $t_1$ by unfolding.
	\begin{enumerate}[label=(\alph*)]
		\item For any $[i],[j] \in Q_0^{\gend'}$, we have $C_{[i][j],\wh{t}_1}=\rho(r_{[i][j],\wh{t}_1})$ for some $r_{[i][j],\wh{t}_1} \in \Lambda \subset \chebr\ps{2n+1}$, where $\rho\colon \chebr\ps{2n+1} \rightarrow \integer^{n \times n}$ is the regular representation of $\chebr\ps{2n+1}$,
		\begin{equation*}
			\Lambda=\left\{\sum_{l=1}^{n-1}a_l\croot_l:a_0,\ldots,a_{n-1} \in \integer, \sgn(a_0)=\ldots=\sgn(a_{n-1})\right\},
		\end{equation*}
		and $\sgn(a)$ denotes the sign of $a \in \integer$. In particular, the entries of $C_{[i][j],\wh{t}_1}$ are sign-coherent.
		\item Let $\xymatrix@1{t_1 \ar@{-}[r]^-{[k]} & t_2}$ be an edge of $\tree_{Q^{\gend'}_0}$. Then the matrix $C_{\wh{t}_2} = \wh\mu_{[k]}(C_{\wh{t}_1})$ is such that
		\begin{equation*}
			C_{[i][j],\wh{t}_2} = 
			\begin{cases}
				- C_{[i][k],\wh{t}_1}														& \text{if } [j]=[k] \\
				C_{[i][j],\wh{t}_1} + \sgn(C_{[i][k],\wh{t}_1}) [C_{[i][k],\wh{t}_1} B_{[k][j],\wh{t}_1}]_+		& \text{otherwise,}
			\end{cases}
		\end{equation*}
		where $\sgn(C_{[i][k],\wh{t}_1})$ is the sign of the $([i],[k])$-th block of $C_{\wh{t}_1}$, and for any matrix $A=(a_{ij})$, we define $[A]_+$ as the matrix whose entries are given by $[a_{ij}]_+=\max(0,a_{ij})$.
	\end{enumerate}
\end{prop}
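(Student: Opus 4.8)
The plan is to prove both parts of Proposition~\ref{prop:CBlockMutation} by a single induction on the length of the mutation sequence connecting $t_1$ to the initial vertex $t_0$, establishing (a) and the block-mutation structure simultaneously, since the two statements feed into each other. For the base case, observe that $C_{\wh{t}_0}$ is the identity matrix, whose diagonal blocks are $\rho(\croot_0)=\rho(1)$ and whose off-diagonal blocks are $\rho(0)$; in both cases the representing element of $\chebr\ps{2n+1}$ lies in $\Lambda$ (trivially, as all coefficients vanish or it is a single unit), so (a) holds at $t_0$.

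For the inductive step, I would assume that $C_{\wh{t}_1}$ has the block form $C_{[i][j],\wh{t}_1}=\rho(r_{[i][j],\wh{t}_1})$ with each $r_{[i][j],\wh{t}_1}\in\Lambda$, and then compute $C_{\wh{t}_2}=\wh\mu_{[k]}(C_{\wh{t}_1})$ directly from the composite-mutation formula. The key mechanism is that $\rho$ is a \emph{ring} homomorphism: the integer matrix $C_{[i][j],\wh{t}_1}B_{[k][j],\wh{t}_1}$ at the block level corresponds to the product $r_{[i][j],\wh{t}_1}\cdot b'_{[k][j],\wh{t}_1}$ in $\chebr\ps{2n+1}$ under $\rho$ (using Proposition~\ref{prop:RegRepExchange} to write the blocks of $B_{\wh{t}_1}$ as $\rho$ of elements of $\chebr\ps{2n+1}$), and since $\rho(\croot_i)$ has only nonnegative entries (Lemma~\ref{lem:ChebRegRep}(a)), the operation $[-]_+$ interacts coherently with the sign-coherence. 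The crucial point to verify is that the sign of the block $C_{[i][k],\wh{t}_1}$ is well-defined, i.e.\ that every nonzero entry of that block has the same sign — but this is precisely the content of membership in $\Lambda$ together with Lemma~\ref{lem:ChebRegRep}(a), since each $\rho(\croot_l)$ has entries in $\{0,1\}$, so $\rho\left(\sum a_l\croot_l\right)$ has entries that are sums of the $a_l$ over index sets, all of which share a common sign when the $a_l$ do. This both justifies writing $\sgn(C_{[i][k],\wh{t}_1})$ in the mutation formula and shows that the mutated block is again $\rho$ of an element of $\Lambda$.

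The forward computation then amounts to checking that the composite mutation $\wh\mu_{[k]}$ on the integer matrix $C_{\wh{t}_1}$, performed vertex-by-vertex over all $l$ with $F(l)=[k]$, collapses at the block level into exactly the displayed formula in (b). Here I would invoke the fact that $\wh\mu_{[k]}$ is a well-defined composite mutation (the individual $\mu_l$ commute, as noted in Definition~\ref{defn:Unfolding}), and that the block structure is preserved under composite mutation because the unfolding blocks $E_{[j]}$ are respected by the folding $F$. The sign-coherence claim in (a) for $C_{\wh{t}_2}$ then follows because $r_{[i][j],\wh{t}_2}$, being obtained from elements of $\Lambda$ via addition and multiplication by $\croot$-monomials with nonnegative-entry $\rho$-images followed by $[-]_+$, stays in $\Lambda$.

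The main obstacle I anticipate is verifying that the scalar-level mutation formula for $\chebr\ps{2n+1}$-entries is genuinely compatible with the matrix-level $[-]_+$ operation appearing in the standard $C$-matrix mutation rule. Concretely, the ordinary mutation formula for $C$-matrices involves $\sgn$ and $[-]_+$ applied entrywise to integer matrices, whereas the block formula applies these to whole blocks; reconciling these requires knowing that within each block all entries of $C_{[i][k],\wh{t}_1}$ are simultaneously sign-coherent \emph{and} that multiplying by the nonnegative matrix $\rho(b')$ preserves this coherence so that $[C_{[i][k],\wh{t}_1}B_{[k][j],\wh{t}_1}]_+$ equals $\sgn(C_{[i][k],\wh{t}_1})$ times a nonnegative matrix realizing a genuine product in $\chebr\ps{2n+1}$. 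The bookkeeping linking the entrywise tropical operation to the block-level one, using the ordering $\leq$ from Proposition~\ref{prop:RegRepExchange} to align rows and columns with the regular-representation basis, is where the real care is needed; once that alignment is pinned down, the rest is a routine transcription of the scalar mutation rule.
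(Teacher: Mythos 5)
Your overall architecture matches the paper's: both arguments establish the implication (a) $\Rightarrow$ (b) by using sign-coherence of the blocks to collapse the entrywise tropical sum $\sum_{F(l)=[k]}\sgn(c_{il})[c_{il}b_{lj}]_+$ into the single block expression $\sgn(C_{[i][k]})[C_{[i][k]}B_{[k][j]}]_+$ (noting that the $B_{[k][k]}$ block vanishes, so the individual mutations inside $\wh\mu_{[k]}$ do not interfere), and both run an induction along the mutation sequence with base case $C_{\wh{t}_0}$ the identity, using that $\rho$ is a ring homomorphism together with Proposition~\ref{prop:RegRepExchange} to keep every block of the form $\rho(r)$ for some $r\in\chebr\ps{2n+1}$. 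Your handling of the compatibility of $[\,\cdot\,]_+$ with $\rho$ via the $\{0,1\}$-entries of the $\rho(\croot_l)$ is also in line with the paper.

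The gap is in the last step of your inductive argument: you assert that $r_{[i][j],\wh{t}_2}=r_{[i][j],\wh{t}_1}+\sgn(r_{[i][k],\wh{t}_1})\,[\,r_{[i][k],\wh{t}_1}s_{[k][j],\wh{t}_1}]_+$ ``stays in $\Lambda$'' because it is built from elements of $\Lambda$ by addition and by multiplication by elements with nonnegative $\rho$-images followed by $[\,\cdot\,]_+$. But $\Lambda$ is not closed under addition: $\croot_0\in\Lambda$ and $-\croot_1\in\Lambda$, yet $\croot_0-\croot_1$ has coefficient vector $(1,-1,0,\dots,0)$ and lies outside $\Lambda$. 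The mutation formula adds an element of $\Lambda$ of one sign to $\pm$ a nonnegative element, which is precisely the situation where sign-coherence can fail; showing that it does not fail is the entire content of the sign-coherence theorem for $\cv$-vectors and cannot be obtained by a formal closure argument. The paper closes this gap by a different mechanism: the $\croot_0$-column of the block $C_{[i][j],\wh{t}_2}=\rho(r_{[i][j],\wh{t}_2})$ is an honest $\cv$-vector of the integer quiver $Q^\gend$ of type $\coxA\coxD\coxE$, so if $r_{[i][j],\wh{t}_2}\notin\Lambda$ that column would fail to be sign-coherent, contradicting the sign-coherence of $\cv$-vectors of integer quivers established in \cite{DWZ2,GHKK}. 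Without importing that external theorem (or an equivalent categorical substitute), your induction does not close.
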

\begin{proof}
  We show first that (a) implies (b), and then prove (a).

  \noindent
	(a) $\Rightarrow$ (b): Suppose that $C_{\wh{t}_1}=(c_{ij,\wh{t}_1})_{i,j \in Q^\gend_0}$ satisfies (a). For any edge $\xymatrix@1{\wh{t}_1 \ar@{-}[r]^-{k} & t'}$ in $\tree_{Q^\gend_0}$, recall that by the classical mutation formula, the matrix $C_{t'} = \mu_k(C_{\wh{t}_1})$ is given by
	\begin{equation*}
		c_{ij, t'} = 
		\begin{cases}
			- c_{ik,\wh{t}_1}											& \text{if } j=k \\
			c_{ij,\wh{t}_1} + \sgn(c_{ik,\wh{t}_1}) [c_{ik,\wh{t}_1} b_{kj,\wh{t}_1}]_+		& \text{otherwise.}
		\end{cases}
	\end{equation*}
	Note that for all $i$, we have $c_{ij,t'} = c_{ij,\wh{t}_1}$ for any $j$ such that $j\neq k$ and $F(j)=F(k)$. This is because the entry $b_{kj,\wh{t}_1}$ belongs to the block $B_{[k][k],\wh{t}_1}$ which is a zero matrix due to the definition of unfolding. Thus, it immediately follows that the matrix $C_{\wh{t}_2}$ is such that $c_{ij, \wh{t}_2}=- c_{ij,\wh{t}_1}$ for any $j$ such that $F(j)=[k]$. Furthermore, one can see that by iteratively taking mutations we have
	\begin{equation*}
		c_{ij,\wh{t}_2} = c_{ij,\wh{t}_1}+ \sum_{F(l)=[k]} \sgn(c_{il,\wh{t}_1}) [c_{il,\wh{t}_1} b_{lj,\wh{t}_1}]_+
	\end{equation*}
	for any $j$ such that $F(j) \neq [k]$. Since $C_{[i][k],\wh{t}_1}$ is sign-coherent by (a), this reduces to
	\begin{align*}
		c_{ij,\wh{t}_2} &= c_{ij,\wh{t}_1}+ \sgn(C_{[i][k],\wh{t}_1})\sum_{F(l)=[k]} [c_{il,\wh{t}_1} b_{lj,\wh{t}_1}]_+ \\
			&=c_{ij,\wh{t}_1} + \sgn(C_{[i][k],\wh{t}_1}) ([C_{[i][k],\wh{t}_1}B_{[k][j],\wh{t}_1}]_+)_{ij},
	\end{align*}
	which shows that the block mutation formula in (b) follows from (a).
	
	(a): Firstly, one notes that it is a trivial consequence of Lemma~\ref{lem:ChebRegRep}(a) that if a block $C_{[i][j],\wh{t}_1}=\rho(r_{[i][j],\wh{t}_1})$ for some $r_{[i][j],\wh{t}_1} \in \Lambda$, then $C_{[i][j],\wh{t}_1}$ is sign-coherent. Thus, we will focus on proving that the blocks of all $C$-matrices are indeed expressible by such representations.
	
	It is immediate the lemma is true for $C_{\wh{t}_0}$, as we have $C_{[i][j],\wh{t}_0} \in \{\rho(0),\rho(1)\}$ for any $[i],[j] \in Q^{\gend'}_0$. So suppose for an induction argument that a matrix $C_{\wh{t}_1}$ satisfies (a). For each $[i],[j] \in Q^{\gend'}_0$, let $s_{[i][j],\wh{t}_1} \in \chebr\ps{2n+1}$ be such that $B_{[i][j],\wh{t}_1}=\rho(s_{[i][j],\wh{t}_1})$, which we may do due to Proposition~\ref{prop:RegRepExchange}. By the block mutation formula, the matrix $C_{\wh{t}_2} = \wh\mu_{[k]}(C_{\wh{t}_1})$ is such that
	\begin{equation*}
		C_{[i][k],\wh{t}_2} = - C_{[i][k],\wh{t}_1} = -\rho(r_{[i][k],\wh{t}_1}) = \rho(-r_{[i][k],\wh{t}_1})
	\end{equation*}
	for any $[i] \in Q^{\gend'}_0$. For $[j] \neq [k]$, we have
	\begin{align*}
		C_{[i][j],\wh{t}_2} &= \rho(r_{[i][j],\wh{t}_1}) + \sgn(\rho(r_{[i][k],\wh{t}_1}))[\rho(r_{[i][k],\wh{t}_1})\rho(s_{[k][j],\wh{t}_1})]_+ \\
		&=\rho(r_{[i][j],\wh{t}_1}) + \sgn(r_{[i][k],\wh{t}_1})[\rho(r_{[i][k],\wh{t}_1}s_{[k][j],\wh{t}_1})]_+.
	\end{align*}
	By Lemma~\ref{lem:RegRepExchange}, we may reduce this further to
	\begin{align*}
		C_{[i][j],\wh{t}_2} &= \rho(r_{[i][j],\wh{t}_1}) + \rho(\sgn(r_{[i][k],\wh{t}_1})[r_{[i][k],\wh{t}_1}s_{[k][j],\wh{t}_1}]_+) \\
		&= \rho(r_{[i][j],\wh{t}_1} + \sgn(r_{[i][k],\wh{t}_1})[r_{[i][k],\wh{t}_1}s_{[k][j],\wh{t}_1}]_+).
	\end{align*}
	So each block of $C_{\wh{t}_2}$ is expressible as the matrix regular representation of some value in $\chebr\ps{2n+1}$. Now we must show that each block is specifically the matrix representation of a value in the subset $\Lambda$. Indeed, if there exists a block $C_{[i][j],\wh{t}_2} = \rho(r_{[i][j],\wh{t}_2})$ with $r_{[i][j],\wh{t}_2} \not \in \Lambda$, the column of $\rho(r_{[i][j],\wh{t}_2})$ representing multiplication by $\croot_0=1$ would not be sign-coherent. In particular, this would imply that $\cv$-vectors for $Q^\gend$ are not sign-coherent, which is false. This proves the matrix $C_{\wh{t}_2}$ satisfies (a). Thus every $C$-matrix obtained by composite mutations associated to vertices in $Q^{\gend'}_0$ satisfies (a) by induction.
\end{proof}

A number of results follow as a consequence of the above proposition. The first important consequence is the commutativity of the left face of the cube in the compatibility theorem.

\begin{cor}
	For any vertex $t \in \tree_{Q^{\gend'}_0}$, we have $\mathbf{d}_F(C_{\wh{t}})=C'_t$. In particular, the left square of Theorem~\ref{thm:Cube} commutes, and $\cv$-vectors are sign-coherent.
\end{cor}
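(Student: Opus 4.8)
The plan is to read off $\mathbf{d}_F(C_{\wh t})$ block by block from Proposition~\ref{prop:CBlockMutation}(a) together with Lemma~\ref{lem:dFMatMap}, and then to identify the result with $C'_t$ by an induction on the distance of $t$ from the root $t_0$ of $\tree_{Q^{\gend'}_0}$. First I would fix $t \in \tree_{Q^{\gend'}_0}$ with unfolding $\wh t$. By Proposition~\ref{prop:CBlockMutation}(a) every block satisfies $C_{[i][j],\wh t} = \rho(r_{[i][j],\wh t})$ with $r_{[i][j],\wh t} \in \Lambda \subset \chebr\ps{2n+1}$. Setting $z_{[i][j]} := \sigma\ps{2n+1}(r_{[i][j],\wh t})$, Lemma~\ref{lem:dFMatMap} applied with $X = C_{\wh t}$, $Y = (r_{[i][j],\wh t})$ and $Z = (z_{[i][j]})$ yields at once $\mathbf{d}_F(C_{\wh t}) = (z_{[i][j]})_{[i],[j] \in Q^{\gend'}_0}$, so it remains only to show $(z_{[i][j]}) = C'_t$.

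For this I would induct. At $t_0$ the blocks of $C_{\wh{t}_0}$ are $\rho(1)$ on the diagonal and $\rho(0)$ off it, whence $(z_{[i][j]}) = I = C'_{t_0}$. For the step, suppose $t_1 \stackrel{[k]}{\text{-----}} t_2$ and $\mathbf{d}_F(C_{\wh{t}_1}) = C'_{t_1}$. I apply $\sigma\ps{2n+1}$ to the block mutation formula of Proposition~\ref{prop:CBlockMutation}(b): as a ring homomorphism it commutes with negation, sums and products, and since each $r_{[i][j],\wh{t}_1} \in \Lambda$ while $\sigma\ps{2n+1}(\croot_l) = U_l(\cos\frac{\pi}{2n+1}) \geq 0$, it also respects the block sign $\sgn(C_{[i][k],\wh{t}_1})$ and the positive part $[\,\cdot\,]_+$. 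Using $\sigma\ps{2n+1}(s_{[k][j],\wh{t}_1}) = b'_{[k][j],t_1}$ and the inductive hypothesis $\sigma\ps{2n+1}(r_{[i][j],\wh{t}_1}) = c'_{[i][j],t_1}$, the block recurrence becomes exactly the $C$-matrix mutation of $C'_{t_1}$ at $[k]$ written in $[\,\cdot\,]_+$ form, i.e.\ the $([i],[j])$-entry of $\mu_{[k]}(C'_{t_1}) = C'_{t_2}$. (This $[\,\cdot\,]_+$ reformulation of the plain mutation is valid because $C'_{t_1} = \mathbf{d}_F(C_{\wh{t}_1})$ is sign-coherent, its columns being non-negative combinations of sign-coherent $\cv$-vectors of $Q^\gend$, as explained below.) Hence $(z_{[i][j]}) = C'_{t_2}$, completing the induction.

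The commutativity of the left face of the cube is precisely the identity $\mathbf{d}_F(\wh\mu_{[k]}(C_{\wh{t}_1})) = \mu_{[k]}(\mathbf{d}_F(C_{\wh{t}_1}))$, which is the content of the inductive step just carried out. Sign-coherence of the $\cv$-vectors of $Q^{\gend'}$ then follows immediately: by the definition of $\mathbf{d}_F$, each $\cv$-vector $\ccv_{[j],t}$ of $Q^{\gend'}$ is the column $d_F(\ccv_{j,\wh t})$ for the vertex $j$ with $\vw(j)=1$ and $F(j)=[j]$, where $\ccv_{j,\wh t}$ is a $\cv$-vector of the integer acyclic quiver $Q^\gend$ and is therefore sign-coherent; since $d_F$ forms the non-negative combinations $\sum_{F(i)=[i]} \vw(i)(\blank)$ with every $\vw(i) > 0$, it carries sign-coherent vectors to sign-coherent vectors.

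I expect the main obstacle to be the inductive step — specifically, verifying that $\sigma\ps{2n+1}$ turns the block mutation formula of Proposition~\ref{prop:CBlockMutation}(b) into the ordinary $C$-matrix mutation formula for $Q^{\gend'}$. The delicate point is the positive-part operation $[\,\cdot\,]_+$, which is not a ring operation; handling it cleanly requires combining the sign-coherence packaged in the condition $r_{[i][j],\wh t} \in \Lambda$ with the positivity $\sigma\ps{2n+1}(\croot_l) \geq 0$, so that taking positive parts commutes with applying $\sigma\ps{2n+1}$.
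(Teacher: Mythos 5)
Your proposal is correct and follows essentially the same route as the paper: an induction on mutation distance from $t_0$, using Lemma~\ref{lem:dFMatMap} together with Proposition~\ref{prop:CBlockMutation}(a) to express each block as $\rho(r_{[i][j],\wh t})$, and then checking that $\sigma\ps{2n+1}$ carries the block mutation formula of Proposition~\ref{prop:CBlockMutation}(b) to the ordinary $C$-matrix mutation of $Q^{\gend'}$. Your explicit attention to why $\sigma\ps{2n+1}$ commutes with $\sgn$ and $[\,\cdot\,]_+$ (via $r\in\Lambda$ and positivity of the $\sigma\ps{2n+1}(\croot_l)$), and your derivation of sign-coherence from $d_F$ being a positive combination of sign-coherent $\cv$-vectors of $Q^\gend$, only make explicit what the paper's proof leaves implicit.
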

\begin{proof}
	It is clear from Lemma~\ref{lem:dFMatMap} that $\mathbf{d}_F(C_{\wh{t}_0})=C'_{t_0}$, as these are both identity matrices. That $\mathbf{d}_F(C_{\wh{t}})=C'_t$ for each $t \in \tree_{Q^{\gend'}_0}$ follows by induction on mutation. So suppose there exists a vertex $t_1 \in \tree_{Q^{\gend'}_0}$ such that $\mathbf{d}_F(C_{\wh{t}_1})=C'_{t_1}=(c'_{[i][j],t_1})$. In particular, this implies by Lemma~\ref{lem:dFMatMap} and Proposition~\ref{prop:CBlockMutation}(a) that for each $[i],[j] \in Q^{\gend'}_0$ we have $C_{[i][j]}=\rho(r_{[i][j],\wh{t}_1})$ with $\sigma\ps{2n+1}(r_{[i][j],\wh{t}_1})=c'_{[i][j],t_1}$. Write $B'_{t_1}= (b'_{[i][j],t_1})$ and let $s_{[i][j],\wh{t}_1} \in \{0,\pm 1,\pm \croot_1\} \subset \chebr\ps{2n+1}$ be such that  $\sigma\ps{2n+1}(s_{[i][j],\wh{t}_1})=b'_{[i][j],t_1}$, which we may do due to Proposition~\ref{prop:RegRepExchange}.
	
	Now we shall compare mutation formulas. Let $\xymatrix@1{t_1 \ar@{-}[r]^-{[k]} & t_2} \in \tree_{Q^{\gend'}_0}$. Then
	\begin{align*}
		C_{[i][j],\wh{t}_2} &= 
		\begin{cases}
			\rho(-r_{[i][k],\wh{t}_1})													& \text{if } [j]=[k] \\
			\rho(r_{[i][j],\wh{t}_1} + \sgn(r_{[i][k],\wh{t}_1})[r_{[i][k],\wh{t}_1}s_{[k][j],\wh{t}_1}]_+)	& \text{otherwise;}
		\end{cases} \\
		c'_{[i][j],t_2}& = 
		\begin{cases}
			- c'_{[i][k],t_1}												& \text{if } [j]=[k] \\
			c'_{[i][j],t_1} + \sgn(c'_{[i][k],t_1}) [c'_{[i][k],t_1} b'_{[k][j],t_1}]_+	& \text{otherwise;}
		\end{cases} \\
		&=
		\begin{cases}
			\sigma\ps{2n+1}(-r_{[i][k],\wh{t}_1})												& \text{if } [j]=[k] \\
			\sigma\ps{2n+1}(r_{[i][j],\wh{t}_1} + \sgn(r_{[i][k],\wh{t}_1}) [r_{[i][k],\wh{t}_1} s_{[k][j],\wh{t}_1}]_+)	& \text{otherwise.}
		\end{cases}
	\end{align*}
	Thus, it is clear from Lemma~\ref{lem:dFMatMap} that we have $\mathbf{d}_F(C_{\wh{t}_2})=C'_{t_2}$. So $\mathbf{d}_F(C_{\wh{t}})=C'_t$ for each $t \in \tree_{Q^{\gend'}_0}$ by induction. It automatically follows from this that $\mathbf{d}_F\wh\mu_{[k]}(C_{\wh{t}}) =\mu_{[k]}\mathbf{d}_F(C_{\wh{t}})$ for each vertex $t \in \tree_{Q^{\gend'}_0}$, as required.
\end{proof}

\begin{cor} \label{cor:CBlockProps}
	For any vertex $t \in \tree_{Q^{\gend'}_0}$, the blocks of $C_{\wh{t}}$ commute. That is $C_{[i][j],\wh{t}}C_{[k][l],\wh{t}}=C_{[k][l],\wh{t}}C_{[i][j],\wh{t}}$ for any $[i],[j],[k],[l] \in Q^{\gend'}_0$.
\end{cor}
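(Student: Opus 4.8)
The plan is to exploit the single structural fact supplied by Proposition~\ref{prop:CBlockMutation}(a): every block $C_{[i][j],\wh{t}}$ of a $C$-matrix arising in the unfolding is of the form $\rho(r_{[i][j],\wh{t}})$ for some element $r_{[i][j],\wh{t}} \in \chebr\ps{2n+1}$, where $\rho$ is the regular representation of $\chebr\ps{2n+1}$ (Definition~\ref{def:RegRep}). Thus every block lies in the image $\rho(\chebr\ps{2n+1}) \subseteq \integer^{n\times n}$, and the commutation of two arbitrary blocks reduces entirely to the commutativity of this image.

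First I would recall that $\rho$ is a ring homomorphism and that $\chebr\ps{2n+1}$ is a commutative ring (Definition~\ref{def:ChebSemiring}). Then, for the four chosen indices, writing $C_{[i][j],\wh{t}} = \rho(r)$ and $C_{[k][l],\wh{t}} = \rho(s)$ with $r,s \in \chebr\ps{2n+1}$, the computation
\begin{equation*}
	C_{[i][j],\wh{t}}C_{[k][l],\wh{t}} = \rho(r)\rho(s) = \rho(rs) = \rho(sr) = \rho(s)\rho(r) = C_{[k][l],\wh{t}}C_{[i][j],\wh{t}}
\end{equation*}
gives the claim directly, using multiplicativity of $\rho$ in the outer equalities and commutativity of $\chebr\ps{2n+1}$ in the middle equality.

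I expect there to be no genuine obstacle: the corollary is a formal consequence of Proposition~\ref{prop:CBlockMutation}(a) together with the elementary fact that a commutative ring acts on itself by commuting multiplication operators. The only point worth stating carefully is that the blocks in question are all images under the \emph{same} representation $\rho$ of the \emph{same} ring, so that the products $\rho(r)\rho(s)$ and $\rho(s)\rho(r)$ may legitimately be compared; this is precisely what Proposition~\ref{prop:CBlockMutation}(a) guarantees uniformly at every vertex $t \in \tree_{Q^{\gend'}_0}$, so the argument applies across the whole tree without further work.
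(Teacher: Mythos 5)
Your argument is correct and is exactly the paper's proof: the paper also derives the corollary from Proposition~\ref{prop:CBlockMutation}(a) together with the facts that $\rho$ is a ring homomorphism and $\chebr\ps{2n+1}$ is commutative. Your write-up just makes the one-line computation $\rho(r)\rho(s)=\rho(rs)=\rho(sr)=\rho(s)\rho(r)$ explicit.
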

\begin{proof}
	This follows from the fact that the blocks of $C_{\wh{t}}$ arise from the regular representation of $\chebr\ps{2n+1}$, which is a commutative ring.
\end{proof}

The next consequence of Proposition~\ref{prop:CBlockMutation} can be verified explicitly by computation. Nevertheless, we provide a proof using the module category of an unfolded quiver.

\begin{cor} \label{thm:SignCoherent}
	The $\cv$-vectors of $Q^{\gend'}$ are roots of $\gend'$.
\end{cor}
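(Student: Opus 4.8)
The plan is to deduce Corollary~\ref{thm:SignCoherent} from the representation-theoretic results already established, rather than by the direct computation alluded to in Lemma~\ref{lem:Roots}. The key bridge is the map $\mathbf{d}_F$, whose compatibility with $C$-matrix mutation was just proved in the preceding corollary: for every vertex $t \in \tree_{Q^{\gend'}_0}$ we have $\mathbf{d}_F(C_{\wh{t}})=C'_t$. Thus each $\cv$-vector $\ccv'_{[j],t}$ of $Q^{\gend'}$ is the image under $d_F$ of a column of $C_{\wh{t}}$, i.e.\ of a $\cv$-vector $\ccv_{j,\wh t}$ of the \emph{unfolded} quiver $Q^\gend$.

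First I would invoke the classical fact (from~\cite{DWZ2,SpeyerThomas}, recalled in the text immediately before Lemma~\ref{lem:Roots}) that since $Q^\gend$ is acyclic of simply-laced Dynkin type $\gend \in \{\coxA_{2n},\coxD_6,\coxE_8\}$, every $\cv$-vector $\ccv_{j,\wh t}$ of $Q^\gend$ is a root of the root system of $\gend$. By sign-coherence, each such $\ccv_{j,\wh t}$ is either a positive or a negative root of $\gend$, and hence (up to sign) is the dimension vector $\dimvect M$ of an indecomposable $KQ^\gend$-module $M$ by Gabriel's Theorem. Then $d_F(\ccv_{j,\wh t}) = \pm d_F(\dimvect M) = \pm\,\dimproj_F(M)$ by the definition of $\dimproj_F$ in Definition~\ref{def:ProjMap}.

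The crucial step is then to identify which indecomposable $M$ arises. Here I would use the block structure from Proposition~\ref{prop:CBlockMutation}(a): the column of $C_{\wh t}$ indexed by $j$ with $\vw(j)=1$ is precisely the $\croot_0$-th column of the relevant regular-representation block, which is the column that $\mathbf{d}_F$ reads off. Combining this with Theorem~\ref{thm:Folding}(a) — which states that $\dimproj_F(M)$ is a positive root of $\gend'$ for any $M$ in a weight-$1$ row (the set $\mathcal{I}\ps{0}$) — shows that the projection of such a weight-$1$ $\cv$-vector lands on an honest root of $\gend'$ (positive or negative according to the sign of the root of $\gend$). Therefore every $\cv$-vector $\ccv'_{[j],t} = d_F(\ccv_{j,\wh t})$ is a root of $\gend'$.

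The main obstacle I anticipate is ensuring that $d_F$ applied to a root of $\gend$ indeed yields a \emph{root} of $\gend'$ and not merely a $\croot_i$-multiple of one — that is, ruling out the possibility that the relevant unfolded $\cv$-vector comes from a weight-$\gratio$ (or higher-weight) row, whose projection would be a nontrivial multiple of a root rather than a root itself. This is controlled precisely by the sign-coherent block structure of Proposition~\ref{prop:CBlockMutation}(a): the entries $r_{[i][j],\wh t} \in \Lambda$ have all coordinates of the same sign, and the $\croot_0$-column of $\rho(r_{[i][j],\wh t})$ selects the correct weight-$1$ representative, so the projection is governed by Theorem~\ref{thm:Folding}(a) rather than by the multiplicative scaling of Theorem~\ref{thm:Folding}(b). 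Once this identification is in place, the conclusion is immediate, and it simultaneously furnishes the computation-free proof of Lemma~\ref{lem:Roots} promised in the remark preceding it.
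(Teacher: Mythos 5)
Your overall strategy is the same as the paper's: lift to the unfolded quiver, use that the $\cv$-vectors of $Q^\gend$ are $\pm$ dimension vectors of indecomposable $KQ^\gend$-modules, and push through $d_F$. You also correctly identify the one real difficulty — ruling out that the weight-$1$ column $\ccv_{j,\wh t}$ is the dimension vector of an indecomposable lying in a higher-weight row, whose projection would be a nontrivial $U_k$-multiple of a root rather than a root. But your proposed resolution of that difficulty does not work as stated. Saying that the weight-$1$ column is ``the $\croot_0$-th column of the regular-representation block'' and that this ``selects the correct weight-$1$ representative, so the projection is governed by Theorem~\ref{thm:Folding}(a)'' conflates two different things: the weight $\vw(j)=1$ of the \emph{column index} $j$, and the weight of the \emph{row of the Auslander-Reiten quiver} containing the indecomposable $M$ with $\dimvect M=\pm\ccv_{j,\wh t}$. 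After an arbitrary mutation sequence there is no a priori reason these agree — that agreement is essentially the content of the corollary. Sign-coherence of the coefficients $r_{[i][j],\wh t}\in\Lambda$ does not distinguish a root of $\gend'$ from a $\sigma\ps{2n+1}(\croot_k)$-multiple of one, since both are perfectly good sign-coherent vectors over $\wh\chebr\ps{2n+1}$.

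The paper closes this gap with a contradiction argument that uses the \emph{other} columns of the same block, and this is the step your proposal is missing. Suppose the weight-$1$ column satisfies $\pm\ccv_{j,\wh t}=\dimvect(\croot_k M_\alpha)$ with $k>0$. Proposition~\ref{prop:CBlockMutation}(a) forces the column $\ccv_{j',\wh t}$ with $F(j')=[j]$ and $\vw(j')=\sigma\ps{2n+1}(\croot_l)$, $l\neq 0$, to project to $\pm\sigma\ps{2n+1}(\croot_k\croot_l)\alpha$. By the product rule $\croot_k\croot_l=\sum_j\croot_{k-l+2j}$ this is the $F$-projected dimension vector of the \emph{decomposable} object $\croot_k\croot_l M_\alpha$, and by Theorem~\ref{thm:Folding} together with the linear independence of $\croot_0,\dots,\croot_{n-1}$ (Remark~\ref{rem:ChebBasis}) no indecomposable has this projected dimension vector. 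Hence $\ccv_{j',\wh t}$ would fail to be $\pm$ the dimension vector of an indecomposable module, contradicting the classical result for $Q^\gend$. You should add this deduction; without it, the claim that the weight-$1$ column lands in $\mathcal{I}\ps{0}$ is asserted rather than proved.
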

\begin{proof}
	By the results of \cite{DWZ2,GHKK}, the $\cv$-vectors of $Q^\gend$ are roots of $\gend$. By Gabriel's Theorem, the $\cv$-vectors of $Q^\gend$ are therefore the $\pm 1$ multiples of dimension vectors of some indecomposable $KQ^\gend$-modules. Now recall that $KQ^\gend$ has the structure of a $\chebsr\ps{2n+1}$-coefficient category $\calM_F$. So by Theorems~\ref{thm:Folding} and \ref{thm:Action}, for any indecomposable $KQ^\gend$-module $M$, there exists $0 \leq k \leq n-1$ and a positive root $\alpha$ of $\gend'$ such that $M \cong \croot_k M_\alpha$. In particular, $\dimproj_F(M) = \srhom\ps{2n+1}(\croot_k) \alpha$.
	
	Let $t$ be a vertex of $\tree_{Q^{\gend'}_0}$. Suppose for a contradiction that there exists a column $\ccv_{j,\wh{t}}$ of $C_{\wh{t}}$ indexed by $j \in Q^\gend_0$ with $\vw(j)=1$ such that $\ccv'_{[j],t}= d_F(\ccv_{j,\wh{t}})$ is not a root of $\gend'$. Let $M$ be the indecomposable $KQ^\gend$-module with dimension vector $\pm\ccv_{j,\wh{t}}$. Then $M \cong \croot_k M_\alpha$ for some $k>0$ and positive root $\alpha$ of $\gend'$. Now by Proposition~\ref{prop:CBlockMutation}(a), for any $i \in Q^\gend_0$, the $[i][j]$-th block of $C_{\wh{t}}$ is the matrix regular representation of some $r_{[i][j],\wh{t}} \in \chebr\ps{2n+1}$. Consequently, for any other column $\ccv_{j',\wh{t}}$ of $C$ indexed by $j' \in Q^\gend_0$ such that $\vw(j') \neq 1$ and $F(j')=[j]$, we have $d_F(\ccv_{j',\wh{t}}) = \sigma\ps{2n+1}(\croot_l) d_F(\ccv_{j,\wh{t}}) = \pm\sigma\ps{2n+1}(\croot_k\croot_l) \alpha$ with $l \neq 0$. But by the product rule of $\chebr\ps{2n+1}$ (Definition~\ref{def:ChebSemiring}) and the axioms of a $\chebsr\ps{2n+1}$-coefficient category (Definition~\ref{def:RPCoeffCat}), this is the $F$-projected dimension vector of the decomposable object $\croot_k\croot_l M_\alpha \in \calM_F$. Moreover, there exists no indecomposable $KQ^\gend$-module $M'$ with $\dimproj_F(M') = \sigma\ps{2n+1}(\croot_k\croot_l) \alpha$ by Theorem~\ref{thm:Folding} and the fact that $\croot_0,\ldots,\croot_{n-1}$ are linearly independent (Remark~\ref{rem:ChebBasis}). But then $\ccv_{j',\wh{t}}$ is not the $\pm1$ multiple of a dimension vector of an indecomposable $KQ^\gend$-module --- a contradiction. Thus, every column $\ccv_{j,\wh{t}}$ of $C_{\wh{t}}$ with $\vw(j)=1$ is such that $d_F(\ccv_{j,\wh{t}})$ is a root of $\gend'$, and hence every column of $C'_t =\mathbf{d}_F(C_{\wh{t}})$ is a root of $\gend'$, as required. 
\end{proof}

The commutativity of the top and bottom squares also follows from Proposition~\ref{prop:CBlockMutation}, but first we must prove a technical result on the determinants of $C$-matrices over the ring $\chebr\ps{2n+1}$, which for integer $C$-matrices is already well-known. Henceforth, given any matrix $A$, we denote its determinant by $|A|$.

\begin{cor} \label{cor:CDeterminant}
	Let $t$ be a vertex of $\tree_{Q^{\gend'}_0}$. Let $X_{\wh{t}}=(r_{[i][j],\wh{t}})$ be the matrix with entries in $\chebr\ps{2n+1}$ such that $\rho(r_{[i][j],\wh{t}})=C_{[i][j],\wh{t}}$ and $\sigma\ps{2n+1}(r_{[i][j],\wh{t}})=c'_{[i][j],t}$. Then 
	\begin{enumerate}[label=(\alph*)]
		\item $|C'_{t}|= \pm 1$,
		\item $|X_{\wh{t}}| = \pm 1$ and $\sigma\ps{2n+1}(|X_{\wh{t}}|)=|C'_t|$.
	\end{enumerate}
\end{cor}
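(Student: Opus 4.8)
The plan is to prove both parts by induction on the mutation distance from the root vertex $t_0$, exploiting the fact (established in Proposition~\ref{prop:CBlockMutation}) that each block $C_{[i][j],\wh{t}}$ is the regular representation $\rho(r_{[i][j],\wh{t}})$ of an element of the commutative ring $\chebr\ps{2n+1}$, and that the blocks pairwise commute by Corollary~\ref{cor:CBlockProps}. The key structural observation is that $X_{\wh{t}}=(r_{[i][j],\wh{t}})$ is a matrix over the \emph{commutative} ring $\chebr\ps{2n+1}$, so its determinant $|X_{\wh{t}}|$ makes sense as an element of $\chebr\ps{2n+1}$, and the block matrix $C_{\wh{t}}$ is obtained from $X_{\wh{t}}$ by applying the ring homomorphism $\rho$ entrywise. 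Since $\rho$ is a ring homomorphism into $\integer^{n\times n}$ and the blocks commute, the Leibniz expansion of the integer determinant $|C_{\wh{t}}|$ is compatible with block expansion: one has $|C_{\wh{t}}| = |\rho(|X_{\wh{t}}|)|$, the ordinary $(n\times n)$-determinant of the regular representation of the single ring element $|X_{\wh{t}}|$.

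For part (a), I would first recall the classical fact that integer $C$-matrices are unimodular, so $|C_{\wh{t}}|=\pm 1$ for every $\wh{t}$ (this follows from \cite{NZ} together with the tropical duality $G_{\wh t}=(C_{\wh t}^T)^{-1}$, which forces $\det C_{\wh t}$ to be a unit in $\integer$). Applying the ring homomorphism $\sigma\ps{2n+1}$ blockwise then yields $|C'_t| = \pm 1$ by the same compatibility between the determinant and the homomorphism $\sigma\ps{2n+1}$; concretely, $\mathbf{d}_F(C_{\wh t})=C'_t$ identifies $C'_t$ with the matrix $(\sigma\ps{2n+1}(r_{[i][j],\wh t}))$, whose determinant is $\sigma\ps{2n+1}(|X_{\wh t}|)$.

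For part (b), the main work is to relate $|X_{\wh{t}}|\in\chebr\ps{2n+1}$ to the integer determinant $|C_{\wh{t}}|$. Using that $\rho$ is a faithful ring homomorphism and that the blocks commute, I would show
\begin{equation*}
	|C_{\wh{t}}| = \det\bigl(\rho(|X_{\wh{t}}|)\bigr).
\end{equation*}
Because $\rho$ is the regular representation with respect to the basis $\{\croot_0,\ldots,\croot_{n-1}\}$, the norm $\det\rho(x)$ of any $x\in\chebr\ps{2n+1}$ is the norm of $x$ as an element of the order $\chebr\ps{2n+1}$, and $\det\rho(x)=\pm1$ if and only if $x$ is a unit. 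Since $|C_{\wh t}|=\pm1$ by the unimodularity just cited, $|X_{\wh{t}}|$ must be a unit of $\chebr\ps{2n+1}$ whose norm is $\pm1$. To pin down that $|X_{\wh{t}}|=\pm1$ itself (and not merely a nontrivial unit), I would run the induction: at $t_0$ we have $X_{\wh{t}_0}=\mathrm{Id}$ so $|X_{\wh{t}_0}|=1$, and the block mutation formula of Proposition~\ref{prop:CBlockMutation}(b) lifts, via $\rho^{-1}$ on the relevant blocks, to a mutation of $X_{\wh{t}}$ over $\chebr\ps{2n+1}$ of exactly the classical shape; this multiplies the determinant by $\pm1$ (a column is negated, and the remaining column operations are determinant-preserving shears), so $|X_{\wh{t}}|=\pm1$ is preserved. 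Finally, applying $\sigma\ps{2n+1}$ and using that it is a ring homomorphism commuting with taking determinants gives $\sigma\ps{2n+1}(|X_{\wh{t}}|)=|C'_t|$.

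The main obstacle I anticipate is justifying the identity $|C_{\wh{t}}|=\det(\rho(|X_{\wh{t}}|))$ cleanly: the integer determinant of the $m\times m$ block matrix must be shown to equal the $n\times n$ determinant of the regular representation of the single ring-element determinant. The honest way to handle this is to observe that the commuting family of blocks $\{\rho(r_{[i][j],\wh t})\}$ generates a commutative subring of $\integer^{n\times n}$ isomorphic to a quotient of $\chebr\ps{2n+1}$, so the block determinant formula for matrices over a commutative ring applies and gives $|C_{\wh t}|=\det\rho\bigl(\det_{\chebr\ps{2n+1}} X_{\wh t}\bigr)$; the faithfulness of $\rho$ then transfers the unit computation back to $\chebr\ps{2n+1}$. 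Once this compatibility is in place, both (a) and (b) follow formally from the unimodularity of integer $C$-matrices and the homomorphism properties of $\sigma\ps{2n+1}$ and $\rho$.
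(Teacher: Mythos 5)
Your argument is correct, and its load-bearing step is the same as the paper's: once one knows (from sign-coherence) that a composite mutation acts on the matrix $X_{\wh{t}}$ over $\chebr\ps{2n+1}$ by determinant-preserving column shears followed by negating the mutated column, induction from $X_{\wh{t}_0}=\mathrm{Id}$ gives $|X_{\wh{t}}|=\pm 1$, and applying the ring homomorphism $\sigma\ps{2n+1}$ (which commutes with the polynomial expression defining the determinant) gives $\sigma\ps{2n+1}(|X_{\wh{t}}|)=|C'_t|$. Where you differ is in the packaging. First, you deduce (a) from (b) via $|C'_t|=\sigma\ps{2n+1}(|X_{\wh{t}}|)$, whereas the paper proves (a) directly by running the identical shear-plus-negation induction on $C'_t$ itself, using sign-coherence of the $\cv$-vectors of $Q^{\gend'}$; both are fine since your (b) does not depend on (a), but your opening appeal to unimodularity of the integer matrix $C_{\wh{t}}$ is not actually needed for (a). Second, your detour through the commuting-block determinant identity $|C_{\wh{t}}|=\det\bigl(\rho(|X_{\wh{t}}|)\bigr)$ and the norm form is correct (the Kov\'acs--Silver--Williams formula applies since the blocks lie in the commutative image of $\rho$, and for a ring free of finite rank over $\integer$ the norm is $\pm 1$ exactly on units), but as you yourself note it only shows $|X_{\wh{t}}|$ is a unit of $\chebr\ps{2n+1}$, not that it equals $\pm 1$ --- and $\chebr\ps{2n+1}$ has nontrivial units --- so this machinery ends up doing no work that the induction does not already do. The paper dispenses with it entirely.
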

\begin{proof}
	(a) Fix indices $[j]$ and $[k]$. Since $\cv$-vectors of $Q^{\gend'}$ are sign-coherent, one sees that the mutation formula for $\mu_{[k]}(C'_t)$ is given by adding some $\wh{\chebr}_{2n+1}$-multiple of column $\ccv'_{[k],t}$ of $C'_t$ to some other columns, and then multiplying $\ccv'_{[k],t}$ by $-1$. By the rules governing determinants, this gives us $|C'_t|=-|\mu_{[k]}(C'_t)|$. That this quantity is $\pm 1$ follows from the fact that $C'_{t_0}$ is the identity matrix.
	
	(b) By the fact that $\sigma\ps{2n+1}$ is a homomorphism mapping positive elements to positive elements, $X_{\wh{t}}$ is column sign-coherent. Given an edge $\xymatrix@1{t \ar@{-}[r]^-{[k]} & t'} \in \tree_{Q^{\gend'}_0}$, applying the mutation formula to $X_{\wh{t}}$ similarly yields $|X_{\wh{t}}|=-|X_{\wh{t}'}|$, and $X_{\wh{t}'}=(r_{[i][j],\wh{t}'})$ is the matrix such that $\rho(r_{[i][j],\wh{t}'})=C_{[i][j],\wh{t}'}$ and $\sigma\ps{2n+1}(r_{[i][j],\wh{t}'})=c'_{[i][j],t'}$. Again, that this quantity is $\pm 1$ is a result of the initial matrix being the identity matrix. That $\sigma\ps{2n+1}(|X_{\wh{t}}|)=|C'_t|$ follows from the fact that $\sigma\ps{2n+1}(|X_{\wh{t}_0}|)=|C'_{t_0}|=1$.
\end{proof}

\begin{prop}\label{prop:TBSquares}
	For any vertex $t \in \tree_{Q^{\gend'}_0}$, we have $\mathbf{d}_F(G_{\wh{t}}) = G_t$. In particular, the top, bottom and right squares of Theorem~\ref{thm:Cube} commute.
\end{prop}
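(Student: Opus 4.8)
The plan is to deduce the identity $\mathbf{d}_F(G_{\wh{t}}) = G_t$ for every $t \in \tree_{Q^{\gend'}_0}$ from the block structure of $C$-matrices established in Proposition~\ref{prop:CBlockMutation}, and then read off the commutativity of the top, bottom and right squares. First I would recall the matrix $X_{\wh{t}}=(r_{[i][j],\wh{t}})$ over $\chebr\ps{2n+1}$ from Corollary~\ref{cor:CDeterminant}, characterised by $\rho(r_{[i][j],\wh{t}})=C_{[i][j],\wh{t}}$ and $\sigma\ps{2n+1}(r_{[i][j],\wh{t}})=c'_{[i][j],t}$, so that the left-square corollary together with Lemma~\ref{lem:dFMatMap} gives $\mathbf{d}_F(C_{\wh{t}})=C'_t$ with $C'_t=\sigma\ps{2n+1}(X_{\wh{t}})$ entrywise. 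The key structural observation is that the block-regular-representation assignment $Y=(y_{[i][j]}) \mapsto (\rho(y_{[i][j]}))$ is a unital ring homomorphism from $|Q^{\gend'}_0|\times|Q^{\gend'}_0|$ matrices over the commutative ring $\chebr\ps{2n+1}$ to integer block matrices: since $\rho$ is a ring homomorphism, block multiplication $\sum_{[l]}\rho(y_{[i][l]})\rho(y'_{[l][j]})=\rho\big(\sum_{[l]}y_{[i][l]}y'_{[l][j]}\big)$ corresponds exactly to multiplication in $\chebr\ps{2n+1}$. Under this map $C_{\wh{t}}$ is the image of $X_{\wh{t}}$.

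Next I would handle transpose and inversion. Because each $\rho(\croot_i)$ is symmetric (Lemma~\ref{lem:ChebRegRep}(b)), every $\rho(r)$ with $r\in\chebr\ps{2n+1}$ is symmetric, being a $\integer$-linear combination of the $\rho(\croot_i)$; consequently transposing the block matrix $C_{\wh{t}}$ intertwines with transposing $X_{\wh{t}}$ over $\chebr\ps{2n+1}$, i.e.\ $C_{\wh{t}}^T$ is the block-regular representation of $X_{\wh{t}}^T$. By Corollary~\ref{cor:CDeterminant}(b) we have $|X_{\wh{t}}|=\pm1$, a unit of $\chebr\ps{2n+1}$, so $X_{\wh{t}}^T$ is invertible over $\chebr\ps{2n+1}$ through the adjugate formula. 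Since a ring homomorphism preserves inverses, $G_{\wh{t}}=(C_{\wh{t}}^T)^{-1}$ is the block-regular representation of $(X_{\wh{t}}^T)^{-1}$; that is, the blocks of $G_{\wh{t}}$ are $\rho(g_{[i][j],\wh{t}})$ where $(g_{[i][j],\wh{t}})=(X_{\wh{t}}^T)^{-1}$.

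I would then apply $\sigma\ps{2n+1}$. As $\sigma\ps{2n+1}$ is a ring homomorphism and the determinant is a unit mapped to $\pm1$, it commutes with both transpose and inversion of matrices, yielding $\sigma\ps{2n+1}\big((X_{\wh{t}}^T)^{-1}\big)=\big(\sigma\ps{2n+1}(X_{\wh{t}})^T\big)^{-1}=\big((C'_t)^T\big)^{-1}=G_t$. Hence the blocks of $G_{\wh{t}}$ are $\rho(g_{[i][j],\wh{t}})$ with $\sigma\ps{2n+1}(g_{[i][j],\wh{t}})$ equal to the entries of $G_t$, and Lemma~\ref{lem:dFMatMap} gives $\mathbf{d}_F(G_{\wh{t}})=G_t$ directly. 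Taking $t=t_1$ and $t=t_2$ is precisely the commutativity of the top and bottom squares. The right square then commutes formally: combining $\mathbf{d}_F(G_{\wh{t}_2})=G'_{t_2}$ with the front-face identity $\mu_{[k]}(G'_{t_1})=G'_{t_2}$ (Remark~\ref{rem:MutG}) shows $\mathbf{d}_F\,\wh\mu_{[k]}=\mu_{[k]}\,\mathbf{d}_F$ on $G$-matrices.

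The main obstacle is the transpose/inverse bookkeeping, which is where the whole argument lives: the proof succeeds only because $\rho$ lands in symmetric matrices (so the block transpose is the regular representation of the $\chebr\ps{2n+1}$-transpose) and because $\det X_{\wh{t}}$ is a \emph{unit} of $\chebr\ps{2n+1}$ (so inverses exist integrally over $\chebr\ps{2n+1}$, not merely over its field of fractions). Both facts are already in hand from Lemma~\ref{lem:ChebRegRep} and Corollary~\ref{cor:CDeterminant}, so once the block-regular-representation map is verified to be a ring homomorphism compatible with transpose and inversion, the rest is routine.
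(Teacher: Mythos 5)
Your argument is correct, and it reaches the conclusion by a cleaner route than the paper, though it is built from exactly the same ingredients (Proposition~\ref{prop:CBlockMutation}(a), Lemma~\ref{lem:ChebRegRep}(b), Corollary~\ref{cor:CDeterminant}, Lemma~\ref{lem:dFMatMap}). The paper instead writes the entries of $G'_t$ as integer polynomials in the entries of $C'_t$ (possible since $|C'_t|=\pm1$), substitutes the blocks $C_{[k][l],\wh{t}}$ into those polynomials to define a candidate matrix $\wt{G}$ (well-defined because the blocks commute), and then verifies $C_{\wh{t}}^T\wt{G}=\mathrm{id}$ by direct computation, splitting into the $\coxH$-type case (where $\chebr\ps{5}\cong\wh\chebr\ps{5}$ lets one suppress $\sigma\ps{5}$) and the $\coxI$-type case (where the explicit $2\times2$ adjugate is used together with Corollary~\ref{cor:CDeterminant}(b)). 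Your observation that the block-regular-representation assignment is a unital ring homomorphism $\Mat_{|Q^{\gend'}_0|}(\chebr\ps{2n+1})\to\Mat_{|Q^{\gend}_0|}(\integer)$ compatible with transpose (by symmetry of the $\rho(\croot_i)$) and hence with inversion (since $|X_{\wh{t}}|$ is a unit, the adjugate inverse exists over $\chebr\ps{2n+1}$ and homomorphisms send two-sided inverses to two-sided inverses) collapses both cases into one line, and the same homomorphism argument applied to the entrywise map $\sigma\ps{2n+1}$ finishes the identification with $G_t$. What you lose relative to the paper is only the explicit form of the inverse in the rank-two case; what you gain is uniformity and the elimination of the case analysis. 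The treatment of the right square is the same formal deduction in both proofs.
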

\begin{proof}
	The proof will not require mutation, and so for the purpose of readability, write $C=(C_{[i][j]})=C_{\wh{t}}$ and $C'=\mathbf{d}_F(C)=(c'_{[i][j]})=C'_t$. Consider the corresponding matrices $G=G_{\wh{t}}=(C^T)\inv$ and $G'=G'_{t}=({C'}^T)\inv$. We can describe the structure of these $G$-matrices explicitly in terms of the $C$-matrices. The matrix $C'$ is at most $4 \times 4$ with $|C'|= \pm 1$, and so it is easily verified by computation that the matrix $G'=(g'_{[i][j]})$ is such that $g'_{[i][j]} \in \integer[V']$ with $V' = \{c'_{[k][l]}:[k],[l] \in Q^{\gend'}_0\}$. 
	
	Now define $V = \{C_{[k][l]}:[k],[l] \in Q^{\gend'}_0\}$ and define a new matrix $\wt{G}$ with blocks indexed by $Q^{\gend'}_0$ such that $\wt{G}_{[i][j]} \in \integer[V]$ is the polynomial given by replacing each term $c'_{[k][l]}$ in $g'_{[i][j]}$ by the block matrix $C_{[k][l]}$. This is well-defined because the blocks of $C$ commute. It is easy to see that $\mathbf{d}_F(\wt{G})$ is the matrix given by substituting each block $C_{[k][l]}= \rho(r_{[k][l]})$ with entry $c'_{[k][l]}=\sigma\ps{2n+1}(r_{[k][l]})$. Hence, we have defined $\wt{G}$ in such a way that we have $\mathbf{d}_F(\wt{G})=G'$.
	
	We claim that we actually have $G = \wt{G}$. That is, we shall verify that $\wt{G} = (C^T)\inv$. To check this, we note that each block $C_{[i][j]}$ is symmetric by Lemma~\ref{lem:ChebRegRep}(b). Thus, the matrix $C^T$ has block structure $C^T_{[i][j]} = C_{[j][i]}$.
	
	For $\coxH$-type foldings, we have $\chebr\ps{5} \cong \wh\chebr\ps{5} \cong \goldint$, so we can abuse notation and dispense with the homomorphism $\sigma\ps{2n+1}$ and write $C_{[i][j]}=\rho(c'_{[i][j]})$ for each $[i],[j] \in Q^{\gend'}_0$. Thus,
	\begin{align*}
		(C^T \wt{G})_{[i][j]} &= \sum_{[k] \in Q^{\gend'}_0} C_{[k][i]}\wt{G}_{[k][j]} \\
		&= \sum_{[k] \in Q^{\gend'}_0} \rho(c'_{[k][i]}g'_{[k][j]}).
	\end{align*}
	But since $G'=({C'}^T)\inv$, we have $c'_{[k][i]}g'_{[k][j]} = 1$ whenever $[i]=[j]$ and $c'_{[k][i]}g'_{[k][j]} = 0$ otherwise. It therefore follows that $(C^T \wt{G})_{[i][i]}=\id_n$ and $(C^T \wt{G})_{[i][j]}=0$ for any $[i] \neq [j]$. So $\wt{G} = G$, as required.
	
	On the other hand, if $\gend' = \coxI_2(2n+1)$, then $C'$ is a $2 \times 2$ matrix and so we specifically have
	\begin{equation*}
		G' = |C'|
		\begin{pmatrix}
			c'_{[1][1]} 	& -c'_{[1][0]} \\
			-c'_{[0][1]} 	& c'_{[0][0]}
		\end{pmatrix},
	\end{equation*}
	where $|C'|=|C'|\inv=\pm 1$ by Corollary~\ref{cor:CDeterminant}(a). Let $X=(r_{[i][j]})$ be the matrix with entries in $\chebr\ps{2n+1}$ such that $\rho(r_{[i][j]})=C_{[i][j]}$ and $\sigma\ps{2n+1}(r_{[i][j]})=c'_{[i][j]}$. Then we have
	\begin{equation*}
		\wt{G} = 
		\begin{pmatrix}
			\rho(|X|r_{[1][1]}) 	& \rho(-|X|r_{[1][0]}) \\
			\rho(-|X|r_{[0][1]}) 	& \rho(|X|r_{[0][0]})
		\end{pmatrix},
	\end{equation*}
	where $|X|=|X|\inv=|C'|=\pm 1$ by Corollary~\ref{cor:CDeterminant}(b). So we have
	\begin{align*}
		C^T \wt{G} &= 
		\begin{pmatrix}
			\rho(r_{[0][0]}) 	& \rho(r_{[1][0]}) \\
			\rho(r_{[0][1]}) 	& \rho(r_{[1][1]})
		\end{pmatrix}
		\begin{pmatrix}
			\rho(|X|r_{[1][1]}) 	& \rho(-|X|r_{[1][0]}) \\
			\rho(-|X|r_{[0][1]}) 	& \rho(|X|r_{[0][0]})
		\end{pmatrix}
		\\ &=
		\begin{pmatrix}
			\rho(|X| (r_{[0][0]}r_{[1][1]} - r_{[0][1]}r_{[1][0]}))  	& \rho(0) \\
			\rho(0) 	& \rho(|X| (r_{[0][0]}r_{[1][1]} - r_{[0][1]}r_{[1][0]}))
		\end{pmatrix}
		\\ &=
		\begin{pmatrix}
			\rho(|X|^2)  	& \rho(0) \\
			\rho(0) 	& \rho(|X|^2)
		\end{pmatrix}
		=
		\begin{pmatrix}
			\rho(1)  	& \rho(0) \\
			\rho(0) 	& \rho(1)
		\end{pmatrix}
		=\id_{2n},
	\end{align*}
	as required. So we indeed have $G=\wt{G}$ in all cases. Hence, $\mathbf{d}_F(G_{\wh{t}}) = G_t$ for each vertex $t \in \tree_{Q^{\gend'}_0}$. We have also shown that the top and bottom squares of Theorem~\ref{thm:Cube} commute. That the right square also commutes follows from the fact that the map $({-}^T)\inv$ is involutive and that that all other squares of the cube commute.
\end{proof}

\subsection{Categorification of the $G$-matrices of $Q^{\gend'}$}
\label{section-cat-g}
One has a categorification of $\gv$-vectors of $Q^\gend$ due to \cite{DehyKeller} and \cite{Plamondon}. We will show how folding and the $\gv$-vectors of $Q^{\gend'}$ fits into this categorification.

Consider the full subcategory $\mathcal{P} \subset \clus_F$ consisting of objects that belong to classes $P(i) \in \der_F$, where $P(i)$ is the complex of the projective object corresponding to $i \in Q^\gend_0$ concentrated in degree 0. For any object $M \in \clus_F$, we have a triangle
\begin{equation*}
	P_1 \rightarrow P_0 \rightarrow M \rightarrow \sus P_1
\end{equation*}
where $P_0,P_1 \in \mathcal{P}$. In particular, we may write
\begin{equation*}
	P_0 = \bigoplus_{i \in Q^\gend_0} a_i P(i) \qquad \text{and} \qquad P_1 = \bigoplus_{i \in Q^\gend_0} b_i P(i),
\end{equation*}
where each $a_i ,b_i\in \nnint$. Alternatively, we may write
\begin{equation*}
	P_0 = \bigoplus_{[i] \in Q^{\gend'_0}} r_{[i]} P(j_{[i]}) \qquad \text{and} \qquad P_1 = \bigoplus_{[i] \in Q^{\gend'_0}} s_{[i]} P(j_{[i]}),
\end{equation*}
where each $r_{[i]},s_{[i]} \in \chebsr\ps{2n+1}$ and each $j_{[i]} \in Q^\gend_0$ is such that $F(j_{[i]})=[i]$ and $\vw(j_{[i]})=1$. In fact, this is equivalent to writing
\begin{equation*}
	P_0 = \bigoplus_{M \in \Gamma_\mathcal{P}} r_M M \qquad \text{and} \qquad P_1 = \bigoplus_{M \in \Gamma_\mathcal{P}} r_M M,
\end{equation*}
where the set $\Gamma_\mathcal{P}$ consists of the iso-classes of objects of $\mathcal{P}$ that belong to the minimal set of $\chebsr\ps{2n+1}$-generators of $\clus_F$, and where each $r_M,s_M \in \chebsr\ps{2n+1}$.

\begin{defn}
	For any object $M \in \clus_F$, we define two $\gv$-vectors associated to $M$ by
	\begin{equation*}
		\ggv_\gend^M = (a_i -b_i)_{i \in Q^\gend_0}
		\qquad \text{and} \qquad 
		\ggv_{\gend'}^M = (r_{[i]} -s_{[i]})_{[i] \in Q^{\gend'}_0}.
	\end{equation*}
\end{defn}

It is in fact easy to see that $d_F(\ggv_\gend^M)=\ggv_{\gend'}^M$ and that $\ggv_{\gend'}^{rM} = r \ggv_{\gend'}$. Now turning our attention to $G$-matrices, this is where $\chebsr\ps{2n+1}$-tilting objects come in.

\begin{defn}
	Let $T = \bigoplus_{[i] \in Q^{\gend'}_0} T_{[i]} \in \clus_F$ be a $\chebsr\ps{2n+1}$-tilting object and let $\wh{T}  = \bigoplus_{[i] \in Q^{\gend'}_0} \bigoplus_{Z \in \mathcal{I}_{T_{[i]}}} Z \in \clus_F$ be the corresponding tilting object. We define a matrix associated to $\wh{T}$ by
	\begin{equation*}
		G_{\widehat{T}}= (\ggv^{Z}_\gend)_{Z \in \mathcal{I}_{T_{[i]}}, [i] \in Q^{\gend'}_0}
	\end{equation*}
	and define a matrix associated to $T$ to by
	\begin{equation*}
		G'_{T} = (\ggv^{T_{[i]}}_{\gend'})_{[i] \in Q^{\gend'}_0}
	\end{equation*}
\end{defn}

Classically, we know that $G_{\widehat{T}}$ is indeed a $G$-matrix of $Q^\gend$. However by construction, we also have that the columns of $G_{\widehat{T}}$ consist of $\gv$-vectors $\ggv_\gend^{\croot_j T_{[i]}}$ for each $[i] \in Q^{\gend'}_0$ and $0 \leq j \leq n-1$. Since the blocks of $G$-matrices obtained by composite mutations are matrix regular representations of $\chebr\ps{2n+1}$, this is precisely a $G$-matrix $G_{\widehat{t}}$ for some vertex $t \in \tree_{Q^{\gend'}_0}$. Moreover, we can see that $\mathbf{d}_F(G_{\widehat{T}})=G'_{T}$, which is a $G$-matrix of $Q^{\gend'}$ by the compatibility theorem (Theorem~\ref{thm:Cube}). In terms of mutation, we know that the $G$-matrix of a tilting object in $\clus_F$ corresponds to a classical mutation of the $G$-matrix. Thus, by Theorem~\ref{thm:Cube}, we can see that the mutation of a $\chebsr\ps{2n+1}$-tilting object $T$ corresponds to the mutation of its $G$-matrix. Thus, we have the following.

\begin{cor}
	Let $F\colon Q^\gend \rightarrow Q^{\gend'}$ be a weighted folding of type $\coxH_4$, $\coxH_3$ or $\coxI_2(2n+1)$ and let $\clus_F$ be the associated cluster category. Let $T_1$ and $T_2$ be basic $\chebsr\ps{2n+1}$-tilting objects that differ by a single indecomposable direct summand. Then $G'_{T_1}$ and $G'_{T_2}$ are $G$-matrices of $Q^{\gend'}$ that differ by mutation at a single vertex in $Q^{\gend'}$.
\end{cor}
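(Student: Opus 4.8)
The plan is to reduce the statement to the classical mutation behaviour of $G$-matrices on the unfolded side and then transport it through the map $\mathbf{d}_F$ using the compatibility cube of Theorem~\ref{thm:Cube}. First I would record the structural setup: by Theorem~\ref{thm:Length} each of $T_1$ and $T_2$ has exactly $|Q_0^{\gend'}|$ indecomposable direct summands, so ``differing by a single summand'' means we may write $T_1 = T \oplus X_1$ and $T_2 = T \oplus X_2$ for an almost complete basic $\chebsr\ps{2n+1}$-tilting object $T$, and by Theorem~\ref{thm:Complements} the objects $X_1,X_2$ are precisely the two complements of $T$. Let $j \in Q^\gend_0$ be the vertex with $\vw(j)=1$ indexing $X_1$ (so $X_1 \cong P(j)$ in the presentation used in the proof of Theorem~\ref{thm:TiltedFolding}), and set $[k]=F(j)$.

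Next I would pass to the unfolded tilting objects $\wh{T}_1$ and $\wh{T}_2$. By the discussion preceding the corollary, $G_{\wh{T}_i}$ is an honest $G$-matrix $G_{\wh{t}_i}$ of $Q^\gend$ for some vertex $\wh{t}_i \in \tree_{Q^\gend_0}$ (its blocks being matrix regular representations of $\chebr\ps{2n+1}$), and moreover $\mathbf{d}_F(G_{\wh{T}_i}) = G'_{T_i}$, which is a $G$-matrix of $Q^{\gend'}$ by Theorem~\ref{thm:Cube}. The crux is therefore to show that $\wh{t}_1$ and $\wh{t}_2$ are joined by the composite mutation $\wh\mu_{[k]}$ in $\tree_{Q^\gend_0}$.

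To establish this I would re-run the argument in the proof of Theorem~\ref{thm:TiltedFolding}. Passing from $\wh{T}_1 \cong \wh{T}\oplus\wh{X}_1$ to $\wh{T}_2 \cong \wh{T}\oplus\wh{X}_2$ replaces the summands in $\mathcal{I}_{X_1}$ by those in $\mathcal{I}_{X_2}$; by Corollary~\ref{cor:ClosedComplements} this is realised by mutating, one at a time, at each vertex $l \in Q^\gend_0$ with $F(l)=[k]$. Classically (\cite{BMRMutation}, or the Dehy--Keller/Plamondon categorification of $\gv$-vectors in \cite{DehyKeller,Plamondon}) each such single-summand tilting mutation corresponds to mutating the $G$-matrix at the column indexed by $l$; and because the $([k],[k])$-block of every exchange matrix of the unfolding vanishes (as used in Proposition~\ref{prop:CBlockMutation}), these single mutations commute and assemble into exactly the composite mutation $\wh\mu_{[k]}$. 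Hence $G_{\wh{t}_2} = \wh\mu_{[k]}(G_{\wh{t}_1})$.

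Finally I would invoke the commutativity of the cube. The top and bottom faces of Theorem~\ref{thm:Cube} state precisely that $\mathbf{d}_F \wh\mu_{[k]} = \mu_{[k]} \mathbf{d}_F$ on $G$-matrices, so applying $\mathbf{d}_F$ to $G_{\wh{t}_2} = \wh\mu_{[k]}(G_{\wh{t}_1})$ yields $G'_{T_2} = \mu_{[k]}(G'_{T_1})$, i.e. the two $G$-matrices of $Q^{\gend'}$ differ by mutation at the single vertex $[k]$. The main obstacle is the third paragraph: verifying that the swap of one indecomposable $R_+$-summand on the folded side really does lift to the \emph{full} composite mutation $\wh\mu_{[k]}$ rather than some partial sequence of single mutations. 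This is where the folding hypothesis (vanishing diagonal block and commutativity of the constituent mutations) is essential, and it is in essence a reapplication of Theorem~\ref{thm:TiltedFolding}.
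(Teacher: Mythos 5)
Your proposal is correct and follows essentially the same route as the paper: both reduce to the classical mutation of $G$-matrices on the unfolded side, identify the passage from $\wh{T}_1$ to $\wh{T}_2$ with the composite mutation $\wh\mu_{[k]}$ via the argument of Theorem~\ref{thm:TiltedFolding} and Corollary~\ref{cor:ClosedComplements}, and then transport the result through $\mathbf{d}_F$ using the commutative cube of Theorem~\ref{thm:Cube}. Your write-up is in fact more explicit than the paper's about the key step that swapping a single $R_+$-summand lifts to the \emph{full} composite mutation rather than a partial one.
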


	\bibliography{Bibliography}{}
	\bibliographystyle{habbrv}
\end{document}